\newtheorem{thm}{Theorem}[section]
\newtheorem{lemma}[thm]{Lemma}
\newtheorem{cor}[thm]{Corollary}
\newtheorem{rmk}[thm]{Remark}
\newtheorem{defn}[thm]{Definition}
\newtheorem{proposition}[thm]{Proposition}
\newtheorem{mainthm}{Theorem}
\newcommand{\N}{\mathbb{N}}
\newcommand{\R}{\mathbb{R}}
\newcommand{\Z}{\mathbb{Z}}
\newcommand{\mcl}{\mathcal L}
\newcommand{\mcn}{\mathcal Q}
\newcommand{\bbp}{\mathbb P}
\newcommand{\C}{\mathbb{C}}
\newcommand{\al}{\alpha}
\newcommand{\be}{\beta}
\newcommand{\ga}{\gamma}
\newcommand{\del}{\delta}
\newcommand{\ep}{\epsilon}
\newcommand{\eps}{\epsilon}
\newcommand{\sig}{\sigma}
\newcommand{\Sig}{\Sigma}
\newcommand{\ka}{\kappa}
\newcommand{\lam}{\lambda}
\newcommand{\Lam}{\Lambda}
\newcommand{\Om}{\Omega}
\newcommand{\om}{\omega}
\newcommand{\tld}[1]{\tilde{#1}}
\newcommand{\mc}[1]{\mathcal{#1}}
\newcommand{\lni}{\lim_{n\to\infty}}
\DeclareMathOperator{\essinf}{ess\ inf}
\DeclareMathOperator{\esssup}{ess\ sup}
\DeclareMathOperator{\var}{var}
\newcommand{\lex}[2]{\lam_{#2}({#1})}
\newcommand{\ba}{B} 
\newcommand{\B}{\mathcal{B}} 
\newcommand{\BV}{\B} 
\newcommand{\RR}{\mathcal R} 
\newcommand{\X}{X} 
\newcommand{\paeom}{\ensuremath{\bbp\text{-a.e. } \om \in \Om}}
\newcommand{\vot}{v_\omega^\theta} 
\newcommand{\lot}{\lambda_\omega^\theta} 
\newcommand{\Lsot}{\mcl_{\sigma^{-1}\omega}^\theta} 
\title{A spectral approach for quenched limit theorems for random expanding dynamical systems}
\author{D.\ Dragi\v cevi\' c \footnote{School of Mathematics and Statistics,
University of New South Wales,
Sydney NSW 2052, Australia. E-mail: \texttt{d.dragicevic@unsw.edu.au}. Department of Mathematics, University of Rijeka, Croatia. E-mail:\texttt{ddragicevic@math.uniri.hr}.},
G.\ Froyland\footnote{School of Mathematics and Statistics,
University of New South Wales,
Sydney NSW 2052, Australia. E-mail: \texttt{G.Froyland@unsw.edu.au }.},
C.\   Gonz\'alez-Tokman\footnote{School of Mathematics and Physics,
The University of Queensland,
St Lucia QLD 4072,
Australia. E-mail: \texttt{cecilia.gt@uq.edu.au}.},
S.\ Vaienti\footnote{
Sandro Vaienti, Aix Marseille Universit\'e, CNRS, CPT, UMR 7332, 13288 Marseille, France and Universit\'e de Toulon, CNRS, CPT, UMR 7332, 83957 La Garde, France. E-mail: \texttt{vaienti@cpt.univ-mrs.fr}.}}
\begin{document}
\maketitle
\begin{abstract}
We prove quenched versions of (i) a large deviations principle (LDP), (ii) a central limit theorem (CLT), and (iii) a local central limit theorem (LCLT) for non-autonomous dynamical systems.
A key advance is the extension of the spectral method, commonly used in limit laws for deterministic maps, to the general random setting.
We achieve this via multiplicative ergodic theory and the development of a general framework to control the regularity of Lyapunov exponents of \emph{twisted transfer operator cocycles} with respect to a twist parameter.
While some versions of the LDP and CLT have previously been proved with other techniques, the local central limit theorem is, to our knowledge, a completely new result, and one that demonstrates the strength of our method.
Applications include non-autonomous (piecewise) expanding maps, defined by random compositions of the form $T_{\sigma^{n-1}\omega}\circ\cdots\circ T_{\sigma\omega}\circ T_\omega$.
An important aspect of our results is that we only assume ergodicity and invertibility of the random driving $\sigma:\Omega\to\Omega$;  in particular no expansivity or mixing properties are required.

\end{abstract}
\tableofcontents

\section{Introduction}
The Nagaev-Guivarc'h spectral method for proving the central limit theorem (due to Nagaev \cite{N57,N61} for Markov chains and Guivarc'h \cite{RE83,GH88} for deterministic dynamics) is a powerful approach with applications to several other limit theorems, in particular large deviations and the local limit theorem.
In the deterministic setting a map $T:X\to X$ on a state space $X$ preserves a probability measure $\mu$ on $X$.
An observable $g:X\to \mathbb{R}$ generates $\mu$-stationary process $\{g(T^n x)\}_{n\ge 0}$ and one studies the statistics of this process.
Central to the spectral method is the transfer operator $\mathcal{L}:\mathcal{B}\circlearrowleft$, acting on a Banach space $\mathcal{B}\subset L^1(\mu)$ of complex-valued functions with regularity properties compatible with the regularity of $T$\ \footnote{The transfer operator satisfies $\int_X f\cdot g\circ T\ d\mu=\int_X \mathcal{L}f\cdot g\ d\mu$ for $f\in L^1(\mu), g\in L^\infty(\mu)$.}.
A \textit{twist} is introduced to form the twisted transfer operator $\mathcal{L}^\theta f:=\mathcal{L}(e^{\theta g}f)$.
The three key steps to the spectral approach are:
\begin{enumerate}
\item[S1.]
Representing the characteristic function of Birkhoff (partial) sums $S_ng=\sum_{i=0}^{n-1} g\circ T^i$ as integrals of $n^{th}$ powers of twisted transfer operators.
\item[S2.]
Quasi-compactness (existence of a spectral gap) for the twisted transfer operators $\mathcal{L}^\theta$ for $\theta$ near zero.
\item[S3.]
Regularity (e.g.\ twice differentiable for the CLT) of the leading eigenvalue of the twisted transfer operators $\mathcal{L}^\theta$ with respect to the twist parameter $\theta$, for $\theta$ near zero.
\end{enumerate}
This spectral approach has been widely used to prove limit theorems for deterministic dynamics, including large deviation principles \cite{HennionHerve,rey-young08}, central limit theorems \cite{RE83,Broise,HennionHerve,AyyerLiveraniStenlund}, Berry-Esseen theorems \cite{GH88,gouezel05}, local central limit theorems \cite{RE83,HennionHerve,gouezel05}, and vector-valued almost-sure invariance principles \cite{melbourne_nicol09,gouezel10}.
We refer the reader to the excellent review paper \cite{G15}, which provides a broader overview of how to apply the spectral method to problems of these types, and the references therein.

In this paper, we extend this spectral approach to the situation where we have a family of maps $\{T_\omega\}_{\omega\in\Omega}$, parameterised by elements of a probability space $(\Omega,\mathbb{P})$.
These maps are composed according to orbits of a driving system $\sigma:\Omega\to\Omega$.
The resulting dynamics takes the form of a map cocycle $T_{\sigma^{n-1}\omega}\circ\cdots\circ T_{\sigma\omega}\circ T_\omega$.
In terms of real-world applications, we imagine that $\Omega$ is the class of underlying configurations that govern the dynamics on the (physical or state) space $X$.
As time evolves, $\sigma$ updates the current configuration and the dynamics $T_\omega$ on $X$ correspondingly changes.
To retain the greatest generality for applications, we make minimal assumptions on the configuration updating (the driving dynamics) $\sigma$, and only assume $\sig$ is $\mathbb{P}$-preserving, ergodic and invertible;  in particular, no mixing hypotheses are imposed on $\sigma$.

We will assume certain uniform-in-$\omega$ (eventual) expansivity conditions for the maps $T_\omega$.
Our observable $g:\Omega\times X\to \mathbb{R}$ can (and, in general, will)  depend on the base configuration $\omega$ and will satisfy a fibrewise finite variation condition.
One can represent the random dynamics by a deterministic skew product  transformation $\tau(\omega, x)=( \sigma (\omega), T_{\omega}(x)), \quad \om \in \Om, \ x\in X$.
It is well known that whenever $\sigma$ is invertible and $\tilde{\mu}$ is a $\tau$-invariant probability measure with marginal $\mathbb{P}$ on the base $\Omega$, the disintegration of $\tilde\mu$ with respect to $\mathbb{P}$ produces conditional measures $\mu_{\om}$ which are {\em equivariant};  namely $\mu_{\om}\circ T_{\omega}^{-1}=\mu_{\sigma \om}$.
Our limit theorems will be established $\mu_{\om}$-almost surely and for $\mathbb{P}$-almost all choices of $\om$;  we therefore develop \emph{quenched} limit theorems.
In the much simpler case where $\sigma$ is Bernoulli, which yields an i.i.d.\ composition of the elements of $\{T_\omega\}_{\omega\in\Omega}$, one is often interested in the study of limit laws with respect to a measure $\hat{\mu}$ which is invariant with respect to the \emph{averaged} transfer operator, and reflects the outcomes of {\em averaged observations} \cite{O83,Arnoldbook}.
The corresponding limit laws with respect to $\hat{\mu}$ are typically called \emph{annealed} limit laws; see \cite{ANV15} and references therein for recent results in this framework.

As is common in the quenched setting, we impose a \textit{fiberwise centering} condition for the observable.
Thus, limit theorems in this context deal with fluctuations about a time-dependent mean.
For example, if the observable is temperature, the limit theorems would characterise temperature fluctuations about the mean, but this mean is allowed to vary with the seasons.
The recent work \cite{AbdelkaderAimino} provides a discussion of annealed and quenched limit theorems, and in particular an example regarding the necessity of fibrewise centering the observable for the quenched case. Without such a condition, quenched limit theorems have been established exclusively in special cases where all maps preserve a common invariant measure \cite{AyyerLiveraniStenlund,NandoriSzaszVarju} (and where the centering is obviously identical on each fibre).

In the quenched random setting we generalise the above three key steps of the spectral approach:
\begin{enumerate}
\item[R1.]
Representing the ($\omega$-dependent) characteristic function of Birkhoff (partial) sums $S_ng(\omega,\cdot)$ defined by~\eqref{birkhoff} as an integral of $n^{th}$ \emph{random compositions} of twisted transfer operators.
\item[R2.]
Quasi-compactness for the twisted transfer operator \emph{cocycle};  equivalently, existence of a gap in the Lyapunov spectrum of the cocycle $\mathcal{L}^{\theta,(n)}_\omega:=\mathcal{L}^\theta_{\sigma^{n-1}\omega}\circ\cdots\circ \mathcal{L}^\theta_{\sigma\omega}\circ \mathcal{L}^\theta_{\omega}$ for $\theta$ near zero.
\item[R3.]
Regularity (e.g.\ twice differentiable for the CLT) of the \emph{leading Lyapunov exponent} and \emph{Oseledets spaces} of the twisted transfer operators cocycle with respect to the twist parameter $\theta$, for $\theta$ near zero.
\end{enumerate}
At this point we note that the key steps S1--S3 in the deterministic spectral approach mean that one satisfies the requirement for a \emph{naive version} of the Nagaev-Guivarc'h method \cite{G15};  namely $\mathbb{E}(e^{i\theta S_n})=c(\theta)\lambda(\theta)^n+d_n(\theta)$ for $c$ continuous at 0 and $|d_n|_\infty\to 0.$
In this case, $\lambda(\theta)$ is the leading eigenvalue of $\mathcal{L}^\theta$.
Similarly, the key steps R1--R3 yield an analogue naive version of a random Nagaev-Guivarc'h method, where
for all complex $\theta$ in a neighborhood of 0, and \paeom, we have that
\[
 \lim_{n\to \infty} \frac 1 n \log | \mathbb{E}_{\mu_\om}(e^{\theta S_ng(\omega,\cdot)}) |=\Lambda (\theta),
\]
where $\Lam(\theta)$ is the top Lyapunov exponent of the random cocycle generated by $\mcl_\om^\theta$ (see Lemma~\ref{need}).
This condition is of course weaker than the asymptotic equivalence of  \cite{G15},
but together with the exponential decay of the norm of the projections to the complement of the top Oseledets space (see Section~\ref{sec:pfclt}), which handles the error corresponding to quantity $d_n$ above, we are able to achieve the desired limit theorems. Under this analogy, we could consider our result as a new {\em naive} version of the Nagaev-Guivarc'h method, framed and adapted to random dynamical systems.

The quasi-compactness of the twisted transfer operator cocycle (item 2 above) will be based on the works \cite{FLQ2,GTQuas1}, which have adapted multiplicative ergodic theory to the setting of cocycles of possibly non-injective operators;  the non-injectivity is crucial for the study of endomorphisms $T_\omega$.
These new multiplicative ergodic theorems, and in particular the quasi-compactness results, utilise random Lasota-Yorke inequalities in the spirit of Buzzi \cite{Buzzi}.
For the regularity of the leading Lyapunov exponent (item 3 above) we develop \emph{ab initio} a cocycle-based perturbation theory, based on techniques of \cite{HennionHerve}.
This is necessary because in the random setting objects such as eigenvalues and eigenfunctions of individual transfer operators have no dynamical meaning and therefore one cannot simply apply standard perturbation results such as \cite{kato}, as is done in \cite{HennionHerve} and all other spectral approaches for limit theorems.
Multiplicative ergodic theorems do not provide, in general, a spectral decomposition with eigenvalues and eigenvectors as in the classical sense, but only a hierarchy of equivariant Oseledets spaces containing vectors which grow at a fixed asymptotic exponential rate, determined by the corresponding Lyapunov exponent.

Let us now summarise the main results of the present paper,  obtained with our new cocycle-based perturbation theory. These are limit theorems for \emph{random Birkhoff sums} $S_n g$, associated to an {observable} $g \colon \Omega \times X \to \mathbb R$, and defined by
\begin{equation}\label{birkhoff}
 S_n g(\om, x):=\sum_{i=0}^{n-1} g(\tau^i (\om, x))=\sum_{i=0}^{n-1} g(\sigma^i \omega, T_\omega^{(i)} x), \quad (\om, x)\in \Om \times X, \ n\in \mathbb N,
\end{equation}
where $T_\omega^{(i)} = T_{\sig^{i-1}\om} \circ \dots \circ T_{\sig\om} \circ T_\om$.
The observable will be required to satisfy some regularity properties, which are made precise in Section~\ref{sec:obs}.
Moreover,  we will suppose that $g$ is \textit{fiberwise  centered} with respect to the invariant measure $\mu$ for $\tau$. That is,
\begin{equation}\label{zeromean0}
 \int g(\om, x) \, d\mu_\om(x)=0 \quad \text{for } \paeom.
\end{equation}
The necessary conditions on the dynamics are summarised in an \textit{admissibility} notion, which is introduced in Definition~\ref{def:admis}.
Our first results are quenched forms of the Large Deviations Theorem 
 and the Central Limit Theorem. 
 We remark that, while our results are all stated in terms of the fiber measures $\mu_\om$,
 in our examples, the same results hold true when $\mu_\om$ is replaced by Lebesgue measure $m$.
This is a consequence of a the result of Eagleson \cite{Eagleson} combined with the fact that, in our examples, $\mu_\om$ is equivalent to $m$.

\begin{mainthm}[Quenched large deviations theorem]\label{thm:ldt}
Assume the transfer operator cocycle $\mc{R}$ is admissible, and the observable $g$ satisfies conditions~\eqref{zeromean0} and \eqref{obs}.
Then, there exists $\epsilon_0>0$ and a non-random function $c\colon (-\epsilon_0, \epsilon_0) \to \mathbb R$ which is nonnegative, continuous, strictly convex, vanishing only at $0$ and such that
\[
\lim_{n\to \infty} \frac 1 n \log \mu_\om(S_n g(\om, \cdot ) >n\epsilon)=-c(\epsilon), \quad \text{for  $0<\epsilon <\epsilon_0$ and \paeom}.
\]
\end{mainthm}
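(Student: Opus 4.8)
The plan is to reduce the statement to a handful of soft properties of the function $\Lambda$ supplied by Lemma~\ref{need} and then run a Gärtner--Ellis / Legendre-transform argument. For real $\theta$ in a neighbourhood of $0$ and $\mathbb P$-a.e.\ $\omega$ we have $\Lambda(\theta)=\lim_{n\to\infty}\frac1n\log\mathbb E_{\mu_\omega}(e^{\theta S_ng(\om,\cdot)})$ (the absolute values in Lemma~\ref{need} are superfluous since $S_ng(\om,\cdot)$ is real-valued), and $\Lambda$ is non-random, being the top Lyapunov exponent of the twisted cocycle $\mathcal R^\theta$. I would first record four properties of $\Lambda$ on a small symmetric interval $I$ about $0$: (a) $\Lambda(0)=0$; (b) $\Lambda\ge0$ on $I$ --- indeed $\frac1n\log\mathbb E_{\mu_\omega}(e^{\theta S_ng})\ge\frac\theta n\mathbb E_{\mu_\omega}(S_ng)$ by Jensen, and $\mathbb E_{\mu_\omega}(S_ng)=\sum_{i=0}^{n-1}\int g(\sigma^i\omega,y)\,d\mu_{\sigma^i\omega}(y)=0$ by equivariance of $(\mu_\omega)$ and the fibrewise centering \eqref{zeromean0}; (c) $\Lambda$ is convex on $I$, being a pointwise limit of the convex functions $\theta\mapsto\frac1n\log\mathbb E_{\mu_\omega}(e^{\theta S_ng})$ (Hölder); (d) $\Lambda$ is $C^2$ --- in fact real-analytic --- near $0$, which is exactly the regularity of the leading Lyapunov exponent of a twisted transfer operator cocycle furnished by the cocycle perturbation theory underpinning the CLT (step R3), and $\Lambda''(0)=:\Sigma^2>0$ by the non-degeneracy built into admissibility. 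From (a), (b), (d), the point $0$ is an interior minimum of the differentiable $\Lambda$, so $\Lambda'(0)=0$; shrinking $I$ using continuity of $\Lambda''$, we may assume $\Lambda''>0$ on $I$, so $\Lambda$ is strictly convex there and $\Lambda'\colon I\to\Lambda'(I)$ is a strictly increasing analytic bijection onto an open interval containing $0$.

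Next I would define the rate function by Legendre duality. Pick $\epsilon_0>0$ with $(0,\epsilon_0)\subset\Lambda'(I\cap(0,\infty))$ and set, for $\epsilon\in[0,\epsilon_0)$, $c(\epsilon):=\sup_{\theta\in I}(\theta\epsilon-\Lambda(\theta))$. For $\epsilon\in(0,\epsilon_0)$ the supremum is attained at the unique interior point $\theta_\epsilon:=(\Lambda')^{-1}(\epsilon)\in I\cap(0,\infty)$, so $c(\epsilon)=\theta_\epsilon\epsilon-\Lambda(\theta_\epsilon)$; the envelope formula gives $c'(\epsilon)=\theta_\epsilon>0$ and $c''(\epsilon)=1/\Lambda''(\theta_\epsilon)>0$. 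Hence $c$ is non-random, continuous, $C^2$ and strictly convex on $(0,\epsilon_0)$, with $c(0)=0$ and $c'(0^+)=\theta_0=0$, so $c$ is strictly increasing and therefore strictly positive on $(0,\epsilon_0)$, vanishing only at $0$ --- exactly the asserted properties of $c$.

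It remains to prove $\lim_{n\to\infty}\frac1n\log\mu_\omega(S_ng(\om,\cdot)>n\epsilon)=-c(\epsilon)$ for each fixed $\epsilon\in(0,\epsilon_0)$ and $\mathbb P$-a.e.\ $\omega$. To secure a single full-measure set of $\omega$ valid for all the $\theta$ we use, I would intersect the countably many $\mathbb P$-null sets coming from Lemma~\ref{need} with $\theta$ ranging over a countable dense subset of $I$, and then invoke convexity to upgrade convergence on this dense set to $\frac1n\log\mathbb E_{\mu_\omega}(e^{\theta S_ng})\to\Lambda(\theta)$ for every $\theta$ in the interior of $I$. For such $\omega$ the \emph{upper bound} is the Chernoff estimate: with $\theta=\theta_\epsilon>0$, $\mu_\omega(S_ng>n\epsilon)\le e^{-n\theta\epsilon}\mathbb E_{\mu_\omega}(e^{\theta S_ng})$, so $\limsup_n\frac1n\log\mu_\omega(S_ng>n\epsilon)\le-\theta\epsilon+\Lambda(\theta)=-c(\epsilon)$. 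For the \emph{lower bound} I would invoke a local form of the Gärtner--Ellis theorem (which needs only the regularity of $\Lambda$ near $0$), or, more in the spirit of the present paper, carry out an explicit exponential tilting: for $\theta=\theta_\epsilon$, use the top Oseledets data $v^\theta_\omega,\lambda^\theta_\omega$ of $\mathcal R^\theta$ (with $\mathcal L^\theta_\omega v^\theta_\omega=\lambda^\theta_\omega v^\theta_{\sigma\omega}$ and $\frac1n\sum_{i=0}^{n-1}\log\lambda^\theta_{\sigma^i\omega}\to\Lambda(\theta)$ by Birkhoff's theorem, the top Oseledets space being one-dimensional for $\theta$ near $0$) to build an equivariant family of tilted probabilities on $X$, deduce from the gap in the Lyapunov spectrum that $\mathbb E_{\mu_\omega}(e^{\theta S_ng})=e^{n\Lambda(\theta)+o(n)}$, and establish a quenched law of large numbers $\tfrac1n S_ng\to\Lambda'(\theta)=\epsilon$ under the tilted measures. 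Then, for small $r>0$,
\[
\mu_\omega(S_ng>n\epsilon)\ \ge\ \mu_\omega\big(n\epsilon<S_ng<n(\epsilon+r)\big)\ \ge\ e^{-n\theta(\epsilon+r)}\!\int \one_{\{n\epsilon<S_ng<n(\epsilon+r)\}}e^{\theta S_ng}\,d\mu_\omega\ =\ e^{-n\theta(\epsilon+r)+n\Lambda(\theta)+o(n)},
\]
whence $\liminf_n\frac1n\log\mu_\omega(S_ng>n\epsilon)\ge-\theta(\epsilon+r)+\Lambda(\theta)$; letting $r\downarrow0$ gives $\ge-c(\epsilon)$, matching the upper bound.

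I expect the lower bound to be the main obstacle, and within it the tilted law of large numbers, equivalently the estimate $\int\one_{A_n}e^{\theta S_ng}\,d\mu_\omega=e^{n\Lambda(\theta)+o(n)}$ for $\theta\ne0$: this forces one to transplant the entire spectral picture --- quasi-compactness, one-dimensionality of the leading Oseledets space, exponential decay of the complementary projections --- from $\mathcal R=\mathcal R^0$ to the twisted cocycle $\mathcal R^\theta$, and to control the Birkhoff sums $\frac1n\sum_{i=0}^{n-1}\log\lambda^\theta_{\sigma^i\omega}$ uniformly enough, which is precisely where ergodicity and invertibility of $\sigma$ together with the uniform-in-$\omega$ bounds packaged in the admissibility hypothesis are used. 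A secondary, purely technical nuisance is the bookkeeping of $\mathbb P$-null sets: Lemma~\ref{need} is stated for fixed $\theta$, so the countable-dense-set-plus-convexity argument above is needed to produce one null set good for all $\epsilon\in(0,\epsilon_0)$ at once.
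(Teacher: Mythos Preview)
Your approach is essentially the same as the paper's: verify the hypotheses of the classical G\"artner--Ellis theorem for the sequence $S_ng(\om,\cdot)$ under $\mu_\om$, using Lemma~\ref{need} for the existence of the limit and the regularity results on $\Lambda$ (differentiability, $\Lambda'(0)=0$, strict convexity near $0$) for the remaining inputs. The paper simply invokes the G\"artner--Ellis theorem as a black box (Theorem~\ref{GET}, quoted from Hennion--Herv\'e), whereas you unpack the upper and lower bounds explicitly; your Jensen argument for $\Lambda\ge0$ and hence $\Lambda'(0)=0$ is a pleasant shortcut compared to the paper's direct computation in Lemma~\ref{zero}, and your handling of the $\mathbb P$-null set issue (countable dense $\theta$'s plus convexity) is more careful than the paper, which glosses over the order of quantifiers. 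Two small inaccuracies to fix: the paper only establishes that $\Lambda$ is $C^2$ near $0$, not real-analytic, and the positivity $\Sigma^2>0$ is a standing assumption made just before Lemma~\ref{lem:Lam''0}, not a consequence of admissibility.
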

\begin{mainthm}[Quenched central limit theorem] \label{thm:clt}
Assume the transfer operator cocycle $\mc{R}$ is admissible, and the observable $g$ satisfies conditions~\eqref{zeromean0} and \eqref{obs}.
Assume also that the non-random \textit{variance} $\Sig^2$, defined in \eqref{variance} satisfies $\Sig^2>0$.
Then, for every bounded and continuous function $\phi \colon \R \to \R$ and \paeom, we have
\[
\lim_{n\to\infty}\int \phi \bigg{(}\frac{S_n g(\om, x)}{ \sqrt n}\bigg{)}\, d\mu_\om (x)=\int \phi \, d\mathcal N(0, \Sig^2).
\]
(The discussion after \eqref{variance} deals with the degenerate case $\Sig^2=0$).
\end{mainthm}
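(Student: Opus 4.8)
The plan is to run the random Nagaev-Guivarc'h scheme R1--R3 at the level of fibrewise characteristic functions and to conclude with L\'evy's continuity theorem. Fix $t\in\R$ and set $\theta_n:=it/\sqrt n$. By R1 (the defining duality of the transfer operators, iterated), for complex $\theta$ near $0$,
\[
\mathbb E_{\mu_\om}\big(e^{\theta S_n g(\om,\cdot)}\big)=\int \mathcal L^{\theta,(n)}_\om v_\om^0\, dm,
\]
where $m$ is the reference measure and $v_\om^0\in\mathcal B$ represents $\mu_\om$; after normalising $\int v_\om^\theta\, dm\equiv 1$, the vector $v_\om^0$ spans the top Oseledets space of the untwisted cocycle, $\mathcal L_\om^0 v_\om^0=v_{\sig\om}^0$, and $\Lam(0)=0$. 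By R2 (admissibility together with the multiplicative ergodic theorem), for $\theta$ in a complex neighbourhood of $0$ the twisted cocycle has a one-dimensional equivariant top Oseledets space with generator $v_\om^\theta$ and scalar cocycle $\lambda_\om^\theta$, $\mathcal L_\om^\theta v_\om^\theta=\lambda_\om^\theta v_{\sig\om}^\theta$, and an equivariant complement $H_\om^\theta$ on which the iterates contract: $\|\mathcal L^{\theta,(n)}_\om|_{H_\om^\theta}\|\le K_\om e^{-\ka n}$ for all $n$, where $\ka>0$, and $K_\om<\infty$ for \paeom, both independent of $\theta$ in a fixed neighbourhood of $0$ (this is the content of Section~\ref{sec:pfclt} and plays the role of the error term $d_n$ from the Introduction). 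Decomposing $v_\om^0=\phi_\om^\theta(v_\om^0)\,v_\om^\theta+w_\om^\theta$ with $w_\om^\theta\in H_\om^\theta$ and applying the cocycle,
\[
\mathbb E_{\mu_\om}\big(e^{\theta S_n g}\big)=\lambda_\om^{\theta,(n)}\,\phi_\om^\theta(v_\om^0)\int v_{\sig^n\om}^\theta\, dm\;+\;\int \mathcal L^{\theta,(n)}_\om w_\om^\theta\, dm,\qquad \lambda_\om^{\theta,(n)}:=\prod_{i=0}^{n-1}\lambda_{\sig^i\om}^\theta.
\]

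One would first dispose of the error term. With $\theta=\theta_n$, the last integral above is bounded by $K_\om e^{-\ka n}\|w_\om^{\theta_n}\|$, and since $\|w_\om^{\theta_n}\|=\|v_\om^0-\phi_\om^{\theta_n}(v_\om^0)v_\om^{\theta_n}\|$ stays bounded as $n\to\infty$ by continuity of $\theta\mapsto v_\om^\theta,\phi_\om^\theta$, this term tends to $0$, uniformly for $t$ in compact sets. For the boundary factors, $\int v_{\sig^n\om}^{\theta_n}\, dm=1$ by the normalisation, and $\phi_\om^{\theta_n}(v_\om^0)\to\phi_\om^0(v_\om^0)=1$ by continuity. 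Hence the theorem reduces to showing $\lambda_\om^{\theta_n,(n)}\to e^{-\Sig^2 t^2/2}$ for \paeom, locally uniformly in $t$.

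The heart of the argument is the second-order expansion of the scalar cocycle provided by R3. Set $\ell_\om(\theta):=\log\lambda_\om^\theta$ (a continuous branch near $0$ since $\lambda_\om^0=1$, with $\ell_\om(0)=0$). The cocycle perturbation theory provides $\ell_\om(\theta)=\ell_\om'(0)\theta+\tfrac12\ell_\om''(0)\theta^2+R_\om(\theta)$ with $|R_\om(\theta)|\le\eta(|\theta|)\,|\theta|^2$, $\eta(s)\to 0$ as $s\to 0$ uniformly in $\om$, and $\ell_\om'(0),\ell_\om''(0)\in L^1(\mathbb P)$. Fibrewise centering \eqref{zeromean0} forces $\ell_\om'(0)=0$ a.e.: integrating $\mathcal L_\om^\theta v_\om^\theta=\lambda_\om^\theta v_{\sig\om}^\theta$ against $m$ and using $\int\mathcal L_\om f\, dm=\int f\, dm$ together with the normalisation gives $\lambda_\om^\theta=\int e^{\theta g(\om,\cdot)}v_\om^\theta\, dm$; differentiating at $\theta=0$ (product rule and $\int v_\om^\theta\, dm\equiv 1$) yields $\frac{d}{d\theta}\lambda_\om^\theta\big|_{\theta=0}=\int g(\om,\cdot)\, d\mu_\om+0=0$, and $\lambda_\om^0=1$ then gives $\ell_\om'(0)=0$. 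Plugging $\theta_n=it/\sqrt n$, so that $\theta_n^2=-t^2/n$,
\[
\log\lambda_\om^{\theta_n,(n)}=\sum_{i=0}^{n-1}\ell_{\sig^i\om}(\theta_n)=-\frac{t^2}{2}\cdot\frac1n\sum_{i=0}^{n-1}\ell_{\sig^i\om}''(0)+\sum_{i=0}^{n-1}R_{\sig^i\om}(\theta_n).
\]
The remainder sum is bounded by $n\,\eta(|t|/\sqrt n)\,(t^2/n)=t^2\,\eta(|t|/\sqrt n)\to 0$, while Birkhoff's ergodic theorem (ergodicity of $\sig$) gives $\frac1n\sum_{i=0}^{n-1}\ell_{\sig^i\om}''(0)\to\int\ell_\om''(0)\, d\mathbb P=\Lam''(0)$, the variance $\Sig^2$ of \eqref{variance}, for \paeom; these limits hold locally uniformly in $t$. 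Therefore $\lambda_\om^{\theta_n,(n)}\to e^{-\Sig^2 t^2/2}$ and hence $\mathbb E_{\mu_\om}(e^{it S_n g(\om,\cdot)/\sqrt n})\to e^{-\Sig^2 t^2/2}$ locally uniformly in $t$, for \paeom. Since the limit is the characteristic function of $\mathcal N(0,\Sig^2)$, L\'evy's continuity theorem yields $\int\phi(S_n g(\om,\cdot)/\sqrt n)\, d\mu_\om\to\int\phi\, d\mathcal N(0,\Sig^2)$ for every bounded continuous $\phi$ and \paeom, which is the claim.

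The hardest step will be R3. In the random setting $\lambda_\om^\theta,v_\om^\theta,\phi_\om^\theta$ are Oseledets data, not eigendata of a single operator, so Kato-type analytic perturbation theory is unavailable, and one must develop \emph{ab initio} a cocycle perturbation theory delivering the twice-differentiability of $\theta\mapsto\lambda_\om^\theta$ and the continuity of $\theta\mapsto v_\om^\theta,\phi_\om^\theta$, with all estimates uniform---or tempered-uniform---in $\om$; in particular the uniform-in-$\om$ control of the remainder $R_\om$ used above, and the identification of $\int\ell_\om''(0)\, d\mathbb P$ with the variance $\Sig^2$ of \eqref{variance}. A secondary, more routine point is that the contraction constant $K_\om$ on $H_\om^\theta$ is finite $\mathbb P$-a.e.\ and stays bounded as $\theta$ ranges over a fixed neighbourhood of $0$; this follows from the quasi-compactness (random Lasota--Yorke) estimates underlying R2.
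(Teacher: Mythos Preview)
Your proposal is correct and follows essentially the same route as the paper: L\'evy's continuity theorem, the representation $\mathbb E_{\mu_\om}(e^{\theta S_n g})=\int\mathcal L_\om^{\theta,(n)}v_\om^0\,dm$, the Oseledets decomposition $v_\om^0=\phi_\om^\theta(v_\om^0)v_\om^\theta+w_\om^\theta$ with uniform exponential decay of the $H_\om^\theta$-part (the paper's Lemma~\ref{lem:UnifExpDecayY2}), and then the second-order Taylor expansion of $\log\lambda_\om^\theta$ combined with Birkhoff's theorem to identify the limit as $e^{-\Sigma^2 t^2/2}$. Your $\ell_\om(\theta)$ is exactly the paper's $\tilde H(\theta)(\sigma\om)$ from Lemma~\ref{TC2}, and you correctly flag that the substantive work---uniform-in-$\om$ $C^2$ regularity of $\theta\mapsto\lambda_\om^\theta$ as a map into $L^\infty(\Omega)$, and the identification $\int\ell_\om''(0)\,d\mathbb P=\Sigma^2$---is precisely what the paper carries out in Section~3 and Lemma~\ref{lem:Lam''0}.
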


Similar LDT and CLT results were previously obtained in different contexts, and using other methods, by Kifer \cite{Kifer-LD90,Kifer92,K98} and Bakhtin \cite{B1,B2}.
In \cite{Kifer-LD90}, Kifer shows a large deviations result for occupational measures, relying on existence of a pressure functional and uniqueness of equilibrium states for some dense sets of functions.
For the CLT, Kifer used martingale techniques. To control the rate of mixing,  conditions such as $\phi$-mixing and $\alpha$-mixing are assumed in \cite{K98}. His examples include random subshifts of finite type and random smooth expanding maps.
Bakhtin obtains a central limit theorem and some estimates on large deviations for sequences of smooth hyperbolic maps with common expanding/contracting distributions, under  a mixing assumption and a variance growth condition on the Birkhoff sums \cite{B1, B2}.
Finally, we note that in our recent article \cite{DFGTV} we provide the first complete proof of the Almost Sure Invariance Principle for random transformations of the type covered in this paper using martingale techniques.


In this work, we prove for the first time a Local Central Limit Theorem for random transformations. Theorem~\ref{thm:lclt} presents the aperiodic version: This result relies on an assumption concerning fast decay in $n$ of the norm of the twisted operator cocycle $\|\mathcal{L}^{it,(n)}_\omega\|_{\mathcal{B}}$, for $t\in\mathbb{R}\setminus\{0\}$ and \paeom.
This hypothesis is made precise in  \ref{cond:aper}.
Such an assumption is usually stated in the deterministic case (resp.\ in the random annealed situation), by asking that the twisted operator (resp.\ the averaged random twisted operator) $\mathcal{L}^{it}$ has spectral radius strictly less than one for $t\in\mathbb{R}\setminus\{0\}$; this is called the {\em aperiodicity} condition.
\begin{mainthm}[Quenched local central limit theorem] \label{thm:lclt}
Assume the transfer operator cocycle $\mc{R}$ is admissible, and the observable $g$ satisfies conditions~\eqref{zeromean0} and \eqref{obs}. In addition, suppose the aperiodicity condition \ref{cond:aper} is satisfied. Then, for \paeom\  and every bounded interval $J\subset \R$, we have
 \[
  \lim_{n\to \infty}\sup_{s\in \R} \bigg{\lvert} \Sig \sqrt{n} \mu_\om (s+S_n g(\om, \cdot)\in J)-\frac{1}{\sqrt{2\pi}}e^{-\frac{s^2}{2n\Sig^2}}\lvert J\rvert \bigg{\rvert}=0.
 \]
\end{mainthm}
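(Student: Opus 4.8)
The plan is to adapt the classical Nagaev–Guivarc'h argument for the local limit theorem to the random cocycle setting, replacing the single twisted operator and its leading eigenvalue by the twisted transfer operator cocycle $\mcl_\om^{it}$ and its top Lyapunov structure. First I would fix a bounded interval $J$ and, using a standard Fourier-analytic smoothing device (approximating $\one_J$ from above and below by functions whose Fourier transforms have compact support, à la the classical proof in \cite{G15,HennionHerve}), reduce the problem to controlling, uniformly in $s$, the quantity $\Sig\sqrt n\int_{-\delta}^{\delta} e^{-its}\psi(t)\,\E_{\mu_\om}(e^{itS_ng(\om,\cdot)})\,dt$ plus a remainder coming from $|t|\ge\delta$. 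The key representation, from step R1 (Lemma in Section~\ref{sec:obs}), is $\E_{\mu_\om}(e^{itS_ng(\om,\cdot)})=\int \mcl_\om^{it,(n)} v_\om\,d(\text{something})$ for an appropriate fibrewise density $v_\om$, so everything is reduced to asymptotics of the twisted cocycle.

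Next I would split the frequency range into three regions. For $|t|\le\delta$ with $\delta$ small, I invoke the cocycle perturbation theory (step R3): for $\theta=it$ near $0$ the cocycle $\mcl_\om^\theta$ has a leading Oseledets space spanned by an equivariant family $v_\om^\theta$, with multiplicative structure $\mcl_\om^{\theta,(n)}v_\om^\theta=\lambda_\om^{\theta,(n)}v_{\sigma^n\om}^\theta$ where $\frac1n\log|\lambda_\om^{\theta,(n)}|\to\Lambda(\theta)$, and $\Lambda$ is $C^2$ near $0$ with $\Lambda(0)=0$, $\Lambda'(0)=0$ (fibrewise centering), $\Lambda''(0)=-\Sig^2$; the complementary part decays exponentially in the Oseledets sense. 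This yields, after the change of variables $t\mapsto t/\sqrt n$, the Gaussian $\frac1{\sqrt{2\pi}}e^{-s^2/(2n\Sig^2)}|J|$ in the limit, exactly as in the deterministic case, with the error terms controlled by the exponential decay of the projection onto the complement of the top Oseledets space (Section~\ref{sec:pfclt}) and by continuity of $c(\theta)$, the ``coefficient'' analogue, at $0$. For the region $\delta\le|t|\le M$ with $M$ fixed, I use the aperiodicity hypothesis~\ref{cond:aper}, which gives $\|\mcl_\om^{it,(n)}\|_\B\to 0$ (in fact with a rate) for each such $t$ and \paeom; a compactness/uniformity argument over the compact annulus $\delta\le|t|\le M$—being careful that the null set may depend on $t$, so one must pass through a countable dense set of $t$'s together with equicontinuity in $t$ of $\|\mcl_\om^{it,(n)}\|_\B$, or quote~\ref{cond:aper} in a form already uniform on compacts—shows this contribution is $o(1/\sqrt n)$. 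Finally, the tail $|t|\ge M$ is handled trivially because the smoothing kernel $\psi$ has compactly supported Fourier transform, so that range simply does not appear.

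The main obstacle I anticipate is the middle range $\delta\le|t|\le M$: in the deterministic theory this is immediate from the aperiodicity condition via a continuity/compactness argument on the spectral radius, but in the random setting the exceptional $\bbp$-null set in~\ref{cond:aper} a priori depends on $t$, and one cannot take an uncountable union of null sets. Resolving this cleanly requires either (a) that \ref{cond:aper} be stated uniformly on compact subsets of $\R\setminus\{0\}$, or (b) a genuine equicontinuity estimate in $t$ for the fibrewise norms $\|\mcl_\om^{it,(n)}\|_\B$ allowing one to upgrade pointwise-in-$t$ decay on a countable dense set to locally uniform decay. A secondary technical point is ensuring the perturbation theory from R3 applies with enough uniformity in $\om$ (measurability of $v_\om^\theta$ in both $\om$ and $\theta$, and $\bbp$-essentially-uniform bounds on the relevant norms) so that the small-$t$ analysis holds off a single $\bbp$-null set; this is where the constructions underpinning Theorems~\ref{thm:ldt} and~\ref{thm:clt} do the heavy lifting and can be cited directly. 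The rest is the standard Fourier bookkeeping of the local limit theorem.
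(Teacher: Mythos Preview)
Your proposal is correct and follows essentially the same route as the paper: reduce via a density argument to test functions with compactly supported Fourier transform, represent $\E_{\mu_\om}(e^{itS_ng})$ through $\mcl_\om^{it,(n)}v_\om^0$, split the frequency domain, handle small $|t|$ via the $C^2$ perturbation theory for the top Oseledets data (the paper breaks this into terms (I)--(III) plus a Gaussian tail (V), using a uniform-in-$(\om,\theta)$ exponential bound on the complementary projection, Lemma~\ref{lem:UnifExpDecayY2}, and the product bound $|\prod_j\lambda_{\sigma^j\om}^{it/\sqrt n}|\le e^{-t^2\Sig^2/8}$ for dominated convergence), and handle the intermediate range via~\ref{cond:aper}. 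Two small remarks: (i) your anticipated ``main obstacle'' evaporates because~\ref{cond:aper} is already formulated uniformly over compact $J\subset\R\setminus\{0\}$, which is exactly your option~(a); (ii) in the paper's conventions $\Lambda''(0)=\Sig^2$ (not $-\Sig^2$), the minus sign appearing only after restricting to $\theta=it$.
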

In the autonomous case, aperiodicity is equivalent to a co-boundary condition, which can be checked in particular examples \cite{Morita}.
We are also able to state an equivalence between the decay of $\mathcal{L}^{it,(n)}_\omega$ and a (random) co-boundary equation (Lemma~\ref{lem:AperiodicCoboundary}), which opens the possibility to verify the hypotheses of the local limit theorem in specific examples (see Section~\ref{sec:exlclt}).
In addition, we establish a \emph{periodic version} of the LCLT in Theorem~\ref{thm:lcltp}.


In summary, a main contribution of the present work is the development of the spectral method for establishing limit theorems for quenched (or $\omega$ fibre-wise) random dynamics.
Our hypotheses are natural from a dynamical point of view, and we explicitly verify them in the framework of the random Lasota-Yorke maps, and more generally for random piecewise expanding maps in higher dimensions.
The new spectral approach for the quenched random setting we present here has been specifically designed for generalisation and we are hopeful that this method will afford the same broad flexibility that continues to be exploited by work in the deterministic setting.
While at present we have uniform-in-$\omega$ assumptions on \emph{time-asymptotic} expansion and decay properties of the random dynamics, we hope that in the future these assumptions can be relaxed to enable even larger classes of dynamical systems to be treated with our new spectral technique.
For example, limit theorems for dynamical systems beyond the uniformly hyperbolic setting continues to be an active area of research, e.g.~ \cite{gouezel05,gouezel10,G15,BahsounBose,DeSimoiLiverani-LimThms,nicol_torok_vaienti_2016,LeppanenStenlund},
and another interesting set of related results on limit theorems occur in the setting of homogenisation \cite{GottwaldMelbourne,MelbourneKelly16,MelbourneKelly17}.
Our extension to the quenched random case opens up a wide variety of potential applications and  future work will explore generalisation to random dynamical systems with even more complicated forms of behaviour.

\section{Preliminaries}
We begin this section by recalling several useful facts from multiplicative ergodic theory. We then introduce assumptions on the state space $X$;  $X$ will be a probability space equipped with a notion of variation for integrable functions.
This abstract approach will enable us to simultaneously treat the
cases where (i) $X$ is a unit interval (in the context of Lasota-Yorke maps) and (ii) $X$ is a subset of $\R^n$ (in the context of piecewise expanding maps in higher dimensions).
We introduce several dynamical assumptions for the cocycle $\mcl_\om$, $\om \in \Om$ of transfer operators under which our limit theorems apply.
This section is concluded by constructing large families of examples
of both Lasota-Yorke maps and piecewise expanding maps in $\R^n$ that satisfy
all of our conditions.
\subsection{Multiplicative ergodic theorem}\label{sectionMET}
In this subsection we recall the recently established versions of the multiplicative ergodic theorem which can be applied to the study of cocycles of transfer operators and  will play
an important role in the present paper. We begin by recalling some basic notions.

A tuple $\mc{R}=(\Om, \mc{F}, \bbp, \sigma, \B, \mcl)$ will be called a linear cocycle, or simply a\ \textit{cocycle}, if
 $\sigma$ is an invertible ergodic measure-preserving transformation on a probability space $(\Omega,\mathcal F,\mathbb P)$,  $(\B, \|\cdot \|)$ is a Banach space and
 $\mathcal L\colon \Omega\to L(\B)$ is a family of bounded linear operators such that
$\log^+\|\mathcal L(\omega)\|\in L^1(\mathbb P)$. 
Sometimes we will also use $\mc{L}$ to refer to the full cocycle $\mc{R}$.
In order to obtain sufficient measurability conditions in our setting of interest, we assume the following:
\begin{enumerate}[label=(C\arabic*), series=conditions]
\setcounter{enumi}{-1}
\item \label{cond:METCond}
$\sigma$ is a homeomorphism, $\Om$ is a Borel subset of a separable, complete metric space and $\mcl$ is $\bbp-$continuous (that is, $\mcl$ is  continuous on each of countably many Borel sets whose union is $\Omega$).
\end{enumerate}
For each $\om \in \Om$ and $n\geq 0$,  let $ \mcl_\om^{(n)}$ be the linear operator given by  \[ \mcl_\om^{(n)}:= \mcl_{\sig^{n-1}\om}\circ \dots \circ\mcl_{\sig\om} \circ \mcl_\om. \]
Condition~\ref{cond:METCond} implies that the maps $\om \mapsto \log \| \mcl_\om^{(n)}\|$ are measurable. Thus,
Kingman's sub-additive ergodic theorem ensures that the following limits exist and coincide for \paeom:
\begin{align*}
\Lam(\mc{R}) &:= \lim_{n\to \infty} \frac1n\log \| \mcl_\om^{(n)}\|\\
\ka(\mc{R}) &:=  \lim_{n\to \infty} \frac1n\log ic( \mcl_\om^{(n)}),
\end{align*}
where \[\text{ic}(A):=\inf\Big\{r>0 : \ A(B_\B) \text{ can be covered with finitely many balls of radius }r \Big\},\] and $B_\B$ is the unit ball of $\B$.
 The cocycle $\mc{R}$ is called \textit{quasi-compact} if
$\Lam(\mc{R})> \ka(\mc{R})$.
 The quantity $\Lam(\mc{R})$ is called the \textit{top Lyapunov exponent} of the cocycle and generalises the notion of (logarithm of) spectral radius of a linear operator. Furthermore,  $\ka(\mc{R})$ generalises the notion of essential spectral radius to the context of cocycles.
 Let
$(\B', |\cdot|)$ be a Banach space such that $\B\subset \B'$ and that the inclusion $(\B, \|\cdot\|) \hookrightarrow (\B',|\cdot|)$ is compact.
The following result, based on a theorem of Hennion \cite{Hennion}, is useful to establish quasi-compactness.
\begin{lemma} (\cite[Lemma C.5]{GTQuas1}) \label{lem:Hennion}
Let $(\Omega,\mathcal F,\mathbb P)$ be a probability space,  $\sigma$ an ergodic, invertible, $\bbp$-preserving transformation on $\Om$ and
 $\mc{R}=(\Om, \mc{F}, \bbp, \sigma, \B, \mcl)$  a cocycle. Assume $\mcl_\om$ can be extended continuously to $(\B', |\cdot|)$ for \paeom, and that
 there exist measurable functions $\al_\om, \be_\om, \ga_\om: \Om \to \R$ such that
the following strong and weak Lasota-Yorke type inequalities hold for every $f\in \B$,
\begin{align}
\| \mcl_\om f\| &\leq \al_\om \|f\| + \be_\om |f| \label{eq:RandomLY}
\quad \text{ and }\quad \\
\| \mcl_\om \| & \leq \ga_\om.
\end{align}
In addition, assume
\[
\int \log \al_\om \, d\bbp(\om) < \Lam(\mc{R}), \text{ and } \int \log \ga_\om \, d\bbp(\om)<\infty.
\]
Then, $\ka(\mc{R})\leq \int \log \al_\om\, d\bbp(\om)$.
In particular, $\mc{R}$ is quasi-compact.
\end{lemma}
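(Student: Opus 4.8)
The plan is to show that the index of compactness $\mathrm{ic}(\mcl_\om^{(n)})$ of the $n$-fold composition is controlled, up to a harmless multiplicative constant, by the product $A_n(\om):=\prod_{i=0}^{n-1}\al_{\sig^i\om}$ of the \emph{strong-norm} coefficients alone, and then to pass to the limit using the two ergodic theorems recalled just before the statement. This is the cocycle version of Hennion's classical compactness criterion \cite{Hennion}.

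First I would iterate the Lasota--Yorke inequality \eqref{eq:RandomLY}. Since $\mcl_\om$ extends to a bounded operator on $(\B',|\cdot|)$ for \paeom, set $N_\om:=\|\mcl_\om\|_{L(\B')}<\infty$ (in the typical application one also has a weak-norm bound $|\mcl_\om f|\le\ga_\om|f|$, in which case one may simply take $N_\om=\ga_\om$), so that $|\mcl_\om^{(k)}f|\le\big(\prod_{i=0}^{k-1}N_{\sig^i\om}\big)|f|$ for all $f\in\B$ and $k\ge0$. Applying \eqref{eq:RandomLY} to $\mcl_{\sig^{n-1}\om}$ at the last step and inserting this weak-norm bound, a straightforward induction on $n$ yields, for \paeom, every $n\ge1$ and every $f\in\B$,
\[
 \|\mcl_\om^{(n)}f\|\le A_n(\om)\|f\|+B_n(\om)|f|,
\]
where $A_n(\om)=\prod_{i=0}^{n-1}\al_{\sig^i\om}$ and $B_n(\om)=\sum_{k=0}^{n-1}\big(\prod_{i=k+1}^{n-1}\al_{\sig^i\om}\big)\be_{\sig^k\om}\prod_{i=0}^{k-1}N_{\sig^i\om}$ is finite for \paeom. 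Only the finiteness of $B_n(\om)$ enters the argument; its measurability, if wanted, is routine from \ref{cond:METCond}.

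Next I would bound the index of compactness. Fix $n\ge1$ and $\eps>0$. The inclusion $(\B,\|\cdot\|)\hookrightarrow(\B',|\cdot|)$ being compact, the unit ball $B_\B$ is totally bounded in $(\B',|\cdot|)$, so there is a finite set $f_1,\dots,f_M\in B_\B$ with $B_\B\subseteq\bigcup_{j=1}^M\{g\in\B:|g-f_j|<\eps\}$. Given $f\in B_\B$, choose $j$ with $|f-f_j|<\eps$; since $\|f-f_j\|\le2$, the inequality above gives $\|\mcl_\om^{(n)}f-\mcl_\om^{(n)}f_j\|\le2A_n(\om)+B_n(\om)\eps$. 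Hence $\mcl_\om^{(n)}(B_\B)$ is covered by $M$ strong-norm balls of radius $2A_n(\om)+B_n(\om)\eps$, so $\mathrm{ic}(\mcl_\om^{(n)})\le2A_n(\om)+B_n(\om)\eps$; letting $\eps\downarrow0$ with $n$ (and hence $B_n(\om)$) fixed gives $\mathrm{ic}(\mcl_\om^{(n)})\le2A_n(\om)$ for \paeom, and, intersecting over $n$, this holds for every $n$ on a single set of full measure. On that set,
\[
 \tfrac1n\log\mathrm{ic}(\mcl_\om^{(n)})\le\tfrac{\log2}{n}+\tfrac1n\sum_{i=0}^{n-1}\log\al_{\sig^i\om}.
\]
Letting $n\to\infty$, the left-hand side tends to $\ka(\mc{R})$ by Kingman's subadditive ergodic theorem, and the right-hand side to $\int\log\al_\om\,d\bbp(\om)$ by Birkhoff's ergodic theorem (the average being well defined under the standing integrability hypotheses). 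Thus $\ka(\mc{R})\le\int\log\al_\om\,d\bbp(\om)$, and together with the hypothesis $\int\log\al_\om\,d\bbp(\om)<\Lam(\mc{R})$ this yields $\ka(\mc{R})<\Lam(\mc{R})$, i.e.\ $\mc{R}$ is quasi-compact.

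I do not anticipate a genuine obstacle; the care is entirely in the bookkeeping. The one point that must be handled correctly is in the iteration: the weak-norm term $|\mcl_\om^{(k)}f|$ has to be controlled by $|f|$ via the $\B'$-operator norm of $\mcl_\om$ — this is precisely where the continuous-extension hypothesis is used — rather than via the embedding $|\cdot|\le C\|\cdot\|$, which would reintroduce a $\|f\|$-term at every step and destroy the clean product $A_n(\om)=\prod_i\al_{\sig^i\om}$ on which the whole estimate rests. A secondary, purely technical point is that the limit $\eps\downarrow0$ in the compactness estimate must be taken for each fixed $n$, before sending $n\to\infty$, so that the (large, $n$-dependent) constant $B_n(\om)$ never contributes to the exponential growth rate.
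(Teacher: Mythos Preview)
Your proof is correct and follows the standard Hennion-type argument: iterate the Lasota--Yorke inequality, use the compact embedding $\B\hookrightarrow\B'$ to absorb the weak-norm remainder into the index of compactness, and pass to the limit via Birkhoff and Kingman. The paper does not actually prove this lemma---it is quoted without proof from \cite[Lemma C.5]{GTQuas1}---so there is no in-paper argument to compare against; your sketch is essentially the proof one finds in that reference (and in Hennion's original deterministic version).
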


Another result which will be useful in the sequel is the following comparison between Lyapunov exponents with respect to different norms.  In what follows, we denote by $\lex{\om, f}{\B}$ the Lyapunov exponent of $f$ with respect to the norm $\|\cdot\|_{\B}$. That is,
$\lex{\om, f}{\B}=\lim_{n\to \infty} \frac1n \log  \| \mcl_\om^{(n)}f\|_{\B}$, where $f\in \B$ and $(\B, \| \cdot\|_{\B})$ is a Banach space.
%
%

\begin{lemma}[Lyapunov exponents for different norms]\label{lem:RandomSameExp}
Under the notation and hypotheses of Lemma~\ref{lem:Hennion},
let
$r:=\int_\Om \log \al_\om\, d\bbp(\om)$ and assume that for some $f\in \B$, $\lex{\om, f}{\B'}> r$. Then, $\lex{\om, f}{\B}=\lex{\om, f}{\B'}$.
\end{lemma}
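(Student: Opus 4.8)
The plan is to prove $\lambda_{\mathcal B}(\om,f)=\lambda_{\mathcal B'}(\om,f)=:\lambda'$ by sandwiching. One inequality is immediate: since the inclusion $(\mathcal B,\|\cdot\|)\hookrightarrow(\mathcal B',|\cdot|)$ is compact it is in particular bounded, so $|g|\le C\|g\|$ for all $g\in\mathcal B$ and some $C>0$; applying this to $g=\mathcal L_\om^{(n)}f$, dividing by $n$, taking logarithms and letting $n\to\infty$ gives $\lambda'\le\liminf_{n}\frac1n\log\|\mathcal L_\om^{(n)}f\|$. Hence it suffices to prove the reverse bound $\limsup_{n\to\infty}\frac1n\log\|\mathcal L_\om^{(n)}f\|\le\lambda'$; once this is known, $\liminf$, $\limsup$ and $\lambda'$ coincide, so the limit defining $\lambda_{\mathcal B}(\om,f)$ exists and equals $\lambda'$. (Note that the hypothesis $\lambda'>r$ forces $\lambda'\in\R$, since also $\lambda'\le\Lambda(\mathcal R)<\infty$, so there are no degenerate cases.)

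For the upper bound I would iterate the strong Lasota--Yorke inequality \eqref{eq:RandomLY}. Writing $v_j:=\mathcal L_\om^{(j)}f$ and $A_{j,n}:=\prod_{i=j}^{n-1}\al_{\sig^i\om}$ (empty product $=1$), and using at each step the identity $\mathcal L_{\sig^j\om}v_j=v_{j+1}$, an easy induction on $n$ yields
\[
\|\mathcal L_\om^{(n)}f\|\ \le\ A_{0,n}\|f\|+\sum_{j=0}^{n-1}A_{j+1,n}\,\be_{\sig^j\om}\,|v_j|\ =\ A_{0,n}\Big(\|f\|+\sum_{j=0}^{n-1}b_j\Big),\qquad b_j:=A_{0,j+1}^{-1}\,\be_{\sig^j\om}\,|v_j|.
\]
Consequently $\frac1n\log\|\mathcal L_\om^{(n)}f\|\le\frac1n\log A_{0,n}+\frac1n\log\big(\|f\|+\sum_{j<n}b_j\big)$, and it remains to bound the $\limsup$ of the right-hand side.

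This is where the ergodic hypotheses enter. By Birkhoff's ergodic theorem applied to $\log\al_\om\in L^1(\bbp)$ together with ergodicity of $\sig$, we have $\frac1n\log A_{0,n}=\frac1n\sum_{i<n}\log\al_{\sig^i\om}\to r$ for $\bbp$-a.e.\ $\om$, and likewise $\frac1j\log A_{0,j+1}\to r$. Since $\log^+\be_\om$ is $\bbp$-integrable, a Borel--Cantelli argument gives $\frac1n\log^+\be_{\sig^n\om}\to0$ a.e., so $\limsup_j\frac1j\log\be_{\sig^j\om}\le0$. Combining these with the hypothesis $\frac1j\log|v_j|\to\lambda'$ yields $\limsup_j\frac1j\log b_j\le-r+0+\lambda'=\lambda'-r$; estimating $\sum_{j<n}b_j\le n\max_{j<n}b_j$ and using $\frac{\log n}{n}\to0$ then gives $\limsup_n\frac1n\log(\|f\|+\sum_{j<n}b_j)\le\max\{0,\lambda'-r\}=\lambda'-r$, the last equality being the one place where the hypothesis $\lambda'>r$ is used. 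Adding back $\frac1n\log A_{0,n}\to r$ gives $\limsup_n\frac1n\log\|\mathcal L_\om^{(n)}f\|\le r+(\lambda'-r)=\lambda'$, which is the required upper bound.

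The argument is short, and its only real content is the observation just made: the condition $\lambda_{\mathcal B'}(\om,f)>r$ is precisely what guarantees that, after iterating the Lasota--Yorke inequality, the accumulated contribution of the weak (low-regularity) term $\be_\om|\cdot|$ is dominated by the genuine growth rate $\lambda'$ of $f$, rather than being swamped by the rate $r$ coming from the strong term $\al_\om\|\cdot\|$. The rest is bookkeeping: one must check that the various sub-exponential error terms — the $o(n)$ in Birkhoff's theorem, the $\log n$ from the crude sum estimate, and the sub-exponential growth of $\be_{\sig^n\om}$ along $\sig$-orbits (guaranteed by $\bbp$-integrability of the Lasota--Yorke coefficients) — do not interfere, which is routine. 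I do not anticipate a serious obstacle.
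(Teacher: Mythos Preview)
Your argument is correct and takes a more direct route than the paper. The paper's proof invokes \cite[Lemma~C.5(2)]{GTQuas1}, which establishes only the normalized special case ``$r<0$ and $\lambda_{\B'}(\om,f)\le 0$ imply $\lambda_{\B}(\om,f)\le 0$'', and then reduces the general statement to this by rescaling the cocycle by a constant so as to shift all Lyapunov exponents. You instead unwind what is behind that cited lemma: you iterate the Lasota--Yorke inequality explicitly, factor out the multiplicative $\alpha$-cocycle, and control each piece via Birkhoff's theorem, treating the general case in one pass. The two approaches are the same at heart, but yours is self-contained and makes transparent exactly where the hypothesis $\lambda'>r$ is used (to ensure $\max\{0,\lambda'-r\}=\lambda'-r$).

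One caveat on hypotheses: you invoke ``$\log^+\beta_\om\in L^1(\mathbb P)$'' to get $\tfrac1n\log^+\beta_{\sigma^n\om}\to 0$ via Borel--Cantelli. Strictly speaking, Lemma~\ref{lem:Hennion} as stated in this paper only lists integrability conditions on $\log\alpha_\om$ and $\log\gamma_\om$, not on $\log^+\beta_\om$. This is a standard tacit assumption in this setting (and is part of the hypotheses of the cited \cite[Lemma~C.5]{GTQuas1}), so it is a matter of bookkeeping rather than a genuine gap; but you should either state it explicitly or note that it is inherited from the reference.
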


\begin{proof}
 The inequality
$\lex{\om, f}{\B}\geq \lex{\om, f}{\B'}$ is trivial, because $\|\cdot\|$ is stronger than
$|\cdot|$ (i.e. because the embedding $(\B, \|\cdot\|) \hookrightarrow (\B',|\cdot|)$ is compact).
In the other direction, the result essentially follows from Lemma~C.5(2) in~\cite{GTQuas1}. Indeed,  this lemma establishes that if $r<0$ and $\lex{\om, f}{\B'}\leq 0$ then $\lex{\om, f}{\B}\leq 0$. The choice of $0$ is irrelevant, because if the cocycle
is  rescaled by a constant $C>0$, all Lyapunov exponents and $r$ are shifted by $\log C$. Thus, we conclude that if $\lex{\om, f}{\B'}> r$ then, $\lex{\om, f}{\B}\leq \lex{\om, f}{\B'}$, as claimed.
%
\end{proof}
 A spectral-type decomposition for quasi-compact cocycles can be obtained via  a \textit{multiplicative ergodic theorem},
as follows.

\begin{thm}[Multiplicative ergodic theorem, MET \cite{FLQ2}] \label{thm:MET}
Let $\mathcal R=(\Omega,\mathcal F,\mathbb
P,\sigma,\B,\mathcal L)$ be a  quasi-compact cocycle and
suppose that condition~\ref{cond:METCond} holds.
Then, there exists $1\le l\le \infty$ and a sequence of exceptional
Lyapunov exponents
\[ \Lam(\RR)=\lambda_1>\lambda_2>\ldots>\lambda_l>\kappa(\RR) \quad \text{(if $1\le l<\infty$)}\]
or  \[ \Lam(\RR)=\lambda_1>\lambda_2>\ldots \quad \text{and} \quad \lim_{n\to\infty} \lambda_n=\kappa(\RR) \quad \text{(if $l=\infty$);} \]
 and for $\mathbb P$-almost every $\omega \in \Om$ there exists a unique splitting (called the \textit{Oseledets splitting}) of $\B$ into closed subspaces
\begin{equation}\label{eq:splitting}
\B=V(\omega)\oplus\bigoplus_{j=1}^l Y_j(\omega),
\end{equation}
depending measurably on $\om$ and  such that:
\begin{enumerate} [label=(\Roman*)]
\item  For each $1\leq j \leq l$, $Y_j(\omega)$ is finite-dimensional ($m_j:=\dim Y_j(\omega)<\infty$),  $Y_j$ is equivariant i.e. $\mcl_\om Y_j(\omega)= Y_j(\sigma\omega)$ and for every $y\in Y_j(\omega)\setminus\{0\}$,
\[\lim_{n\to\infty}\frac 1n\log\|\mathcal L_\omega^{(n)}y\|=\lambda_j.\]
(Throughout this work, we will  also refer to $Y_1(\om)$ as simply $Y(\om)$ or $Y_\om$.)
\item
$V$ is equivariant i.e. $\mcl_\om V(\omega)\subseteq V(\sigma\omega)$ and
for every $v\in V(\omega)$, \[\lim_{n\to\infty}\frac 1n\log\|\mathcal
L_\omega^{(n)}v\|\le \kappa(\RR).\]
\end{enumerate}
\end{thm}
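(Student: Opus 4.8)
The statement is the semi-invertible multiplicative ergodic theorem of \cite{FLQ2} (see also the closely related construction in \cite{GTQuas1}), and in the body of the paper we invoke it directly; I nonetheless indicate how its proof would proceed. The plan is to peel off the exceptional Lyapunov exponents one at a time, starting from the top, using the spectral gap $\Lam(\RR) > \ka(\RR)$ furnished by quasi-compactness to guarantee that each ``fast'' block is finite-dimensional and splits off equivariantly and measurably.

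First I would collect the growth data supplied by Kingman's subadditive ergodic theorem. Besides $\Lam(\RR)$ and $\ka(\RR)$ themselves, applying Kingman to the subadditive sequences $\om \mapsto \log \|\wedge^k \mcl_\om^{(n)}\|$ on the exterior powers $\wedge^k \B$ yields numbers $\Lam_k(\RR) := \lim_{n\to\infty}\frac{1}{n} \log\|\wedge^k\mcl_\om^{(n)}\|$ (with $\Lam_1 = \Lam(\RR)$), whose successive differences $\Lam_k - \Lam_{k-1}$ are the candidate exceptional exponents counted with multiplicity; the hypothesis $\Lam(\RR) > \ka(\RR)$ forces at least $\Lam_1 = \Lam(\RR)$ to exceed $\ka(\RR)$, so that $l \ge 1$.

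The heart of the matter is the construction, for \paeom, of the top Oseledets space $Y_1(\om)$: a finite-dimensional, equivariant subspace on which the cocycle grows at exactly the rate $\lam_1 = \Lam(\RR)$, together with an equivariant closed complement on which the growth exponent is strictly smaller. Since the operators $\mcl_\om$ need not be injective, the classical Oseledets argument --- invert the cocycle and diagonalise --- is unavailable, so I would instead build the decomposition by one-sided limiting procedures. Concretely: first produce the equivariant ``slow'' subspace $V_1(\om)$ of vectors whose growth exponent lies below a threshold fixed strictly between $\ka(\RR)$ and $\lam_1$, realised as a decreasing intersection of preimages $\bigcap_{n\ge0}(\mcl_{\sig^{-n}\om}^{(n)})^{-1}(\cdot)$ of suitable subspaces, and show it is closed, equivariant, and of finite constant codimension $m_1$; finite-dimensionality of the complement is exactly where quasi-compactness enters, through exterior-power volume growth --- an infinite-dimensional fast block would push $\ka(\RR)$ up to $\lam_1$, contradicting the gap. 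Then exhibit a measurably varying equivariant complement $Y_1(\om)$ to $V_1(\om)$, obtained as the limit, in a Polish Grassmannian, of the pushed-forward subspaces $\mcl_{\sig^{-n}\om}^{(n)}(G)$ for an appropriate fixed finite-dimensional $G$, the convergence being exponential with rate controlled by the gap; this same rate certifies that growth off $Y_1$ is strictly slower than $\lam_1$. Measurability of $\om \mapsto Y_1(\om)$ and of the splitting would be handled using condition~\ref{cond:METCond} ($\bbp$-continuity of $\mcl$, and $\Om$ Borel in a Polish space) together with measurable selection theorems.

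With $\B = Y_1(\om)\oplus W_1(\om)$ in hand, I would restrict the cocycle to the equivariant family $W_1$ --- whose top exponent is $\lam_2 < \lam_1$ and whose index of compactness is still at most $\ka(\RR)$ --- and repeat the construction. If at some stage the top exponent of the restricted cocycle equals $\ka(\RR)$, the process terminates and one sets $V(\om) := W_l(\om)$, producing the finite list $\lam_1 > \dots > \lam_l > \ka(\RR)$; otherwise it continues forever, producing a strictly decreasing sequence $(\lam_n)$ that must tend to $\ka(\RR)$ --- it cannot exceed $\ka(\RR)$ by the index-of-compactness bound, and cannot stay bounded away from it since each $\lam_n > \ka(\RR)$ --- in which case $V(\om) := \bigcap_j W_j(\om)$. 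Conclusions (I) and (II) then hold by construction, and uniqueness of the splitting follows because each $Y_j(\om)$ and $V(\om)$ is characterised intrinsically in terms of fibrewise exponential growth rates. The step I expect to be the main obstacle is precisely the non-injectivity of the $\mcl_\om$: it is what rules out spectral projections and forces the fast and slow subspaces to be built as limits of explicit finite-composition objects, and it requires quasi-compactness --- via the index of compactness and exterior-power growth --- to do the work that compactness or finite dimensionality does in the elementary settings, while simultaneously complicating the measurability arguments.
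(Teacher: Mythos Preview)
The paper does not prove this theorem at all: it is quoted verbatim as a result of \cite{FLQ2} and used as a black box throughout. You correctly flag this in your first sentence, and your sketch is a faithful outline of the semi-invertible Oseledets construction carried out in \cite{FLQ2} and \cite{GTQuas1}, so there is nothing to compare against and nothing to correct.
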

The \textit{adjoint cocycle} associated to $\mc{R}$ is the cocycle  $\mc{R}^*:=(\Om, \mc{F}, \bbp, \sigma^{-1}, \B^*, \mcl^*)$, where $(\mcl^*)_\om := (\mcl_{\sig^{-1}\om})^*$.
In a slight abuse of notation which should not cause confusion, we will often write $\mcl^*_\om$ instead of $(\mcl^*)_\om$, so $\mcl^*_\om$ will denote the operator adjoint to $\mcl_{\sig^{-1}\om}$.
\begin{rmk}\label{rmk:METCond*}
It is straightforward to check that if \ref{cond:METCond} holds for $\mc{R}$, it also holds for $\mc{R}^*$. Furthermore,
$\Lam(\mc{R}^*)=\Lam(\mc{R})$ and $\ka(\mc{R}^*)=\ka(\mc{R})$. The last statement follows from the equality, up to a multiplicative factor (2),  $ic(A)$ and $ic(A^*)$ for every $A\in L(\B)$ \cite[Theorem 2.5.1]{AkhmerovEAl}.
\end{rmk}
The following result gives an answer to a natural question on whether one can relate the Lyapunov exponents and Oseledets splitting of the adjoint cocycle $\mc{R}^*$ with the Lyapunov
exponents and Oseledets decomposition of the original cocycle $\mc{R}$.
\begin{cor}\label{cor:METAdjoint}
Under the assumptions of Theorem~\ref{thm:MET}, the adjoint cocycle $\mc{R}^*$ has a unique, measurable, equivariant Oseledets splitting
\begin{equation}\label{eq:splitting_adj}
\B^*=V^*(\omega)\oplus\bigoplus_{j=1}^l Y^*_j(\omega),
\end{equation}
with the same exceptional Lyapunov exponents $\lam_j$ and multiplicities $m_j$ as $\mc{R}$.
\end{cor}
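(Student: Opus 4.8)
The plan is to deduce the Oseledets structure of the adjoint cocycle $\mc{R}^*$ directly from that of $\mc{R}$, by applying Theorem~\ref{thm:MET} to $\mc{R}^*$ itself and then matching up the two decompositions via natural duality pairings. First I would check that $\mc{R}^*$ falls under the hypotheses of Theorem~\ref{thm:MET}: by Remark~\ref{rmk:METCond*}, condition~\ref{cond:METCond} holds for $\mc{R}^*$, and $\Lam(\mc{R}^*)=\Lam(\mc{R})>\ka(\mc{R})=\ka(\mc{R}^*)$, so $\mc{R}^*$ is quasi-compact. Hence Theorem~\ref{thm:MET} already yields \emph{a} unique measurable equivariant Oseledets splitting $\B^*=V^*(\om)\oplus\bigoplus_{j=1}^{l'} Y_j^*(\om)$ with exceptional exponents $\lam_1'>\lam_2'>\cdots$ and multiplicities $m_j'$. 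What remains is to prove $l'=l$, $\lam_j'=\lam_j$ and $m_j'=m_j$.

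The core of the argument is a pairing between the fast spaces of $\mc{R}$ and $\mc{R}^*$. For a fixed index $j$, consider $H_j(\om):=V(\om)\oplus\bigoplus_{i\neq j} Y_i(\om)$, the $\mcl_\om$-equivariant complement of $Y_j(\om)$, and let $P_j(\om)$ be the projection onto $Y_j(\om)$ along $H_j(\om)$; these depend measurably on $\om$ by Theorem~\ref{thm:MET}. I would then examine the annihilator $H_j(\om)^\perp\subset\B^*$, which is $m_j$-dimensional since $Y_j(\om)$ is $m_j$-dimensional and complemented. The key computations are: (i) equivariance — because $\mcl_\om H_j(\om)\subseteq H_j(\sig\om)$, taking annihilators and using $(\mcl^*)_{\sig\om}=(\mcl_\om)^*$ gives $(\mcl^*)_{\sig\om}\big(H_j(\sig\om)^\perp\big)\subseteq H_j(\om)^\perp$, i.e. the family $\om\mapsto H_j(\sig^{-1}\om)^\perp$ is equivariant for $\mc{R}^*$ — and (ii) exponential growth rate: for $0\neq\phi\in H_j(\om)^\perp$ one shows $\lim_n \frac1n\log\|(\mcl^*_\om)^{(n)}\phi\|_{\B^*}=\lam_j$, by writing $\langle (\mcl^*)^{(n)}\phi, f\rangle = \langle \phi, \mcl^{(n)} f\rangle$, decomposing $f$ through the Oseledets splitting, and using that the restriction of $\mcl_\om^{(n)}$ to $Y_j(\om)$ is an isomorphism onto $Y_j(\sig^n\om)$ growing exactly at rate $\lam_j$ (with matching lower bounds via the measurable, tempered nature of the projections $P_j$), while the contribution of $H_j(\om)$ is killed by $\phi$. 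This identifies $\bigoplus_{i=1}^{j} H_i(\sig^{-1}\om)^\perp$-type subspaces with slices of the $\mc{R}^*$-filtration and forces the exponents and multiplicities to agree. Summing appropriately and invoking uniqueness of the Oseledets splitting in Theorem~\ref{thm:MET} applied to $\mc{R}^*$ pins down $Y_j^*(\om)$ and $V^*(\om)$.

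The main obstacle I anticipate is establishing the \emph{lower} bound $\liminf_n \frac1n\log\|(\mcl^*_\om)^{(n)}\phi\|_{\B^*}\ge\lam_j$ for $\phi\in H_j(\om)^\perp$, and more generally controlling the growth of $\phi$ restricted to the $Y_j$-component uniformly. The upper bound is easy from the Lasota–Yorke/submultiplicativity estimates, but the lower bound requires knowing that the "backward" behaviour of the projections $P_j(\om)$ and of the isomorphisms $\mcl_\om^{(n)}|_{Y_j(\om)}$ is subexponential (temperedness), which is part of the fine structure provided by the MET; one must be careful that $\mcl_\om$ may be non-injective, so $(\mcl_\om^{(n)})^*$ need not be injective and one only works on the quotient $\B^*/V(\om)^\perp$-type objects, or equivalently restricts attention to the finitely many fast directions where injectivity on $Y_j$ is automatic. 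A clean way to package all of this is to note it is standard finite-dimensional linear algebra once restricted to $\bigoplus_{i\le N} Y_i$ for $N$ large, and to handle the tail $V(\om)$ and $\phi$'s action on it by the estimate $\|(\mcl^*)^{(n)}\phi|_{V(\sig^{-n}\om)}\|$ decaying past $\ka(\mc{R})+\varepsilon$. Alternatively, one can cite the corresponding duality statement from the multiplicative ergodic theory literature (e.g. the adjoint-cocycle results in \cite{FLQ2,GTQuas1}) and simply verify the hypotheses; I would at least mention this as the economical route.
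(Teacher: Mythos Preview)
Your first step (apply Theorem~\ref{thm:MET} to $\mc{R}^*$ via Remark~\ref{rmk:METCond*}) matches the paper's. For matching exponents and multiplicities, however, the paper takes a different and shorter route: rather than building the adjoint Oseledets spaces as annihilators $H_j(\om)^\perp$ and computing their growth rates directly, it compares $k$-dimensional volume growth for $\mc{R}$ and $\mc{R}^*$. By Lemma~\ref{lem:equivVol} and \cite[Lemma~3]{GTQ2} one has $V_k(A)\approx V_k(A^*)$ uniformly in $A$, so $\lim_n\tfrac{1}{n}\log V_k(\mcl_\om^{(n)})$ is the same for both cocycles; since this limit equals $\mu_1+\cdots+\mu_k$ by \cite[Theorem~1.3]{BlumenthalMET}, equality of all partial sums forces the exponents and multiplicities to coincide, with no direct analysis of the adjoint subspaces needed. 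Your annihilator strategy is also valid and is essentially what the paper carries out \emph{later}, in Lemma~\ref{lem:AnnihilatorOsSplittings}, but only for the top space; the temperedness tools you name for the lower bound (\cite{DF}, \cite[Lemma~8.2]{FLQ1}) are exactly those used there. Be aware of one gap in your outline: after showing each $H_j(\om)^\perp$ grows at rate $\lam_j$, you still must exclude \emph{additional} exceptional exponents for $\mc{R}^*$, i.e.\ show that every $\phi$ outside $V(\om)^\perp=\bigoplus_j H_j(\om)^\perp$ has exponent at most $\ka(\mc{R}^*)$; this requires a further temperedness estimate on the projection onto $V(\om)$ along $\bigoplus_j Y_j(\om)$. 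The paper's volume-growth argument sidesteps this bookkeeping entirely at the price of citing more external machinery; your route is more constructive and yields the identification $Y_j^*(\om)=H_j(\om)^\perp$ as a byproduct.
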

The proof of this result involves some technical properties about volume growth in Banach spaces, and is therefore deferred to Appendix~\ref{sec:usc}.

Next, we establish a relation between Oseledets splittings of $\mc{R}$ and $\mc{R}^*$, which will be used in the sequel.
Let the simplified Oseledets decomposition for the cocycle $\mcl$ (resp. $\mcl^*$) be
\begin{equation}\label{BVstar}
\mathcal B=Y(\om)\oplus H(\om) \quad
(\text{resp. } \mathcal{B}^*=Y^*(\om) \oplus H^*(\om) ),
 \end{equation}
where $Y(\om)$ (resp. $Y^*(\om)$) is the top Oseledets
subspace for $\mcl$ (resp. $\mcl^*$) and $H(\om)$ (resp. $H^*(\om)$) is a direct sum of all other Oseledets subspaces.

For a subspace $S\subset \mathcal B$, we set $ S^\circ=\{\phi \in \mathcal{B}^*: \phi(f)=0 \quad \text{for every $f\in S$}\}$
and similarly for a subspace $S^* \subset \mathcal{B}^*$ we define
$
 (S^*)^\circ =\{f\in \mathcal{B}: \phi(f)=0 \quad \text{for every $\phi \in S^*$}\}.
$
\begin{lemma}[Relation between Oseledets splittings of $\mc{R}$ and $\mc{R}^*$]\label{lem:AnnihilatorOsSplittings}
The following relations hold for \paeom:
 \begin{equation}\label{jj}
  H^*(\om)=Y(\om)^\circ \quad \text{and} \quad H(\om)=Y^*(\om)^\circ.
 \end{equation}

\end{lemma}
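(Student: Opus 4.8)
The plan is to deduce the two annihilator relations in \eqref{jj} from the general duality between a cocycle and its adjoint, using Corollary~\ref{cor:METAdjoint} to know that $\mc{R}^*$ has the same Lyapunov spectrum and multiplicities as $\mc{R}$. I would prove only the first identity $H^*(\om)=Y(\om)^\circ$; the second follows by the symmetric argument applied to $\mc{R}^*$, noting that $(\mc{R}^*)^*$ carries the same Oseledets data as $\mc{R}$ (again via Corollary~\ref{cor:METAdjoint} and a reflexivity/identification argument, or directly since $Y(\om)$ is finite-dimensional so bidual issues are harmless on the relevant pieces).

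First I would record the dimension bookkeeping. By Theorem~\ref{thm:MET}, $\mathcal B=Y(\om)\oplus H(\om)$ with $\dim Y(\om)=m_1<\infty$, and by Corollary~\ref{cor:METAdjoint}, $\mathcal{B}^*=Y^*(\om)\oplus H^*(\om)$ with $\dim Y^*(\om)=m_1$ as well. The annihilator $Y(\om)^\circ$ is a closed subspace of $\mathcal{B}^*$ of codimension $m_1$ (since $Y(\om)$ is finite-dimensional, $\mathcal{B}^*/Y(\om)^\circ\cong Y(\om)^*$ has dimension $m_1$). So $H^*(\om)$ and $Y(\om)^\circ$ have the same (finite) codimension $m_1$ in $\mathcal{B}^*$; hence it suffices to prove one inclusion, $H^*(\om)\subseteq Y(\om)^\circ$, and equality follows automatically.

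For the inclusion, take $\phi\in H^*(\om)$ and $y\in Y(\om)$; I want $\phi(y)=0$. The key is the pairing identity coming from the definition of the adjoint cocycle: for every $n\ge 0$,
\[
\big((\mcl^{*})_{\om}^{(n)}\phi\big)(y') \;=\; \phi\big(\mcl_{\sigma^{-n}\om}^{(n)} y'\big)\qquad\text{for all }y'\in\mathcal B,
\]
where $(\mcl^*)_\om^{(n)}=(\mcl^*)_{\sigma^{n-1}\om}\circ\cdots\circ(\mcl^*)_\om$ is the $n$-step adjoint cocycle over $\sigma^{-1}$. Apply this with $\phi\in H^*(\om)$ and with $y'\in Y(\sigma^{-n}\om)$ chosen by equivariance so that $\mcl_{\sigma^{-n}\om}^{(n)}y'=y$ (equivariance of $Y$ gives $\mcl_\om Y(\om)=Y(\sigma\om)$, and finite-dimensionality plus injectivity of $\mcl_\om$ restricted to $Y(\om)$ — which holds because vectors in $Y(\om)$ grow at the top exponent, so none can map to $0$ — lets us invert). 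Then
\[
\phi(y)\;=\;\big((\mcl^*)_\om^{(n)}\phi\big)(y')\;\le\;\|(\mcl^*)_\om^{(n)}\phi\|_{\mathcal{B}^*}\,\|y'\|_{\mathcal B}.
\]
Now $\phi\in H^*(\om)$ means $\phi$ lies in the sum of the lower Oseledets spaces of $\mc{R}^*$, so $\tfrac1n\log\|(\mcl^*)_\om^{(n)}\phi\|\le \lam_2<\lam_1$ eventually (or $\le\ka(\mc{R}^*)+o(1)$ if $\phi\in V^*$), while $y\in Y(\om)$ forces $\tfrac1n\log\|y'\|_{\mathcal B}\to\lam_1$ (here $y'=\big(\mcl_{\sigma^{-n}\om}^{(n)}|_{Y(\sigma^{-n}\om)}\big)^{-1}y$ decays like $e^{-n\lam_1}$ — this is where one uses that the restriction of the cocycle to the top Oseledets bundle is an invertible cocycle with exponent exactly $\lam_1$, a standard consequence of the MET on the finite-dimensional equivariant bundle $Y$). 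Multiplying, $\|(\mcl^*)_\om^{(n)}\phi\|\,\|y'\|\to 0$, so $\phi(y)=0$. This gives $H^*(\om)\subseteq Y(\om)^\circ$, and by the codimension count, equality.

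The main obstacle I anticipate is the control of $\|y'\|_{\mathcal B}=\|(\mcl_{\sigma^{-n}\om}^{(n)}|_{Y(\sigma^{-n}\om)})^{-1}y\|$: I need that pulling back along the cocycle within the top Oseledets bundle contracts at rate exactly $\lam_1$, uniformly enough in the (measurable, but not uniformly bounded) choice of norms on the bundle $Y$. This is the one place where one must invoke the finer structure of the MET — that $Y$ is a finite-dimensional equivariant subbundle on which the cocycle is invertible with all exponents equal to $\lam_1$ — rather than just the splitting statement; tempered-ness of the Oseledets norms handles the subexponential fluctuations. Once that growth/decay dichotomy between $H^*$ and the inverse image of $Y$ is in hand, the rest of the argument is a short pairing estimate and a dimension count; the second identity in \eqref{jj} is obtained by running the identical argument with the roles of $\mc{R}$ and $\mc{R}^*$ swapped.
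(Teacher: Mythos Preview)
Your argument is correct and takes a genuinely different route from the paper's. The paper proceeds by first showing that $Y(\om)^\circ$ is \emph{adjoint-slow}: it bounds $\lVert \mcl_\om^{*,(n)}\rvert_{Y(\om)^\circ}\rVert$ by $\lVert \mcl_{\sigma^{-n}\om}^{(n)}\Pi_{\sigma^{-n}\om}\rVert$ (with $\Pi_\om$ the projection onto $H(\om)$ along $Y(\om)$), and then invokes results from \cite{DF} and \cite[Lemma~8.2]{FLQ1} to get $\tfrac1n\log\lVert \mcl_{\sigma^{-n}\om}^{(n)}|_{H(\sigma^{-n}\om)}\rVert\to\lambda_2$ and temperedness of $\Pi_{\sigma^{-n}\om}$. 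From this it deduces the auxiliary splitting $\mathcal{B}^*=Y^*(\om)\oplus Y(\om)^\circ$ and then compares with $\mathcal{B}^*=Y^*(\om)\oplus H^*(\om)$ by a growth-rate argument. Your route bypasses the auxiliary splitting entirely: you pair $\phi\in H^*(\om)$ directly with $y\in Y(\om)$, pull $y$ back along the invertible restricted cocycle on the finite-dimensional bundle $Y$, and use the contraction of preimages in $Y$ together with the sub-$\lambda_1$ growth of $\phi$ under $(\mcl^*)^{(n)}$ to force $\phi(y)=0$; equality then follows from the codimension count. Both approaches ultimately rest on the same backward-time asymptotics (yours on $Y$, the paper's on $H$), for which the relevant input is exactly the kind of statement in \cite[Lemma~8.2]{FLQ1} that you flag as the main obstacle. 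Your argument is slightly more direct; the paper's yields the extra information that $Y(\om)^\circ$ itself is invariant with growth rate $<\lambda_1$ under the adjoint cocycle.

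Two small points. First, you have a sign slip: you write $\tfrac1n\log\|y'\|_{\mathcal B}\to\lambda_1$ but immediately clarify ``decays like $e^{-n\lambda_1}$''; the limit should of course be $-\lambda_1$. Second, your handling of the second identity via ``the symmetric argument applied to $\mc{R}^*$'' is slightly imprecise since $\mathcal{B}$ need not be reflexive; the clean way is not to pass to $(\mc{R}^*)^*$ but to run the mirror-image pairing directly: for $\psi\in Y^*(\om)$ and $f\in H(\om)$, write $\psi=(\mcl_\om^{(n)})^*\psi_n$ with $\psi_n\in Y^*(\sigma^n\om)$, so $\psi(f)=\psi_n(\mcl_\om^{(n)}f)$, and use the sub-$\lambda_1$ growth of $\mcl_\om^{(n)}f$ together with the $e^{-n\lambda_1}$ decay of $\|\psi_n\|$ on the finite-dimensional bundle $Y^*$. (The paper is equally terse here, saying only that the second assertion ``can be obtained similarly''.)
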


\begin{proof}
  We first  claim that
 \begin{equation}\label{0456}
  \limsup_{n\to \infty}\frac{1}{n} \log \lVert \mcl_\om^{*, (n)}\rvert_{Y(\om)^\circ}\rVert <\lambda_1, \quad \text{for $\paeom$.}
 \end{equation}
Let $\Pi_{\om}$ denote the projection onto $H(\om)$ along $Y(\om)$ and take an arbitrary $\phi \in Y(\om)^\circ$.
We have
\[
 \begin{split}
  \lVert \mcl_\om^{*, (n)} \phi \rVert_{\mathcal{B}^*} &=\sup_{\lVert f\rVert_{\mathcal B}\le 1} \lvert (\mcl_\om^{*, (n)} \phi)(f)\rvert =\sup_{\lVert f\rVert_{\mathcal B}\le 1} \lvert \phi(\mcl_{\sigma^{-n} \om}^{(n)}(f))\rvert \\
  &=\sup_{\lVert f\rVert_{\mathcal B}\le 1} \lvert \phi(\mcl_{\sigma^{-n} \om}^{(n)}(\Pi_{\sigma^{-n} \om}f))\rvert
  \le \lVert \phi\rVert_{\mathcal{B}^*} \cdot \lVert \mcl_{\sigma^{-n} \om}^{(n)}\Pi_{\sigma^{-n} \om}\rVert,
 \end{split}
\]
and thus
\[
 \lVert \mcl_\om^{*, (n)}\rvert_{Y(\om)^\circ}\rVert \le \lVert \mcl_{\sigma^{-n} \om}^{(n)}\Pi_{\sigma^{-n} \om}\rVert.
\]
Hence, in order to prove~\eqref{0456} it is sufficient to show that
\begin{equation}\label{mm4}
 \limsup_{n\to \infty}\frac{1}{n} \log \lVert \mcl_{\sigma^{-n} \om}^{(n)}\Pi_{\sigma^{-n} \om}\rVert <\lambda_1, \quad \text{for $\paeom$.}
\end{equation}
However, it follows from results in~\cite{DF} and \cite[Lemma 8.2]{FLQ1} that
\[
\lim_{n\to \infty} \frac 1 n \log \lVert \mcl_{\sigma^{-n} \om}^{(n)} \rvert_{H(\sigma^{-n} \om)}\rVert =\lambda_2
\]
and
\[
\lim_{n\to \infty} \frac 1 n \log \lVert \Pi_{\sigma^{-n} \om}\rVert=0,
\]
which readily  imply~\eqref{mm4}. We now claim that
\begin{equation}\label{BVstar2}
\mathcal{B}^*=Y(\om)^*\oplus Y(\om)^\circ, \quad \text{for $\paeom$.}
\end{equation}
We first note that the sum on the right hand side of~\eqref{BVstar2} is direct. Indeed, each nonzero vector in $Y(\om)^*$ grows at the rate $\lambda_1$, while by~\eqref{0456} all nonzero vectors in $Y(\om)^\circ$ grow at the rate $<\lambda_1$. Furthermore, since
 the codimension of $Y(\om)^\circ$ is the same as dimension of $Y(\om)^*$, we have that~\eqref{BVstar2} holds.

Finally, by comparing decompositions~\eqref{BVstar} and~\eqref{BVstar2}, we conclude that the first equality in~\eqref{jj} holds. Indeed, each $\phi \in H^*(\om)$ can be written as $\phi=\phi_1+\phi_2$, where
$\phi_1 \in Y(\om)^*$ and $\phi_2 \in Y(\om)^\circ$. Since $\phi$ and $\phi_2$ grow at the rate $<\lambda_1$ and $\phi_1$ grows at the rate $\lambda_1$, we obtain  that $\phi_1=0$ and thus $\phi=\phi_2 \in Y(\om)^\circ$. Hence,  $H^*(\om) \subset Y(\om)^\circ$ and similarly $Y(\om)^\circ \subset H^*(\om)$. The second assertion of the lemma  can be obtained similarly.
\end{proof}

\subsection{Notions of variation}\label{sec:var}
Let $(X, \mathcal G)$ be a measurable space endowed with a probability measure $m$ and a notion of a variation $\var \colon L^1(X, m) \to [0, \infty]$ which satisfies
the following conditions:
\begin{enumerate}
 \item[(V1)] $\var (th)=\lvert t\rvert \var (h)$;
 \item[(V2)] $\var (g+h)\le \var (g)+\var (h)$;
 \item[(V3)] $\lVert h\rVert_{L^\infty} \le C_{\var}(\lVert h\rVert_1+\var (h))$ for some constant $1\le C_{\var}<\infty$;
 \item[(V4)] for any $C>0$, the set  $\{h\colon X \to \mathbb R: \lVert h\rVert_1+\var (h) \le C\}$ is $L^1(m)$-compact;
 \item[(V5)] $\var(1_X) <\infty$, where $1_X$ denotes the function equal to $1$ on $X$;
 \item[(V6)] $\{h \colon X \to \mathbb R_+: \lVert h\rVert_1=1 \ \text{and} \ \var (h)<\infty\}$ is $L^1(m)$-dense in
 $\{h\colon X \to \mathbb R_+: \lVert h\rVert_1=1\}$.
 \item[(V7)] for any $f\in L^1(X, m)$ such that $\essinf f>0$, we have $\var(1/f) \le \frac{\var (f)}{(\essinf f)^2}$.
 \item[(V8)] $\var (fg)\le \lVert f\rVert_{L^\infty}\cdot \var(g)+\lVert g\rVert_{L^\infty}\cdot \var(f)$.
 \item[(V9)] for $M>0$, $f\colon X \to [-M, M]$ measurable and  every $C^1$ function $h\colon [-M, M] \to \C$, we have
 $\var (h\circ f)\le \lVert h'\rVert_{L^\infty} \cdot \var(f)$.
\end{enumerate} We define
\[
 \B:=BV=BV(X,m)=\{g\in L^1(X, m): \var (g)<\infty \}.
\]
Then, $\B$ is a Banach space with respect to the norm
\[
 \lVert g\rVert_{\B} =\lVert g\rVert_1+ \var (g).
 \]
From now on, we will use $\B$ to denote a Banach space of this type, and $ \lVert g\rVert_{\B} $, or simply $\|g\|$ will denote the corresponding norm.

Well-known examples of this notion correspond to the case where $X$ is a  subset of $\R^n$.
In the one-dimensional case we use the classical notion of variation given by
\begin{equation}\label{var1d}
 \var (g)=\inf_{h=g (mod \ m)} \sup_{0=s_0<s_1<\ldots <s_n=1}\sum_{k=1}^n \lvert h(s_k)-h(s_{k-1})\rvert
\end{equation}
for which it is well known that properties (V1)-(V9) hold.
On the other hand, in the multidimensional case, we let $m=Leb$ and define
\begin{equation}\label{varmd}
 \var (f)=\sup_{0<\epsilon \le \epsilon_0}\frac{1}{\epsilon^\alpha}\int_{\R^d}osc (f,  B_\epsilon (x)))\, dx,
\end{equation}
where
\[
 osc (f, B_\epsilon (x))=\esssup_{x_1, x_2 \in B_\epsilon (x)}\lvert f(x_1)-f(x_2)\rvert
\]
and where $\esssup$ is taken with respect to product measure $m\times m$. For this notion properties (V1)-(V9) have been verified by Saussol~\cite{Saussol} except for (V7) which is proved in~\cite{DFGTV}
and~(V9) which we prove now.
\begin{lemma}
 The notion of $\var$ defined by~\eqref{varmd} satisfies (V9).
\end{lemma}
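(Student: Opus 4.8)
The plan is to reduce the statement to a pointwise Lipschitz estimate for $h$ and then integrate. Since $h$ is $C^1$ on the compact interval $[-M,M]$, the fundamental theorem of calculus gives, for all $a,b\in[-M,M]$,
\[
|h(a)-h(b)|=\Bigl|\int_b^a h'(t)\,dt\Bigr|\le \|h'\|_{L^\infty}\,|a-b|,
\]
and this holds verbatim for complex-valued $h$ (splitting into real and imaginary parts if one prefers). Applying it with $a=f(x_1)$ and $b=f(x_2)$ — both of which lie in $[-M,M]$ by hypothesis — yields
\[
|h(f(x_1))-h(f(x_2))|\le \|h'\|_{L^\infty}\,|f(x_1)-f(x_2)|\qquad\text{for all }x_1,x_2\in X.
\]
Note also that $h\circ f$ is bounded and measurable, hence lies in $L^1(X,m)$, so $\var(h\circ f)$ is meaningful.

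Next I would pass to oscillations. Fix $\epsilon\in(0,\epsilon_0]$ and $x\in\R^d$. Since the displayed inequality holds for every pair $(x_1,x_2)$, it holds in particular for $(m\times m)$-a.e.\ $(x_1,x_2)\in B_\epsilon(x)\times B_\epsilon(x)$, so taking essential suprema over such pairs gives
\[
\osc(h\circ f,B_\epsilon(x))\le \|h'\|_{L^\infty}\,\osc(f,B_\epsilon(x)).
\]
Measurability of $x\mapsto \osc(h\circ f,B_\epsilon(x))$ follows exactly as for $f$, since $h\circ f$ is measurable; this is part of Saussol's framework~\cite{Saussol}.

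Finally, integrating over $x\in\R^d$, dividing by $\epsilon^\alpha$, and taking the supremum over $\epsilon\in(0,\epsilon_0]$ gives
\[
\var(h\circ f)=\sup_{0<\epsilon\le\epsilon_0}\frac{1}{\epsilon^\alpha}\int_{\R^d}\osc(h\circ f,B_\epsilon(x))\,dx\le \|h'\|_{L^\infty}\sup_{0<\epsilon\le\epsilon_0}\frac{1}{\epsilon^\alpha}\int_{\R^d}\osc(f,B_\epsilon(x))\,dx=\|h'\|_{L^\infty}\,\var(f),
\]
as claimed. There is no substantial obstacle here; the only point requiring a moment's care is that the oscillation is defined via an essential supremum with respect to $m\times m$, so one must check that the everywhere-valid pointwise bound survives passage to the essential supremum — which it does trivially. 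The complex-valued hypothesis on $h$ is likewise harmless once one uses the integral form of the mean value estimate rather than the classical mean value theorem.
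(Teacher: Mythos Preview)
Your proof is correct and follows essentially the same route as the paper: a pointwise Lipschitz bound for $h$ yields the oscillation inequality $\osc(h\circ f,B_\epsilon(x))\le \|h'\|_{L^\infty}\,\osc(f,B_\epsilon(x))$, which immediately gives the $\var$ inequality. Your use of the fundamental theorem of calculus rather than the mean value theorem is in fact the more careful choice for complex-valued $h$, but otherwise the arguments are identical.
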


\begin{proof}
 Take $M>0$, $f$ and $h$ as in the statement of~(V9). For arbitrary $x\in X$, $\epsilon>0$ and $x_1, x_2\in B_\epsilon (x)$, it follows from the mean value theorem that
 \[
  \lvert (h\circ f)(x_1)-(h\circ f)(x_2)\rvert \le \lVert h'\rVert_{L^\infty} \cdot \lvert f(x_1)-f(x_2)\rvert,
 \]
which immediately implies that
\[
 osc (h\circ f, B_\epsilon (x)) \le \lVert h'\rVert_{L^\infty} \cdot osc (f, B_\epsilon (x)),
\]
and we obtain the conclusion of the lemma.
\end{proof}

\subsection{Admissible cocycles of transfer operators}\label{COTO}

Let  $(\Omega, \mathcal{F}, \mathbb P, \sigma)$ be  as Section~\ref{sectionMET}, and $X$ and $\BV$ as in Section~\ref{sec:var}.
Let  $T_{\omega} \colon X \to X$, $\omega \in \Omega$ be a collection of non-singular  transformations (i.e.\ $m\circ T_\omega^{-1}\ll m$ for each $\omega$) acting   on $X$.
The associated skew product transformation  $\tau \colon  \Omega \times X \to  \Omega \times X$ is defined by
\begin{equation}
\label{eq:tau}
\tau(\omega, x)=( \sigma (\omega), T_{\omega}(x)), \quad \om \in \Om, \ x\in X.
\end{equation}
 Each transformation $T_{\omega}$ induces the corresponding transfer operator $\mathcal L_{\omega}$ acting on $L^1(X, m)$ and  defined  by the following duality relation
\[
\int_X(\mathcal L_{\omega} \phi)\psi \, dm=\int_X\phi(\psi \circ T_{\omega})\, dm, \quad \phi \in L^1(X, m), \ \psi \in L^\infty(X, m).
\]
For each $n\in \mathbb N$ and $\omega \in \Omega$, set
\[
T_{\omega}^{(n)}=T_{\sigma^{n-1} \omega} \circ \cdots \circ T_{\omega} \quad \text{and} \quad \mathcal L_{\omega}^{(n)}=\mathcal L_{\sigma^{n-1} \omega} \circ \cdots \circ \mathcal L_{\omega}.\]

\begin{defn}[Admissible cocycle] \label{def:admis}
We call the transfer operator cocycle $\mc{R}=(\Om, \mathcal F, \bbp, \sig, \B, \mathcal L)$ admissible if, in addition to \ref{cond:METCond}, the following conditions hold.
\begin{enumerate}[label=(C\arabic*), series=conditions]
\item \label{cond:unifNormBd}
there exists $K>0$ such that
\begin{equation*}
 \lVert \mathcal L_\om f\rVert_{\BV} \le K\lVert f\rVert_{\BV}, \quad \text{for every $f\in \BV$ and $\paeom$.}
\end{equation*}

\item \label{C0}
there exists $N\in \mathbb N$ and measurable $\alpha^N, \beta^N \colon \Om \to (0, \infty)$, with
$ \int_\Om \log \alpha^N (\om)\,  d\mathbb P(\om)<0$,
 such that for every $f\in \BV$ and $\paeom$,
\begin{equation*}
 \lVert \mcl_\om^{(N)} f\rVert_{\BV} \le \alpha^N(\om)\lVert f\rVert_{\BV}+\beta^N(\om)\lVert f\rVert_1.
\end{equation*}
\item  \label{cond:dec} there exist $K', \lambda >0$ such that
for every $n\ge 0$, $f\in \BV$ such that $\int f\, dm=0$ and $\paeom$.
\begin{equation*}
\lVert \mathcal L_{\om}^{(n)} (f)\rVert_{\BV} \le K'e^{-\lambda n}\lVert f\rVert_{\BV}.
\end{equation*}

\item  \label{Min} there exist $N\in \N, c>0$ such that for each $a>0$  and any sufficiently large $n\in \mathbb N$,
\begin{equation*}
\essinf  \mathcal L_\omega^{(Nn)} f\ge c \lVert f\rVert_1, \quad \text{for every $f\in C_a$ and \paeom,}
\end{equation*}
where $C_a:=\{ f \in \BV: f\ge 0 \text{ and } \var(f)\le a\int f\, dm \}.$
\end{enumerate}
\end{defn}
Admissible cocycles of transfer operators can be investigated via Theorem~\ref{thm:MET}. Indeed, the following holds.


\begin{lemma}\label{lem:qc+1dim}
 An admissible cocycle of transfer operators $\mathcal R=(\Om, \mathcal F, \mathbb P, \sig, \BV, \mcl)$  is quasi-compact.
 Furthermore, the top Oseledets space is one-dimensional. That is, $\dim Y(\om)=1$ for \paeom.
\end{lemma}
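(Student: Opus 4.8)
The plan is to verify the hypotheses of the Hennion-type criterion (Lemma~\ref{lem:Hennion}) and the multiplicative ergodic theorem (Theorem~\ref{thm:MET}) for the cocycle $\mc{R}$, and then to pin down the dimension of the top Oseledets space using the minorization condition~\ref{Min} together with a cone/positivity argument. First I would establish quasi-compactness. Working with the $N$-step cocycle generated by $\mcl_\om^{(N)}$ (which is again a cocycle over the ergodic, invertible base $\sigma^N$, and whose Lyapunov exponents are $N$ times those of $\mc{R}$), condition~\ref{C0} supplies exactly the strong Lasota-Yorke inequality $\|\mcl_\om^{(N)}f\| \le \alpha^N(\om)\|f\| + \beta^N(\om)\|f\|_1$ with the measurable tail bound $\|\mcl_\om^{(N)}\| \le K^N$ coming from~\ref{cond:unifNormBd}; the compact embedding $\B = BV \hookrightarrow L^1(m) =: \B'$ is (V4). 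It remains to check the spectral-radius hypothesis of Lemma~\ref{lem:Hennion}, i.e.\ $\int \log \alpha^N \, d\bbp < \Lam(\mc{R}^{(N)})$. Since $\int\log\alpha^N\,d\bbp<0$ by~\ref{C0}, it suffices to show $\Lam(\mc{R}^{(N)}) \ge 0$, equivalently $\Lam(\mc{R})\ge 0$; this follows because the transfer operators preserve integrals, so $\int \mcl_\om^{(n)} 1_X \, dm = \int 1_X \, dm = 1$, whence $\|\mcl_\om^{(n)} 1_X\|_\B \ge \|\mcl_\om^{(n)}1_X\|_1 = 1$ and thus $\Lam(\mc{R})\ge \lex{\om,1_X}{\B}\ge 0$. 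Lemma~\ref{lem:Hennion} then gives $\ka(\mc{R}^{(N)})\le \int\log\alpha^N\,d\bbp < 0 \le \Lam(\mc{R}^{(N)})$, so $\mc{R}^{(N)}$, and hence $\mc{R}$, is quasi-compact.

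Next I would identify the top exponent and show $\lambda_1 = \Lam(\mc{R}) = 0$. The lower bound $\Lam(\mc{R})\ge 0$ was just shown. For the upper bound, split an arbitrary $f\in\B$ as $f = (\int f\,dm)\,1_X + f_0$ with $\int f_0\,dm = 0$; condition~\ref{cond:dec} forces $\|\mcl_\om^{(n)}f_0\|_\B \le K'e^{-\lambda n}\|f_0\|_\B \to 0$, while the $1_X$-component has norm growing subexponentially (in fact bounded in $L^1$ and, by iterating~\ref{C0} with Birkhoff's theorem applied to $\log\alpha^N$, with controlled $BV$-norm), giving $\lex{\om,f}{\B}\le 0$ for every $f$. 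Hence $\Lam(\mc{R}) = 0$ and $\lambda_1 = 0$. This already shows $Y(\om)$ contains the (equivariant, by $\int\mcl_\om 1_X\,dm = 1$ and the decomposition above — more precisely the equivariant family spanned by the conditional densities) direction realizing exponent $0$.

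Finally, to show $\dim Y(\om) = 1$, I would argue that $Y(\om)$ cannot contain two linearly independent directions. Suppose $f\in Y(\om)\setminus\{0\}$. Decomposing $f$ as above, the mean-zero part decays exponentially by~\ref{cond:dec}, so it lies in the slow subspace $V(\om)\oplus\bigoplus_{j\ge 2}Y_j(\om)$; since $Y(\om)$ is complementary to that space in the Oseledets splitting, $f$ must be a multiple of (the component along) $1_X$, forcing $\dim Y(\om)\le 1$. Combined with $\lambda_1 = 0$ being attained, $\dim Y(\om) = 1$. (An alternative route, which also explains the role of~\ref{Min} in the broader paper, is a Birkhoff-cone argument: condition~\ref{Min} shows that after $Nn$ steps every element of a suitable cone $C_a$ is bounded below by a positive multiple of its $L^1$-norm, which together with the Lasota-Yorke bound yields eventual invariance and contraction of a projective cone metric, hence a one-dimensional fixed direction with a spectral gap; but for the present statement the decomposition argument via~\ref{cond:dec} is the most economical.) The main obstacle is the bookkeeping around the $N$-step rescaling — making sure the exponents, the integrability of $\log\alpha^N$, and the $\bbp$-continuity condition~\ref{cond:METCond} all transfer correctly to $\sigma^N$ — and verifying that the mean-zero exponential decay in~\ref{cond:dec} genuinely places $f_0$ outside $Y(\om)$, i.e.\ that no cancellation can promote a mean-zero vector into the top Oseledets space.
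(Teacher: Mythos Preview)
Your approach is essentially the same as the paper's: quasi-compactness via Lemma~\ref{lem:Hennion} applied to the $N$-step cocycle using~\ref{C0} and the observation $\Lambda(\mc{R})\ge 0$ (the paper simply asserts $\Lambda(\mc{R})=0$), and one-dimensionality of $Y(\om)$ via~\ref{cond:dec}. Your argument for $\dim Y(\om)=1$ is correct but can be phrased more cleanly: any nonzero $f\in Y(\om)$ with $\int f\,dm=0$ would have Lyapunov exponent $\le -\lambda<0\le\lambda_1$ by~\ref{cond:dec}, contradicting $f\in Y(\om)$; hence $f\mapsto\int f\,dm$ is injective on $Y(\om)$, forcing $\dim Y(\om)\le 1$ --- this avoids the slightly awkward ``multiple of (the component along) $1_X$'' formulation.
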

\begin{proof}
 The first statement follows readily from Lemma~\ref{lem:Hennion}, \ref{C0} and a simple observation that for a cocycle $\mc{R}$ of transfer operators we have that $\Lambda (\mc{R})=0$.
 The fact that $\dim Y(\om)=1$ follows from \ref{cond:dec}.
 \end{proof}

The following result shows that, in this context, the top Oseledets space is indeed the unique random acim.
That is, there exists a unique measurable function $v^0: \Om \times \X \to \R^+$ such that
for \paeom, $v^0_\om:= v^0(\om, \cdot) \in \B$, $\int v^0_\om(x)dm=1$ and
\begin{equation}\label{v0om}
 \mathcal L_\om v_\om^0=v_{\sigma \om}^0, \quad \text{for $\paeom$.}
\end{equation}

\begin{lemma}[Existence and uniqueness of a random acim]\label{lem:PF}
Let $\mathcal R=(\Omega,\mathcal F,\mathbb
P,\sigma,\B,\mathcal L)$ be an admissible cocycle of transfer operators,
satisfying the assumptions of Theorem~\ref{thm:MET}.
Then, there exists a unique random absolutely continuous invariant measure
for $\mathcal R$.

\end{lemma}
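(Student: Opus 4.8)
The plan is to build the random acim \emph{directly}, as a backward limit $v^0_\om=\lim_{n\to\infty}\mcl_{\sig^{-n}\om}^{(n)}\one_X$, rather than extracting it from the Oseledets splitting. The engine is the observation that transfer operators preserve $m$-integrals (take $\psi\equiv1$ in the duality relation), so that the increments of this sequence have zero mean and hence, by the exponential-decay estimate \ref{cond:dec}, decay geometrically and uniformly; together with completeness of $\B$ this forces the limit to exist.

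\emph{Existence.} For $n\ge0$ put $v_{\om,n}:=\mcl_{\sig^{-n}\om}^{(n)}\one_X$, where $\one_X\in\B$ by (V5). First I would check that $(v_{\om,n})_{n\ge0}$ is Cauchy in $\B$ for \paeom. Since
\[
v_{\om,n+1}-v_{\om,n}=\mcl_{\sig^{-n}\om}^{(n)}\bigl(\mcl_{\sig^{-n-1}\om}\one_X-\one_X\bigr),
\]
and $\mcl_{\sig^{-n-1}\om}\one_X-\one_X$ has zero $m$-integral with $\B$-norm at most $(K+1)\|\one_X\|_\B$ by \ref{cond:unifNormBd}, condition \ref{cond:dec} gives $\|v_{\om,n+1}-v_{\om,n}\|_\B\le K'(K+1)\|\one_X\|_\B\,e^{-\lam n}$, a summable bound. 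Hence $v^0_\om:=\lim_{n\to\infty}v_{\om,n}$ exists in $\B$, with the uniform estimate $\sup_\om\|v^0_\om\|_\B<\infty$ as a byproduct. Because $\|\cdot\|_1\le\|\cdot\|_\B$ and $\int\mcl_\om\phi\,dm=\int\phi\,dm$, we get $\int v^0_\om\,dm=1$; because $\mcl_\om$ is a positive operator each $v_{\om,n}\ge0$, and $\B$-convergence entails $L^1$-convergence, so $v^0_\om\ge0$ $m$-a.e. Equivariance follows from continuity of $\mcl_\om$: $\mcl_\om v^0_\om=\lim_n\mcl_{\sig^{-n}\om}^{(n+1)}\one_X$, and re-indexing gives the same limit for $v^0_{\sig\om}$. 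Measurability of $\om\mapsto v^0_\om\in\B$ follows from \ref{cond:METCond} (each $v_{\om,n}$ is measurable in $\om$) by passage to the pointwise limit, and a jointly measurable representative $v^0\colon\Om\times X\to\R^+$ is obtained in the usual way. Finally, to promote nonnegativity to strict positivity I would apply \ref{Min}: with $a:=\max(1,\var(\one_X))$ one has $\one_X\in C_a$, so for every sufficiently large $k$ and \paeom\ (using $\sig$-invariance of $\bbp$ to transport the almost-everywhere statement to the iterate $\sig^{-Nk}\om$) $\essinf v_{\om,Nk}=\essinf\mcl_{\sig^{-Nk}\om}^{(Nk)}\one_X\ge c\|\one_X\|_1=c$; letting $k\to\infty$ along this subsequence and using $L^1$-convergence gives $\essinf v^0_\om\ge c>0$.

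\emph{Uniqueness.} Suppose $u_\om$ and $v_\om$ are two random acims. Then $h_\om:=u_\om-v_\om$ has $\int h_\om\,dm=0$ and, by equivariance of both, $\mcl_\om^{(n)}h_\om=h_{\sig^n\om}$. Applying \ref{cond:dec} to the zero-mean function $h_\om$ yields $\|h_{\sig^n\om}\|_\B\le K'e^{-\lam n}\|h_\om\|_\B\to0$ as $n\to\infty$, for \paeom. If the set $\{\om:\|h_\om\|_\B>\eps\}$ had positive measure for some $\eps>0$, Poincar\'e recurrence for the measure-preserving $\sig$ would furnish, for a.e.\ such $\om$, infinitely many $n$ with $\|h_{\sig^n\om}\|_\B>\eps$, contradicting the convergence just obtained. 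Hence $h_\om=0$ for \paeom.

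The construction is short, and I do not expect a serious obstacle; the places that need care are verifying at each step that \ref{cond:dec} is legitimately applicable (which rests on the zero-mean observation and on the uniform bound \ref{cond:unifNormBd}), the bookkeeping in the positivity step (where \ref{Min} is only available along the subsequence $\{Nk\}$ and at the backward iterates $\sig^{-Nk}\om$, so one must match it against the limit along the full sequence), and the routine passage to a jointly measurable version of $v^0$. As a remark: one may instead obtain $v^0_\om$ by invoking Theorem~\ref{thm:MET} and Lemma~\ref{lem:qc+1dim} to get a measurable equivariant line $Y(\om)=\R w_\om$, noting $\int w_\om\,dm\ne0$ (otherwise \ref{cond:dec} would force exponential decay of $\mcl_\om^{(n)}w_\om$, incompatible with the top exponent $\Lam(\mc{R})=0$), and normalizing. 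Both routes agree; in particular $v^0_\om$ spans the top Oseledets space $Y(\om)$, which is the statement alluded to in the discussion preceding the lemma.
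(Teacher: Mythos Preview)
Your argument is correct and takes a genuinely different route from the paper's. The paper extracts $v^0_\om$ from the multiplicative ergodic theorem: it invokes Theorem~\ref{thm:MET} and Lemma~\ref{lem:qc+1dim} to obtain a measurable, one-dimensional equivariant line $Y(\om)$, then uses the duality of Lemma~\ref{lem:AnnihilatorOsSplittings} (with $m$ spanning $Y^*(\om)$) to see that any spanning vector $h_\om$ has $\int h_\om\,dm\neq 0$, normalises, and finally deduces nonnegativity by an equivariance argument on the positive and negative parts combined with $\dim Y(\om)=1$. Uniqueness is immediate from one-dimensionality. Your approach bypasses the MET entirely for the construction, relying only on \ref{cond:unifNormBd} and \ref{cond:dec} to make $(\mcl_{\sig^{-n}\om}^{(n)}\one_X)_n$ Cauchy in $\B$; positivity, normalisation and equivariance are then read off directly. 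Your uniqueness argument via \ref{cond:dec} and Poincar\'e recurrence is also more hands-on than the paper's appeal to $\dim Y(\om)=1$. What you gain is an elementary, self-contained construction that does not presuppose the Oseledets machinery; what the paper's approach buys is the immediate identification of $v^0_\om$ as the spanning vector of the top Oseledets space, which is exactly the structural fact needed downstream (and which you recover only in your closing remark).

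Two small points. First, the strict-positivity step via \ref{Min} is not part of the lemma as stated (the paper defers it to Lemma~\ref{lem:boundedv}), so you are proving slightly more than required. Second, in that step you write ``using $L^1$-convergence'' to pass $\essinf\ge c$ to the limit; $L^1$-convergence alone does not control $\essinf$, but since your convergence is in $\B$, (V3) gives $L^\infty$-convergence, which does. This is a wording slip rather than a gap.
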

\begin{proof}

Theorem~\ref{thm:MET} shows that the map $\om \mapsto Y_\om$ is measurable, where
$Y_\om$ is regarded as an element of the Grassmannian $\mc{G}$ of $\B$.
Furthermore,  \cite[Lemma 10]{FLQ2} and an argument analogous to \cite[Lemma
10]{GTQuas1}
yields  existence of a measurable selection of bases for $Y_\om$.
Lemma~\ref{lem:qc+1dim} ensures that $\dim Y(\om)=1$.
Hence, there exists  a measurable map
$\om \mapsto h_\om$, with $h_\om \in \B$ such that $h_\om$ spans $Y_\om$ for $\paeom$.

Notice that Lebesgue measure $m$, when regarded as an element of $\B^*$, is a
conformal measure for $\mc{R}$. That is, $m$ spans $Y^*_\om$ for \paeom. In fact, it
is straightforward to verify $\mcl^*_\om m = m$, because the $\mcl_\om$ preserve
integrals.

Thus, the simplified Oseledets decomposition~\eqref{BVstar}
in combination with the duality relations of Lemma~\ref{lem:AnnihilatorOsSplittings} imply that
$m(h_\om)\neq 0$ for \paeom.
In particular we can consider the (still measurable) function $\om \mapsto v^0_\om:=
\frac{h_\om}{\int h_\om dm}$.

The equivariance property of Theorem~\ref{thm:MET} ensures that $\mcl_\om v^0_\om \in
Y_{\sig\om}$ and the fact that $\mcl_\om$ preserves integrals, combined with the
normalized choice of $v^0_\om$ and the assumption that $\dim Y_{\sig\om}=1$, implies
that $\mcl_\om v^0_\om =v^0_{\sig\om}$.

The fact that $v^0_\om \geq 0$ for \paeom \ follows from the positivity and linearity properties of
$\mcl_\om$, which ensure that the positive and negative parts, $v^+_\om$ and
$v^-_\om$,  are equivariant.
Recall that $v^+_\om$, $v^-_\om$, have non-overlapping supports. Thus, if $v_\om^+ \neq 0 \neq v_\om^-$ for a set of positive measure of $\om \in \Om$,
the spaces
$Y^+_\om, Y^-_\om$ spanned by $v^+_\om$, $v^-_\om$, respectively, are subsets of $Y(\om)$,
contradicting  the fact that $\dim Y(\om)=1$. Then, since the normalization condition implies $v_\om^+\neq 0$, we have $v^-_\om=0$ for \paeom.
The fact that the random acim is unique is also a direct consequence of the fact that $\dim Y(\om)=1$.
\end{proof}
%

For an admissible transfer operator cocycle $\mc{R}$, we let $\mu$ be the invariant probability measure given by
\begin{equation}\label{eq:defmu}
 \mu(A \times B)=\int_{A\times B} v^0(\om, x)\, d (\mathbb P \times m)(\om, x), \quad \text{for $A\in \mathcal F$ and $B\in \mathcal G$,}
\end{equation}
where $v^0$ is the unique random acim for $\mc{R}$ and $\mc{G}$ is the Borel $\sigma$-algebra of $X$.
We note that $\mu$ is $\tau$-invariant, because 
of \eqref{v0om}. Furthermore, for each $G\in L^1(\Omega \times X, \mu)$ we have that
\[
 \int_{\Omega \times X} G\, d\mu=\int_{\Omega} \int_X G(\om, x)\, d\mu_\om(x)\, d\mathbb P(\om),
\]
where $\mu_\om$ is a measure on $X$ given by $d\mu_\om=v^0(\om, \cdot)dm$.
We now list several  important consequences of conditions~\ref{C0}, \ref{cond:dec} and~\ref{Min} established in~\cite[\S2]{DFGTV}.
\begin{lemma}\label{lem:boundedv}
The unique random acim $v^0$ of an  admissible cocycle of transfer operators satiesfies the following:
\begin{enumerate}
\item  \label{it:boundedv}
\begin{equation}\label{eq:boundedv}
  \esssup_{\om \in \Om} \lVert v_\om^0\rVert_{\BV} <\infty;
 \end{equation}
\item  \begin{equation}\label{lowerbound} \essinf v_\om^0 (\cdot)\ge c>0, \quad \text{for  $\paeom$;}
\end{equation}
\item  there exists $K>0$ and $\rho \in (0, 1)$  such that
 \begin{equation}\label{buzzi}
 \bigg{\lvert} \int_X \mathcal L_\omega^{(n)}(f v_\omega^0)h\, dm -\int_X f \, d\mu_\omega \cdot \int_X h \, d\mu_{\sigma^n \omega} \bigg{\rvert} \le K\rho^n
 \lVert h\rVert_{L^\infty} \cdot \lVert f \rVert_{\BV} ,
\end{equation}
for $n\ge 0$, $h \in L^\infty (X, m)$, $f \in \BV$ and $\paeom$.
\end{enumerate}
\end{lemma}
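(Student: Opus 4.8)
The three assertions will be established in the order stated, since the second relies on the first and the third relies on the first; beyond the admissibility conditions only the equivariance relation \eqref{v0om} of Lemma~\ref{lem:PF} is needed. The argument follows the lines of \cite[\S2]{DFGTV}.

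For \eqref{eq:boundedv} the plan is to write $v^0_\om$ as a long backward iterate of itself and run the random Lasota--Yorke inequality \ref{C0}. By \eqref{v0om}, $v^0_\om=\mcl^{(n)}_{\sigma^{-n}\om}v^0_{\sigma^{-n}\om}$ for every $n$, and $\|v^0_\eta\|_1=1$ for a.e.\ $\eta$ since $v^0\ge 0$ has unit integral. Grouping the backward orbit into blocks of length $N$ --- after choosing a phase $r\in\{0,\dots,N-1\}$ so that the backward averages $\tfrac1m\sum_{j=1}^m\log\alpha^N(\sigma^{-r-jN}\om)$ converge to a \emph{negative} value (such a phase exists for a.e.\ $\om$ because averaging these limits over $r$ returns $\int_\Om\log\alpha^N\,d\bbp<0$; this device is needed since $\sigma^N$ need not be ergodic) --- I would iterate \ref{C0} and control the leftover block of length $<N$ via \ref{cond:unifNormBd}, obtaining an estimate of the form
\[
\|v^0_\om\|_\BV\le K^N\Big(\textstyle\prod_{j=1}^m a_j\Big)\,\|v^0_{\sigma^{-r-mN}\om}\|_\BV+K^N\sum_{k=1}^{m}\Big(\textstyle\prod_{j=1}^{k-1} a_j\Big) b_k ,
\]
with $a_j=\alpha^N(\sigma^{-r-jN}\om)\le K^N$ and $b_j=\beta^N(\sigma^{-r-jN}\om)$. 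Here $\prod_{j=1}^m a_j$ decays exponentially in $m$, so evaluating along the (a.e.\ infinite) sequence of returns of the backward orbit to a positive-measure set on which $\|v^0_\cdot\|_\BV$ is bounded, the first term tends to $0$; the series converges for a.e.\ $\om$; hence $\|v^0_\om\|_\BV<\infty$ a.e. Promoting this a.e.\ finiteness to a genuine \emph{essential supremum} bound is the delicate point: it requires the quantitative form of the admissibility conditions, in particular using \ref{C0} together with \ref{Min} to see that the backward iterates eventually re-enter, and remain in, a fixed cone $C_a$. I would follow \cite[\S2]{DFGTV} here. Set $a:=\esssup_\om\var(v^0_\om)<\infty$.

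For \eqref{lowerbound}, note that $\var(v^0_\eta)\le a=a\int v^0_\eta\,dm$ for a.e.\ $\eta$, i.e.\ $v^0_\eta\in C_a$. Writing $v^0_\om=\mcl^{(Nn)}_{\sigma^{-Nn}\om}(v^0_{\sigma^{-Nn}\om})$ and invoking the minorization \ref{Min} for all sufficiently large $n$ gives $\essinf v^0_\om\ge c\,\|v^0_{\sigma^{-Nn}\om}\|_1=c$, as claimed. For \eqref{buzzi}, I would decompose $fv^0_\om=\big(\int_X f\,d\mu_\om\big)v^0_\om+g_\om$ with $g_\om:=fv^0_\om-\big(\int_X f\,d\mu_\om\big)v^0_\om$; since $d\mu_\om=v^0_\om\,dm$ one has $\int_X g_\om\,dm=0$. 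Iterating \eqref{v0om} gives $\mcl^{(n)}_\om v^0_\om=v^0_{\sigma^n\om}$, so
\[
\int_X\mcl^{(n)}_\om(fv^0_\om)\,h\,dm=\Big(\int_X f\,d\mu_\om\Big)\Big(\int_X h\,d\mu_{\sigma^n\om}\Big)+\int_X\mcl^{(n)}_\om(g_\om)\,h\,dm .
\]
The last term is bounded by $\|h\|_{L^\infty}\|\mcl^{(n)}_\om g_\om\|_1\le\|h\|_{L^\infty}\|\mcl^{(n)}_\om g_\om\|_\BV\le K'e^{-\lambda n}\|h\|_{L^\infty}\|g_\om\|_\BV$ by \ref{cond:dec}, while (V3) and (V8) together with \eqref{eq:boundedv} give $\|g_\om\|_\BV\le\|fv^0_\om\|_\BV+\big|\int_X f\,d\mu_\om\big|\,\|v^0_\om\|_\BV\le C\|f\|_\BV$ for a constant $C$ depending only on $C_\var$ and $\esssup_\eta\|v^0_\eta\|_\BV$. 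This yields \eqref{buzzi} with $\rho=e^{-\lambda}$ and $K=K'C$.

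The main obstacle is the essential-supremum bound \eqref{eq:boundedv}: the backward iteration delivers a.e.\ finiteness almost for free, but the uniformity in $\om$ --- which is precisely what makes \ref{Min} applicable in \eqref{lowerbound} and keeps the constants under control in \eqref{buzzi} --- requires the careful cone bookkeeping of \cite[\S2]{DFGTV}.
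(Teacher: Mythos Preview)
The paper does not prove this lemma; it simply records it as a consequence of \cite[\S2]{DFGTV}. Your sketch therefore already contains more than the paper offers, and your derivations of \eqref{lowerbound} (from \ref{Min} applied to $v^0_{\sigma^{-Nn}\om}\in C_a$ together with equivariance) and of \eqref{buzzi} (via the splitting $fv^0_\om=(\int f\,d\mu_\om)v^0_\om+g_\om$ with $\int g_\om\,dm=0$, then \ref{cond:dec} and the product estimate (V3)--(V8)) are complete and correct.

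For \eqref{eq:boundedv}, your iterated-\ref{C0} approach carries a loose end: the admissibility conditions place no integrability hypothesis on $\beta^N$, so the convergence of your series $\sum_k\big(\prod_{j<k}a_j\big)b_k$ is not guaranteed by the assumptions as stated. You are right to flag the uniform bound as the delicate point and defer to \cite{DFGTV}, but there is a much shorter route that bypasses \ref{C0} entirely, using \ref{cond:dec} directly. Since $\int(\mcl_{\sigma^{-n-1}\om}1_X-1_X)\,dm=0$, conditions \ref{cond:unifNormBd} and \ref{cond:dec} give
\[
\big\|\mcl^{(n+1)}_{\sigma^{-n-1}\om}1_X-\mcl^{(n)}_{\sigma^{-n}\om}1_X\big\|_\BV
=\big\|\mcl^{(n)}_{\sigma^{-n}\om}\big(\mcl_{\sigma^{-n-1}\om}1_X-1_X\big)\big\|_\BV
\le K'(K+1)\|1_X\|_\BV\,e^{-\lambda n},
\]
so $\big(\mcl^{(n)}_{\sigma^{-n}\om}1_X\big)_{n\ge0}$ is Cauchy in $\BV$ with an $\om$-independent tail bound. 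Its limit is $v^0_\om$: apply \ref{cond:dec} to $v^0_{\sigma^{-n}\om}-1_X$ along returns of $\sigma^{-n}\om$ to a positive-measure set where $\|v^0_\cdot\|_\BV\le M$ (such a set exists since $v^0_\om\in\BV$ a.e.), then use that the full sequence is Cauchy. Telescoping yields the explicit uniform estimate
\[
\esssup_{\om\in\Om}\|v^0_\om\|_\BV\le\|1_X\|_\BV\Big(1+\frac{K'(K+1)}{1-e^{-\lambda}}\Big),
\]
using only \ref{cond:unifNormBd}, \ref{cond:dec} and (V5), with no phase selection or cone bookkeeping needed.
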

We emphasize that~\eqref{buzzi} is a special case of a more general decay of correlations result proved by Buzzi~\cite{Buzzi}, but in this case with the stronger conclusion that the decay rates and coefficients $K$ are uniform over $\om\in \Om$.

 \subsubsection{Examples }\label{RLYM}
 In order to be able to be in the setting of admissible transfer operators cocycles, we need to ensure that
\ref{cond:METCond} holds. To fulfill this requirement (see~\cite[Section 4.1]{FLQ2} for a detailed discussion)  in the rest of the paper we will assume
\begin{enumerate}[label=(C\arabic*'), start=0]
\item \label{cond:c0'}
 $\sigma$ is a homeomorphism, $\Om$ is a Borel subset of a separable, complete metric space,
  the map $\omega \to T_\omega$ has a countable range $T_1, T_2, \dots$ and for each $j$, $\{\om \in \Om: T_\om=T_j\}$ is  measurable.
\end{enumerate}
Although this condition is somewhat restrictive,
 we emphasize that the assumptions on the structure of $\Omega$ are very mild and that the only requirements for $\sigma$ are that it has to be an ergodic, measure-preserving homeomorphism. In particular, no mixing conditions are required.
 Furthermore, the $T_\omega$ need only be chosen from a countable family.

Following \cite[\S2]{DFGTV}, we present two classes of examples,  one- and higher-dimensional piecewise smooth expanding maps, which yield admissible transfer operator cocycles.
 \paragraph{Random Lasota-Yorke maps.}
\label{sec:examples1}
Let $X=[0, 1]$, a Borel $\sigma$-algebra $\mathcal G$ on $[0, 1]$ and the Lebesgue measure $m$ on $[0, 1]$. Consider the notion of variation defined  in \eqref{var1d}.
For a piecewise $C^2$ map $T:[0,1]\to [0,1]$, set $\delta (T)=\essinf_{x\in [0,1]} \lvert T'\rvert$ and let $b(T)$ denote the number of intervals of monoticity (branches) of $T$.
Consider now a measurable map $\omega\mapsto T_\omega$, $\omega \in \Omega$  of piecewise $C^2$ maps on $[0, 1]$ such that
\begin{equation}\label{sd_consts}
 b:=\esssup_{\omega \in \Omega} b(T_\omega)<\infty, \  \delta:=\essinf_{\omega \in \Omega} \delta (T_\omega)>1, \ \mbox{ and } D:=\esssup_{\omega \in \Omega}\lVert T''_\omega \rVert_{L^\infty}<\infty.
\end{equation}

For each $\omega \in \Omega$, let $b_\omega=b(T_\om)$, so that there are essentially disjoint sub-intervals $J_{\omega,1}, \dots, J_{\omega, b_\omega}\subset I$, with $\cup_{k=1}^{b_{\omega}} J_{\omega, k}=I$, so that $T_\omega|_{J_{\omega,k}}$ is $C^2$ for each $1\leq k \leq b_\omega$. The minimal such partition $\mathcal{P}_\omega:=\{ J_{\omega,1}, \dots, J_{\omega,b_\omega} \}$ is called the \textit{regularity partition} for $T_\omega$.
It is well known that whenever $\delta>2$, and $\essinf_{\omega \in \Omega} \min_{1\leq k \leq b_\omega} m(J_{\omega,k})>0$, there exist  $\alpha \in (0, 1)$ and $K>0$  such that
\begin{equation*}
 \var (\mathcal L_\omega f)\le \alpha \var(f)+K \lVert f\rVert_1, \quad \text{for $ f\in BV$ and \paeom.}
\end{equation*}

More generally, when $\delta<2$, one can take an iterate $N \in \N$ so that $\delta^N>2$. If the regularity partitions $\mathcal{P}^N_\omega:=\{ J^N_{1,\omega}, \dots, J^N_{\omega,b_\omega^{(N)}} \}$ corresponding to the maps $T_\omega^{(N)}$ also satisfy
$\essinf_{\omega \in \Omega} \min_{1\leq k \leq b^{(N)}_\omega} m(J^N_{\omega,k})>0$, then
 there exist $\alpha^N \in (0, 1)$ and $K^N>0$  such that
\begin{equation}\label{RLY}
 \var (\mathcal L_\omega^N f)\le \alpha^N \var(f)+K^N \lVert f\rVert_1, \quad \text{for $ f\in BV$ and \paeom.}
\end{equation}
We  assume that \eqref{RLY} holds for some $N\in \N$.

Finally, we suppose the following uniform covering condition holds:
\begin{equation}\label{eq:UnifCov}
\text{For every subinterval } J\subset I, \exists k= k(J) \text{ s.t. for a.e. }  \omega \in \Omega, T_\omega^{(k)}(J) = I.
\end{equation}

The results of \cite[\S2]{DFGTV} ensure that random Lasota-Yorke maps which satisfy the conditions of this section plus \ref{cond:c0'} are admissible. (While~\ref{C0} is not explicitely required by \cite{DFGTV}, it is established in the process of showing the remaining conditions.)

\paragraph{Random piecewise expanding maps in higher dimensions.}
\label{sec:examples2}
We now discuss the case of piecewise expanding maps in higher dimensions. Let $X$ be a compact subset of $\mathbb{R}^N$
which is the closure of its non-empty interior. Let $X$ be  equipped with a Borel $\sigma$-algebra $\mathcal G$  and Lebesgue measure $m$. We consider the notion of variation defined in~\eqref{varmd} for suitable $\alpha$ and $\epsilon_0$.
We say that the map  $T:X\to X$ is \emph{piecewise expanding} if there exist
finite families   $\mathcal{A}=\{A_i\}_{i=1}^m$ and $\mathcal{\tilde A}=\{\tilde{A_i}\}_{i=1}^m$ of  open sets in $\mathbb R^N$, a family of maps
$T_i: \tilde{A_i}\to \mathbb{R}^N$, $i=1, \ldots, m$ and $\epsilon_1(T)>0$
 such that:
\begin{enumerate}
\item $\mathcal A$ is a disjoint family of sets, $m(X\setminus \bigcup_{i}A_i)=0$ and $\tilde A_i \supset \overline A_i$ for each $i=1, \ldots, m$;
\item there exists $0<\gamma(T_i)\le 1$ such that each $T_i$ is of class $C^{1+\gamma(T_i)}$;
\item
For every $1\leq i \leq m$, $T|_{A_i}=T_i|_{A_i}$ and
$T_i(\tilde{A_i})\supset B_{\eps_1(T)}(T(A_i))$, where
  $B_{\eps}(V)$ denotes a neighborhood of size $\eps$ of the set $V.$ We say that
   $T_i$ is  the local extension of $T$ to the $\tilde{A_i}$;

\item there exists a constant $C_1(T)>0$ so that for each $i$ and $x,y\in T(A_i)$ with
$\mbox{dist}(x,y)\leq\eps_1(T)$,
$$
|\det DT_i^{-1}(x)-\det DT_i^{-1}(y)|\leq C_1(T)|\det DT_i^{-1}(x)|\mbox{dist}(x,y)^{\gamma(T)};
$$

\item there exists $s(T)<1$ such that for every $x,y\in T(\tilde{A}_i) \textrm{ with } \mbox{dist}(x,y)\leq\eps_1(T)$, we have
$$\mbox{dist}(T_i^{-1}x,T_i^{-1}y)\leq s(T)\, \mbox{dist}(x,y);$$

\item each $\partial A_i$ is a codimension-one embedded compact
  piecewise $C^1$ submanifold and
  \[\label{sc}s(T)^{\gamma(T)}+\frac{4s(T)}{1-s(T)}Z(T)\frac{\Gamma_{N-1}}{\Gamma_N}<1, \]
  where $Z(T)=\sup\limits_{x}\sum\limits_i \#\{\textrm{smooth pieces
    intersecting } \partial A_i \textrm{ containing }x\}$ and $\Gamma_N$ is
  the volume of the unit ball in $\mathbb{R}^N$.
\end{enumerate}
Consider now a measurable map $\omega\mapsto T_\omega$, $\omega \in \Omega$  of piecewise expanding maps on $X$ such that
\[
 \epsilon_1:=\inf_{\omega \in \Omega} \epsilon_1(T_\omega)>0, \  \gamma:=\inf_{\omega \in \Omega} \gamma (T_\omega)>0,\  C_1:=\sup_{\om \in \Om} C_1(T_\omega)<\infty, \  s:=\sup_{\om \in \Om} s(T_\om)<1
\]
and
\[
\sup_{\om \in \Om} \Big( s(T_\om)^{\gamma(T_\om)}+\frac{4s(T_\om)}{1-s(T_\om)}Z(T_\om)\frac{\Gamma_{N-1}}{\Gamma_N} \Big)<1.
\]
Then, \cite[Lemma 4.1]{Saussol} implies that there exist $\nu \in (0, 1)$ and $K>0$ independent on $\omega$ such that
\begin{equation}\label{gfv}
 \var (\mathcal L_\omega f) \le \nu \var (f)+K \lVert f\rVert_1 \quad \text{for each $f\in \mathcal B$ and $\omega \in \Omega$,}
\end{equation}
where $\var$ is given by~\eqref{varmd} with $\alpha=\gamma$ and some $\epsilon_0>0$ sufficiently small (which is again independent on $\om$).  We note that~\eqref{gfv} readily implies that conditions~\ref{cond:unifNormBd} and~\ref{C0} hold.
Finally, we note that under additional assumption that
$$\mbox{for any open set $J \subset X$, there exists $k=k(J)$ such that for a.e. $\omega\in \Omega$, $T^k_{\omega}(J)=X,$}$$
the results in~\cite[\S2]{DFGTV} show that~\ref{cond:dec} and~\ref{Min} also hold.

\paragraph{Remark.}
We point out that while conditions \ref{cond:unifNormBd}, \ref{cond:dec} and \ref{Min} are stated in a uniform way, sometimes it is possible to recover them from non-uniform assumptions. For example, assuming that $\{T_\om\}_{\om \in \Om}$ takes only finitely many values, one can recover a uniform version of \ref{cond:dec} from a non-uniform one, for example by compactness arguments (see the proof of Lemma~\ref{lem:AperiodicCoboundary} for a similar argument). Also, our results apply to cases where conditions \ref{cond:unifNormBd}--\ref{Min}, or the hypotheses which imply them (e.g. \eqref{sd_consts}), are only satisfied eventually; that is, for some iterate $T_\om^{(N)}$, where $N$ is independent of $\om \in \Om$.

\section{Twisted transfer operator cocycles}
We begin by introducing the class of observables to which our limit theorems apply.
For a  fixed observable and each parameter $\theta \in \mathbb C$, we introduce the twisted cocycle $\mathcal {L}^\theta=\{\mcl_\om^\theta\}_{\om \in \Om}$.
We show that the cocycle $\mcl^\theta$ is quasicompact for $\theta$ close to $0$.
Most of this section is devoted to the study of   regularity properties of the map $\theta \mapsto \Lambda (\theta)$ on  a neighborhood of $0\in \mathbb C$, where $\Lambda (\theta)$ denotes the top Lyapunov exponent of the cocycle $\mcl^\theta$.
In particular, we show that this map is of class $C^2$ and that its restriction to a neighborhood of $0\in \R$ is  strictly  convex. This is achieved by combining  ideas from the perturbation theory of linear operators with our multiplicative ergodic theory machinery. As a byproduct of our approach, we explicitly construct the top Oseledets subspace of cocycle
$\mcl^\theta$ for $\theta$ close to $0$.

\subsection{The observable}\label{sec:obs}
\begin{defn}[Observable]\label{def:obs}
Let an \emph{observable} be a
 measurable map $g \colon \Omega \times X \to \mathbb R$ satisfying the following properties:
 \begin{itemize}
 \item Regularity:
\begin{equation}\label{obs}
 \lVert g(\om, x)\rVert_{L^\infty (\Omega \times X)}=: M<\infty \quad \text{and} \quad  \esssup_{\om \in \Om} \var (g_\om) <\infty,
\end{equation}
where $g_\omega=g (\omega, \cdot )$, $\omega \in \Omega$.
\item Fiberwise centering:
\begin{equation}\label{zeromean}
 \int g(\om, x) \, d\mu_\om(x)= \int g(\om, x)v^0_\om (x) \, dm(x)=0 \quad \text{for } \paeom,
\end{equation}
where $v^0$ is the density of the unique random acim, satisfying \eqref{v0om}.
\end{itemize}
\end{defn}
The main results of this paper will deal with establishing limit theorems for Birkhoff sums associated to $g$, $S_ng$, defined in \eqref{birkhoff}.

\subsection{Basic properties of twisted transfer operator cocycles}
Throughout this section, $\mc{R}=(\Om, \mc{F}, \bbp, \sig, \B, \mcl)$ will denote an admissible transfer operator cocycle.
For  $\theta \in \mathbb C$,
the \emph{twisted transfer operator cocycle}, or \emph{twisted cocycle}, $\mc{R}^\theta$  is defined as
 $\mc{R}^\theta=(\Om, \mc{F},  \bbp, \sig, \BV, \mcl^\theta)$,
where   for each $\omega \in \Omega$, we define
\begin{equation}\label{eq:twisted}
\mathcal L_\omega^{\theta}(f)=\mathcal L_\omega (e^{\theta g(\omega, \cdot )}f), \quad f\in \BV.
\end{equation}
  For convenience of notation, we will also use $ \mcl^\theta$ to denote the cocycle $\mc{R}^\theta$.
For each $\theta \in \C$, set $\Lam(\theta):=\Lam(\mc{R}^\theta)$, $\ka(\theta):=\ka(\mc{R}^\theta)$ and
\[
\mathcal L_\om^{\theta, \, (n)}=\mathcal{L}^{\theta}_{\sigma^{n-1}\omega}\circ\cdots\circ \mathcal{L}^{\theta}_\omega, \quad \text{for $\om \in \Om$ and $n\in \N$.}
\]
The next lemma provides basic information about the dependence of $\mcl_\omega^{\theta}$ on $\theta$.

\begin{lemma}[Basic regularity of $\theta \mapsto \mcl_\om^\theta$]\label{1214} \
\begin{enumerate}
\item
Assume~\ref{cond:unifNormBd} holds.
 Then, there exists a continuous function $K\colon \mathbb C \to (0, \infty)$ such that
 \begin{equation}\label{se2}
 \lVert \mathcal L_\om^\theta h\rVert_{\BV} \le K(\theta)\lVert h\rVert_{\BV}, \quad \text{for $h\in \BV$, $\theta \in \mathbb C$ and $\paeom$.}
\end{equation}

\item
For $\om\in \Om, \theta \in \C$, let  $M_\om^\theta$ be the linear operator on $\BV$ given by $M_\om^\theta(h(\cdot)):= e^{\theta g(\om, \cdot)} h(\cdot)$. Then, $\theta \mapsto M_\om^\theta$ is continuous in the norm topology of $\BV$. Consequently, $\theta \mapsto \mcl_\om^\theta$ is also continuous in the norm topology of $\BV$.
\end{enumerate}
\end{lemma}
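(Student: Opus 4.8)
The plan is to prove the two assertions of Lemma~\ref{1214} in turn. For part (1), the estimate is essentially immediate: writing $\mathcal L_\om^\theta h = \mathcal L_\om(e^{\theta g(\om,\cdot)}h)$ and using that $\mathcal L_\om$ is bounded by $K$ on $\BV$ by~\ref{cond:unifNormBd}, it suffices to bound $\|e^{\theta g(\om,\cdot)}h\|_{\BV}$ in terms of $\|h\|_{\BV}$. By~(V8), $\var(e^{\theta g_\om}h)\le \|e^{\theta g_\om}\|_{L^\infty}\var(h)+\|h\|_{L^\infty}\var(e^{\theta g_\om})$; since $\|g\|_{L^\infty(\Om\times X)}=M$ we get $\|e^{\theta g_\om}\|_{L^\infty}\le e^{|\theta| M}$, and by~(V9) applied to the $C^1$ function $z\mapsto e^{\theta z}$ on $[-M,M]$ we get $\var(e^{\theta g_\om})\le |\theta| e^{|\theta| M}\var(g_\om)\le |\theta| e^{|\theta| M}\, V$, where $V:=\esssup_{\om}\var(g_\om)<\infty$ by~\eqref{obs}. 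Combining these with~(V3) to control $\|h\|_{L^\infty}$ by $C_{\var}\|h\|_{\BV}$, one obtains $\|\mathcal L_\om^\theta h\|_{\BV}\le K(\theta)\|h\|_{\BV}$ with $K(\theta):=K\, e^{|\theta|M}(1+C_{\var}|\theta|V)$, which is continuous in $\theta$ and independent of $\om$.

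For part (2), I would fix $\om$ and show $\theta\mapsto M_\om^\theta$ is continuous in the operator norm on $\BV$. The key is to estimate, for $\theta,\theta'\in\C$, the operator norm of $M_\om^\theta-M_\om^{\theta'}$, i.e. to bound $\|(e^{\theta g_\om}-e^{\theta' g_\om})h\|_{\BV}$ uniformly over $\|h\|_{\BV}\le 1$. Writing $\psi:= e^{\theta g_\om}-e^{\theta' g_\om}$, by~(V8) we have $\|\psi h\|_{\BV}=\|\psi h\|_1+\var(\psi h)\le \|\psi\|_{L^\infty}\|h\|_1+\|\psi\|_{L^\infty}\var(h)+\|h\|_{L^\infty}\var(\psi)\le \big(\|\psi\|_{L^\infty}+C_{\var}\var(\psi)\big)\|h\|_{\BV}$, using~(V3) once more. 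So it remains to show $\|\psi\|_{L^\infty}\to 0$ and $\var(\psi)\to 0$ as $\theta'\to\theta$. The sup-norm bound is elementary: $|e^{\theta t}-e^{\theta' t}|\le |t||\theta-\theta'|e^{\max(|\theta|,|\theta'|)|t|}$ pointwise, so $\|\psi\|_{L^\infty}\le M|\theta-\theta'|e^{(|\theta|+1)M}$ for $|\theta'-\theta|\le 1$. For the variation, apply~(V9) to the $C^1$ function $h_{\theta,\theta'}(t):= e^{\theta t}-e^{\theta' t}$ on $[-M,M]$: $\var(\psi)=\var(h_{\theta,\theta'}\circ g_\om)\le \|h_{\theta,\theta'}'\|_{L^\infty([-M,M])}\var(g_\om)\le \|h_{\theta,\theta'}'\|_{L^\infty}\, V$, and $h_{\theta,\theta'}'(t)=\theta e^{\theta t}-\theta' e^{\theta' t}\to 0$ uniformly on $[-M,M]$ as $\theta'\to\theta$. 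Hence $\|M_\om^\theta-M_\om^{\theta'}\|\to 0$. Finally, since $\mathcal L_\om^\theta=\mathcal L_\om\circ M_\om^\theta$ and $\mathcal L_\om$ is a fixed bounded operator on $\BV$ (with norm $\le K$ by~\ref{cond:unifNormBd}), continuity of $\theta\mapsto M_\om^\theta$ passes to $\theta\mapsto\mathcal L_\om^\theta$ via $\|\mathcal L_\om^\theta-\mathcal L_\om^{\theta'}\|\le K\|M_\om^\theta-M_\om^{\theta'}\|$.

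There is no serious obstacle here; the only thing to be slightly careful about is that the properties~(V1)--(V9) of $\var$ are exactly what is needed to make the algebra-of-multipliers estimates work (in particular~(V8) for the Leibniz-type inequality and~(V9) for composing with the entire function $z\mapsto e^{\theta z}$ restricted to the bounded range $[-M,M]$ of $g_\om$), and that the bound~$M<\infty$ in~\eqref{obs} is what makes $e^{\theta g_\om}$ a bounded multiplier in the first place. The uniformity in $\om$ in part (1) comes for free because $M$ and $V$ and the constant $K$ from~\ref{cond:unifNormBd} are all $\om$-independent; part (2) is only claimed fibrewise, so no such uniformity is needed there, though in fact the same estimates give it.
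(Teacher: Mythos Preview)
Your proof is correct and follows essentially the same route as the paper: both parts rely on \ref{cond:unifNormBd} together with the multiplier estimates coming from (V3), (V8), (V9) and the bound $\|g\|_{L^\infty}\le M$, arriving at the same $K(\theta)=K\,e^{|\theta|M}(1+C_{\var}|\theta|\esssup_\om\var g_\om)$. The only organizational difference is in part~(2): the paper factors $e^{\theta g_\om}-e^{\theta' g_\om}=e^{\theta' g_\om}(e^{(\theta-\theta')g_\om}-1)$ and then estimates each factor via (V8) and a forward reference to Lemma~\ref{L5}, whereas you apply (V9) directly to the difference function $t\mapsto e^{\theta t}-e^{\theta' t}$, which is slightly more streamlined and avoids the auxiliary lemma.
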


\begin{proof}
Note that it follows from~\eqref{obs} that $\lvert e^{\theta g(\omega, \cdot)}h\rvert_1 \le e^{\lvert \theta \rvert M} \lvert h\rvert_1$.
Furthermore,  by~(V8) we have
\[
 \var(e^{\theta g(\omega, \cdot)}h) \le \lVert e^{\theta g(\omega, \cdot)}\rVert_{L^\infty}\cdot  \var(h)+\var(e^{\theta g(\omega, \cdot)}) \cdot \lVert h\rVert_{L^\infty}.
\]
On the other hand, it follows from Lemma~\ref{var} and (V9)  that
\[
 \lVert e^{\theta g(\omega, \cdot)}\rVert_{L^\infty}\le e^{\lvert \theta \rvert M} \quad \text{and} \quad \var(e^{\theta g(\omega, \cdot)})\le \lvert \theta\rvert e^{\lvert \theta \rvert M}
 \var (g(\om, \cdot))
\]
and thus using~(V3), 
\begin{equation}\label{eq:boundMultip}
 \begin{split}
  \lVert e^{\theta g(\omega, \cdot)}h\rVert_{\BV} &=\var (e^{\theta g(\omega, \cdot)}h)+\lvert e^{\theta g(\omega, \cdot)}h\rvert_1 \\
&\le e^{\lvert \theta \rvert M} \lVert h\rVert_{\BV}+\lvert \theta \rvert e^{\lvert \theta \rvert M}\var (g(\om, \cdot))\lVert h\rVert_{L^\infty}
 \\
 &\le (e^{\lvert \theta \rvert M}+C_{\var}\lvert \theta \rvert e^{\lvert \theta \rvert M}\esssup_{\om\in \Om} \var(g(\om, \cdot)))\lVert h\rVert_{\BV}.
 \end{split}
\end{equation}
We now establish part 1 of the Lemma. It follows from~\ref{cond:unifNormBd} that
\[
 \lVert \mathcal L_{\omega}^\theta (h)\rVert_{\BV} =\lVert \mathcal L_\omega
(e^{\theta g(\omega, \cdot)}h)\rVert_{\BV} \le K \lVert e^{\theta g(\omega, \cdot)}h\rVert_{\BV}.
\]
Hence, \eqref{eq:boundMultip} implies that~\eqref{se2} holds with
\begin{equation}\label{ktheta}
 K(\theta)=K(e^{\lvert \theta \rvert M}+C_{\var}\lvert \theta \rvert e^{\lvert \theta \rvert M}\esssup_{\om\in \Om} \var(g(\om, \cdot))).
\end{equation}

For part 2 of the Lemma, we observe that
 \[ |(M_\om^{\theta_1}-M_\om^{\theta_2})h\|_{\BV} \leq \|M_\om^{\theta_1}\|_{\BV} \|(I- M_\om^{\theta_2-\theta_1})\|_{\BV} \|h\|_{\BV}.\]
By~\eqref{obs} and  the mean value theorem for the map $z\mapsto e^{(\theta_1-\theta_2)z}$, we have
that for each $x\in X$,
\[
\lvert e^{(\theta_1-\theta_2) g(\om, x)}-1\rvert \le Me^{\lvert \theta_1-\theta_2\rvert M}\lvert \theta_1-\theta_2\rvert.
\]
Thus,
\begin{equation}\label{417}
\|1-e^{(\theta_2-\theta_1)g(\om, \cdot)}\|_{L^\infty} \le Me^{\lvert \theta_1-\theta_2\rvert M}\lvert \theta_1-\theta_2\rvert
\end{equation}
and
\begin{equation}\label{412}
\lvert (I- M_\om^{\theta_2-\theta_1})h\rvert _1 \leq  Me^{\lvert \theta_1-\theta_2\rvert M}\lvert \theta_1-\theta_2\rvert \cdot  \lvert h\rvert_1.
\end{equation}
 Assume that $|\theta_2-\theta_1|\leq 1$. We note that conditions (V3) and (V8) together with~\eqref{417} and Lemma~\ref{L5}  imply
\begin{equation}\label{407}
\begin{split}
\var ( (I- M_\om^{\theta_2-\theta_1})h) &\leq \big(\|1-e^{(\theta_2-\theta_1)g(\om, \cdot)}\|_{L^\infty} + C_{var}\var(1-e^{(\theta_2-\theta_1)g(\om, \cdot)}) \big)
 \|h\|_{\BV} \\
& \leq C'|\theta_2-\theta_1| \|h\|_{\BV},
\end{split}
\end{equation}
for some $C'>0$.
Hence, it follows from~\eqref{412}  and~\eqref{407}  that  $\theta \mapsto M_\om^\theta$ is continuous in the norm topology of $\BV$. Continuity of $\theta \mapsto \mcl_\om^\theta$ then follows immediately from continuity of $\mcl_\om$ and the definition of $\mcl_\om^\theta$, in \eqref{eq:twisted}.
\end{proof}

The following lemma shows that the twisted cocycle naturally appears in the study of Birkhoff sums~\eqref{birkhoff}.

\begin{lemma}\label{lem:exprLit}
The following statements hold:
\begin{enumerate}
\item for every $\phi \in \BV^*, f \in \BV$, $\om \in \Om$,  $\theta \in \mathbb C$ and $n\in \N$ we have that
\begin{equation}\label{eq:adjointPert}
\mcl_\om^{\theta, (n)}(f)=\mcl_\om^{(n)}(e^{\theta S_{n}g(\om, \cdot)}f), \quad \text{and} \quad
 \mcl_\om^{\theta*,(n)}(\phi)  =  e^{\theta S_ng(\om, \cdot)}  \mcl_\om^{*(n)}(\phi),
\end{equation}
where $(e^{\theta S_ng(\om, \cdot)} \phi) (f):= \phi (e^{\theta S_ng(\om, \cdot)} f)$;
\item for every $f\in \BV$, $\om \in \Om$ and $n\in \N$ we have that
\begin{equation}\label{intprop}\int \mathcal{L}^{\theta, \, (n)}_\omega (f)\ dm=\int e^{\theta S_ng(\omega, \cdot )}f\ dm. \end{equation}
\end{enumerate}
\end{lemma}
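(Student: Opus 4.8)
The plan is to prove both identities by induction on $n$, exploiting the defining relation $\mcl_\om^\theta(h) = \mcl_\om(e^{\theta g(\om,\cdot)}h)$ together with the cocycle/chain-rule structure of the Birkhoff sums $S_n g$. The crucial combinatorial fact I would isolate first is the additive cocycle identity
\[
S_n g(\om, x) = g(\om, x) + S_{n-1}g\bigl(\sigma\om, T_\om x\bigr),
\]
which follows directly from \eqref{birkhoff} by peeling off the $i=0$ term and re-indexing. Equivalently, $S_n g(\om,\cdot) = S_{n-1}g(\sigma\om,\cdot)\circ T_\om + g(\om,\cdot)$ as functions on $X$. This is exactly the relation needed to match the nested composition of twisted operators against a single exponential weight.

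First I would establish~\eqref{eq:adjointPert}. The base case $n=1$ is just the definition~\eqref{eq:twisted}. For the inductive step, assuming $\mcl_{\sigma\om}^{\theta,(n-1)}(h) = \mcl_{\sigma\om}^{(n-1)}(e^{\theta S_{n-1}g(\sigma\om,\cdot)}h)$ for all $h$, I write
\[
\mcl_\om^{\theta,(n)}(f) = \mcl_{\sigma\om}^{\theta,(n-1)}\bigl(\mcl_\om^\theta(f)\bigr) = \mcl_{\sigma\om}^{\theta,(n-1)}\bigl(\mcl_\om(e^{\theta g(\om,\cdot)}f)\bigr) = \mcl_{\sigma\om}^{(n-1)}\bigl(e^{\theta S_{n-1}g(\sigma\om,\cdot)}\,\mcl_\om(e^{\theta g(\om,\cdot)}f)\bigr).
\]
Now I would use the basic transfer-operator identity $\psi\cdot\mcl_\om(h) = \mcl_\om\bigl((\psi\circ T_\om)\cdot h\bigr)$ for $\psi\in L^\infty$, applied with $\psi = e^{\theta S_{n-1}g(\sigma\om,\cdot)}$, to pull the weight inside: the inner expression becomes $\mcl_\om\bigl(e^{\theta S_{n-1}g(\sigma\om,T_\om\cdot)}e^{\theta g(\om,\cdot)}f\bigr) = \mcl_\om\bigl(e^{\theta S_n g(\om,\cdot)}f\bigr)$ by the cocycle identity above. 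Applying $\mcl_{\sigma\om}^{(n-1)}$ and recognizing $\mcl_{\sigma\om}^{(n-1)}\circ\mcl_\om = \mcl_\om^{(n)}$ completes the forward identity. For the adjoint identity, rather than a separate induction I would simply dualize: for $\phi\in\BV^*$ and $f\in\BV$,
\[
\bigl(\mcl_\om^{\theta*,(n)}\phi\bigr)(f) = \phi\bigl(\mcl_\om^{\theta,(n)}f\bigr) = \phi\bigl(\mcl_\om^{(n)}(e^{\theta S_n g(\om,\cdot)}f)\bigr) = \bigl(\mcl_\om^{*(n)}\phi\bigr)\bigl(e^{\theta S_n g(\om,\cdot)}f\bigr) = \bigl(e^{\theta S_n g(\om,\cdot)}\mcl_\om^{*(n)}\phi\bigr)(f),
\]
using the stated convention $(e^{\theta S_n g(\om,\cdot)}\psi)(f) := \psi(e^{\theta S_n g(\om,\cdot)}f)$; since this holds for all $f$, the two functionals agree.

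The second statement~\eqref{intprop} is then immediate from the first: integrating $\mcl_\om^{\theta,(n)}(f) = \mcl_\om^{(n)}(e^{\theta S_n g(\om,\cdot)}f)$ against $m$ and using that transfer operators preserve the integral, $\int_X \mcl_\om^{(n)}(h)\,dm = \int_X h\,dm$ (which follows by iterating the duality relation with $\psi\equiv 1$), gives $\int \mcl_\om^{\theta,(n)}(f)\,dm = \int e^{\theta S_n g(\om,\cdot)}f\,dm$.

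I do not expect a genuine obstacle here; the only point requiring a little care is the bookkeeping in the inductive step — keeping straight which fibre the weight $S_{n-1}g$ lives over ($\sigma\om$, not $\om$) and correctly composing it with $T_\om$ via the $\psi\cdot\mcl_\om(h) = \mcl_\om((\psi\circ T_\om)h)$ rule, which is precisely where the cocycle identity for $S_n g$ enters. One should also note in passing that all the operators involved are bounded on $\BV$ (so the expressions make sense): boundedness of $\mcl_\om^\theta$ follows from Lemma~\ref{1214}, and the multiplication operators $e^{\theta S_n g(\om,\cdot)}$ map $\BV$ to itself since $S_n g(\om,\cdot)$ is bounded with finite variation by~\eqref{obs} and (V8)–(V9). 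Everything else is a routine unwinding of definitions.
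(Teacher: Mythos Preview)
Your proof is correct and follows essentially the same approach as the paper's: induction on $n$ using the transfer-operator product rule $\psi\cdot\mcl_\om(h)=\mcl_\om((\psi\circ T_\om)h)$, then duality for the adjoint identity, then integration for part~2. The only cosmetic difference is that you peel off the \emph{first} map in the composition (using $S_n g(\om,\cdot)=g(\om,\cdot)+S_{n-1}g(\sigma\om,\cdot)\circ T_\om$), whereas the paper peels off the \emph{last} one (using $S_{n+1}g(\om,\cdot)=S_n g(\om,\cdot)+g(\sigma^n\om,\cdot)\circ T_\om^{(n)}$); both routes are equally valid and equally short.
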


\begin{proof}

We establish the first identity in~\eqref{eq:adjointPert} by induction on $n$. The case $n=1$ follows from the definition of $\mcl_\om^{\theta}$.
We recall that for every $f, \tilde{f}\in \BV$,
\begin{equation}\label{eq:prodRuleP}
\mcl_\om^{(n)}((\tilde{f} \circ T_\om^{(n)})  \cdot f) = \tilde{f}\cdot  \mcl_\om^{(n)}( f).
\end{equation}
 Assuming the claim holds for some $n\geq 1$, we get
\begin{align*}
 \mcl_{\om}^{(n+1)} (e^{\theta S_{n+1}g(\om, \cdot)}f)
 &= \mcl_{\sig^n\om} \big(\mcl_{\om}^{(n)} ( e^{\theta g(\sig^{n} \om, \cdot)\circ T_\om^{(n)}}  e^{\theta S_{n}g(\om, \cdot)}f) \big) \\
 &=
 \mcl_{\sig^n\om} \big( e^{\theta g(\sig^{n} \om, \cdot)} \mcl_{\om}^{(n)} ( e^{\theta  S_{n}g(\om, \cdot)}f) \big)
 = \mcl_{\sig^n\om}^{\theta }\mcl_\om^{\theta , (n)}(f) = \mcl_\om^{\theta , (n+1)}(f).
\end{align*}
The second identity in~\eqref{eq:adjointPert}  follows directly from duality. Finally, we note that the second assertion of the lemma follows by integrating  the first equality in~\eqref{eq:adjointPert} with respect to $m$ and using the fact that $\mathcal L_\om^n$ preserves integrals with respect to $m$.
\end{proof}

\subsection{An auxiliary existence and regularity result}
In this section we establish a regularity result, Lemma~\ref{thm:IFT}, which generalises a theorem of~Hennion and Herv\'e \cite{HennionHerve} to the random setting.
This result will be used later  to show regularity of the top Oseledets space $Y_\om^\theta:= Y_1^{\theta}(\om)$ of the twisted cocycle, for $\theta$ near 0.

Let
\begin{equation}\label{defS}
\begin{split}
\mc{S}:=\Big\{ \mc{V} &: \Om \times X \to \mathbb C \ | \  \mc{V} \text{ is measurable}, \mc{V}(\om, \cdot)\in \BV,\\
 &\esssup_{\om \in \Om} \| \mc{V}(\om, \cdot) \|_{\BV}<\infty,
 \int \mc{V}(\om, x) dm =0 \text{ for } \paeom  \Big\},
 \end{split}
 \end{equation}
 endowed with the Banach space structure defined by the norm
\begin{equation}\label{infnorm}
\|\mc{V}\|_\infty := \esssup_{\om \in \Om} \| \mc{V}(\om, \cdot) \|_{\BV}.
\end{equation}
For $\theta \in \C$ and $\mc{W} \in \mc S$, set
\begin{equation}\label{defF}
F(\theta, \mc{W})(\om, \cdot)= \frac{\Lsot(\mc{W}(\sig^{-1}\om, \cdot) + v_{\sig^{-1}\om}^0(\cdot))}{\int \Lsot(\mc{W}(\sig^{-1}\om, \cdot) + v_{\sig^{-1}\om}^0(\cdot)) dm} - \mc{W}(\om,\cdot) - v_\om^0(\cdot).
\end{equation}

%

\begin{lemma}\label{lem:FwellDef}
There exist  $\ep, R>0$ such that
  $F \colon \mc{D} \to \mc{S}$ is a well-defined map on $\mc{D}:=\{ \theta \in \C : |\theta|<\ep \} \times B_{\mc{S}}(0,R)$, where $B_{\mc{S}}(0,R)$ denotes the ball of radius $R$ in $\mc{S}$ centered at $0$.
\end{lemma}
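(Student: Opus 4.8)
The plan is to verify the three defining properties of $\mc{S}$ — measurability, uniform $\BV$-boundedness, and zero integral — for the function $F(\theta,\mc{W})$, provided $\ep$ and $R$ are chosen small enough. First I would deal with the zero-integral condition, which is the cheapest: since $\mcl_{\sig^{-1}\om}$ preserves integrals with respect to $m$ (by the duality defining the transfer operator), the numerator and the normalising denominator in the first term of \eqref{defF} have the same integral, so the first term integrates to $1$; meanwhile $\int \mc{W}(\om,\cdot)\,dm = 0$ because $\mc{W}\in\mc{S}$, and $\int v_\om^0\,dm = 1$ by the normalisation of the random acim. Hence $\int F(\theta,\mc{W})(\om,\cdot)\,dm = 1 - 0 - 1 = 0$, as required. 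Measurability of $\om\mapsto F(\theta,\mc{W})(\om,\cdot)$ follows from measurability of $\om\mapsto v_\om^0$ (Lemma~\ref{lem:PF}), measurability of $\mc{W}$, the $\bbp$-continuity of $\mcl$ from \ref{cond:METCond} (equivalently \ref{cond:c0'}), and the fact that $\theta\mapsto e^{\theta g(\om,\cdot)}$ is continuous into $\BV$ by Lemma~\ref{1214}(2); the division by the scalar $\int\mcl_{\sig^{-1}\om}(\cdots)\,dm$ preserves measurability as long as that scalar is bounded away from $0$.

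The substantive point is the uniform $\BV$-bound, and this is where the choice of $\ep,R$ is forced. Write $u_{\sig^{-1}\om} := \mc{W}(\sig^{-1}\om,\cdot) + v_{\sig^{-1}\om}^0(\cdot)$, so $\|u_{\sig^{-1}\om}\|_{\BV} \le R + \esssup_\om\|v_\om^0\|_{\BV} =: R + C_0$, using \eqref{eq:boundedv}. By Lemma~\ref{1214}(1), $\|\mcl_{\sig^{-1}\om}^\theta u_{\sig^{-1}\om}\|_{\BV} \le K(\theta)(R+C_0)$ with $K(\cdot)$ continuous, so the numerator of the first term of $F$ is uniformly bounded for $|\theta|\le\ep$. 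The real work is a uniform lower bound on $\bigl|\int \mcl_{\sig^{-1}\om}^\theta u_{\sig^{-1}\om}\,dm\bigr|$. By \eqref{intprop} this integral equals $\int e^{\theta g(\sig^{-1}\om,\cdot)} u_{\sig^{-1}\om}\,dm$. At $\theta=0$ it equals $\int u_{\sig^{-1}\om}\,dm = \int v_{\sig^{-1}\om}^0\,dm = 1$ (the $\mc{W}$-part integrates to zero). For general small $\theta$ I would estimate
\[
\Bigl|\int e^{\theta g(\sig^{-1}\om,\cdot)} u_{\sig^{-1}\om}\,dm - 1\Bigr| \le \int \bigl|e^{\theta g(\sig^{-1}\om,\cdot)} - 1\bigr|\,|u_{\sig^{-1}\om}|\,dm \le M|\theta|e^{M|\theta|}\,\|u_{\sig^{-1}\om}\|_1,
\]
using the bound $|e^{\theta g}-1|\le M|\theta|e^{M|\theta|}$ from the proof of Lemma~\ref{1214} (cf.\ \eqref{417}) together with $\|g\|_{L^\infty}\le M$ from \eqref{obs}; and $\|u_{\sig^{-1}\om}\|_1 \le \|u_{\sig^{-1}\om}\|_{\BV}\le R+C_0$. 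Choosing first $R$ small (so that $C_0$ dominates but the bound is controlled) and then $\ep$ small so that $M\ep e^{M\ep}(R+C_0) \le \tfrac12$, we get $\bigl|\int\mcl_{\sig^{-1}\om}^\theta u_{\sig^{-1}\om}\,dm\bigr|\ge \tfrac12$ uniformly in $\om$ and in $|\theta|\le\ep$. Dividing the uniformly bounded numerator by a scalar of modulus $\ge\tfrac12$ gives a uniform $\BV$-bound on the first term of $F$, and subtracting $\mc{W}$ (bounded by $R$) and $v_\om^0$ (bounded by $C_0$) keeps everything uniformly bounded. Hence $F(\theta,\mc{W})\in\mc{S}$ on $\mc{D}$.

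The main obstacle, as indicated, is the uniform-in-$\om$ lower bound on the normalising denominator: everything else is either a one-line integral identity or a direct application of the already-proved regularity estimates of Lemma~\ref{1214} and the uniform acim bounds of Lemma~\ref{lem:boundedv}. One subtlety worth being careful about is the order of quantifiers in choosing the constants — $R$ must be fixed before $\ep$, since the admissible range of $\ep$ depends on $R$ through the product $M|\theta|e^{M|\theta|}(R+C_0)$ — but there is no circularity, and shrinking $R$ only helps. I would also remark that the same computation shows the denominator is close to $1$, not merely bounded below, which will be convenient when differentiating $F$ in the subsequent implicit-function-theorem argument.
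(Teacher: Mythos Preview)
Your proof is correct and follows essentially the same route as the paper: verify the zero-integral condition by construction, and obtain the uniform $\BV$-bound by bounding the numerator via Lemma~\ref{1214} and \eqref{eq:boundedv} while showing the normalising denominator is uniformly close to $1$ (hence $\ge \tfrac12$). The only cosmetic difference is that the paper packages the denominator control as ``$H(\theta,\mc W):=\int \mcl^\theta_{\sigma^{-1}\om}(\mc W_{\sigma^{-1}\om}+v^0_{\sigma^{-1}\om})\,dm$ is continuous on a neighbourhood of $(0,0)$ with $H(0,0)\equiv 1$'' (appealing to differentiability of $H$ proved in the appendix), whereas you write out the explicit elementary estimate $|H(\theta,\mc W)(\om)-1|\le M|\theta|e^{M|\theta|}(R+C_0)$; both yield the same $\tfrac12$ lower bound on the same neighbourhood.
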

\begin{proof}
We define a map $H$  by
\[
 H(\theta, \mc W)(\om)=\int \Lsot(\mc{W}(\sig^{-1}\om, \cdot) + v_{\sig^{-1}\om}^0(\cdot)) \, dm=\int e^{\theta g(\sigma^{-1}\om, \cdot)}(\mc{W}(\sig^{-1}\om, \cdot) + v_{\sig^{-1}\om}^0(\cdot)) \, dm.
\]
It is proved in Lemmas~\ref{1108} and \ref{L6} of Appendix~\ref{regofF} that $H$ is a well-defined and differentiable function on a neighborhood of $(0,0)$  (and thus in particular continuous) with values in $L^\infty(\Om, \bbp)$.  Moreover,  we observe that $H(0,0)(\om)=1$ for each $\om \in \Om$ and
therefore
\[
\begin{split}
 \lvert H(\theta, \mc W)(\om)\rvert \ge 1-\lvert H(0,0)(\om)-H(\theta, \mc W)(\om)\rvert \ge 1-\lVert H(0,0)-H(\theta, \mc W)\rVert_{L^\infty},
 \end{split}
\]
for \paeom. Continuity of $H$ implies that  $\lVert H(0,0)-H(\theta, \mc W)\rVert_{L^\infty} \le 1/2$
for all $(\theta, \mc W)$ in a neighborhood of $(0,0)$ and hence, in such neighborhood,
\[
 \essinf_{\om} \lvert H(\theta, \mc W)(\om)\rvert \ge 1/2.
\]
The above inequality together with~\eqref{eq:boundedv} and~\eqref{se2}  yields the desired conclusion.
\end{proof}

\begin{lemma}\label{thm:IFT}
Let $\mc{D}=\{ \theta \in \C : |\theta|<\ep \} \times B_{\mc{S}}(0,R)$ be as in Lemma~\ref{lem:FwellDef}. Then, by shrinking $\epsilon>0$ if necessary, we have that 
$F:\mc{D} \to \mc{S}$ is $C^1$  and
the equation
\begin{equation}
F(\theta, \mc{W})=0
\end{equation}
has a unique solution $O(\theta) \in \mc{S}$, for every $\theta$ in a neighborhood  of 0.
Furthermore, $O(\theta)$ is a $C^2$ function of $\theta$.
\end{lemma}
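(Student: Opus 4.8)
The plan is to apply the implicit function theorem in Banach spaces to $F\colon \mc{D}\to\mc{S}$ at the base point $(0,0)$. The first thing to check is that $F(0,0)=0$ and, more usefully, that $\mc{W}\mapsto F(0,\mc{W})$ is \emph{linear}. Indeed, at $\theta=0$ we have $\mcl_\om^0=\mcl_\om$, so the numerator in \eqref{defF} is $\mcl_{\sig^{-1}\om}(\mc{W}(\sig^{-1}\om,\cdot)+v^0_{\sig^{-1}\om})$; since each $\mcl_\om$ preserves $m$-integrals and $\int\mc{W}(\sig^{-1}\om,\cdot)\,dm=0$ (because $\mc{W}\in\mc{S}$), the denominator equals $\int v^0_{\sig^{-1}\om}\,dm=1$. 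Using the equivariance $\mcl_{\sig^{-1}\om}v^0_{\sig^{-1}\om}=v^0_\om$ from \eqref{v0om}, this collapses to
$F(0,\mc{W})(\om,\cdot)=\mcl_{\sig^{-1}\om}(\mc{W}(\sig^{-1}\om,\cdot))-\mc{W}(\om,\cdot)$,
which vanishes at $\mc{W}=0$ and is linear in $\mc{W}$.

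Next I would establish the regularity of $F$ on $\mc{D}$. For fixed $\theta$, the map $\mc{W}\mapsto F(\theta,\mc{W})$ is linear-fractional: the numerator in \eqref{defF} is affine-linear and bounded in $\mc{W}$, and the denominator $H(\theta,\mc{W})(\om)$ is an affine-linear functional of $\mc{W}$ which, after possibly shrinking $\ep$ and $R$, satisfies $\essinf_\om|H(\theta,\mc{W})(\om)|\ge 1/2$ by Lemma~\ref{lem:FwellDef}; since $t\mapsto 1/t$ is smooth away from $0$, $\mc{W}\mapsto F(\theta,\mc{W})$ is $C^\infty$. The $\theta$-dependence is governed by the regularity of $\theta\mapsto M_\om^\theta$ (multiplication by $e^{\theta g(\om,\cdot)}$) as a map into $L(\BV)$, uniformly in $\om$ — which is exactly what is proved in Appendix~\ref{regofF} (Lemmas~\ref{1108} and \ref{L6}), its derivatives being controlled uniformly in $\om$ via $\esssup_\om\var(g_\om)<\infty$, (V8) and the uniform bound \eqref{se2} coming from \ref{cond:unifNormBd}. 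Combining these inputs through the chain and quotient rules shows that, shrinking $\ep$ if necessary, $F$ is $C^1$ on $\mc{D}$ (indeed $C^2$, which is what we will need below).

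The crucial step is to show that $D_{\mc{W}}F(0,0)$ is a bounded, invertible operator on $\mc{S}$. By the first paragraph, $D_{\mc{W}}F(0,0)=\mc{M}-I$, where $\mc{M}\colon\mc{S}\to\mc{S}$ is given by $(\mc{M}\mc{W})(\om,\cdot):=\mcl_{\sig^{-1}\om}(\mc{W}(\sig^{-1}\om,\cdot))$; this is a well-defined bounded operator on $\mc{S}$ since $\mcl$ preserves integrals (so $\mc{M}\mc{W}$ still has zero $m$-average) and, by \ref{cond:unifNormBd}, preserves the uniform $\BV$-bound. Iterating gives $(\mc{M}^n\mc{W})(\om,\cdot)=\mcl_{\sig^{-n}\om}^{(n)}(\mc{W}(\sig^{-n}\om,\cdot))$, and because $\int\mc{W}(\sig^{-n}\om,\cdot)\,dm=0$, the decay condition \ref{cond:dec} yields $\|\mc{M}^n\mc{W}\|_\infty\le K'e^{-\lam n}\|\mc{W}\|_\infty$, hence $\|\mc{M}^n\|_{L(\mc{S})}\le K'e^{-\lam n}$. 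Thus the spectral radius of $\mc{M}$ is at most $e^{-\lam}<1$, the Neumann series $\sum_{n\ge 0}\mc{M}^n$ converges in operator norm to $(I-\mc{M})^{-1}$, and $D_{\mc{W}}F(0,0)=-(I-\mc{M})$ is invertible with bounded inverse.

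With these three ingredients the implicit function theorem applies, producing a neighborhood of $0$ and a $C^1$ map $\theta\mapsto O(\theta)\in\mc{S}$ with $O(0)=0$, $F(\theta,O(\theta))=0$, and uniqueness among solutions in $B_{\mc{S}}(0,R)$ for $\theta$ near $0$. To upgrade to $C^2$, one uses that $F$ is in fact $C^2$ on $\mc{D}$: either invoke the $C^2$ version of the implicit function theorem directly, or differentiate the identity $F(\theta,O(\theta))=0$ to get $O'(\theta)=-[D_{\mc{W}}F(\theta,O(\theta))]^{-1}D_\theta F(\theta,O(\theta))$ and observe that the right-hand side is $C^1$ in $\theta$ because $F\in C^2$, $D_{\mc{W}}F$ remains invertible near $(0,0)$, and operator inversion is smooth. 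I expect the main obstacle to be not the abstract IFT but rather its two inputs: the uniform-in-$\om$ differentiability estimates for the $\theta$-dependence of $F$ (which is why that bookkeeping is relegated to the appendix), and the identification and invertibility of $D_{\mc{W}}F(0,0)$ — the point at which the spectral gap of the untwisted cocycle on zero-average functions, encoded in \ref{cond:dec}, genuinely enters the argument.
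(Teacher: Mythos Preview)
Your proposal is correct and follows the same overall strategy as the paper: verify $F(0,0)=0$, establish $C^2$ regularity via the appendix estimates, compute $D_{\mc W}F(0,0)$, show it is invertible using \ref{cond:dec}, and apply the implicit function theorem.

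The only noteworthy difference is in how invertibility of $D_{\mc W}F(0,0)=\mc M-I$ is obtained. The paper argues injectivity and surjectivity separately: injectivity by observing that a nonzero $\mc X\in\mc S$ with $\mcl_\om\mc X_\om=\mc X_{\sig\om}$ would produce a second vector in the top Oseledets space, contradicting $\dim Y(\om)=1$ (Lemma~\ref{lem:qc+1dim}); surjectivity by writing down the explicit inverse $\tilde{\mc X}(\om,\cdot)=-\sum_{j\ge 0}\mcl_{\sig^{-j}\om}^{(j)}\mc X(\sig^{-j}\om,\cdot)$ and checking convergence via \ref{cond:dec}. You instead observe directly that $\|\mc M^n\|_{L(\mc S)}\le K'e^{-\lambda n}$ by \ref{cond:dec}, so the spectral radius of $\mc M$ is strictly less than $1$ and the Neumann series gives $(I-\mc M)^{-1}$. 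Your route is slightly more economical (it does not need to invoke the Oseledets one-dimensionality), and the paper's explicit surjectivity formula \eqref{inverse} is of course exactly your Neumann series written out; the paper's separate injectivity argument is then, strictly speaking, redundant. Either way the same exponential decay \ref{cond:dec} is the essential input.
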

\begin{proof}
We notice that  $F(0,0)=0$. Furthermore, Proposition~\ref{prop:F-C2} of Appendix~\ref{sec:regofF} ensures  that $F$ is $C^2$
on a neighborhood $(0, 0)\in \mathbb C \times \mc{S}$, and
\[
 (D_2F(0,0) \mathcal X)(\om, \cdot)=\mathcal L_{\sigma^{-1} \om}(\mathcal X(\sigma^{-1}\om, \cdot))-\mathcal X(\om, \cdot), \quad \text{for $\om \in \Om$ and $\mathcal X\in \mathcal S$.}
\]
 We now prove that $D_2 F(0,0)$ is bijective operator.

For injectivity, we have that if $D_2F(0, 0)\mathcal X=0$ for some nonzero $\mc{X}\in \mc{S}$, then $\mcl_\om \mc{X}_\om = \mc{X}_{\sig\om}$ for \paeom. Notice that $\mc{X}_\om \notin \langle v^0_\om \rangle$ because $\int \mc{X}_\om(\cdot) dm=0$ and  $ \mc{X}_\om\neq 0$. Hence, this yields a
contradiction with the one-dimensionality of the top Oseledets space of the cocycle $\mathcal L$, given by Lemma~\ref{lem:qc+1dim}. Therefore,  $D_2F(0,0)$ is injective.
To prove surjectivity, take $\mc{X}\in \mc{S}$ and
 let \begin{equation}\label{inverse} \tld{\mc{X}}(\om, \cdot):= - \sum_{j=0}^\infty  \mcl_{\sig^{-j}\om}^{(j)} \mc{X}(\sig^{-j}\om, \cdot). \end{equation} It follows from~\ref{cond:dec} that $\tld{\mc{X}}
 \in \mathcal S$ and it is easy to verify that
$D_2 F(0,0)\tld{\mc{X}}=\mc{X}$.
Thus, $D_2F(0,0)$ is surjective.

Combining the previous arguments, we conclude that $D_2F(0,0)$ is bijective. The conclusion of the lemma  now follows directly from the  implicit function theorem for Banach spaces (see, e.g.\ Theorem 3.2 \cite{avez}).
\end{proof}

%

We end this section with a specialisation of the previous results to real valued $\theta$.
\begin{proposition}\label{prop:Density}
 There exists $\delta >0$ such that for each $\theta \in (-\delta, \delta)$, $O(\theta)(\om, \cdot)+v^0_\om$ is a density for \paeom.
\end{proposition}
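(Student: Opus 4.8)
The plan is to verify the two defining properties of a probability density (with respect to $m$): that $O(\theta)(\om,\cdot)+v^0_\om$ is nonnegative $m$-a.e.\ and integrates to $1$, both holding for all sufficiently small real $\theta$ and for $\paeom$.

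I would first dispose of the normalization, which is automatic. By the definition of $\mc{S}$ in~\eqref{defS}, any $\mc{V}\in\mc{S}$ satisfies $\int \mc{V}(\om,x)\,dm(x)=0$ for $\paeom$; applying this to $\mc{V}=O(\theta)$ and using $\int v^0_\om\,dm=1$ from the normalization of the random acim gives $\int\big(O(\theta)(\om,x)+v^0_\om(x)\big)\,dm(x)=1$ for $\paeom$, for every $\theta$ in the neighborhood of $0$ on which $O(\theta)$ is defined by Lemma~\ref{thm:IFT}. I would also note, for $\theta\in\R$, that $O(\theta)$ is real-valued: since $F(\theta,\cdot)$ commutes with complex conjugation when $\theta$ is real (the only $\theta$-dependence is through the real factor $e^{\theta g}$), $\overline{O(\theta)}$ is also a solution lying in the same ball $B_{\mc{S}}(0,R)$, so uniqueness in Lemma~\ref{thm:IFT} forces $\overline{O(\theta)}=O(\theta)$.

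For nonnegativity I would combine the uniform positivity of the acim with the continuity of $O$ at $0$. By Lemma~\ref{thm:IFT}, $O$ is $C^2$ (hence continuous) near $0$ with $O(0)=0$, so $\|O(\theta)\|_\infty\to 0$ as $\theta\to 0$, where $\|\cdot\|_\infty$ is the norm on $\mc{S}$ from~\eqref{infnorm}. By~\eqref{lowerbound} there is $c>0$ with $\essinf v^0_\om\ge c$ for $\paeom$, and by property~(V3) of $\var$ we have $\|O(\theta)(\om,\cdot)\|_{L^\infty}\le C_{\var}\|O(\theta)(\om,\cdot)\|_{\BV}\le C_{\var}\|O(\theta)\|_\infty$ for $\paeom$. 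Choosing $\delta>0$ small enough (and inside the domain of $O$) that $C_{\var}\|O(\theta)\|_\infty<c$ whenever $|\theta|<\delta$, I get, for $|\theta|<\delta$ and $\paeom$,
\[
\essinf_{x}\big(O(\theta)(\om,x)+v^0_\om(x)\big)\ \ge\ c-C_{\var}\|O(\theta)\|_\infty\ >\ 0 .
\]
Together with the normalization, this shows $O(\theta)(\om,\cdot)+v^0_\om$ is a (strictly positive, bounded-below) density, completing the proof.

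I do not expect any real obstacle: the statement follows directly from the regularity of $O$ in Lemma~\ref{thm:IFT} and the uniform bounds on $v^0_\om$ in Lemma~\ref{lem:boundedv}. The only point needing care is keeping every estimate uniform in $\om$, which is precisely what the $\|\cdot\|_\infty$-norm on $\mc{S}$ and the $\om$-uniform bounds~\eqref{eq:boundedv}--\eqref{lowerbound} are built to supply; choosing $\delta$ independent of $\om$ is then immediate.
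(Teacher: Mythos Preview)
Your proof is correct and follows essentially the same strategy as the paper: establish normalization from $O(\theta)\in\mc{S}$, real-valuedness for $\theta\in\R$, and then nonnegativity by combining the uniform lower bound~\eqref{lowerbound} on $v^0_\om$ with $\|O(\theta)\|_\infty\to 0$ via~(V3).

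The one noteworthy difference is how you handle real-valuedness. The paper (Lemma~\ref{lem:WReal}) re-runs the implicit function theorem on the real subspace $\tilde{\mc{S}}\subset\mc{S}$ of real-valued functions to produce a real solution $\tilde O(\theta)$, and then invokes uniqueness in $\mc{S}$ to conclude $O(\theta)=\tilde O(\theta)$. Your conjugation argument is more direct: for $\theta\in\R$ one has $F(\theta,\overline{\mc{W}})=\overline{F(\theta,\mc{W})}$ (since $e^{\theta g}$, $v^0_\om$ and $\mcl_\om$ are real), so $\overline{O(\theta)}$ is another zero of $F(\theta,\cdot)$ with the same $\|\cdot\|_\infty$-norm, and local uniqueness from the implicit function theorem forces $\overline{O(\theta)}=O(\theta)$. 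This avoids setting up a second Banach space and a second application of the implicit function theorem; the paper's version, on the other hand, makes explicit that the full $C^2$ regularity of $O$ also holds within the real framework, which is not needed here but could be useful elsewhere.
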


 We first show the following auxiliary result.
 \begin{lemma}\label{lem:WReal}
  For $\theta \in \R$ sufficiently close to $0$, $O(\theta)$ is real-valued.
 \end{lemma}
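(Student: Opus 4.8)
The plan is to exploit the uniqueness of the solution $O(\theta)$ guaranteed by Lemma~\ref{thm:IFT}, together with the observation that the defining equation $F(\theta,\cdot)=0$ respects complex conjugation when $\theta$ is real. First I would note that for $\theta\in\R$ the twisted operator $\Lsot$ maps real-valued functions to real-valued functions, since $e^{\theta g(\sigma^{-1}\om,\cdot)}$ is real and $\mcl_{\sigma^{-1}\om}$ preserves reality (being a genuine transfer operator). Consequently, if $\mc W\in\mc S$ is such that $\mc W(\om,\cdot)$ is real-valued for \paeom, then $F(\theta,\mc W)(\om,\cdot)$ is again real-valued, because the numerator, the denominator (which is the real integral $H(\theta,\mc W)(\om)$), and the subtracted terms $\mc W(\om,\cdot)+v_\om^0(\cdot)$ are all real.

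The key step is then the following conjugation-symmetry argument. Let $\overline{\mc W}$ denote the pointwise complex conjugate of $\mc W\in\mc S$; clearly $\overline{\mc W}\in\mc S$ with $\|\overline{\mc W}\|_\infty=\|\mc W\|_\infty$, so $\mc W\mapsto\overline{\mc W}$ is an (antilinear) isometry of $B_{\mc S}(0,R)$. For $\theta\in\R$, since $\Lsot$ has real coefficients and $v^0_{\sigma^{-1}\om}$ is real, one checks directly from \eqref{defF} that $F(\theta,\overline{\mc W})=\overline{F(\theta,\mc W)}$. In particular, applying this with $\mc W=O(\theta)$, which satisfies $F(\theta,O(\theta))=0$, gives $F(\theta,\overline{O(\theta)})=\overline{0}=0$. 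Since $\overline{O(\theta)}$ lies in $B_{\mc S}(0,R)$ and the implicit function theorem furnishes a \emph{unique} solution of $F(\theta,\mc W)=0$ in a neighborhood of $0\in\mc S$ for each $\theta$ near $0$ — and since $O(\theta)\to O(0)=0$ as $\theta\to 0$ by continuity, so that $\overline{O(\theta)}$ is also close to $0$ — uniqueness forces $\overline{O(\theta)}=O(\theta)$, i.e.\ $O(\theta)$ is real-valued for $\theta\in\R$ sufficiently close to $0$.

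The only mildly delicate point, and the one I would be most careful about, is making sure $\overline{O(\theta)}$ genuinely falls within the neighborhood on which uniqueness holds. This is handled by continuity of $\theta\mapsto O(\theta)$ (part of Lemma~\ref{thm:IFT}) together with $O(0)=0$: there is a radius $r\le R$ and $\delta'>0$ such that for $|\theta|<\delta'$ the unique zero of $F(\theta,\cdot)$ in $B_{\mc S}(0,r)$ is $O(\theta)$, and by shrinking $\delta'$ further we ensure $\|O(\theta)\|_\infty<r$, hence also $\|\overline{O(\theta)}\|_\infty<r$. Then $F(\theta,\overline{O(\theta)})=0$ with $\overline{O(\theta)}\in B_{\mc S}(0,r)$ yields $\overline{O(\theta)}=O(\theta)$. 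I expect no serious obstacle here; the argument is essentially a symmetry-plus-uniqueness observation, and the proof of Proposition~\ref{prop:Density} will then follow by combining Lemma~\ref{lem:WReal} with the facts that $\int(O(\theta)(\om,\cdot)+v^0_\om)\,dm=1$ and, for $\theta$ small enough, $O(\theta)(\om,\cdot)+v^0_\om\ge 0$ since $v^0_\om$ is bounded below by $c>0$ (see \eqref{lowerbound}) while $\|O(\theta)\|_\infty$ is small by continuity, so $\essinf(O(\theta)(\om,\cdot)+v^0_\om)>0$.
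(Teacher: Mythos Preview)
Your argument is correct and, in fact, slightly slicker than the paper's. Both approaches hinge on the uniqueness clause of the implicit function theorem in Lemma~\ref{thm:IFT}, but they invoke it differently. The paper introduces the real Banach subspace $\tilde{\mc S}\subset\mc S$ of real-valued elements, observes that for $\theta\in\R$ the map $F$ restricts to a map $\tilde F:\R\times\tilde{\mc S}\to\tilde{\mc S}$, re-runs the regularity and implicit function theorem arguments in this real setting to produce a real-valued solution $\tilde O(\theta)\in\tilde{\mc S}$, and then concludes $O(\theta)=\tilde O(\theta)$ by uniqueness in the ambient complex space. You instead exploit the conjugation symmetry $F(\theta,\overline{\mc W})=\overline{F(\theta,\mc W)}$ for real $\theta$ directly: since $F(\theta,O(\theta))=0$ implies $F(\theta,\overline{O(\theta)})=0$, and since $\|\overline{O(\theta)}\|_\infty=\|O(\theta)\|_\infty$ is small, uniqueness forces $\overline{O(\theta)}=O(\theta)$.

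Your route avoids having to re-verify differentiability of $F$ on the real subspace and is conceptually cleaner; the paper's route has the minor advantage of making explicit that the entire implicit-function machinery can be run in the real category, which some readers may find reassuring. Your handling of the ``mildly delicate point'' (ensuring $\overline{O(\theta)}$ lies in the uniqueness ball) is exactly right. The closing remarks about Proposition~\ref{prop:Density} are accurate previews of what comes next but are not part of the proof of the lemma itself.
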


 \begin{proof}
 We consider the space
 \[
 \begin{split}
\tilde{\mc{S}}:=\Big\{ \mc{V} &: \Om \times \X \to \mathbb R \ | \  \mc{V} \text{ is measurable}, \mc{V}(\om, \cdot)\in \BV,\\
 &\esssup_{\om \in \Om} \| \mc{V}(\om, \cdot) \|_{\BV}<\infty,
 \int \mc{V}(\om, x) dm =0 \text{ for } \paeom  \Big\}.
 \end{split}
 \]
Hence, $\tilde{\mc{S}}$ consists of real-valued functions $\mc{V} \in \mc{S}$.
 We note that $\tilde{\mc {S}}$ is a Banach space with the norm $\lVert \cdot \rVert_\infty$ defined by~\eqref{infnorm}. Moreover, we can define  a map $\tilde F$ on a neighborhood of $(0,0)$ in
 $\mathbb R \times \tilde{\mc{S}}$ with values in $\tilde{\mc{S}}$ by the RHS of~\eqref{defF}. Proceeding as in  Appendix~\ref{regofF}, one can show that $\tilde F$ is a
 differentiable map on a neighborhood of $(0,0)$. Moreover, arguing as in the proof of Lemma~\ref{thm:IFT} one can conclude that for $\theta$ sufficiently close to $0$, there exists a unique
 $\tilde{O}(\theta )\in \tilde{\mc{S}}$ such that $\tilde F(\theta, \tilde{O}(\theta))=0$ and that $\tilde{O}(\theta)$ is differentiable with respect to $\theta$.
 Since $\tilde{\mc{S}} \subset \mc{S}$ and from the uniqueness property in the implicit function
 theorem, we conclude that $O(\theta)=\tilde{O}(\theta)$ for $\theta$ sufficiently close to $0$ which immediately implies the conclusion of the lemma.
\end{proof}

\begin{proof} [Proof of Proposition~\ref{prop:Density}]
By Lemma~\ref{lem:WReal}, for $\theta$ sufficiently close to $0$, $O(\theta)(\om, \cdot) +v_\om^0(\cdot)$ is real-valued. Moreover, $\int (O(\theta)(\om, \cdot) +v_\om^0(\cdot))\, dm=1$ for a.e.
$\om \in \Om$. It remains to show that $O(\theta)(\om, \cdot) +v_\om^0(\cdot) \ge 0$ for \paeom. Since the map $\theta \mapsto O(\theta)$ is continuous,  there
exists $\delta >0$ such that for all $\theta \in (-\delta, \delta)$, $O(\theta)$ belongs to a ball of radius $c/(2 C_{\var})$ centered at $0$ in $\mc{S}$. In particular,
\[
 \esssup_{\om \in \Om}\lVert O(\theta) (\om, \cdot) \rVert_{\BV} < c/(2 C_{\var})
\]
and therefore,
\[
 \esssup_{\om \in \Om}\lVert O(\theta) (\om, \cdot) \rVert_{L^\infty} < c/2.
\]
By~\eqref{lowerbound},
\[
 \essinf (O(\theta) (\om, \cdot)+v^0_\om(\cdot)) \ge c/2, \quad \text{for a.e. $\om \in \Omega$,}
\]
which completes the proof of the proposition.
\end{proof}

\subsection{A lower bound on $\Lam(\theta)$}
\label{LMB}

The goal of this section is to establish a differentiable lower  bound ($\hat\Lam(\theta)$)  on $\Lam(\theta)$, the top Lyapunov exponent of the twisted cocycle, for $\theta\in \C$ in a neighborhood of $0$. In Section~\ref{sec:quasicompactnessTwisted}, we will show that this lower bound in fact coincides with $\Lam(\theta)$, and hence all the results of this section will immediately translate into properties of $\Lam$.

Let $0<\ep<1$ and $O(\theta)$ be as in Lemma~\ref{thm:IFT}.
Let
\begin{equation}\label{eq:vomt}
v_\om^\theta(\cdot):= v_\om^0(\cdot) +O(\theta)(\om,\cdot).
\end{equation}
We notice that $\int v_\om^\theta(\cdot)\ dm =1$ and by Lemma~\ref{thm:IFT}, $\theta \mapsto v^\theta$ is continuously differentiable.
 Let us define
\begin{equation}\label{eq:hatLam}
 \hat\Lambda (\theta) :=  \int \log \Big|\int e^{\theta g(\om, x)} v_\om^\theta(x) \,d m(x) \Big|\, d\bbp(\om),
\end{equation}
and
\begin{equation}\label{eq:int}
\lot :=  \int e^{\theta g(\om, x)} v_\om^\theta(x) \,d m(x)
= \int \mcl_\om^\theta v_\om^\theta(x) \,d m(x),
\end{equation}
where the last identity follows from~\eqref{intprop}. Notice also that $\om \mapsto \lot$ is an integrable function.

\begin{lemma}\label{lem:lowerBoundLam}
For every $\theta \in B_\C(0,\ep):= \{ \theta \in \C : |\theta|<\ep \}$, $ \hat\Lambda (\theta)\leq \Lambda (\theta)$.
\end{lemma}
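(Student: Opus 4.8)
The plan is to exploit the fact that the family $v_\om^\theta = v_\om^0 + O(\theta)(\om,\cdot)$ is an "approximate equivariant family" for the twisted cocycle, in the sense built into the definition of $F$ in~\eqref{defF}: the vanishing of $F(\theta,O(\theta))$ says precisely that
\[
\mcl_{\sigma^{-1}\om}^\theta\, v_{\sigma^{-1}\om}^\theta = \lambda_{\sigma^{-1}\om}^\theta\, v_\om^\theta,
\]
equivalently $\mcl_\om^\theta v_\om^\theta = \lot\, v_{\sigma\om}^\theta$, where $\lot$ is as in~\eqref{eq:int}. (One reads this off by multiplying the defining relation $O(\theta)=F(\theta,O(\theta))+O(\theta)$ through by the normalizing denominator, which is exactly $\lot$ by~\eqref{intprop}.) Iterating this equivariance relation $n$ times gives
\[
\mcl_\om^{\theta,(n)} v_\om^\theta = \Big(\prod_{j=0}^{n-1}\lambda_{\sigma^j\om}^\theta\Big) v_{\sigma^n\om}^\theta .
\]

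Next I would take norms in $\BV$. Since $\theta \mapsto v^\theta$ is continuous (Lemma~\ref{thm:IFT}) and $\esssup_\om \|v_\om^0\|_\BV<\infty$ by~\eqref{eq:boundedv}, shrinking $\ep$ if necessary we get a uniform bound $\esssup_\om \|v_\om^\theta\|_\BV \le C$ for $|\theta|<\ep$; moreover $\|v_\om^\theta\|_\BV \ge \|v_\om^\theta\|_1 \ge |\int v_\om^\theta\,dm| = 1$. Hence
\[
\|\mcl_\om^{\theta,(n)}\| \;\ge\; \frac{\|\mcl_\om^{\theta,(n)} v_\om^\theta\|_\BV}{\|v_\om^\theta\|_\BV} \;\ge\; \frac{1}{C}\prod_{j=0}^{n-1}\big|\lambda_{\sigma^j\om}^\theta\big| .
\]
Taking $\frac1n\log$, using that $\om\mapsto \log|\lot|$ is integrable (it is, since $\log^+|\lot|$ is controlled by~\eqref{se2}–type bounds and $\log^-|\lot|$ by $\essinf_\om|\lot|>0$, which follows from the denominator estimate $\essinf_\om|H(\theta,\mc{W})(\om)|\ge 1/2$ in Lemma~\ref{lem:FwellDef}), Birkhoff's ergodic theorem gives
\[
\Lambda(\theta) \;=\; \lim_{n\to\infty}\tfrac1n\log\|\mcl_\om^{\theta,(n)}\| \;\ge\; \lim_{n\to\infty}\tfrac1n\sum_{j=0}^{n-1}\log\big|\lambda_{\sigma^j\om}^\theta\big| \;=\; \int \log|\lot|\,d\bbp(\om) \;=\; \hat\Lambda(\theta),
\]
for $\paeom$, which is the claim.

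The main obstacle is making sure the equivariance identity $\mcl_\om^\theta v_\om^\theta = \lot\, v_{\sigma\om}^\theta$ is correctly extracted from $F(\theta,O(\theta))=0$ (being careful with the $\sigma^{-1}$ shifts in~\eqref{defF} and with the normalization), and checking the integrability of $\om\mapsto\log|\lot|$ so that Kingman/Birkhoff applies — in particular the lower bound $\essinf_\om|\lot|>0$ on a possibly smaller disc, and the upper bound, both uniform in $\om$. Everything else is a routine norm estimate plus the ergodic theorem. One should also note that $\Lambda(\theta)$ here denotes $\log$ of the asymptotic growth of the operator norm, so the lower bound via a single vector $v_\om^\theta$ of uniformly bounded, uniformly-bounded-below norm is exactly what is needed.
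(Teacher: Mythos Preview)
Your proposal is correct and follows essentially the same route as the paper: extract the equivariance $\mcl_\om^\theta v_\om^\theta = \lot\, v_{\sigma\om}^\theta$ from $F(\theta,O(\theta))=0$, iterate, and apply Birkhoff's ergodic theorem to $\om\mapsto\log|\lot|$ to obtain $\Lambda(\theta)\ge\hat\Lambda(\theta)$. The paper's version is slightly terser---it simply writes the chain $\Lambda(\theta)\ge\lim\frac1n\log\|\mcl_\om^{\theta,(n)}v_\om^\theta\|_\B\ge\lim\frac1n\log\|\mcl_\om^{\theta,(n)}v_\om^\theta\|_1\ge\lim\frac1n\sum\log|\lambda_{\sigma^j\om}^\theta|$ and invokes Birkhoff---whereas you make the uniform bounds $1\le\|v_\om^\theta\|_\BV\le C$ and the integrability of $\log|\lot|$ more explicit, but the argument is the same.
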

\begin{proof}
Recall that $O(\theta)$ satisfies the equation $F(\theta, O(\theta))=0$, for $\theta \in \{ \theta \in \C : |\theta|<\ep \}$.
 Hence, for \paeom,  $v_\om^\theta(\cdot)$ satisfies the equivariance equation
 $\mcl^\theta_\om v_\om^\theta(\cdot) = \lam^\theta_{\om} v_{\sig\om}^\theta(\cdot)$.
Thus, using Birkhoff's ergodic theorem to go from the first to the second line below, we get
\begin{equation*}
\begin{split}
\Lam(\theta) & \geq \lni \frac1n \log \|\mcl_\om^{\theta,(n)} v_\om^\theta \|_{\mathcal B} \geq \lni \frac1n \log \|\mcl_\om^{\theta,(n)} v_\om^\theta \|_1\geq \lni  \frac1n  \sum_{j=0}^{n-1}\log |\lam^\theta_{\sig^{j}\om}|  \\
&  = \int \log |\lam^\theta_{\om}| d\bbp(\om)=
 \int \log \Big|\int e^{\theta g(\om, x)} v_\om^\theta(\cdot) \,d m(x)\Big| \, d\bbp(\om) = \hat\Lambda (\theta).
 \end{split}
\end{equation*}
\end{proof}

The rest of the section deals with differentiability properties of  $\hat\Lambda (\theta)$.
 From now on  we  shall also use the notation $O(\theta)_\om$ for $O(\theta)(\om, \cdot)$.

%

\begin{lemma}\label{difflambda}
We have that $\hat\Lambda$ is differentiable on a neighborhood of 0, 
and
\[
\hat \Lambda' (\theta)=
\Re \Bigg( \int \frac{ \overline{\lot}  ( \int g(\om, \cdot)e^{\theta g(\om, \cdot)}v_\om^\theta(\cdot)+ e^{\theta g(\om, \cdot)}O'(\theta)_\om (\cdot)\, dm )}{|\lot |^2}\, d\bbp(\om) \Bigg),
\]
where $\Re (z)$ denotes the real part of $z$ and $\overline{z}$ the complex conjugate of $z$.
\end{lemma}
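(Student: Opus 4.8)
The plan is to differentiate $\hat\Lambda(\theta)$ directly from its defining formula \eqref{eq:hatLam}, namely $\hat\Lambda(\theta) = \int \log|\lambda_\omega^\theta|\, d\mathbb{P}(\omega)$, where $\lambda_\omega^\theta = \int e^{\theta g(\omega,x)} v_\omega^\theta(x)\, dm(x)$. The key preliminary observations are: (i) by Lemma~\ref{thm:IFT}, $\theta \mapsto O(\theta)$ is $C^2$ as a map into $\mc{S}$, hence $\theta \mapsto v_\omega^\theta = v_\omega^0 + O(\theta)_\omega$ is $C^1$ (indeed $C^2$) into $\BV$, uniformly in $\omega$; (ii) consequently $\theta \mapsto \lambda_\omega^\theta$ is differentiable with derivative obtained by differentiating under the integral sign over $X$: $\frac{d}{d\theta}\lambda_\omega^\theta = \int g(\omega,\cdot)e^{\theta g(\omega,\cdot)}v_\omega^\theta\, dm + \int e^{\theta g(\omega,\cdot)}O'(\theta)_\omega\, dm$; and (iii) since $\lambda_\omega^0 = \int v_\omega^0\, dm = 1$ by normalization, continuity gives $|\lambda_\omega^\theta - 1| < 1/2$ uniformly in $\omega$ for $\theta$ in a small enough neighborhood of $0$, so $\lambda_\omega^\theta \neq 0$ and $\log|\lambda_\omega^\theta| = \frac{1}{2}\log|\lambda_\omega^\theta|^2 = \frac{1}{2}\log(\lambda_\omega^\theta \overline{\lambda_\omega^\theta})$ is well-defined and smooth in $\theta$ near $0$.

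Next I would compute the derivative of $\log|\lambda_\omega^\theta|$ pointwise in $\omega$. Writing $\log|\lambda_\omega^\theta| = \frac{1}{2}\log(\lambda_\omega^\theta\overline{\lambda_\omega^\theta})$ and using that, for a differentiable $\C$-valued function $\theta \mapsto z(\theta)$ with $z \neq 0$, one has $\frac{d}{d\theta}\log|z|^2 = \frac{z'\bar z + z\bar{z'}}{|z|^2} = 2\Re\!\big(\frac{\overline{z}\,z'}{|z|^2}\big)$ (valid for real $\theta$, since then $\overline{z'} = \overline{z}'$), we obtain
\[
\frac{d}{d\theta}\log|\lambda_\omega^\theta| = \Re\!\left(\frac{\overline{\lambda_\omega^\theta}\Big(\int g(\omega,\cdot)e^{\theta g(\omega,\cdot)}v_\omega^\theta\, dm + \int e^{\theta g(\omega,\cdot)}O'(\theta)_\omega\, dm\Big)}{|\lambda_\omega^\theta|^2}\right).
\]
Then I would justify differentiating $\hat\Lambda(\theta) = \int \log|\lambda_\omega^\theta|\, d\mathbb{P}(\omega)$ under the integral sign over $\Omega$, which requires a dominated-convergence / uniform-bound argument: the integrand's $\theta$-derivative must be bounded uniformly in $\omega$ on a neighborhood of $0$. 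This follows because $\|v_\omega^\theta\|_{\BV} \leq \|v_\omega^0\|_{\BV} + \|O(\theta)\|_\infty$ is uniformly bounded by \eqref{eq:boundedv} and Lemma~\ref{thm:IFT}, $\|O'(\theta)_\omega\|_{\BV}$ is uniformly bounded by $C^1$-ness of $O$ into $\mc{S}$, $\|e^{\theta g(\omega,\cdot)}\|_{L^\infty} \leq e^{|\theta|M}$ and $\|g_\omega\|_{L^\infty} \leq M$ by \eqref{obs}, and $|\lambda_\omega^\theta| \geq 1/2$ uniformly; so all quantities appearing are bounded by a constant independent of $\omega$ on a fixed neighborhood of $0$. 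An elementary difference-quotient argument together with this uniform bound (or equivalently the fact that $\theta \mapsto \log|\lambda_\cdot^\theta|$ is $C^1$ as a map into $L^\infty(\Omega,\mathbb{P})$, hence into $L^1(\Omega,\mathbb{P})$) licenses the interchange and yields exactly the claimed formula for $\hat\Lambda'(\theta)$.

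The main obstacle — really the only non-routine point — is the uniform-in-$\omega$ control needed to differentiate under the $\int\, d\mathbb{P}(\omega)$ sign; everything else is bookkeeping with the product and chain rules. The crucial inputs that make this work are the fact that $O(\theta)$ lives in $\mc{S}$, which by its very definition \eqref{defS}, \eqref{infnorm} carries a norm that is an essential supremum over $\omega$, so $C^1$-regularity of $\theta \mapsto O(\theta)$ into $\mc{S}$ automatically delivers the $\omega$-uniform estimates on $O(\theta)_\omega$ and $O'(\theta)_\omega$; combined with the uniform bounds \eqref{eq:boundedv} on $v_\omega^0$ and the global bound \eqref{obs} on the observable $g$, this gives a dominating constant. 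I would also note in passing that, although the formula is stated for $\theta$ in a neighborhood of $0$ and the $\Re(\cdot)$ makes sense for complex $\theta$, the derivative formula as written is the derivative along the real direction; differentiability of $\hat\Lambda$ as a function on a complex neighborhood is not claimed here (and indeed $\log|\cdot|$ is not holomorphic), so the statement should be read as real-differentiability, which is all that is needed for the subsequent convexity and CLT arguments.
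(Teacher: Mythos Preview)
Your proposal is correct and follows essentially the same approach as the paper: compute the pointwise derivative of $\log|\lambda_\omega^\theta|$ via the identity $\frac{d}{d\theta}\log|z| = \Re(\bar z\, z')/|z|^2$, then justify differentiation under the $\mathbb{P}$-integral by a uniform-in-$\omega$ bound on this derivative (the paper phrases this as a Leibniz-rule argument with a constant dominating function, exactly as you do). The only cosmetic difference is that the paper routes the differentiability of $\theta\mapsto\lambda_\omega^\theta$ through the auxiliary map $H(\theta,O(\theta))$ and its regularity established in the appendix, whereas you compute the derivative of $\int e^{\theta g}v_\omega^\theta\,dm$ directly via the product rule; the resulting formula and the uniform estimates are identical.
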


\begin{proof}
 Write
 \[
  \hat \Lambda(\theta)=\int Z(\theta, \omega) \, d\mathbb P(\omega),
 \]
where
\[
 Z(\theta, \omega):= \log | \lot|=\log \Big| \int e^{\theta g(\om, x)}v_\om^\theta(x)\,d m(x)\Big|.
\]
Note that $Z(\theta, \om)=\log |H(\theta, O(\theta))(\sigma \om)|$, where $H$ is as in Lemma~\ref{lem:FwellDef}. Since $H(0,0)=1$ and both $H$ and $O$ are continuous (by Lemma~\ref{thm:IFT}), there
is a neighborhood $U$ of $0$ in $\C$ on which $\lVert H(\theta, O(\theta))-H(0,0)\rVert_{L^\infty} <1/2$. In particular, $Z$ is well defined  and
$Z(\theta, \om)\in [\log\frac12, \log \frac32]$ for every $\theta\in U \cap B_\C(0,\ep)$ and \paeom.
Thus, the map $\omega \mapsto Z(\theta, \omega)$ is $\mathbb P$-integrable for every $\theta\in U \cap B_\C(0,\ep)$.

%

It follows from Lemma~\ref{lem:Z'} below that for \paeom, the map $\theta \mapsto Z_\om(\theta):=Z(\theta, \omega)$ is differentiable in a neighborhood of 0, and
\[
 Z_\om'(\theta)=\frac{\Re \Big( \overline{\lot}   ( \int g(\om, \cdot)e^{\theta g(\om, \cdot)}v_\om^\theta(\cdot)+ e^{\theta g(\om, \cdot)}O'(\theta)_\om (\cdot)\, dm )\Big)}{| \lot|^2},
\]
where $\Re (z)$ denotes the real part of $z$ and $\overline{z}$ the complex conjugate of $z$.
In particular,
\[
 |Z_\om'(\theta)| \leq \frac{\big|\int (g(\om, x)e^{\theta g(\om, x)}v_\om^\theta(\cdot)+ e^{\theta g(\om, x)}O'(\theta)_\om (x))\, dm(x) |}{ |\int e^{\theta g(\om, x)}v_\om^\theta(x) \,d m(x)|}.
\]
We claim that there exists an integrable function $C\colon \Omega \to \mathbb R$  such that
\begin{equation}\label{nn}
 \lvert Z_\om'(\theta)\rvert \le C(\om), \quad \text{for all $\theta$ in a neighborhood of 0 and \paeom.}
\end{equation}
Once this is established, the conclusion of the lemma follows from Leibniz rule for exchanging the order of differentiation and integration.

To complete the proof, let us show \eqref{nn}. For $\theta \in U$ we have
\[
\Big|\int e^{\theta g(\om, x)}v_\om^\theta(x)\, dm(x) \Big|\geq \frac12.
 \]
Also, recall  that  $\ep<1$, so that for  $\theta\in B_\C(0,\ep)$ one has
\[
\begin{split}
\bigg{ \lvert} \int g(\om, x)e^{\theta g(\om,x)}&v_\om^\theta(x) \, dm(x) \bigg{\rvert} \le \int \big \lvert g(\om, x)  e^{\theta g(\om, x)}v_\om^\theta(x) \big \rvert \, dm(x) \\
&\le Me^M \lvert O(\theta)_\om + v^0_\om \rvert_1 \leq Me^M (1+ \| O(\theta)_\om \|_{\BV})
\leq Me^M (1+ \| O(\theta) \|_\infty).
\end{split}
\]
Finally,
\[
 \bigg{\lvert} \int e^{\theta g(\om,x)}O'(\theta)_\om (x)\, dm(x) \bigg{\rvert} \le e^M \lvert O'(\theta)_\om \rvert_1 \le e^M \lVert O'(\theta)_\om\rVert_{\BV}\le e^M \lVert O'(\theta)\rVert_\infty,
\]
for \paeom.
Since  $O$ and $O'$ are continuous by Lemma~\ref{thm:IFT}, the terms on the RHS of the above inequalities are uniformly bounded for $\theta$ in a (closed) neighborhood of 0. Hence, \eqref{nn} holds for a constant function $C$.
\end{proof}

\begin{lemma}\label{lem:Z'}
For \paeom, and $\theta$ in a neighborhood of 0, the map $\theta \mapsto Z_\om(\theta):=Z(\theta, \omega)$ is differentiable. Moreover,
\[
 Z_\om'(\theta)=\frac{\Re \Big( \overline{\lot}   ( \int g(\om, \cdot)e^{\theta g(\om, \cdot)}v_\om^\theta(\cdot)+ e^{\theta g(\om, \cdot)}O'(\theta)_\om (\cdot)\, dm )\Big)}{| \lot|^2},
\]
where $\Re (z)$ denotes the real part of $z$ and $\overline{z}$ the complex conjugate of $z$.
\end{lemma}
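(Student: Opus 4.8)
The plan is to obtain the formula by chaining two differentiability statements: first that $\theta\mapsto\lot$ is differentiable near $0$ with the expected product-rule derivative, and then that $\log|\cdot|$ is real-differentiable away from the origin, so that $Z_\om=\log|\lot|$ inherits differentiability via the chain rule.

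For the first step I would observe that $\lot$ is a pointwise value of the map $H$ from Lemma~\ref{lem:FwellDef}: unwinding the definitions gives $\lot=H(\theta,O(\theta))(\sig\om)$ for $\paeom$, where as usual $v_\om^\theta=v_\om^0+O(\theta)_\om$. By Lemmas~\ref{1108} and~\ref{L6} the map $H$ is differentiable on a neighbourhood of $(0,0)$ with values in $L^\infty(\Om,\bbp)$, and by Lemma~\ref{thm:IFT} the curve $\theta\mapsto O(\theta)\in\mc{S}$ is differentiable near $0$ (with $O(0)=0$). Composing and applying the chain rule in the real direction of $\theta$, the curve $\theta\mapsto H(\theta,O(\theta))$ is differentiable in $L^\infty(\Om,\bbp)$ with derivative $\partial_1H(\theta,O(\theta))+\partial_2H(\theta,O(\theta))[O'(\theta)]$; since $H$ is affine in its second argument and $\partial_1H$ is computed by differentiating $e^{\theta g}$ under the integral sign, this derivative is the function
\[
\om\mapsto \int g(\sig^{-1}\om,\cdot)e^{\theta g(\sig^{-1}\om,\cdot)}v^\theta_{\sig^{-1}\om}+e^{\theta g(\sig^{-1}\om,\cdot)}O'(\theta)_{\sig^{-1}\om}\,dm .
\]
The $o(|h|)$ estimates defining these derivatives hold in $L^\infty(\Om,\bbp)$, i.e.\ \emph{uniformly} in $\om$, so they persist after evaluating at the single point $\sig\om$ for $\paeom$; hence $\theta\mapsto\lot$ is differentiable near $0$ for $\paeom$, with $(\lot)'=\int g(\om,\cdot)e^{\theta g(\om,\cdot)}v_\om^\theta(\cdot)+e^{\theta g(\om,\cdot)}O'(\theta)_\om(\cdot)\,dm$. (If one prefers not to route through $H$, the same conclusion follows directly: $\theta\mapsto e^{\theta g(\om,\cdot)}$ is differentiable into $L^\infty(X,m)$ with derivative $g(\om,\cdot)e^{\theta g(\om,\cdot)}$ by (V8)--(V9), $\theta\mapsto v_\om^\theta$ is differentiable into $L^1(X,m)$ by evaluating the $\mc{S}$-differentiability of $O$ at $\om$ together with $\BV\hookrightarrow L^1(X,m)$, and $(u,f)\mapsto\int uf\,dm$ is bounded bilinear on $L^\infty\times L^1$.)

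For the second step, since $\lot|_{\theta=0}=\int v_\om^0\,dm=1$, the estimate already used in the proof of Lemma~\ref{lem:FwellDef} (compare the proof of Lemma~\ref{difflambda}) gives $|\lot|\ge 1/2$ for $\paeom$ and every $\theta$ in a fixed neighbourhood of $0$, so $Z_\om(\theta)=\log|\lot|$ is well defined there. On $\{z\in\C:|z|\ge 1/2\}$ the map $z\mapsto\log|z|=\tfrac12\log(z\bar z)$ is real-differentiable, with derivative in the direction $h\in\C$ equal to $\Re(\bar z\,h)/|z|^2$. Applying the chain rule (again in the real direction of $\theta$) to $\theta\mapsto\log|\lot|$ then yields that $Z_\om$ is differentiable with
\[
Z_\om'(\theta)=\frac{\Re\big(\overline{\lot}\,(\lot)'\big)}{|\lot|^2},
\]
and substituting the expression for $(\lot)'$ from the first step produces exactly the asserted formula.

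The argument is largely routine; two points deserve care. First, the differentiability of $\theta\mapsto H(\theta,O(\theta))$ is a priori only an $L^\infty(\Om,\bbp)$-statement, and it is precisely the uniformity in $\om$ built into that norm that allows passing to genuine pointwise-in-$\om$ differentiability of $\theta\mapsto\lot$. Second, $\log|\cdot|$ is not holomorphic, so the chain rule must be applied along the real axis rather than complex-analytically; this is the source of the real part and the complex conjugate in the final formula, and the step where sign and conjugation bookkeeping is easiest to get wrong. I expect this second point to be the main (minor) obstacle.
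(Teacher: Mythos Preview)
Your proposal is correct and follows essentially the same approach as the paper: both arguments write $\lot=H(\theta,O(\theta))(\sigma\om)$, use the differentiability of $H$ and $O$ (and the $L^\infty$-uniformity of the resulting $o(|h|)$ estimate) to obtain pointwise differentiability of $\theta\mapsto\lot$, and then apply the real chain rule for $z\mapsto\log|z|$ to reach the stated formula. Your added remarks on the two delicate points (passing from $L^\infty$ to pointwise, and the non-holomorphic chain rule) match precisely the issues the paper handles.
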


\begin{proof}
First observe that if $\theta\mapsto f(\theta)\in \C$, has polar decomposition $f(\theta)=r(\theta) e^{i\phi(\theta)}$,
then, whenever $|f|(\theta)\neq 0$, $\frac{d |f|(\theta)}{d \theta} = \frac{\Re(\bar{f}(\theta)f'(\theta) )}{r(\theta)}$, where $f'$ denotes differentiation with respect to $\theta$.
Thus, by the chain rule, it is sufficient to prove that the map $\lot$
is differentiable with respect to $\theta$ and that
\begin{equation}\label{eq:Iprime}
D_\theta \lot=\int \Big( g(\om,x)e^{\theta g(\om, x)}v_\om^\theta(x)+ e^{\theta g(\om, x)}O'(\theta)_\om (x)\Big)\, dm(x).
\end{equation}
Using the same notation as in Lemma~\ref{lem:FwellDef}, we can write
\[
\lot=H(\theta, O(\theta))(\sigma \om)=:P(\theta)(\sigma \om).
\]
We note that $P$ is a differentiable map with values in $L^\infty (\Omega)$. Indeed, this follows directly from the regularity properties of $H$ established in
Lemmas~\ref{1108} and~\ref{L6} and the differentiability of $O$ (see Lemma~\ref{thm:IFT}) together with the chain rule. Since
\[
 \frac{\lvert \lam_\om^{\theta+t}-\lam_\om^\theta -P'(\theta)(\sigma \om)\rvert}{\lvert t\rvert} \le \frac{\lVert P(\theta+t)-P(\theta)-P'(\theta)\rVert_{L^\infty(\Om)}}{\lvert t\rvert},
\]
for $\mathbb P$-a.e. $\om \in \Om$ and  $t, \theta$ close to $0\in \mathbb C$, we conclude that $\lot$ is differentiable  with respect to $\theta$ in a neighborhood of $0\in \mathbb C$.

\end{proof}

\begin{lemma}\label{zero}
 We have that $\hat \Lambda'(0)=0$.
\end{lemma}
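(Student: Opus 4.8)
The plan is to evaluate the formula for $\hat\Lambda'(\theta)$ from Lemma~\ref{difflambda} at $\theta=0$ and show that all terms vanish, using the fiberwise centering condition~\eqref{zeromean} and the normalization $\int O(\theta)_\om\,dm=0$. First I would observe that at $\theta=0$ we have $v_\om^0=v_\om^\theta|_{\theta=0}$, that $O(0)\equiv 0$ (since $F(0,0)=0$ and $O(0)$ is the unique solution), and that $\lambda_\om^0=\int v_\om^0\,dm=1$ for $\bbp$-a.e.\ $\om$; in particular $\overline{\lambda_\om^0}=1$ and $|\lambda_\om^0|^2=1$, so the formula collapses to
\[
\hat\Lambda'(0)=\Re\Bigg(\int \Big(\int g(\om,\cdot)v_\om^0(\cdot)\,dm + \int O'(0)_\om(\cdot)\,dm\Big)\,d\bbp(\om)\Bigg).
\]

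Next I would handle the two inner integrals separately. The first, $\int g(\om,x)v_\om^0(x)\,dm(x)$, is exactly the fiberwise mean of $g$, which vanishes for $\bbp$-a.e.\ $\om$ by the centering hypothesis~\eqref{zeromean}. For the second, I would use that $O(\theta)\in\mc S$ for every $\theta$ near $0$, hence $\int O(\theta)(\om,x)\,dm(x)=0$ for a.e.\ $\om$ and all such $\theta$; differentiating this identity in $\theta$ (which is justified since $\theta\mapsto O(\theta)$ is $C^2$ with values in $\mc S$ by Lemma~\ref{thm:IFT}, and integration against $m$ on $X$ is a bounded linear functional on $\BV$, so it commutes with the $\theta$-derivative) gives $\int O'(\theta)_\om(x)\,dm(x)=0$ for a.e.\ $\om$, in particular at $\theta=0$. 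Therefore both inner integrals are zero $\bbp$-a.e., the outer integral over $\Om$ is zero, and $\hat\Lambda'(0)=\Re(0)=0$.

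I do not expect any serious obstacle here; the only point requiring a little care is the justification that $\frac{d}{d\theta}\int O(\theta)_\om\,dm = \int O'(\theta)_\om\,dm$, i.e.\ that one may interchange the $m$-integral with the $\theta$-derivative. This follows immediately from the fact that $\mc V\mapsto \int \mc V(\om,\cdot)\,dm$ is a continuous linear map from $\mc S$ (or from $\BV$ fiberwise) to the scalars — indeed it is identically zero on $\mc S$ by definition of $\mc S$ — so its composition with the differentiable curve $\theta\mapsto O(\theta)$ is differentiable with the expected derivative (and is in fact constantly zero). Alternatively one notes directly that the constant-zero function is the derivative of the constant-zero function $\theta\mapsto\int O(\theta)_\om\,dm$. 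Either way, no estimates beyond those already established in Lemmas~\ref{thm:IFT} and~\ref{difflambda} are needed.
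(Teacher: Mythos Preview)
Your proposal is correct and follows essentially the same route as the paper: evaluate the formula of Lemma~\ref{difflambda} at $\theta=0$, kill the $g$-term via the centering condition~\eqref{zeromean}, and kill the $O'(0)$-term by showing $\int O'(0)_\om\,dm=0$. The only minor difference is in justifying this last point: the paper invokes the implicit function theorem formula $O'(0)=-D_2F(0,0)^{-1}(D_1F(0,0))$ and the fact that $D_2F(0,0)^{-1}$ maps $\mc S$ to $\mc S$ to conclude $O'(0)\in\mc S$, whereas you differentiate the identically-zero map $\theta\mapsto\int O(\theta)_\om\,dm$ (equivalently, observe that $O$ is a differentiable curve in the Banach space $\mc S$, so its derivative lies in $\mc S$). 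Both arguments are valid and amount to the same observation.
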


\begin{proof}
Let $F$ be as in Lemma~\ref{thm:IFT}.
 By identifying $D_1 F(0,0)$ with its value at $1$, it follows from the implicit function theorem that
 \[
  O'(0)=-D_2F(0,0)^{-1}( D_1F(0,0)).
 \]
It is shown in Lemma~\ref{thm:IFT} that $D_2F(0,0) \colon \mathcal S \to \mathcal S$ is bijective. Thus, $D_2F(0,0)^{-1} \colon \mathcal S \to \mathcal S$ and therefore  $O'(0) \in \mathcal S$ which implies that
\begin{equation}\label{zerod}
\int O'(0)_\om \, dm(x)=0 \text{ for \paeom.}
\end{equation}
 The conclusion of the lemma
follows directly from Lemma~\ref{difflambda} and the centering condition~\eqref{zeromean}.
\end{proof}

\subsection{Quasicompactness of twisted cocycles and differentiability of $\Lam(\theta)$}\label{sec:quasicompactnessTwisted}
In this section we establish quasicompactness of the twisted transfer operator cocycle, as well as differentiability of the top Lyapunov exponent with respect to $\theta$, for $\theta \in \C$ near $0$.
\begin{thm}[Quasi-compactness of twisted cocycles, $\theta$ near 0]\label{cor:quasicompactness}
Assume that the cocycle $\mc{R}=(\Om, \mathcal F, \bbp, \sig, \B, \mathcal L)$ is admissible. 
For $\theta \in \C$ sufficiently close to $0$, we have that the twisted cocycle $\mathcal L^\theta$ is quasi-compact.
Furthermore, for such $\theta$, the top Oseledets space of $\mathcal L^\theta$ is one-dimensional. That is, $\dim Y^\theta(\om)=1$ for \paeom.
\end{thm}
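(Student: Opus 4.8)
The plan is to verify the hypotheses of Lemma~\ref{lem:Hennion} (Hennion's criterion) for the twisted cocycle $\mc{R}^\theta$ when $|\theta|$ is small, and then to identify the top Lyapunov exponent using the work already done on $\hat\Lambda(\theta)$. First I would establish a uniform (in $\om$) random Lasota--Yorke inequality for $\mcl_\om^{\theta,(N)}$: starting from condition~\ref{C0} for the untwisted cocycle, $\|\mcl_\om^{(N)}f\|_\BV\le \al^N(\om)\|f\|_\BV+\be^N(\om)\|f\|_1$, and using the multiplicative bound from Lemma~\ref{1214}, namely $\|M_\om^\theta\|_\BV\le K(\theta)$ with $K(\theta)\to 1$ as $\theta\to 0$ (together with (V3) and the $L^\infty$/$L^1$ comparison for the multiplier), one gets
\[
\|\mcl_\om^{\theta,(N)}f\|_\BV\le \tilde\al^N_\theta(\om)\|f\|_\BV+\tilde\be^N_\theta(\om)\|f\|_1,
\]
where $\tilde\al^N_\theta(\om)$ differs from $\al^N(\om)$ by a factor that tends to $1$ uniformly in $\om$ as $\theta\to 0$, and $\tilde\be^N_\theta(\om)$ stays integrable. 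Since $\int\log\al^N\,d\bbp<0$, by choosing $\theta$ close enough to $0$ we retain $\int\log\tilde\al^N_\theta\,d\bbp<0$. Passing from the $N$-step inequality to a one-step inequality for the cocycle (replacing $\B$-norm iterates appropriately, or working directly with the $N$-step cocycle and noting $\Lam$, $\ka$ scale by $N$) is routine. The $L^1$-side compactness (V4) gives the compact embedding $\B\hookrightarrow\B'=L^1(X,m)$, and $\log^+\|\mcl_\om^\theta\|\in L^1(\bbp)$ follows from~\eqref{se2}.

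Next I would pin down $\Lam(\theta)$. Lemma~\ref{lem:Hennion} gives $\ka(\mc{R}^\theta)\le \int\log\tilde\al^N_\theta\,d\bbp/N$ (or the analogous bound), which is strictly negative and bounded away from $0$ uniformly for $\theta$ in a small neighborhood of $0$. On the other hand, Lemma~\ref{lem:lowerBoundLam} gives $\Lam(\theta)\ge\hat\Lambda(\theta)$, and since $\hat\Lambda$ is continuous near $0$ (Lemma~\ref{difflambda}) with $\hat\Lambda(0)=0$, we have $\Lam(\theta)\ge\hat\Lambda(\theta)>\ka(\mc{R}^\theta)$ for $\theta$ close to $0$; this establishes quasi-compactness. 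To get the one-dimensionality of $Y^\theta(\om)$, I would use the equivariant family $v_\om^\theta=v_\om^0+O(\theta)(\om,\cdot)$ constructed via Lemma~\ref{thm:IFT}: it satisfies $\mcl_\om^\theta v_\om^\theta=\lot\, v_{\sig\om}^\theta$, and by Birkhoff's theorem the vector $v_\om^\theta$ has Lyapunov exponent exactly $\int\log|\lot|\,d\bbp=\hat\Lambda(\theta)$. Thus $\hat\Lambda(\theta)$ is an exceptional exponent, so $\Lam(\theta)\ge\hat\Lambda(\theta)$ is actually achieved in the Oseledets hierarchy; combined with $\Lam(\theta)\ge\hat\Lambda(\theta)$ from Lemma~\ref{lem:lowerBoundLam}, one argues $\Lam(\theta)=\hat\Lambda(\theta)$ and that $v_\om^\theta$ spans the top space. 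The cleanest route is a continuity/perturbation argument: at $\theta=0$ the top space is one-dimensional and the gap $\lambda_1-\lambda_2>0$; by Lemma~\ref{1214} the cocycle varies continuously in $\theta$, and upper semicontinuity of the top exponent together with the explicit lower bound $\hat\Lambda(\theta)$ (which is continuous and close to $\lambda_1(0)$) traps $\lambda_1(\theta)$ near $\lambda_1(0)$ and keeps it separated from the rest of the spectrum, forcing $\dim Y^\theta(\om)=1$.

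The main obstacle I expect is the rigorous control of the dimension of the top Oseledets space under perturbation — i.e.\ ruling out that, for some small $\theta$, the top exponent is attained by a space of dimension $\ge 2$ or that exponents collide. Unlike the deterministic setting, there is no Kato-type analytic perturbation theory available for the whole Oseledets splitting; one only has the explicitly constructed equivariant direction $\langle v_\om^\theta\rangle$. The way around this is to show that (i) $\langle v_\om^\theta\rangle$ realizes the exponent $\hat\Lambda(\theta)$, (ii) $\hat\Lambda(\theta)=\Lam(\theta)$ — which should follow by combining Lemma~\ref{lem:lowerBoundLam} with a matching upper bound obtained from the Lasota--Yorke inequality and the decomposition $f=(\int f\,dm)\,v_\om^\theta + (f - (\int f\,dm)v_\om^\theta)$, the second summand lying in a complement on which the cocycle decays at rate $\le\ka(\mc{R}^\theta)+o(1)$ by an argument parallel to~\ref{cond:dec} but for the twisted operator (this is the genuinely technical step, essentially re-deriving a twisted analogue of the exponential-decay estimate) — and (iii) the annihilator/adjoint duality of Lemma~\ref{lem:AnnihilatorOsSplittings} to conclude that the top space cannot be larger. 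I would also remark that, as with the untwisted case, once $\hat\Lambda=\Lam$ near $0$, all the differentiability conclusions of Section~\ref{LMB} transfer verbatim to $\Lam(\theta)$, which is the payoff motivating the statement.
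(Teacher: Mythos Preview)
Your quasi-compactness argument is essentially the paper's: derive a twisted Lasota--Yorke inequality $\|\mcl_\om^{\theta,(N)}f\|_\BV\le\tilde\al^{\theta,N}(\om)\|f\|_\BV+\be^N(\om)\|f\|_1$ with $\tilde\al^{\theta,N}\to\al^N$ as $\theta\to0$, use $\Lam(\theta)\ge\hat\Lambda(\theta)$ from Lemma~\ref{lem:lowerBoundLam} together with continuity of $\hat\Lambda$ at $0$ to ensure $N\Lam(\theta)>\int\log\tilde\al^{\theta,N}\,d\bbp$, and invoke Lemma~\ref{lem:Hennion} (via the $N$-step cocycle). No issues there.

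The gap is in the one-dimensionality step. Upper semicontinuity of the \emph{top} exponent $\mu_1^\theta$ alone, combined with the lower bound $\hat\Lambda(\theta)$, only pins down $\mu_1^\theta$; it says nothing about $\mu_2^\theta$ and cannot by itself ``keep it separated from the rest of the spectrum''. Nor can you first establish $\Lam(\theta)=\hat\Lambda(\theta)$ and then deduce simplicity: your proposed route~(ii) --- re-deriving a twisted analogue of~\ref{cond:dec} on the putative complement --- would essentially require already knowing the top space is one-dimensional to define that complement, and the annihilator duality of Lemma~\ref{lem:AnnihilatorOsSplittings} likewise presupposes the Oseledets splitting rather than bounding its multiplicities. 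In the paper the logical order is reversed: simplicity of $Y^\theta$ is proved first, and $\Lam=\hat\Lambda$ is a corollary (Corollary~\ref{cor:LamHatLam}).

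The missing tool is upper semicontinuity of the \emph{sum} $\theta\mapsto\mu_1^\theta+\mu_2^\theta$ (Lemma~\ref{lem:usc}), where the $\mu_j^\theta$ are the exceptional exponents listed with multiplicity. This sum has a $k$-dimensional volume-growth interpretation, which is what makes it upper semicontinuous under norm-continuous perturbations of the generator. Then one argues by contradiction: if $\dim Y_1^{\theta}\ge2$ along some sequence $\theta_n\to0$, then $\mu_1^{\theta_n}=\mu_2^{\theta_n}\ge\hat\Lambda(\theta_n)\to0$, so $\mu_1^{\theta_n}+\mu_2^{\theta_n}\to0$, contradicting
\[
\limsup_{\theta\to0}\bigl(\mu_1^\theta+\mu_2^\theta\bigr)\le\mu_1^0+\mu_2^0<0,
\]
the last inequality holding because $\mu_1^0=0$ and $\dim Y^0=1$. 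This is the clean, cocycle-level substitute for the Kato perturbation argument you were looking for.
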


The following Lasota-Yorke type estimate will be useful in the proof.
\begin{lemma}\label{lem:LYtwisted}
Assume conditions~\ref{cond:unifNormBd} and \ref{C0} hold.
 Then, we have
 \[
  \lVert \mcl_\om^{\theta, (N)} f\rVert_{\BV} \le \tilde \alpha^{\theta, N}(\om)\lVert f\rVert_{\BV} +\beta^N(\om) \lVert f\rVert_1,
 \]
where
\[
 \tilde \alpha^{\theta, N}(\om)= \alpha^N(\om)+C\lvert \theta\rvert e^{\lvert \theta \rvert M}\sum_{j=0}^{N-1}K^{N-1-j}K(\theta)^j,
\]
for some constant  $C>0$ where $K(\theta)$ is given by Lemma~\ref{1214} and K is given by~\ref{cond:unifNormBd}.
\end{lemma}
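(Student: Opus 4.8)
The plan is to decompose $\mcl_\om^{\theta,(N)}$ into the untwisted iterated operator $\mcl_\om^{(N)}$ plus an error term coming from the presence of the weight $e^{\theta S_N g}$, and then to control that error term using the multiplicative structure of $\mcl^{\theta,(N)}$ established in Lemma~\ref{lem:exprLit}. First I would recall from~\eqref{eq:adjointPert} that $\mcl_\om^{\theta,(N)}f = \mcl_\om^{(N)}(e^{\theta S_N g(\om,\cdot)}f)$, so that
\[
\mcl_\om^{\theta,(N)}f = \mcl_\om^{(N)}f + \mcl_\om^{(N)}\big((e^{\theta S_N g(\om,\cdot)}-1)f\big).
\]
To the first term I would apply condition~\ref{C0} directly, obtaining the bound $\alpha^N(\om)\|f\|_{\BV}+\beta^N(\om)\|f\|_1$; the $\beta^N$-term is exactly the one appearing in the statement. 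It then remains to absorb the correction term into $\tilde\alpha^{\theta,N}(\om)\|f\|_{\BV}$, and this is where the estimate $C|\theta|e^{|\theta|M}\sum_{j=0}^{N-1}K^{N-1-j}K(\theta)^j$ must come from.

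For the correction term, the natural route is \emph{not} to expand $\mcl_\om^{(N)}$ on $(e^{\theta S_N g}-1)f$ all at once, but rather to telescope the difference $\mcl_\om^{\theta,(N)}-\mcl_\om^{(N)}$ fibrewise. Writing $\mcl_\om^{\theta,(N)}-\mcl_\om^{(N)} = \sum_{j=0}^{N-1} \mcl_{\sig^{j+1}\om}^{(N-1-j)}\big(\mcl_{\sig^j\om}^\theta - \mcl_{\sig^j\om}\big)\mcl_\om^{\theta,(j)}$, I would estimate each summand by the product of operator norms: the trailing factor $\mcl_\om^{\theta,(j)}$ has $\BV$-norm at most $K(\theta)^j$ by iterating~\eqref{se2}; the middle factor $\mcl_{\sig^j\om}^\theta-\mcl_{\sig^j\om} = \mcl_{\sig^j\om}\circ(M_{\sig^j\om}^\theta - I)$ has norm at most $K\cdot C'|\theta|e^{|\theta|M}$ for $|\theta|\le 1$ by~\ref{cond:unifNormBd} together with the multiplier bound~\eqref{407}/\eqref{412} from the proof of Lemma~\ref{1214}; and the leading factor $\mcl_{\sig^{j+1}\om}^{(N-1-j)}$ has norm at most $K^{N-1-j}$ by~\ref{cond:unifNormBd}. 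Multiplying these and summing over $j$ gives precisely $C|\theta|e^{|\theta|M}\sum_{j=0}^{N-1}K^{N-1-j}K(\theta)^j$ times $\|f\|_{\BV}$, with $C$ absorbing $C'$ and the constant $C_{\var}$ appearing in~\eqref{407}.

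The main obstacle is the bookkeeping of the constants: one must be careful that every operator norm used is genuinely uniform in $\om$ (which is guaranteed by~\ref{cond:unifNormBd} and by~\eqref{se2}, both of which are uniform in $\om$), and that the multiplier estimate $\|M_\om^\theta - I\|_{\BV}\le C'|\theta|e^{|\theta|M}$ is applied only on the range $|\theta|\le 1$ where it was proved — for larger $\theta$ the lemma can simply be restated with a different (still continuous) function of $\theta$, or one notes the lemma is only ever used for $\theta$ near $0$. A secondary point to check is that the operator $(M_\om^\theta-I)$ maps $\BV$ to $\BV$ with the claimed norm bound uniformly in $\om$; this uses $\esssup_{\om}\var(g_\om)<\infty$ from~\eqref{obs}, exactly as in the proof of Lemma~\ref{1214}. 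Once these uniformities are in place the computation is routine, and the stated form of $\tilde\alpha^{\theta,N}$ drops out; note in particular that $\tilde\alpha^{\theta,N}(\om)\to\alpha^N(\om)$ as $\theta\to 0$, so that the integral condition $\int\log\alpha^N\,d\bbp<0$ from~\ref{C0} persists for $\tilde\alpha^{\theta,N}$ when $\theta$ is small, which is the use to which this lemma will be put in proving Theorem~\ref{cor:quasicompactness}.
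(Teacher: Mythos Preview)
Your proposal is correct and follows essentially the same approach as the paper: apply \ref{C0} to $\mcl_\om^{(N)}f$ and bound $\mcl_\om^{\theta,(N)}-\mcl_\om^{(N)}$ via a telescoping sum, estimating each factor by \ref{cond:unifNormBd}, \eqref{se2}, and the multiplier bound from Lemma~\ref{1214}. The only cosmetic difference is that you telescope in the opposite order (untwisted factors on the left, twisted on the right, rather than the paper's twisted-on-the-left version), which yields exactly the same bound $\sum_{j=0}^{N-1}K^{N-1-j}K(\theta)^j$.
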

\begin{proof}
 It follows from~\ref{C0} that
 \[
 \begin{split}
  \lVert \mcl_\om^{\theta, (N)} f\rVert_{\BV} &\le \lVert  \mcl_\om^{ (N)} f\rVert_{\BV}+\lVert \mcl_\om^{\theta, (N)}-\mcl_\om^{(N)}\rVert_{\BV} \cdot \lVert f\rVert_{\BV} \\
  &\le
  \alpha^N(\om)\lVert f\rVert_{\BV}+\beta^N(\om)\lVert f\rVert_1+\lVert \mcl_\om^{\theta, (N)}-\mcl_\om^{(N)}\rVert_{\BV} \cdot \lVert f\rVert_{\BV}.
  \end{split}
 \]
On the other hand, we have that
\[
 \mcl_\om^{\theta, (N)}-\mcl_\om^{(N)}=\sum_{j=0}^{N-1} \mcl_{\sigma^{N-j} \om}^{\theta, (j)}(\mcl_{\sigma^{N-1-j} \om}^\theta -\mcl_{\sigma^{N-1-j} \om})\mcl_\om^{(N-1-j)}.
\]
It follows from~\ref{cond:unifNormBd} and~\eqref{se2} that
\[
 \lVert \mcl_\om^{(N-1-j)}\rVert_{\BV} \le K^{N-1-j} \quad \text{and} \quad \lVert \mcl_{\sigma^{N-j} \om}^{\theta, (j)}\rVert_{\BV} \le K(\theta)^j.
\]
Furthermore, using (V3) and (V8), we have that for any $h\in \BV$,
\[
\begin{split}
 \lVert (\mcl_\om^\theta -\mcl _\om)(h) \rVert_{\BV} &=\lVert \mcl_\om (e^{\theta g(\om, \cdot)}h-h)\rVert_{\BV} \\
 &\le K\lVert (e^{\theta g(\om, \cdot)}-1)h\rVert_{\BV}\\
 &=K\var((e^{\theta g(\om, \cdot)}-1)h)+K\lVert (e^{\theta g(\om, \cdot)}-1)h\rVert_1 \\
 &\le K\lVert e^{\theta g(\om, \cdot)}-1\rVert_{L^\infty} \cdot \var (h)+K\var (e^{\theta g(\om, \cdot)}-1)\cdot \lVert h\rVert_{L^\infty}\\
 &\phantom{\le}+K\lVert e^{\theta g(\om, \cdot)}-1\rVert_{L^\infty} \cdot \rVert h\rVert_1 \\
 &\le K\lVert e^{\theta g(\om, \cdot)}-1\rVert_{L^\infty}\lVert h\rVert_{\BV}+KC_{\var}\var (e^{\theta g(\om, \cdot)}-1)\cdot \lVert h\rVert_{\BV}.
 \end{split}
\]
 By applying the mean-value theorem for the map $z\mapsto e^{\theta z}$ and using~\eqref{obs}, we obtain that $\lVert e^{\theta g(\om, \cdot)}-1\rVert_{L^\infty}\le \lvert \theta \rvert e^{\lvert \theta \rvert M}
 M$. Furthermore, it follows from (V9) (applied to $h(z)=e^{\theta z}-1$ and $f=g(\om, \cdot)$) together with~\eqref{obs} that $\var (e^{\theta g(\om, \cdot)}-1)\le 
 \lvert \theta\rvert e^{\lvert \theta \rvert M}\var(g(\om, \cdot))$.
Therefore,
\[
 \lVert \mcl_\om^{\theta, (N)}-\mcl_\om^{(N)}\rVert_{\BV} \le C\lvert \theta\rvert e^{\lvert \theta \rvert M}\sum_{j=0}^{N-1}K(\theta)^j K^{N-1-j},
\]
where
\[
 C=KM+KC_{\var} \esssup_{\om \in \Om} (\var g(\om, \cdot))
\]
and the conclusion of the lemma follows by combining the above estimates.
\end{proof}

Theorem~\ref{cor:quasicompactness} may now be established as follows.
\begin{proof}[Proof of Theorem~\ref{cor:quasicompactness}]
 It follows from Lemma~\ref{lem:LYtwisted} and the dominated convergence theorem that
 \[
  \int_\Om \log \tilde \alpha^{\theta, N} (\om)\,  d\mathbb P(\om) \to  \int_\Om \log \alpha^N (\om)\,  d\mathbb P(\om)<0 \quad \text{when $\theta \to 0$.}
 \]
Thus, there exists $\delta >0$ such that
\[
 \int_\Om \log \tilde \alpha^{\theta, N} (\om)\,  d\mathbb P(\om) \le \frac{1}{2}\int_\Om \log \alpha^N (\om)\,  d\mathbb P(\om), \quad \text{for $\theta \in B_{\C}(0, \delta)$.}
\]
Lemma~\ref{lem:lowerBoundLam} implies that $\Lambda$ is bounded below by a continuous function $\hat\Lambda$ in a neighborhood of 0, and $\Lambda(0)=\hat\Lambda(0)=0$. 
Hence, by decreasing $\delta$ if necessary, we can assume that \[N\Lambda(\theta)> \frac{1}{2}\int_\Om \log \alpha^N (\om)\,  d\mathbb P(\om)
\quad \text{for $\theta \in B_{\C}(0, \delta)$.}\]
Therefore,
\begin{equation}\label{6io}
 N\Lambda(\theta)>\int_\Om \log \tilde \alpha^{\theta, N} (\om)\,  d\mathbb P(\om) \quad \text{for $\theta \in B_{\C}(0, \delta)$.}
\end{equation}
Let $\mc{R}^{\theta(N)}$ denote the cocycle 
 over $\sigma^N$ with generator $\om \mapsto \mcl_\om^{\theta, (N)}$. We claim that
 \begin{equation}\label{7io}
  \Lambda (\mc{R}^{\theta(N)})=N\Lambda (\theta) \quad \text{and} \quad \kappa(\mc{R}^{\theta(N)})=N\kappa (\mc R^\theta).
 \end{equation}
Indeed, we have that
\[
 \Lambda (\mc{R}^{\theta(N)})=\lim_{n\to \infty}\frac 1 n \log \lVert \mcl_{\sigma^{(n-1)N}\om}^{\theta, (N)} \cdot \ldots \cdot \mcl_{\sigma^N \om}^{\theta, (N)} \cdot \mcl_\om^{\theta, (N)}\rVert=
 N\lim_{n\to \infty}\frac{1}{nN} \log \lVert \mcl_\om^{\theta, (nN)}\rVert=N\Lambda(\theta),
\]
which proves the first equality in~\eqref{7io}. Similarly, one can establish the second identity in~\eqref{7io}. We now note that Lemmas~\ref{lem:Hennion} and~\ref{lem:LYtwisted} together 
with~\eqref{6io} and the first identity in~\eqref{7io} imply that the cocycle $\mc{R}^{\theta(N)}$ is quasicompact, i.e. $\Lambda(\mc{R}^{\theta(N)})>\kappa(\mc{R}^{\theta(N)})$.
Hence, \eqref{7io} implies  that $\Lambda(\mc{R}^\theta)>\kappa (\mc{R}^\theta)$ and we conclude that $\mc{R}^\theta$ is a quasicompact cocycle.

Now we show $\dim Y^\theta:=\dim Y_1^\theta=1$.
Let $\lam_1^\theta=\mu_1^\theta \geq \mu_2^\theta \geq \dots \geq \mu_{L_\theta}^\theta>\ka(\theta)$ be the exceptional Lyapunov exponents of twisted cocycle $\mathcal L_\om^\theta$, enumerated with multiplicity.
That is,  $m_j^\theta= \dim Y_j^{\theta}(\om)$ denotes the multiplicity of the Lyapunov exponent $\lam_j^\theta$. As in Theorem~\ref{thm:MET}, let $M_j^\theta:=m^\theta_1+\dots +m^\theta_{j}$. Therefore,
$\Lam(\theta)=\lam_1^\theta=\mu_i^\theta$ for every $1\leq i \leq M^\theta_1$
and
$\lam_{j}^\theta=\mu_i^\theta$ for every $M^\theta_{j-1}+1 \leq i \leq M^\theta_j$ and for every finite $1<j\leq l_\theta$.
By Lemma \ref{1214}(2) the map $\theta \mapsto \mcl_\om^\theta$ is continuous in the norm topology of $\BV$ for every $\om \in \Om$ and also that the functions $\om \mapsto \log^+\|\mcl_\om^\theta\|$ are dominated by an integrable function whenever $\theta$ is restricted to a compact set.
Thus, Lemma~\ref{lem:usc} of Appendix~\ref{sec:usc} shows that $\theta \mapsto \mu^\theta_1 + \mu^\theta_2$ is upper-semicontinuous.
 Hence,
$$
0>\mu^0_1 + \mu_2^0  \geq  \limsup_{\theta\to0} ( \mu^\theta_1 + \mu^\theta_2),
$$
where the first inequality follows from the one-dimensionality of the top Oseledets subspace of the cocycle $\mathcal L_\om$. We note that
Lemmas~\ref{lem:lowerBoundLam} and \ref{difflambda}, ensure that $\limsup_{\theta\to0} \mu^\theta_1 \geq \hat\Lam(0)=0$. Therefore
$\limsup_{\theta\to0} \mu^\theta_2 <0$ and $\dim Y_1^\theta=1$, as claimed.
\end{proof}

\begin{cor}\label{cor:LamHatLam}
For  $\theta\in \C$ near 0, we have that $\Lam(\theta)=\hat\Lam(\theta)$.
In particular, $\Lam(\theta)$ is differentiable near $0$ and $\Lam'(0)=0$.
\end{cor}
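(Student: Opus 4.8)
The plan is to deduce the result from Lemma~\ref{lem:lowerBoundLam}, which already gives $\hat\Lam(\theta)\le\Lam(\theta)$ for $\theta$ near $0$, by establishing the reverse inequality. For this I would show that the explicit equivariant family $\vot=v_\om^0+O(\theta)(\om,\cdot)$ supplied by Lemma~\ref{thm:IFT} spans the top Oseledets space $Y^\theta(\om)$ of the twisted cocycle $\mc R^\theta$ --- which is one-dimensional by Theorem~\ref{cor:quasicompactness}. Given that, part~(I) of the multiplicative ergodic theorem (Theorem~\ref{thm:MET}) identifies the asymptotic growth rate of $\vot$ with $\Lam(\theta)$, while a Birkhoff-type computation identifies the same rate with $\hat\Lam(\theta)$, so $\Lam(\theta)=\hat\Lam(\theta)$; differentiability of $\Lam$ near $0$ and $\Lam'(0)=0$ then follow at once from Lemmas~\ref{difflambda} and~\ref{zero}.

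First I would compute the growth rate of $\vot$. Recall from the proof of Lemma~\ref{lem:lowerBoundLam} that $\Lot\vot=\lot v_{\sig\om}^\theta$ for \paeom, whence $\mcl_\om^{\theta,(n)}\vot=\bigl(\prod_{j=0}^{n-1}\lam_{\sig^j\om}^\theta\bigr)v_{\sig^n\om}^\theta$. Since $\int\vot\,dm=1$ we have $\|v_{\sig^n\om}^\theta\|_{\B}\ge\|v_{\sig^n\om}^\theta\|_1\ge1$, and \eqref{eq:boundedv} together with $O(\theta)\in\mc S$ gives $\sup_\om\|\vot\|_{\B}<\infty$; moreover $\om\mapsto\log|\lot|$ is bounded for $\theta$ near $0$ (it lies in $[\log\tfrac12,\log\tfrac32]$, cf.\ the proof of Lemma~\ref{difflambda}) and $\int\log|\lot|\,d\bbp=\hat\Lam(\theta)$ by \eqref{eq:hatLam}. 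Birkhoff's ergodic theorem then yields $\lni\tfrac1n\log\|\mcl_\om^{\theta,(n)}\vot\|_{\B}=\hat\Lam(\theta)$ for \paeom.

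Next I would rule out that $\vot$ lies in $H^\theta(\om)$, the sum of the non-top Oseledets spaces of $\mc R^\theta$, for $\theta$ near $0$. Were that the case for a positive-measure set of $\om$, then --- since $\lot\ne0$ and the splitting is equivariant --- it would hold for \paeom, and Theorem~\ref{thm:MET} would force $\limsup_n\tfrac1n\log\|\mcl_\om^{\theta,(n)}\vot\|_{\B}$ to be bounded above by the growth rate on $H^\theta(\om)$. But the proof of Theorem~\ref{cor:quasicompactness} shows that the $\limsup$ as $\theta\to0$ of both $\ka(\theta)$ and the second exceptional Lyapunov exponent $\lam_2^\theta$ of $\mc R^\theta$ is negative, so there are $\eta<0$ and $\delta>0$ such that this growth rate is $<\eta$ for $|\theta|<\delta$; shrinking $\delta$ so that also $\hat\Lam(\theta)>\eta$ (possible as $\hat\Lam$ is continuous with $\hat\Lam(0)=0$, by Lemma~\ref{difflambda}), we contradict the previous paragraph. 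Hence $y_\om:=\Pi_\om^\theta\vot\ne0$ for \paeom, where $\Pi_\om^\theta$ is the projection onto $Y^\theta(\om)$ along $H^\theta(\om)$; as $\dim Y^\theta(\om)=1$, $y_\om$ spans $Y^\theta(\om)$, and equivariance of the splitting gives $\Lot y_\om=\lot y_{\sig\om}$, i.e.\ $\mcl_\om^{\theta,(n)}y_\om=\bigl(\prod_{j=0}^{n-1}\lam_{\sig^j\om}^\theta\bigr)y_{\sig^n\om}$. By part~(I) of Theorem~\ref{thm:MET}, $\lni\tfrac1n\log\|\mcl_\om^{\theta,(n)}y_\om\|_{\B}=\Lam(\theta)$; writing $y_{\sig^n\om}=\Pi_{\sig^n\om}^\theta v_{\sig^n\om}^\theta$ and invoking the temperedness of the Oseledets projections ($\tfrac1n\log\|\Pi_{\sig^n\om}^\theta\|\to0$, as used in the proof of Lemma~\ref{lem:AnnihilatorOsSplittings}), together with $\sup_\om\|\vot\|_{\B}<\infty$ and the Birkhoff limit above, we obtain $\Lam(\theta)\le\hat\Lam(\theta)$, hence equality.

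The main obstacle is the step ruling out that $\vot$ collapses into a subdominant Oseledets space. This is genuinely not a qualitative matter --- a priori one could have $\hat\Lam(\theta)=\lam_2^\theta$ --- and it works only because of the \emph{quantitative} spectral gap $\limsup_{\theta\to0}\max\{\ka(\theta),\lam_2^\theta\}<0$ extracted from the upper-semicontinuity argument in the proof of Theorem~\ref{cor:quasicompactness}, set against the continuity of $\hat\Lam$ at $0$. The remaining ingredients --- the uniform bounds $1\le\|\vot\|_{\B}\le\sup_\om\|\vot\|_{\B}<\infty$ and the temperedness of the Oseledets projections --- are standard, and are precisely what legitimise the Birkhoff-type limits.
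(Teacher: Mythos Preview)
Your proof is correct and follows essentially the same route as the paper: the key step in both is the quantitative gap $\limsup_{\theta\to 0}\mu_2^\theta<0$ extracted from Theorem~\ref{cor:quasicompactness}, which (set against the continuity of $\hat\Lam$ at $0$) prevents $\vot$ from lying in $H^\theta(\om)$ and forces its growth rate $\hat\Lam(\theta)$ to equal the top exponent $\Lam(\theta)$. Your detour through the projection $\Pi_\om^\theta\vot$ and temperedness is sound but unnecessary --- once $\vot\notin H^\theta(\om)$, the MET already gives that the growth rate of $\vot$ itself equals $\Lam(\theta)$, which is the shortcut the paper takes.
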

\begin{proof}
We recall that $\hat\Lam(0)=0$ and $\hat\Lam$ is differentiable near 0, by Lemma~\ref{difflambda}.
In addition, $v_\om^\theta(\cdot)$, defined in \eqref{eq:vomt}, gives a one-dimensional measurable equivariant subspace of $\BV$ which grows at rate $\hat\Lam(\theta)$ (see~\eqref{eq:hatLam}).
Theorem~\ref{cor:quasicompactness} shows that $\limsup_{\theta\to0} \mu^\theta_2 <0$.
In particular, $\mu^\theta_2 < \hat\Lam(\theta)$ for $\theta$ sufficiently close to 0.
Combining this information with the multiplicative ergodic theorem (Theorem~\ref{thm:MET}) and Lemma~\ref{lem:lowerBoundLam}, we get that $\Lam(\theta)=\hat\Lam(\theta)$ and  $Y_1^{\theta}(\om)= \langle v_\om^\theta\rangle$, for all $\theta\in \C$ near 0.  Thus, lemma~\ref{zero} implies that $\Lam'(0)=0$.
\end{proof}

\subsection{Convexity of $\Lambda(\theta)$}
We continue to denote by  $\mu$ the invariant measure for the skew product transformation $\tau$ defined in \eqref{eq:defmu}. Furthermore, let $S_ng $ be given by~\eqref{birkhoff}.
By expanding the term $[S_n g(\om, x)]^2$ it is straightforward to verify using standard computations and~\eqref{buzzi} that
\[
 \lim_{n\to \infty} \frac 1 n \int_{\Om \times X} [S_n g(\om, x)]^2 \, d\mu(\om, x)=\int_{\Om \times X}  g(\om, x)^2\, d\mu(\om, x)+2 \sum_{n=1}^\infty \int_{\Om \times X}  g(\om, x) g(\tau^n (\om, x))\, d\mu(\om, x)
\]
and that the right-hand side of the above equality is finite.
Set \begin{equation}\label{variance}
     \Sig^2 :=\int _{\Om \times X} g(\om, x)^2\, d\mu(\om, x)+2 \sum_{n=1}^\infty \int_{\Om \times X}  g(\om, x) g(\tau^n (\om, x))\, d\mu(\om, x).
    \end{equation}
Obviously, $\Sig^2 \ge 0$ and from now on we shall assume that $\Sig^2>0$.
This is equivalent to a non-coboundary condition on $g$; we refer the interested reader to \cite{DFGTV} for a precise statement characterising the degenerate case $\Sig^2=0$.

\begin{lemma}\label{lem:Lam''0}
We have that $\Lambda$ is of class $C^2$ on a neighborhood of $0$ and  $\Lambda''(0)=\Sig^2$.
\end{lemma}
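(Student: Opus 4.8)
The plan is to exploit the identity $\Lambda(\theta)=\hat\Lambda(\theta)$ from Corollary~\ref{cor:LamHatLam}, so that it suffices to prove that $\hat\Lambda$ is $C^2$ near $0$ and that $\hat\Lambda''(0)=\Sigma^2$. The $C^2$ regularity will follow from differentiating once more the formula for $\hat\Lambda'(\theta)$ established in Lemma~\ref{difflambda}: every ingredient appearing there -- the maps $\theta\mapsto v_\om^\theta$, $\theta\mapsto O(\theta)_\om$, $\theta\mapsto O'(\theta)_\om$, and $\theta\mapsto\lambda_\om^\theta$ -- is $C^1$ (indeed $O(\theta)$ is $C^2$ by Lemma~\ref{thm:IFT}, and $\lambda_\om^\theta$ inherits $C^1$-dependence through $H(\theta,O(\theta))$ as in Lemma~\ref{lem:Z'}). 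One differentiates under the integral sign, using uniform-in-$\omega$ bounds exactly as in the proof of \eqref{nn} in Lemma~\ref{difflambda} to produce an integrable dominating function for the $\theta$-derivative of $Z_\om'(\theta)$; this gives $\hat\Lambda\in C^2$ on a neighbourhood of $0$.

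The substantive part is the evaluation $\hat\Lambda''(0)=\Sigma^2$. First I would simplify the derivative formula at $\theta=0$: since $\lambda_\om^0=1$, $v_\om^0$ is a density, \eqref{zeromean} holds, and $\int O'(0)_\om\,dm=0$ (equation \eqref{zerod}), the expression for $\hat\Lambda'(\theta)$ collapses considerably when one differentiates it once more and sets $\theta=0$. Carrying this out, $\hat\Lambda''(0)$ should reduce to
\[
\hat\Lambda''(0)=\int \Big(\int g(\om,x)^2 v_\om^0(x)\,dm(x) + 2\int g(\om,x)\,O'(0)_\om(x)\,dm(x)\Big)\,d\bbp(\om),
\]
modulo terms that vanish by centering. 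The first summand is exactly $\int_{\Om\times X} g^2\,d\mu$. The task is then to identify the cross term $2\int\!\!\int g(\om,x)O'(0)_\om(x)\,dm\,d\bbp$ with the series $2\sum_{n\ge1}\int_{\Om\times X} g\cdot(g\circ\tau^n)\,d\mu$ in \eqref{variance}.

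For this identification I would use the explicit formula for $O'(0)$ coming from the implicit function theorem, namely $O'(0)=-D_2F(0,0)^{-1}(D_1F(0,0))$, together with the Neumann-type series \eqref{inverse} for $D_2F(0,0)^{-1}$. Computing $D_1F(0,0)$ directly from \eqref{defF} gives $D_1F(0,0)(\om,\cdot)=\mathcal L_{\sigma^{-1}\om}(g(\sigma^{-1}\om,\cdot)v^0_{\sigma^{-1}\om}) - (\int g\,v^0\,dm)v_\om^0 = \mathcal L_{\sigma^{-1}\om}(g_{\sigma^{-1}\om}v^0_{\sigma^{-1}\om})$ (the subtracted term vanishing by \eqref{zeromean}). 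Applying \eqref{inverse} and then pairing against $g_\om$ and integrating $dm\,d\bbp$ produces, after a change of variables using $\sigma$-invariance of $\bbp$ and the equivariance $\mathcal L_\om v^0_\om=v^0_{\sigma\om}$, precisely $-\sum_{j\ge1}\int_{\Om\times X} g\cdot(g\circ\tau^j)\,d\mu$; the overall minus sign in $O'(0)=-D_2F(0,0)^{-1}D_1F(0,0)$ then yields the correct $+2\sum_{j\ge1}$. Convergence of the series and the legitimacy of interchanging sum and integral are guaranteed by \ref{cond:dec}, which gives the geometric bound $\|\mathcal L^{(j)}_{\sigma^{-j}\om}(g_{\sigma^{-j}\om}v^0_{\sigma^{-j}\om})\|_{\BV}\le K'e^{-\lambda j}\|g_{\sigma^{-j}\om}v^0_{\sigma^{-j}\om}\|_{\BV}$ (note $\int g_\om v^0_\om\,dm=0$, so the decay estimate applies), uniformly in $\om$.

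I expect the main obstacle to be bookkeeping: correctly expanding the second derivative of the quotient expression in Lemma~\ref{difflambda} and tracking which of the many terms survive at $\theta=0$, and then matching the telescoped series from \eqref{inverse} to \eqref{variance} with the right index shift and sign. The analytic inputs (dominated convergence for differentiation under the integral, geometric decay from \ref{cond:dec}) are routine given the machinery already developed; the risk is purely in the algebra of reconciling the two formulas for $\Sigma^2$.
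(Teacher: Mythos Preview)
Your proposal is correct and follows essentially the same route as the paper: identify $\Lambda=\hat\Lambda$, differentiate once more under the integral using the $C^2$ regularity of $O(\theta)$ and dominated convergence, simplify at $\theta=0$ via $\lambda_\om^0=1$, $(\lambda_\om^\theta)'|_{\theta=0}=0$ and the centering conditions, and then identify the cross term through the implicit-function formula $O'(0)=-D_2F(0,0)^{-1}D_1F(0,0)$ combined with the Neumann series~\eqref{inverse}. The only point to make explicit is that the second derivative of $\lambda_\om^\theta$ at $0$ also produces an $\int O''(0)_\om\,dm$ term, which vanishes because $O''(0)\in\mathcal S$; this is presumably what you are absorbing into ``terms that vanish by centering.''
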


\begin{proof}
Using the notation in subsection~\ref{LMB}, it follows from Lemma~\ref{difflambda} and Corollary~\ref{cor:LamHatLam} that
\[
\Lambda' (\theta)=
\Re \Big( \int \frac{ \overline{\lot}  ( \int g(\om, \cdot)e^{\theta g(\om, \cdot)}(O(\theta)_\om (\cdot)+v^0_\om (\cdot))+ e^{\theta g(\om, \cdot)}O'(\theta)_\om (\cdot)\, dm )}{|\lot|^2}\, d\bbp(\om) \Big) .
\]
Proceeding as in the proof of Lemma~\ref{difflambda}, one can show that $\Lambda$ is of class $C^2$ on a neighborhood of $0$ and that
\begin{equation}\label{sd}
\Lambda''(\theta)= \Re \Big( \int \frac{\lot \,''}{\lot} - \frac{(\lot \, ')^2}{(\lot)^2} \, d\mathbb P(\om) \Big),
\end{equation}
where we have used $'$ to denote derivative with respect to $\theta$.
We recall that $\lam_\om^0=1$, $\lot\,'$ is given by \eqref{eq:Iprime}, and in particular $\lot\,' |_{\theta=0}=0$ for \paeom.
It is then straightforward,  using~\eqref{sd}, the chain rule and the formulas in  Appendices~\ref{regofF} and~\ref{regofF2}, to verify that
\[
 \Lambda''(0)= \Re \Big( \int \int g(\om, x)^2 v_\om^0(x)+ 2g(\om, x)O'(0)_\om(x)+O''(0)_\om (x)\, dm(x)\, d\mathbb P(\om) \Big).
\]
Moreover, since $\theta\mapsto O'(\theta)$ is a map on a neighborhood of $0$ with values in $\mathcal S$ we can regard $O''(0)$ as an element of (the tangent space of) $\mathcal S$, which implies that
\[
 \int O''(0)_\om (x)\, dm(x)=0 \quad \text{for a.e. $\om$}
\]
and thus
\begin{equation}\label{0521}
 \Lambda''(0)= \Re \Big( \int \int (g(\om, x)^2 v_\om^0(x)+ 2g(\om, x)O'(0)_\om(x))\, dm(x)\, d\mathbb P(\om) \Big).
\end{equation}
On the other hand, by the implicit function theorem,
\[
 O'(0)_\om=-(D_2 F(0,0)^{-1}(D_1 F(0,0)))_\om.
\]
Furthermore, \eqref{inverse} implies that
\[
(D_2F(0,0)^{-1}\mathcal W)_\om=-\sum_{j=0}^\infty \mathcal L_{\sigma^{-j}\om}^{(j)} (\mc W_{\sigma^{-j} \om}),
\]
for each $\mc W\in \mc S$.
This together with Proposition~\ref{difF} gives that
 \begin{equation}\label{derO}
 O'(0)_\om=\sum_{j=1}^\infty \mathcal L_{\sigma^{-j}  \om}^{(j)} (g(\sigma^{-j} \om, \cdot) v_{\sigma^{-j} \om}^0(\cdot)).
\end{equation}
Using~\eqref{0521}, \eqref{derO}, the duality property of transfer operators, as well as the fact that   $\sigma$ preserves $\mathbb P$, we have that
\[
\begin{split}
\Lambda''(0) &=\int \bigg{[} \int g(\om, x)^2v_\om^0 \, dm (x)+2\sum_{j=1}^\infty \int g(\om, x)\mathcal L_{\sigma^{-j}  \om}^{(j)} (g(\sigma^{-j} \om, \cdot) v_{\sigma^{-j} \om}^0) \, dm(x) \bigg{]}\, d\mathbb P(\om) \\
&=\int \bigg{[} \int g(\om, x)^2 \, d\mu_\om (x)+2\sum_{j=1}^\infty  \int g(\om, T_{\sigma^{-j} \om}^{(j)} x) g(\sigma^{-j} \om, x)\, d\mu_{\sigma^{-j} \om} (x) \bigg{]}\, d\mathbb P(\om) \\
&=\int g(\om, x)^2 \, d\mu (\om, x)+2 \sum_{j=1}^\infty \int   \int g(\sigma^j \om, T_{ \om}^{(j)} x) g( \om, x)\, d\mu_{ \om} (x) \, d\mathbb P(\om) \\
&=\int g(\om, x)^2 \, d\mu (\om, x)+2\sum_{j=1}^\infty \int g(\om, x) g(\tau^j (\om, x))\, d\mu(\om, x) =\Sig^2.
\end{split}
\]
\end{proof}

The following result is a direct consequence of the previous lemma.

\begin{cor}\label{convex}
$\Lambda$ is strictly convex on a neighborhood of $0$.
\end{cor}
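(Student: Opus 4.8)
The plan is to obtain strict convexity as an immediate consequence of the second-order computation in Lemma~\ref{lem:Lam''0}, so the argument is short. First I would point out that convexity is a statement about the restriction of $\Lambda$ to a real neighborhood of $0$; the previous subsections worked with $\theta\in\C$ near $0$, but the relevant regularity (in particular $\Lambda\in C^2$, and the fact that $\Lambda$ is real-valued for real $\theta$, which is clear from the formula \eqref{eq:hatLam} together with Corollary~\ref{cor:LamHatLam}) holds on a real interval $(-\delta_0,\delta_0)$, and that is the object I restrict to.

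Next I would invoke Lemma~\ref{lem:Lam''0}, which gives that $\Lambda$ is of class $C^2$ near $0$ and $\Lambda''(0)=\Sig^2$. Combining this with the standing hypothesis $\Sig^2>0$ (assumed just before Lemma~\ref{lem:Lam''0}) and the continuity of $\theta\mapsto\Lambda''(\theta)$ guaranteed by the $C^2$ regularity, there is some $\delta\in(0,\delta_0]$ with $\Lambda''(\theta)>0$ for all $\theta\in(-\delta,\delta)$.

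Finally I would conclude using the elementary calculus fact that a $C^2$ real-valued function with everywhere-positive second derivative on an interval is strictly convex there (equivalently, $\Lambda'$ is strictly increasing on $(-\delta,\delta)$, or one writes $\Lambda(\theta_1)-\Lambda(\theta_0)-\Lambda'(\theta_0)(\theta_1-\theta_0)=\int_{\theta_0}^{\theta_1}(\theta_1-u)\Lambda''(u)\,du>0$ for $\theta_0\neq\theta_1$ in $(-\delta,\delta)$). This yields strict convexity of $\Lambda$ on $(-\delta,\delta)$, proving the corollary. I do not anticipate any genuine obstacle here: the only points worth flagging are the purely notational passage from the complex to the real setting, and the fact that one may need to shrink the neighborhood of $0$ so that $\Lambda''$ remains positive.
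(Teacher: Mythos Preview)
Your proposal is correct and follows essentially the same approach as the paper: the corollary is stated there as a direct consequence of Lemma~\ref{lem:Lam''0}, and you have simply spelled out the standard $C^2$/continuity argument that underlies that remark.
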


\subsection{Choice of bases for top Oseledets spaces $Y_\omega ^\theta$ and $Y_\omega ^{*\theta}$}
\label{sec:choiceOsBases}

We recall that $Y_\omega^\theta$ and $Y_\omega^{*\theta}$ are top Oseledets subspaces for twisted and adjoint twisted cocycle, $\mcl^\theta$ and $\mcl^{\theta*}$, respectively.
 The Oseledets decomposition for these cocycles  can be written in the form
\begin{equation}\label{sod}
\BV=Y^\theta_\om \oplus H^\theta_\om  \quad \text{ and } \quad
\BV^* = Y^{*\,\theta}_\om \oplus H^{*\,\theta}_\om,
\end{equation}
where $H^\theta_\om=V^\theta(\omega)\oplus\bigoplus_{j=2}^{l_\theta} Y^\theta_j(\omega)$ is the equivariant complement to $Y^\theta_\om:= Y_1^\theta(\om)$, and $H^{*\,\theta}_\om$ is defined similarly.
Furthermore, Lemma~\ref{lem:AnnihilatorOsSplittings} shows that the following duality relations hold:
\begin{equation}\label{eq:DualityRel}
\begin{split}
\psi(y)&=0 \text{ whenever } y \in Y^\theta_\om \text{ and } \psi \in H^{*\,\theta}_\om,\quad \text{ and }\\
\phi(f) &=0 \text{ whenever } \phi  \in Y^{*\, \theta}_\om \text{ and } f \in  H^{\theta}_\om.
\end{split}
\end{equation}

Let us fix convenient choices for elements of the one-dimensional top Oseledets spaces $Y^\theta_\om$ and $Y^{*\,\theta}_\om$, for $\theta \in \C$ close to $0$.
Let  $v_\omega^\theta\in Y^\theta_\om$ be as in \eqref{eq:vomt}, so that $\int v_\om^\theta(\cdot)dm=1$.
(In view of Proposition~\ref{prop:Density}, when $\theta\in \R$ close to $0$, the operators $\mcl_\om^\theta$ are positive, so we can additionally assume $\vot \geq 0$ and so $\lVert \vot\rVert_1=1$).

Since $\dim Y^\theta_\om=1$, $\vot$ is defined uniquely for \paeom.
Theorem~\ref{thm:MET} ensures that, for \paeom, there exists $\lot\in \C$ ($\lot> 0$ if $\theta \in \R$) such that
\begin{equation}
\label{eq:def_lambdas}
\mathcal{L}^{\theta}_\omega v^\theta_\omega=\lambda^\theta_\omega v^\theta_{\sigma\omega}.
\end{equation}
Integrating~\eqref{eq:def_lambdas}, and using (\ref{eq:int}), we obtain
\begin{equation}\label{eq:lam}
\lambda^\theta_\omega= \int e^{\theta g(\om,x)} v_\om^\theta(x) \, dm(x),
\end{equation}
and thus $\lambda_\omega^\theta$ coincides with the quantity introduced in~\eqref{eq:int}.
By~\eqref{eq:hatLam} and Corollary~\ref{cor:LamHatLam},
\begin{equation}\label{eq:LambdaFromlambda}
\Lam(\theta)= \int \log |\lot| \, d\bbp(\om).
\end{equation}
Next, let us fix $\phi^\theta_\omega \in Y^{*\,\theta}_\om$ so that $\phi^\theta_\omega(v^\theta_\omega)=1$. This selection is again possible and unique, because of~\eqref{eq:DualityRel}. Furthermore, this choice implies that
\begin{equation} \label{eq:DualEig}
(\mathcal{L}^{\theta}_\omega)^*\phi^\theta_{\sigma\omega}=\lambda^\theta_\omega \phi^\theta_{\omega},
\end{equation}
because $Y_\om^{*\theta}$ is one-dimensional and equivariant.
Indeed, if $C_\om^\theta$ is the constant such that $(\mathcal{L}^{\theta}_\omega)^*\phi^\theta_{\sigma\omega}=C_\om^\theta \phi^\theta_{\omega}$, then
\begin{equation*}
\begin{split}
\lot &=  
 \lot \phi_{\sig\om}^\theta (v_{\sig\om}^\theta)=  \phi_{\sig\om}^\theta (\mcl_\om^\theta \vot) =  (  (\mathcal{L}^{\theta}_\omega)^*\phi_{\sig\om}^\theta)  (\vot)=    C_\om^\theta \phi_{\om}^\theta (\vot) = C_\om^\theta.
\end{split}
\end{equation*}

\section{Limit theorems}
In this section we establish the main results of our paper.
To obtain the large deviation principle (Theorem~\ref{thm:ldt}), we first
 link the asymptotic behaviour of moment generating (and characteristic) functions associated to Birkhoff sums with the Lyapunov exponents $\Lambda (\theta)$. Then, we combine the strict convexity of the map $\theta \mapsto \Lambda (\theta)$ on a neighborhood of $0\in \mathbb R$ with the classical G\"artner-Ellis theorem.
We establish the central limit theorem (Theorem~\ref{thm:clt}) by applying Levy's continuity theorem and using the  $C^2$-regularity
of the map $\theta \mapsto \Lambda (\theta)$ on a neighborhood of $0\in \mathbb C$.
Finally, we demonstrate the full power of our approach by proving for the first time random versions of the local central limit theorem,  both under the so-called aperiodic and periodic assumptions (Theorems~\ref{thm:lclt} and~\ref{thm:lcltp}).
In addition, we present several equivalent formulations of the aperiodicity condition.
\subsection{Large deviations property}
In this section we establish Theorem~\ref{thm:ldt}.
%
The main tool in establishing this large deviations property will be the following classical result.
\begin{thm}\label{GET}(G\"artner-Ellis \cite{HennionHerve})
 For  $n\in \N$, let $\mathbb P_n$ be a probability measure on a measurable space $(Y, \mathcal T)$ and let $\mathbb E_n$ denote the corresponding expectation operator. Furthermore,
 let $S_n$ be a real random variable on $(\Omega, \mathcal T)$ and assume that on some interval $[-\theta_{+}, \theta_{+}]$, $\theta_{+}>0$, we have
 \begin{equation}\label{GETE}
  \lim_{n\to \infty} \frac 1n \log \mathbb E_n (e^{\theta S_n})=\psi (\theta),
 \end{equation}
where $\psi$ is a strictly convex continuously differentiable function satisfying $\psi'(0)=0$. Then, there exists $\epsilon_+>0$
such that the function $c$ defined by
\begin{equation}\label{GETE2}
 c(\epsilon)=\sup_{\lvert \theta \rvert \le \theta_+ }\{ \theta \epsilon-\psi (\theta) \}
\end{equation}
is
nonnegative, continuous, strictly convex  on $[-\epsilon_+, \epsilon_+]$,
vanishing only at $0$ and  such that
\[
 \lim_{n\to \infty} \frac 1 n \log \mathbb P_n (S_n >n\epsilon)=-c(\epsilon), \quad \text{for every $\epsilon \in (0, \epsilon_+)$.}
\]
\end{thm}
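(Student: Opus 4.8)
The plan is to derive this local Gärtner--Ellis statement from elementary convex analysis applied to the Legendre--Fenchel transform $c$, together with a Chernoff/Markov upper bound and an exponential-tilting lower bound; no dynamics is involved, only the stated hypotheses on $\psi$. First I would analyse $c$. Since $\psi$ is strictly convex and continuously differentiable on $[-\theta_+,\theta_+]$, its derivative $\psi'$ is continuous and strictly increasing with $\psi'(0)=0$, so $\psi'$ maps $[-\theta_+,\theta_+]$ homeomorphically onto an interval $[\psi'(-\theta_+),\psi'(\theta_+)]$ having $0$ in its interior. Choose $\epsilon_+>0$ so small that $[-\epsilon_+,\epsilon_+]\subseteq\psi'([-\theta_+,\theta_+])$, and for $\epsilon\in[-\epsilon_+,\epsilon_+]$ set $\theta(\epsilon):=(\psi')^{-1}(\epsilon)$. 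Because $\theta\mapsto\theta\epsilon-\psi(\theta)$ has derivative $\epsilon-\psi'(\theta)$, positive for $\theta<\theta(\epsilon)$ and negative for $\theta>\theta(\epsilon)$, the supremum in \eqref{GETE2} is attained at the interior point $\theta(\epsilon)$ and $c(\epsilon)=\theta(\epsilon)\epsilon-\psi(\theta(\epsilon))$. Using $\psi(0)=\lim_n\tfrac1n\log\mathbb E_n(1)=0$, the choice $\theta=0$ in \eqref{GETE2} gives $c\ge 0$ and $c(0)=-\psi(\theta(0))=0$, while for $\epsilon\ne 0$ the strict tangent-line inequality for the strictly convex $\psi$ at $\theta(\epsilon)\ne 0$ yields $0=\psi(0)>\psi(\theta(\epsilon))-\psi'(\theta(\epsilon))\theta(\epsilon)=-c(\epsilon)$, so $c$ vanishes only at $0$. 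Finally $c$ is convex as a pointwise supremum of affine functions of $\epsilon$, its unique subgradient at $\epsilon$ is $\theta(\epsilon)$, and $\epsilon\mapsto\theta(\epsilon)$ is continuous and strictly increasing; hence $c$ is $C^1$ with $c'=\theta(\cdot)$, and strictly convex, in particular continuous, on $[-\epsilon_+,\epsilon_+]$.

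For the upper bound, fix $\epsilon\in(0,\epsilon_+)$. For every $\theta\in[0,\theta_+]$, Markov's inequality gives $\mathbb P_n(S_n>n\epsilon)\le e^{-\theta n\epsilon}\mathbb E_n(e^{\theta S_n})$, so by \eqref{GETE} we have $\limsup_n\tfrac1n\log\mathbb P_n(S_n>n\epsilon)\le\psi(\theta)-\theta\epsilon$. Minimising over $\theta\in[0,\theta_+]$, and noting that, since $\epsilon>0$, the maximiser $\theta(\epsilon)$ is positive so that $\sup_{0\le\theta\le\theta_+}(\theta\epsilon-\psi(\theta))=\sup_{|\theta|\le\theta_+}(\theta\epsilon-\psi(\theta))=c(\epsilon)$, we obtain $\limsup_n\tfrac1n\log\mathbb P_n(S_n>n\epsilon)\le-c(\epsilon)$.

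The lower bound is the heart of the argument and the step I expect to be the main obstacle. Fix $\epsilon\in(0,\epsilon_+)$ and a small $\delta>0$ with $\epsilon+\delta<\epsilon_+$, and put $\theta':=\theta(\epsilon+\delta)\in(0,\theta_+)$, so $\psi'(\theta')=\epsilon+\delta$. On $E_n:=\{n\epsilon<S_n<n(\epsilon+2\delta)\}\subseteq\{S_n>n\epsilon\}$ we have $e^{\theta'S_n}<e^{\theta'n(\epsilon+2\delta)}$, hence
\[
\mathbb P_n(S_n>n\epsilon)\ge\mathbb P_n(E_n)\ge e^{-\theta'n(\epsilon+2\delta)}\,\mathbb E_n\big(e^{\theta'S_n}\one_{E_n}\big).
\]
It remains to show $\mathbb E_n(e^{\theta'S_n}\one_{E_n})$ is a non-negligible fraction of $\mathbb E_n(e^{\theta'S_n})$. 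Writing it as $\mathbb E_n(e^{\theta'S_n})-\mathbb E_n(e^{\theta'S_n};S_n\le n\epsilon)-\mathbb E_n(e^{\theta'S_n};S_n\ge n(\epsilon+2\delta))$, I would bound the two tail terms by re-tilting. On $\{S_n\le n\epsilon\}$, with $\theta'':=\theta(\epsilon)<\theta'$ we get $e^{\theta'S_n}\le e^{(\theta'-\theta'')n\epsilon}e^{\theta''S_n}$, so this tail is at most $e^{(\theta'-\theta'')n\epsilon}\mathbb E_n(e^{\theta''S_n})$, whose exponential rate $(\theta'-\theta'')\epsilon+\psi(\theta'')$ is strictly below $\psi(\theta')$ because $\psi(\theta')-\psi(\theta'')=\int_{\theta''}^{\theta'}\psi'(t)\,dt>\epsilon(\theta'-\theta'')$ (as $\psi'\ge\psi'(\theta'')=\epsilon$, strictly, on $(\theta'',\theta']$). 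Symmetrically, choose $\theta'''\in(\theta',\theta_+)$ close enough to $\theta'$ that $\tfrac{1}{\theta'''-\theta'}\int_{\theta'}^{\theta'''}\psi'(t)\,dt<\epsilon+2\delta$ (possible since this average tends to $\psi'(\theta')=\epsilon+\delta$ as $\theta'''\downarrow\theta'$); then on $\{S_n\ge n(\epsilon+2\delta)\}$, $e^{\theta'S_n}\le e^{(\theta'-\theta''')n(\epsilon+2\delta)}e^{\theta'''S_n}$, and this tail has exponential rate $(\theta'-\theta''')(\epsilon+2\delta)+\psi(\theta''')<\psi(\theta')$. Hence both tails are exponentially smaller than $\mathbb E_n(e^{\theta'S_n})$, so $\mathbb E_n(e^{\theta'S_n}\one_{E_n})\ge\tfrac12\mathbb E_n(e^{\theta'S_n})$ for all large $n$, and therefore
\[
\liminf_n\tfrac1n\log\mathbb P_n(S_n>n\epsilon)\ge\psi(\theta')-\theta'(\epsilon+2\delta)=-\big(\theta'\epsilon-\psi(\theta')\big)-2\theta'\delta\ge-c(\epsilon)-2\theta_+\delta.
\]
Letting $\delta\downarrow 0$ gives $\liminf_n\tfrac1n\log\mathbb P_n(S_n>n\epsilon)\ge-c(\epsilon)$, which together with the upper bound completes the proof. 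The delicate point, as anticipated, is organising these three strict-convexity comparisons so the tail contributions decay at the right exponential order while the surviving ``window'' contribution stays correctly localised near $n\epsilon$; verifying the asserted regularity of $c$ up to the endpoints of $[-\epsilon_+,\epsilon_+]$ (which is what dictates the choice of $\epsilon_+$ strictly inside the range of $\psi'$) is routine convex analysis.
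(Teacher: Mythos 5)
Your proposal is correct. Note, however, that the paper does not prove this statement at all: it is quoted as a classical result and attributed to Hennion--Herv\'e \cite{HennionHerve}, so there is no in-paper argument to compare against line by line. Your argument is a legitimate self-contained proof of the local G\"artner--Ellis statement. The convex-analysis part (attainment of the supremum at $\theta(\epsilon)=(\psi')^{-1}(\epsilon)$, $c\ge 0$, $c(0)=0$, strict positivity for $\epsilon\neq 0$ via the tangent-line inequality, and $C^1$ strict convexity of $c$ from the envelope identity $c'=\theta(\cdot)$) is standard and correctly executed; the Chernoff upper bound is routine, and the observation that for $\epsilon>0$ the sup over $[0,\theta_+]$ equals the sup over $[-\theta_+,\theta_+]$ is the right way to reconcile the one-sided bound with the definition \eqref{GETE2}. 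The lower bound is where your route differs from the textbook treatment: the usual proof (as in \cite{HennionHerve} and standard large-deviations texts) passes to exponentially tilted probability measures and runs a weak law of large numbers or second-moment estimate under the tilted law, whereas you avoid constructing tilted measures altogether and instead compare the two tail contributions to $\mathbb E_n(e^{\theta' S_n})$ by re-tilting with nearby parameters $\theta''=\theta(\epsilon)$ and $\theta'''$ chosen via the integral-average condition on $\psi'$. This only invokes the hypothesis \eqref{GETE} at finitely many parameter values and uses strict convexity exactly where it is needed, which is arguably cleaner in this abstract setting where nothing is known about the structure of $S_n$; the price is that the three strict-convexity comparisons must be organised carefully, and you have done so correctly (in particular, both tail rates $(\theta'-\theta'')\epsilon+\psi(\theta'')$ and $(\theta'-\theta''')(\epsilon+2\delta)+\psi(\theta''')$ are indeed strictly below $\psi(\theta')$, so the window $E_n$ captures at least half of $\mathbb E_n(e^{\theta'S_n})$ for large $n$, and letting $\delta\downarrow 0$ closes the gap with the upper bound).
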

We will also need  the following results, linking the asymptotic behaviour of characteristic functions associated to Birkhoff sums with the numbers $\Lambda (\theta)$.
\begin{lemma}\label{L:growthExpSums}
Let $\theta\in \C$ be sufficiently close to 0, so that the results of Section~\ref{sec:choiceOsBases} apply.
Let $f\in \BV$ be such that $f\notin H_\om^\theta$. That is, $\phi^\theta_\omega (f) \neq 0$.
Then,
\[
\lim_{n\to\infty}\frac{1}{n} \log \Big| \int e^{\theta S_ng(\omega,x)}f\ dm \Big| =  \Lam(\theta) \quad \text{for \paeom.}
\]
\end{lemma}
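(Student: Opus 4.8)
The plan is to compare $\int e^{\theta S_n g(\om,x)}f\,dm$ with the scalar cocycle $(\lot)_{\om\in\Om}$ and then bound the error coming from the complementary Oseledets space. Throughout, $\theta$ is fixed and close enough to $0$ that Section~\ref{sec:choiceOsBases} applies: $\B = Y_\om^\theta \oplus H_\om^\theta$ with $Y_\om^\theta = \langle v_\om^\theta\rangle$ one-dimensional, $\mcl_\om^\theta v_\om^\theta = \lot v_{\sig\om}^\theta$, $\int v_\om^\theta\,dm = 1$, and $\phi_\om^\theta \in Y_\om^{*\theta}$ is normalized by $\phi_\om^\theta(v_\om^\theta) = 1$ and annihilates $H_\om^\theta$; in particular $f\mapsto \phi_\om^\theta(f)\,v_\om^\theta$ is the projection onto $Y_\om^\theta$ along $H_\om^\theta$.

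First I would apply Lemma~\ref{lem:exprLit}: by \eqref{intprop}, $\int e^{\theta S_n g(\om,x)}f\,dm = \int \mcl_\om^{\theta,(n)}(f)\,dm$, so it suffices to analyse $\frac1n\log\big|\int \mcl_\om^{\theta,(n)}(f)\,dm\big|$. Write $f = \phi_\om^\theta(f)\,v_\om^\theta + h_\om$ with $h_\om\in H_\om^\theta$; the hypothesis $f\notin H_\om^\theta$ means exactly $\phi_\om^\theta(f)\ne 0$. Iterating the equivariance relation gives $\mcl_\om^{\theta,(n)}v_\om^\theta = \big(\prod_{j=0}^{n-1}\lambda_{\sig^j\om}^\theta\big)v_{\sig^n\om}^\theta$, while equivariance of $H^\theta$ gives $\mcl_\om^{\theta,(n)}h_\om\in H_{\sig^n\om}^\theta$; integrating against $m$ and using $\int v_{\sig^n\om}^\theta\,dm = 1$,
\[
\int e^{\theta S_n g(\om,x)}f\,dm = \phi_\om^\theta(f)\prod_{j=0}^{n-1}\lambda_{\sig^j\om}^\theta \;+\; \int \mcl_\om^{\theta,(n)}h_\om\,dm \;=:\; A_n(\om) + B_n(\om).
\]

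Next I would estimate the two terms. For $A_n$: the function $\om\mapsto\log|\lot|$ is in $L^1(\bbp)$ — implicit in \eqref{eq:LambdaFromlambda}, since $|\lot|$ is bounded above uniformly in $\om$ by \eqref{eq:boundedv} and \eqref{se2} while $\Lam(\theta)$ is finite — so Birkhoff's ergodic theorem together with \eqref{eq:LambdaFromlambda} gives $\frac1n\sum_{j=0}^{n-1}\log|\lambda_{\sig^j\om}^\theta|\to\Lam(\theta)$ for \paeom; combined with $\phi_\om^\theta(f)\ne 0$ this yields $\frac1n\log|A_n(\om)|\to\Lam(\theta)$, and since $\{\om:\lot = 0\}$ together with all its $\sig$-preimages is $\bbp$-null, $A_n(\om)\ne 0$ for every $n$ and \paeom. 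For $B_n$: $|B_n(\om)| \le \lVert\mcl_\om^{\theta,(n)}h_\om\rVert_1 \le \lVert\mcl_\om^{\theta,(n)}h_\om\rVert_\B$, and since $h_\om\in H_\om^\theta$ the multiplicative ergodic theorem (Theorem~\ref{thm:MET}) forces $\limsup_{n\to\infty}\frac1n\log\lVert\mcl_\om^{\theta,(n)}h_\om\rVert_\B \le \mu_2^\theta$, where $\mu_2^\theta$ is the second Lyapunov exponent of $\mcl^\theta$ (with the convention $\mu_2^\theta = \ka(\theta)$ if there is a single exceptional exponent). By Theorem~\ref{cor:quasicompactness} and Corollary~\ref{cor:LamHatLam}, $\mu_2^\theta < \Lam(\theta)$ for $\theta$ near $0$, hence $\limsup_n\frac1n\log|B_n(\om)| < \Lam(\theta)$ for \paeom.

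Finally I would combine the two estimates: since $\frac1n\log|A_n|\to\Lam(\theta)$ and $\limsup_n\frac1n\log|B_n| < \Lam(\theta)$, the ratio $|B_n|/|A_n|$ tends to $0$ exponentially fast, so $A_n + B_n \ne 0$ for all large $n$, and from $|A_n|-|B_n| \le |A_n+B_n| \le |A_n|+|B_n|$ we get $\frac1n\log|A_n+B_n|\to\Lam(\theta)$. Therefore $\lni\frac1n\log\big|\int e^{\theta S_n g(\om,x)}f\,dm\big| = \Lam(\theta)$ for \paeom, as claimed. The one genuinely delicate ingredient is the spectral-gap property $\mu_2^\theta < \Lam(\theta)$ for $\theta$ near $0$ — simplicity and strict dominance of the leading Lyapunov exponent of the twisted cocycle — but this is exactly the content of Theorem~\ref{cor:quasicompactness} and Corollary~\ref{cor:LamHatLam}, so no further work is needed.
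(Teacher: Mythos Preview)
Your proof is correct and follows essentially the same approach as the paper: decompose $f$ along $Y_\om^\theta\oplus H_\om^\theta$, use \eqref{intprop} to pass to $\int\mcl_\om^{\theta,(n)}f\,dm$, control the leading term via Birkhoff's theorem and \eqref{eq:LambdaFromlambda}, and kill the $H_\om^\theta$-contribution using the spectral gap from Theorem~\ref{thm:MET}. Your write-up is in fact somewhat more careful about the endgame (nonvanishing of $A_n$, integrability of $\log|\lot|$, and the triangle-inequality combination) than the paper's version.
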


\begin{proof}
Given $f\in \B$, we may write (see~\eqref{sod}) $f=\phi^\theta_\omega (f) v^\theta_\omega+h^\theta_\omega$, where $h^\theta_\omega\in H^\theta_\omega$.
Using this decomposition and applying repeatedly \eqref{eq:def_lambdas}, we get
\begin{equation}
\label{decomp}\mathcal{L}^{\theta,(n)}_\omega f=\left(\prod_{i=0}^{n-1}\lambda_{\sigma^i\omega}^\theta\right) \phi^\theta_\omega (f)  v^\theta_{\sigma^{n-1}\omega}+ \mathcal{L}^{\theta,(n)}_\omega h_\omega^\theta.
\end{equation}
Theorem~\ref{thm:MET} ensures that
\begin{equation}
\label{decay}
\lim_{n\to\infty}\frac{1}{n}\log\|\mathcal{L}^{\theta,(n)}_{\omega}|_{H_\omega^\theta}\|<\Lambda(\theta).
\end{equation}
Thus, the second term in (\ref{decomp}) grows asymptotically with $n$ at an exponential rate strictly slower than $\Lambda(\theta)$.
By~\eqref{intprop} and \eqref{decomp}, we have that for \paeom
\begin{align*}
\lim_{n\to\infty}\frac{1}{n}& \log \Big| \int e^{\theta S_ng(\omega,x)}f\ dm \Big| = \lim_{n\to\infty}\frac{1}{n}\log \Big| \int \mathcal{L}^{\theta,(n)}_\omega f\ dm \Big| \\
&=\lim_{n\to\infty}\frac{1}{n}\sum_{i=0}^{n-1}\log |\lambda_{\sigma^i\omega}^\theta|+
\lim_{n\to\infty}\frac{1}{n}\log
\Big| \int\left[ \phi^\theta_\omega (f) v^\theta_{\sigma^{n-1}\omega}+\frac{\mathcal{L}^{\theta,(n)}_\omega h_\omega^\theta}{\prod_{i=0}^{n-1}|\lambda_{\sigma^i\omega}^\theta|}\right]\ dm \Big|,
\end{align*}
whenever the RHS limits exist.
 The first limit in the previous line equals $\Lambda(\theta)$ by \eqref{eq:LambdaFromlambda}.
The second limit is zero, because the choice of $v^\theta_{\sigma^{n-1}\omega}$ ensures the integral of the first term in the square brackets is $\phi^\theta_\omega (f) \neq 0$ (by assumption), which is independent of $n$, and
the second term in the square brackets goes to zero as $n \to \infty$ by (\ref{decay}).
The conclusion follows.
\end{proof}

\begin{lemma}\label{need}
For all complex $\theta$ in a neighborhood of 0, and \paeom, we have that
\[
 \lim_{n\to \infty} \frac 1 n \log  \Big|\int e^{\theta S_n g(\om, x)} \, d\mu_\om(x) \Big|=\Lambda (\theta).
\]
\end{lemma}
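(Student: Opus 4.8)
The plan is to reduce the statement to Lemma~\ref{L:growthExpSums}. Since $d\mu_\om = v^0_\om\,dm$, identity~\eqref{intprop} gives
\[
\int e^{\theta S_n g(\om, x)}\,d\mu_\om(x) = \int e^{\theta S_n g(\om, \cdot)}v^0_\om\,dm = \int \mcl_\om^{\theta,(n)}(v^0_\om)\,dm .
\]
The upper bound is then immediate: $\bigl|\int \mcl_\om^{\theta,(n)}(v^0_\om)\,dm\bigr| \le \|\mcl_\om^{\theta,(n)}(v^0_\om)\|_1 \le \|\mcl_\om^{\theta,(n)}\|\,\|v^0_\om\|_{\BV}$, so from the uniform bound $\esssup_\om\|v^0_\om\|_{\BV}<\infty$ (Lemma~\ref{lem:boundedv}) and the definition of $\Lam(\theta)$ as the top Lyapunov exponent of $\mc{R}^\theta$ we obtain $\limsup_{n\to\infty}\frac1n\log\bigl|\int e^{\theta S_n g(\om,x)}\,d\mu_\om(x)\bigr| \le \Lam(\theta)$ for \paeom.

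For the matching lower bound I would repeat the computation in the proof of Lemma~\ref{L:growthExpSums} with the ($\om$-dependent) function $f=v^0_\om$ in place of a fixed $f$; this is legitimate because that argument is carried out fibrewise in $\om$, $v^0_\om\in\BV$ for \paeom, and $\esssup_\om\|v^0_\om\|_{\BV}<\infty$. Decomposing $v^0_\om = \phi^\theta_\om(v^0_\om)\,v^\theta_\om + h^\theta_\om$ with $h^\theta_\om\in H^\theta_\om$ according to \eqref{sod}, applying $\mcl_\om^{\theta,(n)}$ and integrating, the first summand contributes $\phi^\theta_\om(v^0_\om)\prod_{i=0}^{n-1}\lambda^\theta_{\sigma^i\om}$ by \eqref{eq:def_lambdas} and $\int v^\theta_{\sigma^n\om}\,dm=1$, whose logarithmic growth rate equals $\Lam(\theta)$ by Birkhoff's ergodic theorem together with \eqref{eq:LambdaFromlambda}, while $\int\mcl_\om^{\theta,(n)}h^\theta_\om\,dm$ is controlled by \eqref{decay} and grows at a strictly smaller exponential rate. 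Hence $\lim_{n\to\infty}\frac1n\log\bigl|\int e^{\theta S_n g(\om,x)}\,d\mu_\om(x)\bigr| = \Lam(\theta)$, \emph{provided} that $\phi^\theta_\om(v^0_\om)\neq 0$ for \paeom.

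Verifying $\phi^\theta_\om(v^0_\om)\neq 0$ for \paeom\ and all $\theta$ in a neighbourhood of $0$ is therefore the crux, and the step I expect to require the most care. When $\theta=0$ it is immediate: as in the proof of Lemma~\ref{lem:PF}, $m$ spans the top Oseledets space $Y^{*0}_\om$ of the untwisted adjoint cocycle, so the normalisation $\phi^0_\om(v^0_\om)=1$ forces $\phi^0_\om=m$ and $\phi^0_\om(v^0_\om)=\int v^0_\om\,dm=1$. For $\theta$ near $0$, writing $v^\theta_\om = v^0_\om + O(\theta)_\om$ as in \eqref{eq:vomt} and using $\phi^\theta_\om(v^\theta_\om)=1$ gives
\[
\phi^\theta_\om(v^0_\om) = 1 - \phi^\theta_\om\bigl(O(\theta)_\om\bigr), \qquad \bigl|\phi^\theta_\om\bigl(O(\theta)_\om\bigr)\bigr| \le \|\phi^\theta_\om\|_{\BV^*}\,\|O(\theta)\|_\infty ,
\]
and $\|O(\theta)\|_\infty\to 0$ as $\theta\to 0$ since $O$ is continuous with $O(0)=0$ (Lemma~\ref{thm:IFT}). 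Because $\|v^\theta_\om\|_{\BV}\ge\|v^\theta_\om\|_1\ge\bigl|\int v^\theta_\om\,dm\bigr|=1$, one has $\|\phi^\theta_\om\|_{\BV^*}\le\|\Pi^\theta_{Y,\om}\|$, where $\Pi^\theta_{Y,\om}$ denotes the Oseledets projection of $\mc{R}^\theta$ onto $Y^\theta_\om$ along $H^\theta_\om$; so it is enough to bound $\esssup_\om\|\Pi^\theta_{Y,\om}\|$ uniformly for $\theta$ in a neighbourhood of $0$. This uniform-in-$\om$ bound on the top Oseledets projection of the twisted cocycle --- which I regard as the main obstacle --- can be extracted from the twisted random Lasota--Yorke inequality of Lemma~\ref{lem:LYtwisted} and the exponential separation \eqref{decay}, in the same spirit as the bound $\esssup_\om\|v^0_\om\|_{\BV}<\infty$ of Lemma~\ref{lem:boundedv}. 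Once it is available, shrinking the neighbourhood so that $\|O(\theta)\|_\infty$ is small enough yields $\bigl|\phi^\theta_\om\bigl(O(\theta)_\om\bigr)\bigr|<\tfrac12$, hence $\phi^\theta_\om(v^0_\om)\neq 0$, and the proof is complete.
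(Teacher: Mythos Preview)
Your reduction to Lemma~\ref{L:growthExpSums} and your identification of the crux --- showing $\phi^\theta_\om(v^0_\om)\neq 0$ for \paeom\ and all $\theta$ near $0$ --- exactly matches the paper's proof. The decomposition $\phi^\theta_\om(v^0_\om)=1-\phi^\theta_\om(O(\theta)_\om)$ is a nice rearrangement, but as you observe it does not get around the need for a uniform-in-$\om$ bound on $\|\phi^\theta_\om\|_{\BV^*}$ for $\theta$ near $0$.

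Where your argument remains incomplete is precisely this bound. The inputs you cite --- the twisted Lasota--Yorke inequality of Lemma~\ref{lem:LYtwisted} and the asymptotic separation~\eqref{decay} --- yield, via the multiplicative ergodic theorem, only a \emph{tempered} (subexponential) control of the Oseledets projection $\Pi^\theta_{Y,\om}$, not a uniform-in-$\om$ bound; indeed, the uniform exponential decay proved later in Lemma~\ref{lem:UnifExpDecayY2} already \emph{presupposes} uniform control of $\phi^\theta$, so invoking that circle of ideas here would be circular. The paper resolves the issue not by bounding the projection but by constructing $\phi^\theta$ directly: Appendix~\ref{0245} runs an implicit function theorem argument, parallel to the one in Lemma~\ref{thm:IFT} for $v^\theta$, in a Banach space $\mathcal{N}$ of measurable families $\om\mapsto\Phi_\om\in\BV^*$ with norm $\esssup_\om\|\Phi_\om\|_{\BV^*}$. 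This produces a $C^1$ map $\theta\mapsto\phi^\theta$ with values in $\mathcal{N}$, so that $\esssup_\om\|\phi^\theta_\om-\phi^0_\om\|_{\BV^*}\to 0$ as $\theta\to 0$; combined with $\phi^0_\om(v^0_\om)=1$ and $\esssup_\om\|v^0_\om\|_{\BV}<\infty$ from~\eqref{eq:boundedv}, this gives $\phi^\theta_\om(v^0_\om)\to 1$ uniformly in $\om$. Your proposed route, if carried out, would end up reproducing essentially this same construction.
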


\begin{proof}
Since
\[
\lim_{n\to \infty} \frac 1 n \log  \Big| \int e^{\theta S_n g(\om, x)} \, d\mu_\om(x) \Big|=\lim_{n\to \infty} \frac 1 n \log  \Big|\int e^{\theta S_n(\om, x)}v_\om^0(x) \, dm(x) \Big|,
\]
by Lemma~\ref{L:growthExpSums} it is sufficient to show that $ \phi_\om^\theta (v_\om^0) \neq 0$ for $\theta$ near 0.
We know that $\phi_\om^0 (v_\om^0)=\int v_\om^0 dm=1$. Hence, the differentiability of $\theta \mapsto \phi^\theta$ at $\theta=0$, established in Appendix~\ref{0245}, together with the uniform bound on $\|v_\om^0\|_{\B}$ provided by \eqref{eq:boundedv}, ensure that for  $\theta \in \C$ sufficiently close to 0 and \paeom, $\phi_\om^\theta (v_\om^0)\neq 0$ as required.
\end{proof}

\begin{proof}[Proof of Theorem~\ref{thm:ldt}]
The proof follows directly from Theorem~\ref{GET} when applied to the case when
\[
 (Y, \mathcal T)=(X, \mathcal B), \quad \mathbb P_n=\mu_\om  \quad S_n=S_ng(\om, \cdot) \quad \text{and} \quad \psi(\theta)=\Lam(\theta).
\]
Indeed, we  note that~\eqref{GETE} holds by Lemma~\ref{need} (the absolute values are irrelevant when $\theta \in \R$). Furthermore, it follows from Corollary~\ref{cor:LamHatLam} that $\Lambda$ is continuously differentiable on a neighborhood
of $0$ in $\R$ satisfying $\Lambda'(0)=0$ and  by Corollary~\ref{convex}, we have that $\Lambda$ is strictly convex on a neighborhood
of $0$ in $\R$. Finally, $c$ does not depend on $\om$ by~\eqref{GETE2}.
\end{proof}

\subsection{Central limit theorem}\label{sec:pfclt}
The goal of section is to establish Theorem~\ref{thm:clt}.
We start with the following lemma, which will be useful in the proofs of the both central limit theorem and local central limit theorem.
\begin{lemma}\label{lem:UnifExpDecayY2}
There exist $C>0, 0<r<1$ such that for every $\theta \in \mathbb{C}$ sufficiently close to 0, every $n\in \N$ and $\paeom$, we have
\begin{equation}
\Big| \int\mathcal L_\om^{\theta, (n)}(v_\om^0 -\phi_\om^{\theta}(v_\om^0) v_{\om}^{\theta})\, dm \Big|
\leq Cr^n .
\end{equation}
\end{lemma}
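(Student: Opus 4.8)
The plan is to decompose the function $v_\om^0 - \phi_\om^\theta(v_\om^0) v_\om^\theta$ along the Oseledets splitting $\BV = Y_\om^\theta \oplus H_\om^\theta$ of the twisted cocycle, observe that this particular combination lies entirely in $H_\om^\theta$, and then invoke the uniform exponential decay of the twisted cocycle restricted to $H^\theta$. First I would write, using the choice of bases from Section~\ref{sec:choiceOsBases}, that any $f \in \BV$ satisfies $f = \phi_\om^\theta(f) v_\om^\theta + h_\om^{\theta,f}$ with $h_\om^{\theta,f} \in H_\om^\theta$. Applying this to $f = v_\om^0$, the vector $w_\om^\theta := v_\om^0 - \phi_\om^\theta(v_\om^0) v_\om^\theta$ is exactly the $H_\om^\theta$-component of $v_\om^0$, so $w_\om^\theta \in H_\om^\theta$. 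Consequently $\mcl_\om^{\theta,(n)} w_\om^\theta = \mcl_\om^{\theta,(n)}|_{H_\om^\theta}(w_\om^\theta)$, and by~\eqref{intprop} the integral in the statement equals $\int \mcl_\om^{\theta,(n)} w_\om^\theta \, dm$, which is bounded in absolute value by $\|\mcl_\om^{\theta,(n)}|_{H_\om^\theta}\| \cdot \|w_\om^\theta\|_{\BV}$ (the $L^1$-norm being dominated by the $\BV$-norm via (V3), so $|\int \cdot \, dm| \le \|\cdot\|_{\BV}$).

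It then remains to control the two factors uniformly in $\om$. For the first factor, Theorem~\ref{cor:quasicompactness} shows $\limsup_{\theta\to0}\mu_2^\theta < 0$, i.e. there is a uniform gap: for $\theta$ close enough to $0$ there exist $C_1>0$ and $0<r_1<1$, independent of $\om$, with $\|\mcl_\om^{\theta,(n)}|_{H_\om^\theta}\| \le C_1 r_1^n$ for all $n$ and $\paeom$. Strictly speaking the multiplicative ergodic theorem gives this rate $\paeom$ with an $\om$-dependent constant; to get a genuinely uniform constant one appeals to the standard Oseledets-filtration arguments (as used elsewhere in the paper, e.g. near~\eqref{decay} and in~\cite{FLQ2,GTQuas1}) together with~\ref{cond:unifNormBd}, \ref{C0} and the uniform Lasota–Yorke bound of Lemma~\ref{lem:LYtwisted}, which pin the subdominant growth rate below $\Lam(\theta)$ uniformly. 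For the second factor, $\|w_\om^\theta\|_{\BV} \le \|v_\om^0\|_{\BV} + |\phi_\om^\theta(v_\om^0)|\,\|v_\om^\theta\|_{\BV}$; here $\esssup_\om \|v_\om^0\|_{\BV}<\infty$ by~\eqref{eq:boundedv}, $\esssup_\om\|v_\om^\theta\|_{\BV} = \esssup_\om\|v_\om^0 + O(\theta)_\om\|_{\BV} < \infty$ by Lemma~\ref{thm:IFT} (continuity of $\theta\mapsto O(\theta)$ into $\mc S$, and the norm on $\mc S$ is the essential sup of the $\BV$-norms), and $|\phi_\om^\theta(v_\om^0)|$ is uniformly bounded since $\phi_\om^\theta$ depends continuously (indeed differentiably, Appendix~\ref{0245}) on $\theta$ with $\phi_\om^0(v_\om^0)=1$, together with the uniform bound on $\|v_\om^0\|_{\BV}$. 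Multiplying, we obtain the claim with $C = C_1 \cdot \esssup_\om \|w_\om^\theta\|_{\BV}$ and $r = r_1$.

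The main obstacle is obtaining the \emph{uniform}-in-$\om$ exponential rate for $\|\mcl_\om^{\theta,(n)}|_{H_\om^\theta}\|$, since the MET by itself only yields a pointwise-in-$\om$ asymptotic rate. I would handle this exactly as in the paragraph around~\eqref{decay}: the uniform Lasota–Yorke inequality from Lemma~\ref{lem:LYtwisted} gives $\|\mcl_\om^{\theta,(N)} f\|_{\BV} \le \tilde\alpha^{\theta,N}(\om)\|f\|_{\BV} + \beta^N(\om)\|f\|_1$ with $\int\log\tilde\alpha^{\theta,N}\,d\bbp < N\Lam(\theta)$ (inequality~\eqref{6io}), and combining this with the uniform decay of correlations~\eqref{buzzi} (which forces the $L^1$-norm of $H^\theta$-vectors to decay uniformly) yields a uniform constant. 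This is a standard bootstrap in this circle of ideas, and the remaining estimates on $\|w_\om^\theta\|_{\BV}$ are routine given the results already assembled in Sections~\ref{LMB}--\ref{sec:choiceOsBases}.
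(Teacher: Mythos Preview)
Your overall decomposition is correct and matches the paper's: $w_\om^\theta := v_\om^0 - \phi_\om^\theta(v_\om^0)v_\om^\theta$ is the projection of $v_\om^0$ onto $H_\om^\theta$, and the bound on $\|w_\om^\theta\|_{\BV}$ uniformly in $\om$ and $\theta$ is exactly right. The gap is in your mechanism for the \emph{uniform-in-$\om$} bound on $\|\mcl_\om^{\theta,(n)}|_{H_\om^\theta}\|$. The references you invoke do not deliver this: the paragraph around~\eqref{decay} only records the MET conclusion, which is asymptotic and $\om$-dependent; the Lasota--Yorke inequality of Lemma~\ref{lem:LYtwisted} and~\eqref{6io} only control $\kappa(\theta)$, not $\mu_2^\theta$; and~\eqref{buzzi} concerns the \emph{untwisted} cocycle acting on functions of zero $\mu_\om$-mean, whereas for $\theta\neq 0$ the space $H_\om^\theta$ is \emph{not} characterised by any mean-zero condition, so there is no reason the $L^1$-norm of $H_\om^\theta$-vectors should decay uniformly under $\mcl_\om^{\theta,(n)}$. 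Your bootstrap sketch therefore does not close.

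The paper's argument avoids this problem by a direct perturbation at the operator level. Define $\mcn_\om^\theta f := \mcl_\om^\theta(f-\phi_\om^\theta(f)v_\om^\theta)$, so that $\mcn_\om^{\theta,(n)}f = \mcl_\om^{\theta,(n)}(f-\phi_\om^\theta(f)v_\om^\theta)$. At $\theta=0$ one has $f - \phi_\om^0(f)v_\om^0 = f - (\int f\,dm)v_\om^0$, which has zero $m$-mean, so condition~\ref{cond:dec} gives $\|\mcn_\om^{0,(n)}\|\le K'e^{-\lambda n}$ \emph{uniformly} in $\om$. Fix $r\in(e^{-\lambda},1)$ and $n_0$ with $K'e^{-\lambda n_0}<r^{n_0}$. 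Since $\theta\mapsto \mcl_\om^\theta$, $\theta\mapsto v_\om^\theta$ and $\theta\mapsto \phi_\om^\theta$ are norm-continuous \emph{uniformly in $\om$} (Lemma~\ref{1214}, Lemma~\ref{thm:IFT}, Appendix~\ref{0245}), the map $\theta\mapsto \mcn_\om^{\theta,(n_0)}$ is as well, so for $|\theta|$ small enough one has $\|\mcn_\om^{\theta,(n_0)}\|<r^{n_0}$ for every $\om$. Writing $n=kn_0+\ell$ and using submultiplicativity plus the uniform bound $\|\mcn_\om^\theta\|\le N$ yields $\|\mcn_\om^{\theta,(n)}\|\le (N/r)^{n_0}r^n$. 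This is the step you were missing: uniformity comes not from Oseledets-type estimates for the twisted cocycle, but from perturbing the explicit uniform decay at $\theta=0$ furnished by~\ref{cond:dec}.
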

\begin{proof}
The following argument generalises \cite[Lemma III.9]{HennionHerve} to the random setting.
For each $\theta$ near 0 and $\om \in \Om$, let
\[
\mcn_\om^{\theta} f :=  \mathcal L_\om^{\theta}(f - \phi_\om^\theta (f) v_{\om}^{\theta}).
\]
Note that, in view of  Lemma~\ref{1214} and  differentiability of $\theta\mapsto v^\theta$ and $\theta\mapsto \phi^\theta$
(established in Lemma~\ref{thm:IFT} (see \eqref{eq:vomt}) and Appendix~\ref{0245}, respectively),
 we get that there exists $N>1$ such that $\|\mcn_\om^{\theta}\|<N$ for every $\om \in \Om$, provided $\theta$ is sufficiently close to 0.

In addition, since $f - \phi_\om^\theta (f) v_{\om}^{\theta} $ is the projection of $f$ onto $H_\om^{\theta}$ along the top Oseledets space $Y^{\theta}_\om$, we get that, for every $n\geq 1$,
\[
\mcn_\om^{\theta, (n)} f =  \mathcal L_\om^{\theta,(n)}(f - \phi_\om^\theta (f) v_{\om}^{\theta}).
\]
Furthermore, since $f - \phi_\om^0 (f) v_{\om}^{0}= f -(\int f dm) v^0_\om$, condition \ref{cond:dec} and Lemma~\ref{lem:boundedv}\eqref{it:boundedv} ensure that there exist $K', \lam>0$ such that for every $n\geq 0$ and \paeom,
 $\|\mcn _\om^{0, (n)}\| \leq K'e^{-\lam n}$.

 Let $1>r>e^{-\lam}$, and let $n_0\in \N$ be such that $K'e^{-\lam n_0}< r^{n_0}$.
Lemma~\ref{1214} together with differentiability of $\theta\mapsto v^\theta$ and $\theta\mapsto \phi^\theta$ ensure that $\theta \mapsto \mcn_\om^\theta$ is continuous in the norm topology of $\BV$. In fact, the uniform control over $\om\in \Om$, guaranteed by the aforementioned differentiability conditions, along with
 Condition~\ref{cond:unifNormBd}, ensure that  one can choose $\ep>0$ so that if $|\theta|<\ep$, then
 $\| \mcn_\omega^{\theta,(n_0)}\|<r^{n_0}$ for every $\om \in \Om$.
Writing $n=kn_0+\ell$, with $0\leq \ell<n_0$, we get
\begin{eqnarray*}
\| \mcn_\omega^{\theta,(n)}\|&\le &
\prod_{j=0}^{k-1} \| \mcn_{\sig^{jn_0}\omega}^{\theta,(n_0)}\| (\| \mcn_{\sig^{kn_0}\omega}^{\theta,(\ell)}\|)
<  r^n\left(N/r \right)^\ell\le cr^n,
\end{eqnarray*}
with $c=\big(\frac{N}{r}\big)^{n_0}$.
 Thus,
 \begin{align*}
\Big| \int \mathcal L_\om^{\theta, (n)}(v_\om^0 &-\phi_\om^{\theta}(v_\om^0) v_{\om}^{\theta})\, dm \Big|
\leq \| \mathcal L_\om^{\theta, (n)}(v_\om^0 -\phi_\om^{\theta}(v_\om^0) v_{\om}^{\theta}) \|_{1}\\
&\leq  \| \mathcal L_\om^{\theta, (n)}(v_\om^0 -\phi_\om^{\theta}(v_\om^0) v_{\om}^{\theta}) \|_{\mathcal B}
= \|\mcn_\om^{\theta, (n)} (v_\om^0) \|_{\mathcal B}
\leq cr^n \|v_\om^0\|_{\mathcal B}.
 \end{align*}
By~\eqref{eq:boundedv}, there exists $\tilde K>0$ such that   $\|v_\om^0\|_{\mathcal B}\leq \tilde{K}$ for \paeom, so the proof of the lemma is complete.
 \end{proof}

\begin{proof}[Proof of Theorem~\ref{thm:clt}]
We recall that $\Sig^2>0$ is given by~\eqref{variance}.
It follows from Levy's continuity theorem that it is sufficient to prove that, for every $t\in \R$,
\[
 \lim_{n\to \infty}\int e^{it \frac{S_n g(\om, \cdot)}{\sqrt n}} \, d\mu_\om=e^{-\frac{t^2 \Sig^2}{2}}, \quad \text{for \paeom.}
\]
Assume $n$ is sufficiently large so that $\dim Y_1^{\frac{it}{\sqrt{n}}}=1$ and $v_\om^{\frac{it}{\sqrt{n}}}$ can be chosen as in \eqref{eq:vomt}. In particular, $\int_0^1 v_\om^{\frac{it}{\sqrt{n}}} dm=1$ and $\mathcal L_{\om}^{\frac{it}{\sqrt{n}}, (n)}v_\om^{\frac{it}{\sqrt{n}}}= (\prod_{j=0}^{n-1} \lam_{\sig^j\om}^{\frac{it}{\sqrt{n}}}) v_{\sig^{n}\om}^{\frac{it}{\sqrt{n}}}$, for \paeom. Furthermore,
using~\eqref{intprop},
\begin{align*}
 \int e^{it \frac{S_n g(\om, \cdot)}{\sqrt n}} \, d\mu_\om &= \int e^{it \frac{S_n g(\om, \cdot)}{\sqrt n}}v_\om^0 \, dm=\int \mathcal L_\om^{\frac{it}{\sqrt n}, (n)} v_\om^0 \, dm\\
 &=\int \mathcal L_\om^{\frac{it}{\sqrt n}, (n)}
 \Big(  \phi_\om^{\frac{it}{\sqrt n}}( v_\om^0 ) v_{\om}^{\frac{it}{\sqrt n}} +
 (v_\om^0 -  \phi_\om^{\frac{it}{\sqrt n}}( v_\om^0 ) v_{\om}^{\frac{it}{\sqrt n}}) \Big)\, dm\\
 & = \phi_\om^{\frac{it}{\sqrt n}}( v_\om^0 ) \cdot \prod_{j=0}^{n-1} \lambda_{\sigma^j \om}^{\frac{it}{\sqrt n}} + \int \mathcal L_\om^{\frac{it}{\sqrt n}, (n)}(v_\om^0 -  \phi_\om^{\frac{it}{\sqrt n}}( v_\om^0 ) v_{\om}^{\frac{it}{\sqrt n}})\, dm.
\end{align*}
 Lemma~\ref{lem:UnifExpDecayY2} shows that the second term converges to 0 as $n\to \infty$.
Also, differentiability of $\theta \mapsto \phi^\theta$, established in Appendix~\ref{0245}, ensures that
$\lim_{n\to \infty}  \phi_\om^{\frac{it}{\sqrt n}}( v_\om^0 )=  \phi_\om^{0}( v_\om^0 )=1$.
Thus, to conclude the proof of the theorem, we need to prove that
\begin{equation}\label{QQQ}
\lim_{n\to \infty} \prod_{j=0}^{n-1} \lambda_{\sigma^j \om}^{\frac{it}{\sqrt n}} = e^{-\frac{t^2 \Sig^2}{2}}, \quad \text{for \paeom,}
\end{equation}
which is equivalent to
\[
\lim_{n \to \infty} \sum_{j=0}^{n-1} \log  \lambda_{\sigma^j \om}^{\frac{it}{\sqrt n}} = -\frac{t^2 \Sig^2}{2}, \quad \text{for \paeom.}
\]
Using the notation of Lemmas~\ref{lem:FwellDef} and~\ref{thm:IFT}, we have that $\lambda_\om^\theta=H(\theta, O(\theta))(\sigma \om)$ and thus we need to prove that
\begin{equation}\label{taylor}
\lim_{n\to \infty} \sum_{j=0}^{n-1} \log H \bigg{(}\frac{it}{\sqrt n}, O(\frac{it}{\sqrt n})\bigg{)}(\sigma^{ j+1} \om) = -\frac{t^2 \Sig^2}{2} \quad \text{for \paeom.}
\end{equation}
Let $\tilde{H}$ be a map defined in a neighborhood of $0$ in $\mathbb C$ with values in $L^\infty (\Om)$ by $\tilde{H}(\theta)= \log H(\theta, O(\theta))$.
It will be shown in Lemma~\ref{TC2} that $\tilde{H}$ is of class $C^2$, $\tilde{H}(0)(\om)=0$, $\tilde{H}'(0)(\om)=0$ and
\[
\tilde{H}''(0)(\om)=\int (g(\sigma^{-1} \om, \cdot )^2v_{\sigma^{-1} \om}^0+2g(\sigma^{-1} \om, \cdot)O'(0)_{\sigma^{-1} \om})\, dm.
\]
Developing $\tilde{H}$  in a Taylor series around $0$, we have that
\[
\tilde{H}(\theta)(\om)= \log H(\theta, O(\theta))(\om) =\frac{\tilde{H}''(0)(\om)}{2} \theta^2+ R(\theta)(\om),
\]
where $R$ denotes the remainder. Therefore,
\[
 \log H \bigg{(}\frac{it}{\sqrt n}, O(\frac{it}{\sqrt n})\bigg{)}(\sigma^{ j+1} \om)=-\frac{t^2 \tilde{H}''(0)(\sigma^{j+1} \om)}{2n}+R(it/\sqrt n)(\sigma^{j+1} \om),
\]
which implies that
\begin{equation}\label{ii}
 \sum_{j=0}^{n-1} \log H \bigg{(}\frac{it}{\sqrt n}, O(\frac{it}{\sqrt n})\bigg{)}(\sigma^{ j+1} \om)=-\frac{t^2 }{2}\cdot \frac 1 n\sum_{j=0}^{n-1}\tilde{H}''(0)(\sigma^{j+1} \om) +\sum_{j=0}^{n-1}R(it/\sqrt n)(\sigma^{j+1} \om).
 \end{equation}
The asymptotic behaviour of the first term is governed by Birkhoff's ergodic theorem,
so using \eqref{0521} in the second equality and Lemma~\ref{lem:Lam''0} in the third one, we get:
\begin{equation}
\label{Lambda2prime}
\lim_{n\to\infty} -\frac{t^2 }{2}\frac 1 n\sum_{j=0}^{n-1}\tilde{H}''(0)(\sigma^{j+1} \om ) = -\frac{t^2 }{2}\int \tilde{H}''(0)(\om) \, d\mathbb P(\om)=-\frac{t^2 }{2}\Lambda''(0)=-\frac{t^2 }{2}\Sig^2 \quad \text{for \paeom.}
\end{equation}
Now we deal with the last term of \eqref{ii}. Writing $R(\theta)=\theta^2 \tilde R(\theta)$ with $\lim_{\theta \to 0} \tilde R(\theta)=0$, we conclude that for each $\epsilon >0$ and $t\in \R\setminus \{0\}$, there exists $\delta >0$ such that $\lVert \tilde R(\theta)\rVert_{L^\infty} \le
\frac{\epsilon}{ t^2}$
for all $\lvert \theta\rvert \le \delta$. We note that  there exists $n_0\in \mathbb N$ such that $\lvert it/\sqrt n\rvert \le \delta$ for each $n\ge n_0$. Hence,
\[
\bigg{\lvert}  \sum_{j=0}^{n-1}R(it/\sqrt n)(\sigma^{j+1} \om) \bigg{\rvert} \le \frac{ t^2}{n}\sum_{j=0}^{n-1}\lvert \tilde R(it/\sqrt n)(\sigma^{j+1} \om)\rvert \le \frac{ t^2}{n}\cdot \frac{n\epsilon}{ t^2}
=\epsilon,\]
for every $n\ge n_0$, which implies that
 the second term on the right-hand side of~\eqref{ii} converges to $0$ and
 thus~\eqref{taylor} holds. The proof of the theorem is complete.
\end{proof}

\begin{lemma}\label{TC2}
 The map $\tilde{H}(\theta)= \log H(\theta, O(\theta))$ is of class $C^2$. Moreover,
 $\tilde{H}(0)(\om)=0$, $\tilde{H}'(0)(\om)=0$ and
\[
\tilde{H}''(0)(\om)=\int (g(\sigma^{-1} \om, \cdot )^2v_{\sigma^{-1} \om}^0+2g(\sigma^{-1} \om, \cdot)O'(0)_{\sigma^{-1} \om})\, dm.
\]
\end{lemma}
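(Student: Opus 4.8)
The plan is to write $\tilde H(\theta)=\log P(\theta)$, where $P(\theta):=H(\theta,O(\theta))$ is an $L^\infty(\Om)$-valued map, and to read off the values at $0$ of $\tilde H,\tilde H',\tilde H''$ from the regularity of $H$ and $O$ already established. First I would recall that $O(0)=0$, since $F(0,0)=0$ and $O(\theta)$ is the unique zero of $F(\theta,\cdot)$ near $0$ (Lemma~\ref{thm:IFT}); hence
\[
P(0)(\om)=H(0,0)(\om)=\int v^0_{\sig^{-1}\om}\,dm=1,
\]
so $\tilde H(0)(\om)=\log 1=0$. For the $C^2$ regularity: $H$ is $C^2$ on a neighbourhood of $(0,0)$ with values in $L^\infty(\Om)$ (Lemmas~\ref{1108} and~\ref{L6} together with the second-order estimates behind Proposition~\ref{prop:F-C2} and Appendix~\ref{regofF2}), and $O$ is $C^2$ (Lemma~\ref{thm:IFT}), so $P$ is $C^2$ by the chain rule; since $P(0)\equiv 1$ and $P$ is continuous, $P$ takes values in $\{z:|z-1|<1/2\}$ for $\theta$ near $0$, a region on which $\log$ is holomorphic with uniformly bounded derivatives, so $\tilde H=\log\circ P$ is $C^2$ near $0$.

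For the first derivative, the chain rule gives $\tilde H'(\theta)=P'(\theta)/P(\theta)$, hence $\tilde H'(0)=P'(0)=D_1H(0,0)+D_2H(0,0)\,O'(0)$. Differentiating under the integral sign, $D_1H(0,0)(\om)=\int g(\sig^{-1}\om,\cdot)\,v^0_{\sig^{-1}\om}\,dm$, which vanishes for $\paeom$ by the fibrewise centering condition~\eqref{zeromean} (applied at $\sig^{-1}\om$ and using that $\sig$ preserves $\bbp$). Moreover $H(\theta,\cdot)$ is affine in its second argument, with $D_2H(0,0)\mc{X}(\om)=\int \mc{X}(\sig^{-1}\om,\cdot)\,dm$, and this vanishes identically on $\mc{S}$ since elements of $\mc{S}$ have zero $m$-integral. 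Therefore $\tilde H'(0)(\om)=0$.

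For the second derivative, $\tilde H''(\theta)=P''(\theta)/P(\theta)-\bigl(P'(\theta)/P(\theta)\bigr)^2$, so using $P(0)\equiv 1$ and $P'(0)=\tilde H'(0)=0$ we obtain $\tilde H''(0)=P''(0)$. The second-order chain rule gives
\[
P''(0)=D_1^2H(0,0)+2\,D_1D_2H(0,0)\bigl(O'(0)\bigr)+D_2^2H(0,0)\bigl(O'(0),O'(0)\bigr)+D_2H(0,0)\,O''(0).
\]
Affinity of $H$ in the second variable kills the $D_2^2H$ term, and $D_2H(0,0)\,O''(0)=\int O''(0)_{\sig^{-1}\om}\,dm=0$ because $O''(0)\in\mc{S}$. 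The remaining terms, computed by differentiating under the integral, are $D_1^2H(0,0)(\om)=\int g(\sig^{-1}\om,\cdot)^2\,v^0_{\sig^{-1}\om}\,dm$ and $D_1D_2H(0,0)(\mc{X})(\om)=\int g(\sig^{-1}\om,\cdot)\,\mc{X}(\sig^{-1}\om,\cdot)\,dm$, the latter evaluated at $\mc{X}=O'(0)$; adding them gives exactly
\[
\tilde H''(0)(\om)=\int\bigl(g(\sig^{-1}\om,\cdot)^2\,v^0_{\sig^{-1}\om}+2\,g(\sig^{-1}\om,\cdot)\,O'(0)_{\sig^{-1}\om}\bigr)\,dm,
\]
as claimed.

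The main obstacle is purely bookkeeping of regularity: justifying that each differentiation under the integral sign yields an $L^\infty(\Om)$-valued map with the stated uniform-in-$\om$ bounds, and that the (first- and second-order) chain rule is legitimate for these Banach-space-valued maps. This is exactly what Lemmas~\ref{1108},~\ref{L6}, Proposition~\ref{prop:F-C2}, Proposition~\ref{difF} and Appendix~\ref{regofF2} supply, in combination with the uniform bound $\esssup_\om\|v^0_\om\|_\B<\infty$ from~\eqref{eq:boundedv} and the continuity of $O$ and $O'$ from Lemma~\ref{thm:IFT}; once these are invoked the computation is routine. I would also note the consistency with Lemma~\ref{lem:Lam''0}: since $\lot=P(\theta)(\sig\om)$, the function $\tilde H''(0)$ is precisely the integrand whose $\bbp$-average is identified there with $\Sig^2$.
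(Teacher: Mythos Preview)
Your proof is correct and follows essentially the same approach as the paper: both apply the chain rule to $\tilde H=\log\circ P$ with $P(\theta)=H(\theta,O(\theta))$, invoke the $C^2$ regularity of $H$ and $O$ from Appendices~\ref{regofF}--\ref{regofF2} and Lemma~\ref{thm:IFT}, and evaluate the partial derivatives of $H$ at $(0,0)$ using the centering condition~\eqref{zeromean}, the fact that $O'(0),O''(0)\in\mc S$, and $D_{22}H=0$. Your organization is slightly cleaner in that you first observe $P'(0)=0$ and hence $\tilde H''(0)=P''(0)$, whereas the paper writes out the full expression for $\tilde H''(\theta)$ before specializing, but the substance is the same.
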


\begin{proof}
The regularity of $\tilde H$ follows directly from the results in Appendices~\ref{regofF} and~\ref{regofF2}. Moreover,
we have $\tilde{H}(0)(\om)=\log H(0, O(0))(\om)=\log 1=0$. Furthermore,
\[
 \tilde{H}'(\theta)(\om)=\frac{1}{H(\theta, O(\theta))(\om)}[D_1H(\theta, O(\theta))(\om)+(D_2H(\theta, O(\theta))O'(\theta))(\om)].
\]
Taking into account formulas in Appendix~\ref{regofF}, \eqref{zeromean} and~\eqref{zerod}, we have
\[
 \tilde{H}'(0)(\om)=\int g(\sigma^{-1} \om, \cdot) v_{\sigma^{-1} \om}^0 \, dm +\int O'(0)_{\sigma^{-1} \om}\, dm=0.
\]
Finally, taking  into account that $D_{22}H=0$ (see Appendix~\ref{regofF2}) we have
\[
 \begin{split}
  \tilde{H}''(\theta)(\om) &=\frac{-D_1 H(\theta, O(\theta))(\om)}{[H(\theta, O(\theta))(\om)]^2}[D_1H(\theta, O(\theta))(\om)+(D_2H(\theta, O(\theta))O'(\theta))(\om)] \\
  &\phantom{=}+\frac{1}{H(\theta, O(\theta))(\om)}[D_{11}H(\theta, O(\theta))(\om)+(D_{21}H(\theta, O(\theta))O'(\theta))(\om)] \\
  &\phantom{=}+\frac{1}{H(\theta, O(\theta))(\om)}[(D_{12}H(\theta, O(\theta))O'(\theta))(\om)+(D_2H(\theta, O(\theta))O''(\theta))(\om)].
 \end{split}
\]
Using formulas in Appendices~\ref{regofF} and~\ref{regofF2}, we obtain the desired expression for $\tilde{H}''(0)$.
\end{proof}

\subsection{Local central limit theorem}
In order to obtain a local central limit theorem, we introduce an additional assumption related to aperiodicity, as follows.
  \begin{enumerate}[label=(C\arabic*), series=conditions, start=5]
 \item \label{cond:aper}
 For \paeom \ and for every compact interval $J\subset \R \setminus\{0\}$ there exist $C=C(\om)>0$ and
$\rho \in (0,1)$ such that
\begin{equation}\label{aper}
\lVert \mathcal L_{\om}^{it, (n)}\rVert_{\B} \le C\rho^n, \quad \text{for $t\in J$ and $n\ge 0$.}
\end{equation}
\end{enumerate}

The proof of Theorem~\ref{thm:lclt} is presented in Section~\ref{sec:pfLCLT}.
In Section~\ref{sec:aper}, we show that \ref{cond:aper}
  can be phrased as a so-called aperiodicity condition, resembling a usual requirement for autonomous versions of the local CLT.
Examples are presented in Section~\ref{sec:exlclt}.

\subsubsection{Proof of Theorem~\ref{thm:lclt}}\label{sec:pfLCLT}
 Using the density argument (see~\cite{Morita}), it is sufficient to show that
 \begin{equation}
 \label{66a}
 \sup_{s\in \R} \bigg{\lvert} \Sig \sqrt{n}\int h(s+S_ng(\om, \cdot))\, d\mu_\om-\frac{1}{\sqrt{2\pi}}e^{-\frac{s^2}{2n\Sig^2}}\int_{\R} h(u)\, du\bigg{\rvert} \to 0,
 \end{equation}
when $n\to \infty$ for every $h\in L^1(\R)$ whose Fourier transform $\hat{h}$ has compact support.
Moreover, we recall the following inversion formula
\begin{equation}\label{invf}
 h(x)=\frac{1}{2\pi}\int_{\R} \hat h(t)e^{itx}\, dt.
\end{equation}
By~\eqref{intprop}, \eqref{invf} and Fubini's theorem,
\[
 \begin{split}
  \Sig \sqrt{n}\int h(s+S_ng(\om,\cdot))\, d\mu_\om &=\frac{\Sig \sqrt{n}}{2\pi} \int \int_{\R}\hat h(t)e^{it(s+S_n g(\om, \cdot))}\, dt \, d\mu_\om \\
  &=\frac{\Sig \sqrt{n}}{2\pi} \int_{\R} e^{its}\hat h(t)\int e^{it S_n g(\om, \cdot)}\, d\mu_\om \, dt \\
  &=\frac{\Sig \sqrt{n}}{2\pi} \int_{\R} e^{its}\hat h(t)\int e^{it S_n g(\om, \cdot)}v_\om^0\, dm \, dt \\
  &=\frac{\Sig \sqrt{n}}{2\pi} \int_{\R} e^{its}\hat h(t)\int \mathcal L_{\om}^{it, (n)}v_\om^0\, dm \, dt \\
  &=\frac{\Sigma}{2\pi}\int_{\R} e^{\frac{its}{\sqrt n}}\hat h(\frac{t}{\sqrt n})\int \mathcal L_{\om}^{\frac{it}{\sqrt n}, (n)}v_\om^0\, dm \, dt.
 \end{split}
\]
Recalling that the Fourier transform of  $f(x)=e^{-\frac{\Sig^2x^2}{2}}$ is given by $\hat f(t)=\frac{\sqrt{2\pi}}{\Sigma}e^{- t^2/2\Sig^2}$ we have
\begin{eqnarray*}
\frac{1}{\sqrt{2\pi}}e^{-\frac{s^2}{2n\Sig^2}}\int_{\R} h(u)\, du&=&\frac{\hat h(0)}{\sqrt{2\pi}}e^{-\frac{s^2}{2n\Sig^2}}\\
&=&\frac{\hat h(0)\Sigma}{2\pi} \hat f(-s/\sqrt n) \\
&=&\frac{\hat h(0)\Sigma}{2\pi} \int_{\R} e^{\frac{its}{\sqrt n}} \cdot e^{-\frac{\Sig^2 t^2}{2}}\, dt.
 \end{eqnarray*}
Hence, we need to prove that
\begin{equation}\label{CLTL1}
 \sup_{s\in \R}\bigg{\rvert} \frac{\Sigma}{2\pi}\int_{\R} e^{\frac{its}{\sqrt n}}\hat h(\frac{t}{\sqrt n})\int \mathcal L_{\om}^{\frac{it}{\sqrt n}, (n)}v_\om^0\, dm \, dt -
 \frac{\hat h(0)\Sigma}{2\pi} \int_{\R} e^{\frac{its}{\sqrt n}} \cdot e^{-\frac{\Sig^2 t^2}{2}}\, dt \bigg{\rvert} \to 0,
\end{equation}
when $n\to \infty$, for \paeom.
Choose $\delta >0$ such that the support of $\hat h$ is contained in $[-\delta, \delta]$.
Recall that  $ \mathcal L_{\om}^{\theta, (n)}v_\om^{\theta}= (\prod_{j=0}^{n-1} \lam_{\sig^j\om}^{\theta}) v_{\sig^{n}\om}^{\theta}$ for \paeom, and for all $\theta$ near 0.
Then, for any $\tilde \delta \in (0, \delta)$, we have,
\begin{align*}
 &\frac{\Sigma}{2\pi}\int_{\R} e^{\frac{its}{\sqrt n}}\hat h(\frac{t}{\sqrt n})\int \mathcal L_{\om}^{\frac{it}{\sqrt n}, (n)}v_\om^0\, dm \, dt -
 \frac{\hat h(0)\Sigma}{2\pi} \int_{\R} e^{\frac{its}{\sqrt n}} \cdot e^{-\frac{\Sig^2 t^2}{2}}\, dt \displaybreak[0] \\
 &=\frac{\Sigma}{2\pi} \int_{\lvert t\rvert < \tilde \delta \sqrt n} e^{\frac{its}{\sqrt n}} \Big(\hat h(\frac{t}{\sqrt n})\prod_{j=0}^{n-1}\lambda_{\sigma^j \om}^{\frac{it}{\sqrt n}}-\hat h(0)e^{-\frac{\Sig^2 t^2}{2}} \Big)\, dt \displaybreak[0] \\
&\phantom{=}+\frac{\Sigma}{2\pi}\int_{\lvert t\rvert < \tilde \delta \sqrt n}e^{\frac{its}{\sqrt n}}\hat h(\frac{t}{\sqrt n})
\int
\prod_{j=0}^{n-1}\lambda_{\sigma^j \om}^{\frac{it}{\sqrt n}} \Big( \phi_\om^{\frac{it}{\sqrt n}}( v_\om^0 ) v_{\sig^{n}\om}^{\frac{it}{\sqrt n}}-1 \Big)\,dm \, dt \displaybreak[0] \\
&\phantom{=}+\frac{\Sig \sqrt{n}}{2\pi}\int_{\lvert t\rvert <\tilde \delta}e^{its}\hat h(t)\int  \mathcal L_{\om}^{it, (n)} (v_\om^0 - \phi_\om^{it}( v_\om^0 ) v_{\om}^{it}) \, dm\, dt \displaybreak[0] \\
&\phantom{=}+\frac{\Sig \sqrt{n}}{2\pi}\int_{\tilde \delta \le \lvert t\rvert < \delta}e^{its}\hat h(t)\int \mathcal L_\om^{it, (n)}v_\om^0\, dm\, dt \displaybreak[0] \\
&\phantom{=}-\frac{\Sigma}{2\pi}\hat h(0) \int_{\lvert t\rvert \ge \tilde \delta \sqrt n}e^{\frac{its}{\sqrt n}} \cdot e^{-\frac{\Sig^2 t^2}{2}}\, dt=: (I)+(II)+(III)+(IV)+(V).
\end{align*}

The proof of the theorem will be complete once we
 show that each of the terms $(I)$--$(V)$ converges to zero as $n\to \infty$.

\paragraph{Control of (I).}
We claim that for \paeom,
\[
\lim_{n\to \infty}
\sup_{s\in \R} \bigg{\lvert}\int_{\lvert t\rvert < \tilde \delta \sqrt n}e^{\frac{its}{\sqrt n}}
 (\hat h(\frac{t}{\sqrt n})\prod_{j=0}^{n-1} \lambda_{\sigma^j \om}^{\frac{it}{\sqrt n}}-\hat h(0)e^{-\frac{\Sig^2 t^2}{2}})\, dt \bigg{\rvert} = 0.
 \]

 Indeed, it is clear that
 \[
  \sup_{s\in \R} \bigg{\lvert}\int_{\lvert t\rvert < \tilde \delta \sqrt n}e^{\frac{its}{\sqrt n}}
 (\hat h(\frac{t}{\sqrt n})\prod_{j=0}^{n-1} \lambda_{\sigma^j \om}^{\frac{it}{\sqrt n}}-\hat h(0)e^{-\frac{\Sig^2 t^2}{2}})\, dt \bigg{\rvert}
 \le \int_{\lvert t\rvert < \tilde \delta \sqrt n}\bigg{\lvert} \hat h(\frac{t}{\sqrt n})\prod_{j=0}^{n-1} \lambda_{\sigma^j \om}^{\frac{it}{\sqrt n}}-\hat h(0)e^{-\frac{\Sig^2 t^2}{2}}
 \bigg{\lvert} \, dt.
 \]
It follows from the continuity of $\hat h$ and~\eqref{QQQ} that for \paeom \ and every $t$,
\begin{equation}
\label{intermediate}
 \hat h(\frac{t}{\sqrt n})\prod_{j=0}^{n-1} \lambda_{\sigma^j \om}^{\frac{it}{\sqrt n}}-\hat h(0)e^{-\frac{\Sig^2 t^2}{2}}\to 0, \quad \text{when $n\to \infty$.}
\end{equation}
The desired conclusion will  follow from the dominated convergence theorem once we establish the following lemma.

\begin{lemma}\label{lem:boundProdLam}
 For $\tilde \delta >0$ sufficiently small, there exists $n_0\in \N$ such that for all $n\ge n_0$ and $t$ such that $\lvert t\rvert < \tilde \delta \sqrt n$,
 \[
  \bigg{\lvert} \prod_{j=0}^{n-1} \lambda_{\sigma^j \om}^{\frac{it}{\sqrt n}} \bigg{\rvert} \le e^{-\frac{t^2 \Sigma^2}{8}}.
 \]

\end{lemma}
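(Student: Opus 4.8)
The plan is to pass to logarithms, Taylor-expand each factor in $\theta$ about $0$ using the $C^2$ map $\tilde H(\theta)=\log H(\theta,O(\theta))$ from Lemma~\ref{TC2}, and then invoke Birkhoff's ergodic theorem. Since $\lambda_{\sig^j\om}^\theta=H(\theta,O(\theta))(\sig^{j+1}\om)=\exp\big(\tilde H(\theta)(\sig^{j+1}\om)\big)$ for $\theta$ near $0$, and $|e^z|=e^{\Re z}$, we have
\[
\Big|\prod_{j=0}^{n-1}\lambda_{\sig^j\om}^{\frac{it}{\sqrt n}}\Big|=\exp\Big(\sum_{j=0}^{n-1}\Re\,\tilde H(\tfrac{it}{\sqrt n})(\sig^{j+1}\om)\Big),
\]
so it suffices to bound the exponent on the right by $-\tfrac18 t^2\Sig^2$.

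First I would use the Taylor expansion recorded in the proof of Theorem~\ref{thm:clt}, namely $\tilde H(\theta)=\tfrac12\tilde H''(0)\,\theta^2+R(\theta)$ with $R(\theta)=\theta^2\tilde R(\theta)$ and $\|\tilde R(\theta)\|_{L^\infty(\Om)}\to 0$ as $\theta\to 0$, together with the observation that $\tilde H''(0)$ is real-valued (because $O'(0)$ is real by Lemma~\ref{lem:WReal}, so the formula for $\tilde H''(0)$ in Lemma~\ref{TC2} involves only real quantities). Setting $\theta=it/\sqrt n$, so that $\theta^2=-t^2/n$, this gives
\[
\sum_{j=0}^{n-1}\Re\,\tilde H(\tfrac{it}{\sqrt n})(\sig^{j+1}\om)=-\frac{t^2}{2n}\sum_{j=0}^{n-1}\tilde H''(0)(\sig^{j+1}\om)+\sum_{j=0}^{n-1}\Re\,R(\tfrac{it}{\sqrt n})(\sig^{j+1}\om).
\]
For the first sum, Birkhoff's ergodic theorem applied to $\tilde H''(0)\in L^1(\Om,\bbp)$ and the ergodic, $\bbp$-preserving $\sig$, combined with $\int\tilde H''(0)\,d\bbp=\Lambda''(0)=\Sig^2$ (see \eqref{Lambda2prime} and Lemma~\ref{lem:Lam''0}), yields for \paeom\ an $n_0=n_0(\om)$ with $\frac1n\sum_{j=0}^{n-1}\tilde H''(0)(\sig^{j+1}\om)\ge\tfrac34\Sig^2$ whenever $n\ge n_0$; since $\Sig^2>0$, this makes the first term $\le-\tfrac38 t^2\Sig^2$. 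For the remainder, I would fix $\tilde\delta>0$ small enough that $\tilde\delta$ lies within the domain of $C^2$-regularity of $\tilde H$ and $\sup_{|\theta|\le\tilde\delta}\|\tilde R(\theta)\|_{L^\infty}\le\tfrac14\Sig^2$; then $|t|<\tilde\delta\sqrt n$ forces $|it/\sqrt n|<\tilde\delta$, hence $|R(\tfrac{it}{\sqrt n})(\cdot)|\le\tfrac{t^2}{n}\cdot\tfrac14\Sig^2$ pointwise, so the remainder sum has modulus $\le\tfrac14 t^2\Sig^2$. Adding the two bounds gives exponent $\le-\tfrac38 t^2\Sig^2+\tfrac14 t^2\Sig^2=-\tfrac18 t^2\Sig^2$, which is the claim after exponentiating.

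The one delicate point — the main obstacle — is that the error $R(it/\sqrt n)$ must be controlled \emph{uniformly over the base points} $\sig^{j+1}\om$ that appear in the product; this is why one needs the $L^\infty(\Om)$-smallness of $\tilde R(\theta)$ as $\theta\to 0$, not merely pointwise smallness. That uniformity is precisely what is supplied by the construction of $H$, $O$, and hence $\tilde H$ as differentiable maps taking values in $L^\infty(\Om)$ (Appendices~\ref{regofF} and~\ref{regofF2}). The dependence of $n_0$ on $\om$ enters only through Birkhoff's theorem and is harmless in the application, since Lemma~\ref{lem:boundProdLam} is invoked inside a \paeom\ statement in the proof of Theorem~\ref{thm:lclt}.
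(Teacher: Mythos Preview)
Your proof is correct and follows essentially the same route as the paper: pass to logarithms via $\tilde H$, Taylor-expand to second order, control the main term by Birkhoff's ergodic theorem and the remainder by the $L^\infty(\Om)$-smallness of $\tilde R$. The only differences are cosmetic---you use the thresholds $\tfrac34\Sig^2$ and $\tfrac14\Sig^2$ where the paper uses $\tfrac12\Sig^2$ and $\tfrac18\Sig^2$, and you invoke the (true) fact that $\tilde H''(0)$ is real, whereas the paper simply carries $\Re$ throughout; note that the reality of $O'(0)$ is more directly seen from the explicit formula~\eqref{derO} than from Lemma~\ref{lem:WReal}.
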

\begin{proof}
 We use the same notation as in the proof of Lemma~\ref{TC2}. As before, $\Re (z)$ denotes the real part of a complex number $z$.  We note that
 \[
  \bigg{\lvert} \prod_{j=0}^{n-1} \lambda_{\sigma^j \om}^{\frac{it}{\sqrt n}} \bigg{\rvert}=e^{-\frac{t^2}{2} \Re (\frac 1n \sum_{j=0}^{n-1} \tilde{H}''(0)(\sigma^j \om))}\cdot
  e^{\Re (\sum_{j=0}^{n-1} R(it/\sqrt n)(\sigma^j \om))}.
 \]
Since, by (\ref{Lambda2prime}), $\frac{1}{n} \sum_{j=0}^{n-1} \tilde{H}''(0)(\sigma^j \om) \to \Sigma^2$ for \paeom, we also have that \[ \Re \bigg{(}\frac 1n \sum_{j=0}^{n-1} \tilde{H}''(0)(\sigma^j \om) \bigg{)}\to \Sigma^2, \quad \paeom \]
and therefore for \paeom \ there exists $n_0=n_0(\om) \in \N$ such that
\[
  \Re \bigg{(}\frac 1n \sum_{j=0}^{n-1} \tilde{H}''(0)(\sigma^j \om) \bigg{)} \ge \Sigma^2/2, \quad \text{for $n\ge n_0$.}
\]
Hence,
\[
 e^{-\frac{t^2}{2} \Re (\frac 1n \sum_{j=0}^{n-1} \tilde{H}''(0)(\sigma^j \om))} \le e^{-\frac{t^2 \Sigma^2}{4}}, \quad \text{for $n\ge n_0$ and every $t\in \R$.}
\]
We now choose $\tilde \delta$ such that $\lVert \tilde R(\theta)\rVert_{L^\infty} \le \Sigma^2/8$ whenever $\lvert \theta \rvert \le \tilde \delta$. Hence, for $t$ such that $\lvert t\rvert< \tilde \delta \sqrt n$
, we have
\[
 \bigg{\lvert} \sum_{j=0}^{n-1} R(it/\sqrt n)(\sigma^j \om)) \bigg{\rvert} \le \frac{t^2}{n}\sum_{j=0}^{n-1} \lvert \tilde R(it/\sqrt n)(\sigma^j \om)\rvert \le \frac{t^2 \Sigma^2}{8}
\]
and therefore
\[
 e^{\Re (\sum_{j=0}^{n-1} R(it/\sqrt n)(\sigma^j \om))}\le e^{-\frac{t^2 \Sigma^2}{8} },
\]
which implies the statement of the lemma.
\end{proof}

\paragraph{Control of (II).}
We recall that for $\theta$ sufficiently close to 0,  $v^\theta_\om$ as defined in \eqref{eq:vomt} satisfies  $\int_0^1 v^{\theta}_{\om}\,dm=1$ for \paeom.
Thus, to control (II) we must show that for \paeom
\begin{equation}\label{eq:controlII}
\lim_{n\to \infty} \sup_{s\in \R} \bigg{\lvert} \frac{\Sigma}{2\pi}\int_{\lvert t\rvert < \tilde \delta \sqrt n}e^{\frac{its}{\sqrt n}}\hat h(\frac{t}{\sqrt n})\prod_{j=0}^{n-1}\lambda_{\sigma^j \om}^{\frac{it}{\sqrt n}}( \phi_\om^{\frac{it}{\sqrt n}}( v_\om^0 ) -1)\, dt
\bigg{\rvert} = 0.
 \end{equation}

Using the fact that $ \phi_\om^0( v_\om^0) =1$ and the differentiability of $\theta \mapsto \phi^\theta$ (see Appendix~\ref{0245}), we conclude that there exists $C>0$ such that $\lvert\phi_\om^{\theta}(v_\om^0) -1\rvert \le C\lvert \theta \rvert$ for $\theta$ in a
neighborhood of $0$ in $\mathbb C$. Taking into account Lemma~\ref{lem:boundProdLam}, we conclude that
\[
\sup_{s\in \R} \bigg{\lvert} \frac{\Sigma}{2\pi}\int_{\lvert t\rvert < \tilde \delta \sqrt n}e^{\frac{its}{\sqrt n}}\hat h(\frac{t}{\sqrt n})\prod_{i=0}^{n-1}\lambda_{\sigma^i \om}^{\frac{it}{\sqrt n}}( \phi_\om^{\frac{it}{\sqrt n}}( v_\om^0 ) -1)\, dt
\bigg{\rvert} \le \frac{1}{\sqrt n}C\frac{\Sigma}{2\pi}\lVert \hat h\rVert_{L^\infty}\int_{\lvert t\rvert < \tilde \delta \sqrt n}\lvert t\rvert e^{-\frac{\Sig^2t^2}{8}}\, dt,
\]
which readily implies \eqref{eq:controlII}.

\paragraph{Control of (III).}\label{par:controlIII}
We must show that
\[
\lim_{n\to \infty}
\sup_{s\in \R} \bigg{\lvert}\frac{\Sig \sqrt{n}}{2\pi}\int_{\lvert t\rvert <\tilde \delta}e^{its}\hat h(t)\int_0^1 \mathcal L_\om^{it, (n)}(v_\om^0 - \phi_\om^{it}( v_\om^0 ) v_{\om}^{it})\, dm\,dt \bigg{\rvert} = 0.
\]
Lemma~\ref{lem:UnifExpDecayY2} shows that there exist $C>0$ and $0<r<1$ such that for every sufficiently small $t$, every $n\in \N$ and \paeom,
 \begin{align*}
\Big| \int \mathcal L_\om^{it, (n)}(v_\om^0 &- \phi_\om^{it}( v_\om^0 ) v_{\om}^{it})\, dm \Big|
\leq Cr^n.
 \end{align*}
 Hence, provided $\tilde{\delta}$ is sufficiently small,
 \[
\lim_{n\to \infty}
\sup_{s\in \R} \bigg{\lvert}\frac{\Sig \sqrt{n}}{2\pi}\int_{\lvert t\rvert <\tilde \delta}e^{its}\hat h(t)\int \mathcal L_\om^{it, (n)}(v_\om^0 - \phi_\om^{it}( v_\om^0 ) v_{\om}^{it})\, dm\,dt \bigg{\rvert}
\leq \lim_{n\to \infty} \frac{\Sig \sqrt{n}}{2\pi} \|\hat h\|_{L^\infty} Cr^n=0.
\]

\paragraph{Control of (IV).}
By the aperiodicity condition~\ref{cond:aper},
\[
 \sup_{s\in \R}\frac{\Sig \sqrt{n}}{2\pi} \bigg{\lvert}\int_{\tilde \delta \le \lvert t\rvert \le \delta} e^{its}\hat h(t)\int \mathcal L_{\om}^{it, (n)}v_\om^0\, dm \, dt
 \bigg{\rvert} \le 2C(\delta -\tilde \delta)\frac{\Sig \sqrt{n}}{2\pi}\lVert \hat h\rVert_{L^\infty} \cdot \rho^n \cdot \lVert v^0\rVert_\infty  \to 0,
\]
when $n\to \infty$ by \eqref{eq:boundedv} and the fact that $\hat h$ is continuous.

\paragraph{Control of (V).}
It follows from the dominated convergence theorem and the integrability of the map $t\mapsto e^{-\frac{\Sig^2t^2}{2}}$ that
\[
 \sup_{s\in \R} \bigg{\lvert}\frac{\hat h(0)\Sigma}{2\pi} \int_{\lvert t\rvert \ge \tilde \delta \sqrt n} e^{\frac{its}{\sqrt n}} \cdot e^{-\frac{\Sig^2 t^2}{2}}\, dt\bigg{\rvert}
 \le \frac{\lvert \hat h(0)\rvert \Sigma}{2\pi}\int_{\lvert t\rvert \ge \tilde \delta \sqrt n}e^{-\frac{\Sig^2 t^2}{2}}\, dt \to 0,
\]
when $n\to \infty$.
\qed

\subsubsection{Equivalent versions of the aperiodicity condition}\label{sec:aper}

In this subsection we show the following equivalence result.

\begin{lemma}\label{lem:AperiodicCoboundary}
Assume $\dim Y^0=1$ and condition~\ref{cond:METCond} holds. Suppose, in addition, that $\Om$ is compact and that the map $\mcl: \Om \to L(\B), \om \mapsto  \mcl_\om$, is continuous on each of finitely many pairwise disjoint open sets $\Om_1, \dots, \Om_q$ whose union is  $\Omega$, up to a set of $\bbp$ measure 0. Furthermore, assume that for each $1\leq j \leq q$, $\mcl: \Om_j \to L(\B)$ can be extended continuously to the closure $\bar \Om_j$.
 Then, each of the following conditions is equivalent to Condition~\ref{cond:aper}:
\begin{enumerate}
\item \label{it:cobi}
For every $t\in \R \setminus \{0\}$, $\Lam(it)< 0$.
\item \label{it:cobii}
For every $t\in \R$,
either (i) $\Lam(it)< 0$ or (ii) the cocycle $\mc{R}^{it}$ is quasicompact and
the equation
\begin{equation}\label{eq:coboundary}
e^{itg(\om,x)} \mcl^{0*}_\om \psi_{\sig \om} = \ga_\om^{it} \psi_\om,
\end{equation}
where $\ga_\om^{it}\in S^1$ and $\psi_\om \in \B^*$   only has a
measurable non-zero solution $\psi:=\{\psi_\om\}_{\om\in\Om}$ when $t=0$. Furthermore, in this case $\ga_\om^{0}=1$ and $\psi_\om(f)=\int f dm$ (up to a scalar multiplicative factor).
\end{enumerate}
\end{lemma}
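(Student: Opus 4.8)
The plan is to show the chain of implications \ref{cond:aper} $\Rightarrow$ \eqref{it:cobi} $\Rightarrow$ \eqref{it:cobii} $\Rightarrow$ \ref{cond:aper}, exploiting the compactness of $\Om$ and the finite-piece continuity of $\om\mapsto\mcl_\om$ to pass between pointwise (in $\om$) and uniform statements. First I would observe that \ref{cond:aper} $\Rightarrow$ \eqref{it:cobi} is essentially immediate: if for a.e.\ $\om$ and every compact $J\subset\R\setminus\{0\}$ one has $\lVert\mcl_\om^{it,(n)}\rVert_\B\le C(\om)\rho^n$ for $t\in J$, then taking $J=\{t\}$ gives $\Lam(it)=\lim_n\frac1n\log\lVert\mcl_\om^{it,(n)}\rVert\le\log\rho<0$. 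Conversely, for \eqref{it:cobi} $\Rightarrow$ \ref{cond:aper}, I would use Lemma~\ref{1214}(2) (norm-continuity of $\theta\mapsto\mcl_\om^\theta$), condition~\ref{cond:unifNormBd} (uniform norm bound $K$ for $\mcl_\om$), and the compactness of $\bar\Om_j$ to show that $\Lam(it)$ is actually continuous in $t$ and that the decay rate can be taken uniform in $\om$: cover $J$ by finitely many small balls on which $\lVert\mcl_\om^{it,(n_0)}\rVert$ is uniformly $<\rho^{n_0}$ for some fixed iterate $n_0$ (this uses that $\Lam(it)<0$ on the compact $J$ plus a submultiplicativity/Fekete argument to promote the exponent bound to a finite-$n_0$ bound, together with continuity in both $t$ and $\om$ via the finitely-many-pieces hypothesis), then split $n=kn_0+\ell$ as in the proof of Lemma~\ref{lem:UnifExpDecayY2} to obtain $\lVert\mcl_\om^{it,(n)}\rVert\le C\rho^n$.

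Next, for \eqref{it:cobi} $\Rightarrow$ \eqref{it:cobii} and \eqref{it:cobii} $\Rightarrow$ \eqref{it:cobi}, the content is the equivalence, for a fixed $t$ with $\mc{R}^{it}$ quasicompact, between $\Lam(it)=0$ and solvability of the coboundary equation \eqref{eq:coboundary}. If $\Lam(it)=0$, then since $\lvert e^{itg}\rvert=1$ the operators $\mcl_\om^{it}$ do not increase the $L^1$ norm, so $\Lam(\mc{R}^{it})\le0$ always; quasicompactness together with $\Lam(it)=0>\ka(it)$ yields a top Oseledets space for the adjoint cocycle $\mc{R}^{it*}$, and a vector $\psi_\om\in Y^{*it}(\om)$ spanning it satisfies $(\mcl^{it}_\om)^*\psi_{\sig\om}=\ga_\om^{it}\psi_\om$ with $\lvert\ga_\om^{it}\rvert$ of $\bbp$-geometric-mean $1$; using the identity $(\mcl^{it}_\om)^*\phi=e^{itg(\om,\cdot)}(\mcl^0_{\sig^{-1}\om})^*\phi$ — wait, more precisely $(\mcl^{it}_\om)^* = M_\om^{it\,*}\circ\mcl_\om^*$ from \eqref{eq:adjointPert} — one rewrites this as \eqref{eq:coboundary}, and a normalization argument (as in the proof of Lemma~\ref{lem:AnnihilatorOsSplittings}, comparing growth rates) forces $\lvert\ga_\om^{it}\rvert\equiv1$. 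For the converse direction within \eqref{it:cobii}: a nonzero measurable solution $\psi_\om$ of \eqref{eq:coboundary} with $\ga_\om^{it}\in S^1$ gives, via duality, a one-dimensional equivariant subspace of $\B^*$ along which the adjoint cocycle grows at rate $\int\log\lvert\ga_\om^{it}\rvert\,d\bbp=0$, whence $\Lam(\mc{R}^{it*})\ge0$; combined with $\Lam(\mc{R}^{it})\le0$ and Remark~\ref{rmk:METCond*} ($\Lam(\mc{R}^{it*})=\Lam(\mc{R}^{it})$) this gives $\Lam(it)=0$. Then \eqref{it:cobii} says exactly that for each $t\ne0$ this cannot happen, so $\Lam(it)<0$ for all $t\ne0$, i.e.\ \eqref{it:cobi}. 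The claim that at $t=0$ one has $\ga_\om^0=1$ and $\psi_\om=\int\cdot\,dm$ follows because $m$ spans $Y^{*0}(\om)$ (as noted in the proof of Lemma~\ref{lem:PF}) and $\dim Y^{*0}=1$.

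The main obstacle I expect is the uniformity-in-$\om$ upgrade needed for \eqref{it:cobi} $\Rightarrow$ \ref{cond:aper}: knowing only the pointwise-in-$\om$ Lyapunov exponent $\Lam(it)<0$ does not by itself give a uniform $\rho$ and it does not immediately give the finite-$n_0$ bound $\sup_\om\lVert\mcl_\om^{it,(n_0)}\rVert<\rho^{n_0}$ that the telescoping argument requires. This is precisely where the hypotheses that $\Om$ is compact and $\mcl$ is continuous on the closures $\bar\Om_j$ of finitely many pieces are used — they let one replace the cocycle over $(\Om,\sigma)$ by a uniform estimate over the finite symbolic data, via a compactness/subadditivity argument (the paper signals this in the Remark following the examples and in the parenthetical ``see the proof of Lemma~\ref{lem:AperiodicCoboundary} for a similar argument''). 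One must also be careful that the exceptional $\bbp$-null sets in the a.e.\ statements can be chosen independently of $t\in J$, which again follows from continuity of $t\mapsto\mcl_\om^{it}$ and separability. I would handle the $t$-uniformity and $\om$-uniformity simultaneously by working with the joint continuity of $(t,\om)\mapsto\mcl_\om^{it}$ on $J\times\bar\Om_j$ and extracting a single finite iterate $n_0$ and contraction factor that work on the whole compact set.
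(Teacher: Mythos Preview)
Your overall strategy matches the paper's: the two equivalences \ref{cond:aper}$\Leftrightarrow$\eqref{it:cobi} and \eqref{it:cobi}$\Leftrightarrow$\eqref{it:cobii} are handled separately, the first via a compactness/telescoping argument over $(\bar\Om_j)\times J$, the second by translating between $\Lam(it)=0$ and existence of adjoint eigenvectors. Two points, however, need sharpening.

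First, in \eqref{it:cobi}$\Rightarrow$\ref{cond:aper} you assert that $\Lam(it)$ is continuous in $t$ and that a \emph{single} iterate $n_0$ can be found with $\sup_{(\om,t)}\lVert\mcl_\om^{it,(n_0)}\rVert<\rho^{n_0}$. Neither is justified. The paper only establishes upper semi-continuity of $t\mapsto\Lam(it)$ (Lemma~\ref{lem:usc}), which on the compact $J$ still yields $\sup_{t\in J}\Lam(it)<r<0$; continuity is not claimed and not needed. More importantly, the finite cover of $\tilde\Om\times J$ (where $\tilde\Om=\amalg_j\bar\Om_j$) produces finitely many pairs $(A_j^l,n_j^l)$ with $\lVert\mcl_\om^{it,(n_j^l)}\rVert\le\rho^{n_j^l}$ on $A_j^l$, but the $n_j^l$ vary with the chart, and there is no way to promote this to a single $n_0$: knowing a bound at time $n_j^l$ does not give one at a larger $n_0$, since the tail factor $\lVert\mcl_{\sigma^{n_j^l}\om}^{it,(n_0-n_j^l)}\rVert$ can be as large as $M^{n_0-n_j^l}$. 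The paper instead builds a \emph{recursive} sequence $m_k(\om,t)=n_{j(\sigma^{M_k}\om,t)}^{l(\sigma^{M_k}\om)}$, $M_{k+1}=M_k+m_k$, so that at each stage one applies the bound valid at the current base point; the telescoping $n=\sum_{k<\tilde n}m_k+\ell$ with $\ell<n_0:=\max n_j^l$ then gives $\lVert\mcl_\om^{it,(n)}\rVert\le(M/\rho)^{n_0}\rho^n$. Your $n=kn_0+\ell$ scheme would only work if all the $n_j^l$ happened to coincide.

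Second, in the \eqref{it:cobii}$\Rightarrow$\eqref{it:cobi} direction you speak of ``a vector $\psi_\om\in Y^{*it}(\om)$ spanning it'' and a scalar $\gamma_\om^{it}$, but this presupposes $\dim Y^{it}=1$, which is not automatic: quasicompactness alone only gives $\dim Y^{it}<\infty$. The paper invokes Lemma~\ref{lem:simpleY1it} (whose proof is nontrivial and uses volume-growth arguments) to obtain $\dim Y^{it}=1$ when $\Lam(it)=0$; without it you have no scalar eigenvalue and no clean way to conclude $|\gamma_\om^{it}|=1$. Once $\dim Y^{it}=1$ is in place, the argument you sketch---normalise $v_\om$ in $L^1$, use $\lVert\mcl_\om^{it}\rVert_1\le1$ to get $|\hat\lambda_\om^{it}|\le1$, combine with $\int\log|\hat\lambda_\om^{it}|\,d\bbp=0$---is exactly the paper's.
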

Before proceeding with the proof, we present an auxiliary result for the cocycle $\mc{R}^{it}$.

\begin{lemma}\label{lem:simpleY1it}
Assume $\dim Y^0=1$ and $\mc{R}^{it}$ is quasi-compact for every $t\in \R$ for which $\Lam(it)=0$.
Then, for each $t\in \R$, either $\Lam(it)<0$ or $\dim Y^{it}=1$.
\end{lemma}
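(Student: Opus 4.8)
The plan is to run the same upper-semicontinuity argument that was used to prove $\dim Y^\theta=1$ in Theorem~\ref{cor:quasicompactness}, but now localised at an arbitrary imaginary point $i t_0$ rather than at $0$. Fix $t_0\in\R$ with $\Lam(it_0)=0$ (the case $\Lam(it_0)<0$ is one of the two alternatives and needs no argument). By hypothesis $\mc R^{it_0}$ is quasi-compact, so Theorem~\ref{thm:MET} applies and gives exceptional exponents $\mu_1^{it_0}\ge\mu_2^{it_0}\ge\cdots$ enumerated with multiplicity, with $\mu_1^{it_0}=\Lam(it_0)=0$. What must be shown is $\mu_2^{it_0}<0$, i.e.\ the top Oseledets space is one-dimensional and there is a genuine gap below it.

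The key input is Lemma~\ref{lem:usc} of Appendix~\ref{sec:usc}: the map $\theta\mapsto \mu_1^\theta+\mu_2^\theta$ is upper-semicontinuous, using that $\theta\mapsto\mcl_\om^\theta$ is norm-continuous (Lemma~\ref{1214}(2)) and that $\log^+\|\mcl_\om^\theta\|$ is dominated by an integrable function on compact sets of $\theta$ (again Lemma~\ref{1214}). First I would apply this along the ray $\theta=is$, $s$ real near $t_0$; in particular along the segment joining $it_0$ to $i\cdot 0=0$ I get
\[
\mu_1^{it_0}+\mu_2^{it_0}\ \ge\ \limsup_{\theta\to it_0}\,(\mu_1^\theta+\mu_2^\theta).
\]
This does not immediately help since we are evaluating \emph{at} $it_0$, not taking a limit. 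Instead the correct move is: from $\Lam(it)=\mu_1^{it}$ and the general inequality $\Lam(it)\le\Lam(0)=0$ (because $|e^{itg}|=1$, so $\|\mcl_\om^{it,(n)}f\|_1\le\|\mcl_\om^{(n)}|f|\|_1=\||f|\|_1$ at the $L^1$ level, hence $\mu_1^{it}\le 0$; one then upgrades from $|\cdot|$ to $\|\cdot\|_\B$ via Lemma~\ref{lem:RandomSameExp} exactly as in the untwisted case), we know $\mu_1^{it_0}=0$ is already the top possible value. Now suppose for contradiction $\mu_2^{it_0}=0$ as well, so $\mu_1^{it_0}+\mu_2^{it_0}=0$. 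I would then perturb: by upper-semicontinuity of $\theta\mapsto\mu_1^\theta+\mu_2^\theta$ at $it_0$ there is nothing forcing a contradiction directly, so instead I compare with the behaviour established at $0$. The cleaner route, mirroring Theorem~\ref{cor:quasicompactness}, is to note that $\dim Y^0=1$ gives $\mu_1^0+\mu_2^0<0$; upper-semicontinuity then yields a neighbourhood $U$ of $0$ in $\C$ with $\mu_1^\theta+\mu_2^\theta<0$ for $\theta\in U$, which gives $\dim Y^\theta=1$ only near $0$ — not at a far-away $it_0$.

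So the genuine argument at $it_0$ must be self-contained, and the main obstacle is precisely that we cannot rely on proximity to $0$. The resolution: combine (a) $\mu_1^{it_0}=\Lam(it_0)=0$, established as above; (b) the Lasota–Yorke estimate of Lemma~\ref{lem:LYtwisted} for the twisted cocycle, which holds for \emph{all} $\theta$ (the constant $\tilde\alpha^{\theta,N}(\om)$ is defined for all $\theta$), giving via Lemma~\ref{lem:Hennion} the bound $\ka(\mc R^{it_0(N)})\le\int\log\tilde\alpha^{it_0,N}\,d\bbp$, hence a quantitative ceiling on $\ka(\mc R^{it_0})$ in terms of $|t_0|$ — but this ceiling need not be negative for large $t_0$, so quasi-compactness at $it_0$ is genuinely an \emph{assumption} here, which is exactly why it appears in the hypotheses. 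Granting quasi-compactness, one invokes Theorem~\ref{thm:MET}. Then to get $\dim Y^{it_0}=1$: argue that any two-dimensional (or a second exceptional) top Oseledets behaviour at exponent $0$ would, together with the $L^1$-contraction $\|\mcl_\om^{it_0,(n)}f\|_1\le\||f|\|_1$ and the spectral decomposition of the \emph{untwisted} cocycle, force a second untwisted direction growing at rate $0$. Concretely: if $f\in Y^{it_0}(\om)$ with $\|\mcl_\om^{it_0,(n)}f\|_\B\approx 1$, then $|f|$ (or a suitable real combination) has $\|\mcl_\om^{(n)}|f|\|_1$ bounded below, while $\int \mcl_\om^{(n)}|f|\,dm = \int|f|\,dm$ is constant; by the Perron–Frobenius structure (Lemma~\ref{lem:PF}, $\dim Y^0=1$, and the decay \ref{cond:dec} of the zero-integral part) $\mcl_\om^{(n)}|f|\to (\int|f|\,dm)\,v^0_{\sig^n\om}$ in $\B$, so $|f|$ is forced to align with $v^0_\om$; running this for two linearly independent elements of $Y^{it_0}(\om)$ and using that phases $e^{it_0 S_n g}$ are unimodular, one derives that the two elements are scalar multiples, contradicting $\dim Y^{it_0}\ge 2$. (This is essentially the classical "no two independent unimodular eigenfunctions over a one-dimensional Perron eigenspace" argument, adapted fibrewise; the coboundary equation~\eqref{eq:coboundary} in Lemma~\ref{lem:AperiodicCoboundary} is the precise bookkeeping device for it.)

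I expect the writing to be short: state the $L^1$-contraction and deduce $\Lam(it)\le 0$, invoke the quasi-compactness hypothesis and Theorem~\ref{thm:MET}, then dispatch the multiplicity via the alignment-with-$v^0$ argument just sketched, citing \ref{cond:dec}, Lemma~\ref{lem:PF}, and the one-dimensionality $\dim Y^0=1$. The one subtle point — and the main obstacle — is making the "$|f|$ must be proportional to $v^0$" step rigorous when $f$ is complex-valued and only the modulus is controlled in $L^1$; I would handle it by testing against $v^0$ paired in $\B^*$ with Lebesgue measure (which spans $Y^{0*}_\om$ by the proof of Lemma~\ref{lem:PF}) and exploiting that $m(\mcl_\om^{it_0,(n)}f)=m(e^{it_0 S_n g}f)$ has modulus $\le\||f|\|_1$, so along the top Oseledets direction this pairing cannot decay, forcing, via the duality relations~\eqref{eq:DualityRel} and equivariance, a one-dimensional conclusion exactly as in Corollary~\ref{cor:LamHatLam}.
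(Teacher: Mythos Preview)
Your overall strategy --- establish $\Lambda(it)\le 0$ via the $L^1$-contraction, invoke quasi-compactness, then argue that every element of $Y^{it}_\om$ has modulus proportional to $v^0_\om$, and finally run a phase argument to force one-dimensionality --- is exactly the paper's route. You also correctly recognise and abandon the upper-semicontinuity approach, which indeed gives nothing away from $\theta=0$.

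The gap is in your alignment step. You write that from $\mcl_\om^{(n)}|f|\to(\int|f|\,dm)\,v^0_{\sig^n\om}$ one concludes ``$|f|$ is forced to align with $v^0_\om$''. This is a non-sequitur: by \ref{cond:dec}, \emph{every} nonnegative $h\in\B$ with $\int h\,dm=c$ satisfies $\mcl_\om^{(n)}h\to c\,v^0_{\sig^n\om}$, so forward iteration of $|f|$ carries no information about $|f|$ itself. Your proposed duality patch does not rescue this: the pairing $m(\mcl_\om^{it,(n)}f)=\int e^{itS_ng}f\,dm$ can oscillate and decay even when $f\in Y^{it}_\om$, so it does not pin down the magnitude of $f$.

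The paper's resolution runs the iteration \emph{backward}. Since $\mcl_{\sig^{-n}\om}^{it,(n)}\colon Y^{it}_{\sig^{-n}\om}\to Y^{it}_\om$ is a bijection, take $v_{-n}$ with $\mcl_{\sig^{-n}\om}^{it,(n)}v_{-n}=v$; if one knows the \emph{pointwise} equality $|\mcl_{\sig^{-n}\om}^{it,(n)}v_{-n}|=\mcl_{\sig^{-n}\om}^{(n)}|v_{-n}|$ (not merely $\le$), then $|v|=\mcl_{\sig^{-n}\om}^{(n)}|v_{-n}|$, and now \ref{cond:dec} applied to $|v_{-n}|-v^0_{\sig^{-n}\om}$ (with tempered growth of $\|v_{-n}\|_\B$ from Oseledets) gives $|v|=v^0_\om$. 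The pointwise equality in turn reduces, via the triangle-inequality analysis of $\mcl^{it}_\om$ as a sum over preimages, to showing $\|\mcl_\om^{it}v\|_1=1$ for every $v\in Y^{it}_\om\cap S_1$. This last statement is the genuinely delicate point you are missing: a priori one only has $\le 1$, and promoting it to equality requires a separate argument comparing $L^1$ and $\B$ Lyapunov exponents together with $k$-dimensional volume growth (the paper defers this to Appendix~\ref{sec:pfStep1}). Once $|v|=v^0_\om$ is established for every normalised $v\in Y^{it}_\om$, the phase argument you allude to (applied to $u$, $v$ and $u+v$) finishes the proof.
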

\begin{proof}
Assume $\dim Y^0=1$.
It follows from the definition of $\mcl_\om^{it}$ that $\Lam(it)\leq 0$ for every $t\in \R$.
Indeed, for every $v\in \B$, $\|\mcl_\om^{it} v\|_1= \|\mcl_\om (e^{itg(\om, \cdot)}v) \|_1 \leq \| e^{itg(\om, \cdot)}v \|_1=\|v\|_1$.
Hence,  $\lim_{n\to \infty} \frac1n \log  \| \mcl_\om^{it,(n)}v\|_{1}\leq 0$.
Lemma~\ref{lem:RandomSameExp} then implies that $\Lam(it)\leq 0$.

Suppose $\Lam(it)=0$ for some $t\in \R$. Let $d=\dim Y^{it}$. Then $d<\infty$ by the quasi-compactness assumption. Our proof proceeds in three steps:
\begin{enumerate}[label=(\arabic*)]
\item \label{it:L1Norm}
Let $S_1=\{x\in \BV : \|x\|_1=1 \}$. Then, for \paeom \  and every $v \in Y_\om^{it}\cap S_1$, $\|\mcl_\om^{it} v\|_1 =1$.
\item \label{it:vitNorm}
 Assume $v \in Y_\om^{it}$ is such that $\|v\|_1=1$. Then $|v|=v^0_\om$. In words, the magnitude of $v$ is given by $v^0_\om$, the generator of $Y_\om^{0}$.
\item  \label{it:vitAngle}
Assume $u , v  \in Y_\om^{it}$ are such that $\|v \|_1=\|u \|_1=1$. Then, there exists a constant $a \in \R$ such that $u =e^{ia} v $. In particular, $d=\dim Y^{it}=1$.
\end{enumerate}

The proof of step~\ref{it:L1Norm} involves some technical aspects of Lyapunov exponents and volume growth
and it is deferred until Appendix~\ref{sec:pfStep1}. Assuming this step has been established, we proceed to show the remaining two.
\paragraph{Proof of step \ref{it:vitNorm}.}
Let $v \in Y_\om^{it}$ be such that $\|v\|_1=1$. Consider the polar decomposition of $v$,
\[
v(x)= e^{i\phi(x)} r(x),
\]
where $\phi, r: \X \to \R$ are functions such that $r\geq 0$. Notice that the choice of $r(x)$ is unique. The choice of $\phi(x) \pmod{2\pi}$ is unique whenever $r(x)\neq 0$, and arbitrary otherwise.
Because of step \ref{it:L1Norm}, for \paeom \ and $n\in \N$, we have $\| \mcl_\om^{it,(n)} v\|_1=1$.
Also, $\|\mcl^{(n)}_\om |v| \|_1 = \| \mcl^{(n)}_\om r \|_1=1$, where we use $|v|$ to denote the magnitude (radial component) of $v$.
Notice that $\mcl_\om^{(n)} r(x)= \sum_{T^{(n)}_\om y=x} \frac{r(y) }{|(T^{(n)}_\om)'(y)|}$ and by Lemma~\ref{lem:exprLit}(1),
\begin{equation}\label{eq:polarLitv}
\mcl_\om^{it,(n)} v (x)= \sum_{T^{(n)}_\om y=x} e^{it S_ng(\om,y)+ i \phi(y)}\frac{ r(y) }{|(T^{(n)}_\om)'(y)|}.
\end{equation}
In particular, for each $x\in \X$, we have $|\mcl_\om^{it,(n)} v (x)| \leq \mcl^{(n)}_\om r(x)$.
Since $1=\| \mcl_\om^{it,(n)} v\|_1 = \int |\mcl_\om^{it,(n)} v (x)| dx$
and $1=\| \mcl^{(n)}_\om r\|_1 = \int \mcl^{(n)}_\om r(x) dx$, it must be that for a.e. $x\in \X$,
\begin{equation} \label{eq:NormLitv}
|\mcl_\om^{it,(n)} v (x)| = \mcl^{(n)}_\om r (x).
\end{equation}
In view of the triangle inequality, equality in \eqref{eq:polarLitv} holds
if and only if for a.e. $x\in \X$ such that $\mcl^{(n)}_\om r(x)\neq 0$, the phases coincide on all preimages of $x$. That is, if and only if $e^{it S_n g(\om,y)+ i \phi(y)}=e^{it S_n g(\om,y')+ i \phi(y')}$ for all $y, y' \in (T_\om^{(n)})^{-1}(x)$
(if for some preimage $y$ of $x$ the modulus $\frac{ r(y) }{|(T^{(n)}_\om)'(y)|}$ is zero, we may redefine $\phi(y)$ in such a way that it satisfies this requirement).
Thus, there exists $\phi_n: \X \to \R$ such that $e^{it S_n g(\om,y)+ i \phi(y)}= e^{i\phi_n \circ T^{(n)}_\om(y)}$,
for every $y$ such that $\mcl^{(n)}_\om r(y)\neq 0$. Thus, for all such $y$, we have
\begin{equation}\label{eq:loitv}
\mcl_\om^{it,(n)} v (y) = \mcl^{(n)}_\om (e^{it S_n g(\om,y)+ i \phi(y)} r(y)) = \mcl^{(n)}_\om (e^{i\phi_n \circ T^{(n)}_\om(y)} r(y)) = e^{i\phi_n (y)} \mcl^{(n)}_\om r(y).
\end{equation}
Note that if  $\mcl^{(n)}_\om r(y)= 0$, then $\mcl_\om^{it,(n)} v (y)=0$ as well, so indeed equality between LHS and RHS of \eqref{eq:loitv} holds for a.e. $y \in \X$.

Notice that, by equivariance of  $Y_{\om}^{it}$, $\mcl_\om^{it,(n)} v \in Y_{\sig^n\om}^{it}$, and the polar decomposition of $\mcl_\om^{it,(n)} v$ is precisely given by the RHS of \eqref{eq:loitv}.
Recall that for every $n$ and \paeom, $\mcl_{\sig^{-n}\om}^{it,(n)}: Y_{\sig^{-n}\om}^{it} \to Y_\om^{it}$ is a bijection.
Let $v_{-n} \in Y_{\sig^{-n}\om}^{it}$ be such that $\mcl_{\sig^{-n}\om}^{it,(n)} v_{-n}=v$, and let $r_{-n}=|v_{-n}|$.
We recall that by step~\ref{it:L1Norm} of the proof, $\|r_{-n}\|_1=1$.
Also, \cite[Lemma 20]{FLQ2} implies that for every $\ep>0$ there exists $C_\ep>0$ such that $\|v_{-n}\| \leq C_\ep e^{\ep n} \|v\|$.
Hence, $\|r_{-n}\| \leq \|v_{-n}\| \leq C_\ep e^{\ep n} \|v\|$, where we have used the facts that $\var(|v|)\leq \var(v)$ and $\|\, |v|\, \|_1 = \|v\|_1$ for every $v\in \BV$.
Notice that $\int r_{-n} - v^0_{\sig^{-n}\om} dm=0$, as both $r_{-n}$ and $v^0_{\sig^{-n}\om}$ are non-negative and normalized in $L^1$. Thus, \eqref{eq:NormLitv} applied to $v_{-n}$  and $\sig^{-n}\om$, together with \ref{cond:dec} yields
\begin{equation}\label{eq:r-v0}
\begin{split}
\| r - v^0_\om \| &= \| |\mcl_{\sig^{-n}\om}^{it,(n)} v_{-n} |- v^0_\om \| = \|\mcl_{\sig^{-n}\om}^{(n)} (r_{-n} - v^0_{\sig^{-n}\om}) \| \\
& \leq
K' e^{-\lam n} (\| r_{-n}\| + \|v^0_{\sig^{-n}\om}\|) \leq K' e^{-\lam n} (C_\ep e^{\ep n}\|v\| +  \esssup_{\om \in \Om} \lVert v_\om^0\rVert).
\end{split}
\end{equation}
Let $\ep < \lam$. Then, the quantity on the RHS of \eqref{eq:r-v0} goes to zero as $n\to \infty$ and therefore $r=v_\om^0$, as claimed.

\paragraph{Proof of step \ref{it:vitAngle}.} Let $u, v \in Y_\om^{it}$ be such that $\|v\|_1=\|u\|_1=1$. In view of step \ref{it:vitNorm}, there exist functions $\phi, \psi: \X \to \R$ such that $v=e^{i\phi}v^0_\om$ and $u=e^{i\psi}v^0_\om$.
Since $Y_\om^{it}$ is a vector space, we have $u+v \in Y_\om^{it}$, although $u+v$ may  not be normalized in $L^1$. Hence, again using step \ref{it:vitNorm}, there exist $\rho \in \R$ and $\xi: X \to \R$ such that $v+u = \rho e^{i\xi}v^0_\om$.
Therefore,
\[
v+u=e^{i\phi}v^0_\om + e^{i\psi}v^0_\om = \rho e^{i\xi}v^0_\om.
\]
Recalling that $v^0_\om$ is bounded away from 0, we can divide by $v^0_\om$, and take magnitudes (norms) to get
\[
|e^{i\phi} + e^{i\psi}| = \rho.
\]
Elementary plane geometry shows that this implies $|\phi-\psi|$ is essentially constant (modulo $2\pi$). In particular, $\phi-\psi$ can take at most two values, say $\pm a$.
A similar argument, considering $v$ and $u'=e^{ia}u$ shows that $\phi-\psi-a$ can also take at most two values, say $\pm b$.
Putting this together, we have on the one hand that $\phi-\psi-a  \in \{0, -2a\}$, and on the other hand that $\phi-\psi-a  \in \{b,-b\}$. Thus, either (i) $b=0$, and therefore $v=e^{ia}u$, or (ii) $b\neq 0$ and then $\phi-\psi-a=-2a$, and therefore $\phi=\psi-a$ and $v=e^{-ia}u$.

\end{proof}

\begin{proof}[Proof of Lemma~\ref{lem:AperiodicCoboundary}] \
\paragraph{Equivalence between Assumption~\eqref{aper} and item~\eqref{it:cobi}.}
It is straightforward to check that \eqref{aper} directly implies  item  \eqref{it:cobi}.
To show the converse, assume the hypotheses of Lemma~\ref{lem:AperiodicCoboundary} and item \eqref{it:cobi}. An immediate consequence of upper semi-continuity of $t\mapsto \Lam(it)$, as established in Lemma~\ref{lem:usc}, is that
if $J\subset \R$ is a compact interval not containing 0, then there exists $r<0$ such that $\sup_{t\in J} \Lam(it)<r$. Let $\rho:=e^r$. Then, for
 $\paeom$ and $t\in J$, there exists $C_{\om,t}>0$ such that for every for $n\ge 0$,
\begin{equation}\label{eq:aperNear0}
\lVert \mathcal L_{\om}^{it, (n)}\rVert \le C_{\om,t} \rho^n.
\end{equation}
In order to show \eqref{aper}, we will in fact ensure the constant $C_{\om,t}$ can be chosen independently of $(\om,t)$, provided $(\om,t)\in \hat{\Om} \times J$ for some full $\bbp$-measure subset $\hat{\Om}\subset \Om$.
We will establish this result for $\om \in \hat{\Om} := \cap_{k \in \Z} \, \sig^{k} (\cup_{l=1}^q \Om_l)$.
 Notice that $\hat{\Om}$ is $\sig$-invariant and, since $\sig$ is a $\bbp$-preserving homeomorphism of $\Om$, then $\bbp(\hat\Om)=1$.
 For technical reasons regarding compactness, let us consider $\tilde{\Om}:= \amalg_{1\leq l \leq q} \bar{\Om}_l$;
 where $\amalg$ denotes disjoint union, with the associated disjoint union topology (so $\tilde{\Om}$ may be thought of as $\cup_{1 \leq l \leq q} \big( \{ l \}\times \bar{\Om}_l \big)$, with the finest topology such that each injection $\bar \Om_l \hookrightarrow \{ l \}\times \bar{\Om}_l \subset \tilde{\Om}$ is continuous).   In this way, each  $\{l\} \times \bar \Om_l \subset \tilde{\Om}$ is a clopen set and, since $\Om$ is compact, so is $\tilde \Om$.

  For notational convenience, but in a slight abuse of notation, we drop the `$\{l\}$' component, and identify elements of $\tilde\Om$ with elements of $\Om$, although points on the boundaries between $\bar\Om_l$'s may appear with multiplicity in $\tilde{\Om}$.
 For each $\om_0 \in \bar{\Om}_l  \subset \tilde{\Om}$, we denote $\mcl_{\om_0}$ the (unique) value making $\om \mapsto \mcl_\om$ continuous on $\bar\Om_l \subset \tilde\Om$. This is possible by the assumptions of the lemma and the universal property of the disjoint union topology.
 In addition, notice that each element of $\hat{\Om}$ belongs to exactly one of $\bar{\Om}_1, \dots, \bar{\Om}_q$ and therefore it has a unique representative in $\tilde\Om$. Hence, there is no ambiguity in the definition of $\mcl_\om$ for $\om \in \hat\Om \subset \tilde \Om$.

Let $1\leq l \leq q$
and note that  for every $(\om_0, t_0)\in \bar{\Om}_l \times J \subset \tilde{\Om}\times J$, there is an open neighborhood $U_{(\om_0, t_0)}\subset \tilde{\Om} \times J$
(we emphasize that the topology of $\tilde\Om$ is used here)
and $\bar n= \bar n(\om_0,t_0)<\infty$ such that if $(\om,t)\in U_{(\om_0, t_0)}$ 
 then
$\|\mathcal{L}_{\omega}^{it,(\bar n)}\|\le \rho^{\bar n}$.
Indeed, let $\bar n=\bar n(\om_0, t_0)$ be such that $\|\mathcal{L}_{\omega_0}^{it_0,(\bar n)}\|\leq \rho^{\bar n}/2$.
Recall that Lemma~\ref{1214} ensures that  $t\mapsto M_\om^{it}:= (f(\cdot)\mapsto e^{itg(\om, \cdot)} f(\cdot))$ is continuous in the norm topology of $\B$, so that $(\om, t) \mapsto \mcl_{\om}^{it}$ can be extended continuously to $\bar{\Om}_l\times J$ for each $1\leq l \leq q$, and therefore to all $\tilde{\Om}\times J$. Thus, one can choose
an open neighborhood $U_{(\om_0, t_0)}\subset \tilde{\Om} \times J$
so that if $(\om,t)\in U_{(\om_0, t_0)}$, then $\|\mathcal{L}_\omega^{it,(\bar n)}\|\leq \rho^{\bar n}$, as claimed.

By compactness, there are finite collections (of cardinality, say, $N^l$)  $A^l_1, \dots, A^l_{N^l} \subset \bar \Om_l \times J$ and $n^l_1, \dots, n^l_{N^l}\in \N$  such that $\cup_{j=1}^{N^l}  A^l_j \supset  (\hat{\Om}\cap \Om_l) \times J$ and for every $(\om,t) \in A^l_j \cap \big( (\hat{\Om}\cap \Om_l) \times J \big)$,
$\|\mathcal{L}_\omega^{it, (n^l_j)}\|\leq \rho^{n^l_j}$.

Let  $n_0:=\max_{1\leq l \leq q}\max_{1\leq j \leq N^{l}}  n^l_j<\infty$.
For each $\om \in \hat{\Om}$, let $1\leq l(\om) \leq q$ be the index such that $\om \in \Om_{l(\om)}$.
Let $(\om, t) \in \hat{\Om} \times J$, and let $1\leq j(\om,t) \leq N^{l(\om)}$ be such that $(\om, t)\in A^{l(\om)}_{j(\om,t)}$.
Let us recursively define two sequences $\{m_k(\om,t)\}_{k\geq 0}, \{M_k(\om,t)\}_{k\geq 0} \subset \N$ as follows: $ M_0(\om,t)=0, m_0(\om,t)=n^{l(\om)}_{ j(\om,t)}, M_{k+1}(\om,t)= M_k(\om,t) + m_{k}(\om,t)$ and $m_{k+1}(\om,t) = n^{l(\sig^{M_k(\om,t)}\om)}_{j(\sig^{M_k(\om,t)}\om, t)}$.

Notice that for every $(\om, t)\in \hat{\Om} \times J$ and $k\in \N$, $m_{k}(\om,t) \leq n_0$.
Then, each $n\in \N$ can be decomposed as $n= \big(\sum_{k=0}^{\tilde n-1}  m_k(\om,t) \big ) + \ell$,
where $\tilde n = \tilde n (\om, t, n)\geq 0$ is taken to be as large as possible while ensuring  that $0\leq \ell =\ell(\om,t, n)<n_0$. Choosing $M>1$ such  that $\|\mcl_\om^{it}\|\leq M$ for every $(\om, t) \in \hat{\Om} \times J$  (possible by Lemma \ref{1214}), we get
\begin{eqnarray*}
\|\mathcal{L}_\omega^{it,(n)}\| \leq
\Big( \prod_{k=0}^{\tilde n-1} \|\mathcal{L}_{\sig^{M_{k}(\om,t)}\omega}^{it,(m_{k}(\om,t))}\| \Big) (\|\mathcal{L}_{\sig^{M_{\tilde n}(\om,t)}\omega}^{it,(\ell)}\|) \leq \rho^n\left(M/\rho \right)^\ell\le C\rho^n,
\end{eqnarray*}
for every $(\om, t) \in \hat{\Om} \times J$, where  $C=(M/\rho)^{n_0}$, and \eqref{aper} holds.

\paragraph{Equivalence of items \eqref{it:cobi} and \eqref{it:cobii}.}
Assume item \eqref{it:cobi} holds, and suppose there exists $t\in \R\setminus\{0\}$ such that \eqref{eq:coboundary} has a non-zero, measurable solution. By iterating \eqref{eq:coboundary} $n$ times, and recalling identity \eqref{eq:prodRuleP}, we get
\begin{equation}\label{eq:adjointExprN}
e^{itS_ng(\om,\cdot)}  \mcl_\om^{0*(n)} (\psi_{\sig^{n} \om}) = \ga_\om^{it,n} \psi_\om,
\end{equation}
with $\ga_\om^{it,n} \in S^1$.
Lemma~\ref{lem:exprLit} ensures $\mcl_\om^{it*(n)}(\psi)=  e^{itS_ng(\om, \cdot)} \mcl_\om^{0*(n)}(\psi)$, so \eqref{eq:adjointExprN} implies that
$\|\mcl_\om^{it* (n)}\psi_{\sig^{n}\om} \|_{\B^*}=\|\psi_\om\|_{\B^*}$. Thus, invoking again \cite[Lemma 8.2]{FLQ1},
$\lim_{n\to \infty} \frac{1}{n}\log \|\mcl_\om^{it* (n)}\psi\|_{\B^*}=0$, contradicting item \eqref{it:cobi}.
Hence, \eqref{eq:coboundary} only has solutions when $t=0$. It is direct to check that the choice  $\ga_\om^{0}=1$ and $\psi^0_\om(f)=\int f dm$ provide a solution. Since by hypothesis $\dim Y^0=1$, no other solution may exist, except for constant scalar multiples of $\psi^0_\om$.

Let us show item \eqref{it:cobii} implies item \eqref{it:cobi} by contradiction.
Assume item  \eqref{it:cobii}  holds, and $\Lam(it)=0$ for some nonzero $t\in \R$.
Then, by assumption $\mcl^{it}$ is quasi-compact and
by Lemma~\ref{lem:simpleY1it}, $\dim Y^{it}=1$.
An argument similar to that in Section~\ref{sec:choiceOsBases} implies that there exist non-zero measurable solutions
$v$ to $\mcl_\om^{it}v_{\om} = \hat{\lam}_\om^{it} v_{\sig\om}$ and $\psi$ to $\mcl_\om^{it*}\psi_{\sig\om} = \hat{\lam}_\om^{it} \psi_\om$, chosen so that  $\|v_\om\|_1=1$ and $ \psi_\om( v_\om )=1$ for \paeom.
Thus,  $\int \log |\hat{\lam}_\om^{it}| d\bbp=\Lam(it)=0$.
Recalling that $\|\mcl_\om^{it}\|_1 \leq 1$, we get $|\hat{\lam}_\om^{it}|\leq 1$ for \paeom. Combining the last two statements we get that $|\hat{\lam}_\om^{it}|=1$ for \paeom.
In view of Lemma~\ref{lem:exprLit}(1), $\psi$ yields a solution to \eqref{eq:coboundary}.  Hence, Condition \eqref{it:cobii} implies that $t=0$.
\end{proof}

\subsubsection{Application to random Lasota--Yorke maps}\label{sec:exlclt}

\begin{thm}[Local central limit theorem for random Lasota--Yorke maps]
Assume $\mc{R}=(\Om, \mc{F}, \bbp, \sig, \B, \mcl)$ is an admissible random Lasota-Yorke map (see Section~\ref{RLYM})
such that there exists $1\leq q<\infty$, essentially disjoint compact sets $\Om_1, \dots, \Om_q\subset \Om$ with $\cup_{j=1}^q \Om_j=\Om$, and maps $\{ T_j : I \to I\}_{1\leq j \le q}$ such that $T_{\om}= T_j$ for $\bbp$ a.e. $\om \in \Om_j$. Let $g \colon \Omega \times X \to \mathbb R$ be an observable satisfying the regularity and centering conditions  \eqref{obs} and \eqref{zeromean}.
Then one of the two following conditions holds:
\begin{enumerate}
\item
$\mc{R}$ satisfies the local central limit theorem (Theorem~\ref{thm:lclt}), or
\item
The observable is periodic, that is, \eqref{eq:coboundary} has a
measurable non-zero solution $\psi:=\{\psi_\om\}_{\om\in\Om}$ with $\psi_\om \in \B^*$, for some $t\in \R\setminus \{0\}$, $\ga_\om^{it}\in S^1$. (See Section~\ref{sec:plclt} for further information in this setting.)
\end{enumerate}
\end{thm}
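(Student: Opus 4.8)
The plan is to read off the dichotomy from Lemma~\ref{lem:AperiodicCoboundary}, whose structural hypotheses I first check. Admissibility of $\mc{R}$ supplies condition~\ref{cond:METCond}, and Lemma~\ref{lem:qc+1dim} gives $\dim Y^0=1$. The base space $\Om=\bigcup_{j=1}^q\Om_j$ is a finite union of compacts, hence compact; and since $\om\mapsto T_\om$ takes only the values $T_1,\dots,T_q$, with $T_\om=T_j$ for $\bbp$-a.e.\ $\om\in\Om_j$, the map $\om\mapsto\mcl_\om$ is ($\bbp$-a.e.) constant on each $\Om_j$, equal to the transfer operator of $T_j$; so on the open sets $\mathrm{int}(\Om_j)$ it is continuous and it extends (constantly) to the closures $\bar\Om_j$. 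Thus, after the routine measure-zero adjustments, Lemma~\ref{lem:AperiodicCoboundary} applies, and Condition~\ref{cond:aper}, item~\eqref{it:cobi} and item~\eqref{it:cobii} are all equivalent.

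Before invoking this I would prove a supplement, valid for any admissible random Lasota--Yorke cocycle: for every $t\in\R$ one has $\Lam(it)\le0$, and if $\Lam(it)=0$ then $\mc{R}^{it}$ is quasi-compact (hence $\dim Y^{it}=1$, by Lemma~\ref{lem:simpleY1it}). The proof rests on a refined Lasota--Yorke inequality for the twisted cocycle. By Lemma~\ref{lem:exprLit}, $\mcl_\om^{it,(N)}f=\mcl_\om^{(N)}(e^{itS_Ng(\om,\cdot)}f)$; the crucial point is that on any smoothness branch $J$ of the regularity partition $\mathcal P_\om^{(N)}$ of $T_\om^{(N)}$, each summand $g(\sigma^i\om,\cdot)\circ T_\om^{(i)}$ ($0\le i<N$) is $g_{\sigma^i\om}$ composed with a monotone $C^2$ map, so $\var_J\bigl(S_Ng(\om,\cdot)\bigr)\le N\,\esssup_{\om\in\Om}\var(g_\om)$ --- \emph{linear} in $N$, not exponential. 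Since $|e^{itS_Ng}|\equiv1$, carrying this branch-wise bound through the standard Lasota--Yorke computation (and using $\delta^N\to\infty$, the uniform bound $\essinf_{\om\in\Om}\min_k m(J^{(N)}_{\om,k})>0$, and the uniform constants of \eqref{sd_consts}--\eqref{RLY}) yields, for $N$ large enough,
\[
\var(\mcl_\om^{it,(N)}f)\le\alpha_N\,\var(f)+K_N\,\|f\|_1,\qquad f\in\BV,\ \paeom,
\]
with $\alpha_N<1$ deterministic (the extra twist term is $O(N|t|\,\delta^{-N})$, which can be made arbitrarily small). Together with $\|\mcl_\om^{it,(n)}\|_1\le1$ this keeps $\|\mcl_\om^{it,(n)}\|_{\BV}$ bounded in $n$, so $\Lam(it)\le0$; and if $\Lam(it)=0$, Lemma~\ref{lem:Hennion} applied to $\mc{R}^{it,(N)}$ (whose top exponent is $N\Lam(it)=0>\log\alpha_N$) gives $\ka(\mc{R}^{it,(N)})<0$, so $\mc{R}^{it,(N)}$, and therefore $\mc{R}^{it}$, is quasi-compact.

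Now the dichotomy. If \ref{cond:aper} holds, Theorem~\ref{thm:lclt} gives that $\mc{R}$ satisfies the local central limit theorem. Otherwise item~\eqref{it:cobii} fails, so (noting that $t=0$ cannot be a witness) there is $t_0\in\R\setminus\{0\}$ for which neither alternative (i) $\Lam(it_0)<0$ nor alternative (ii) $[\,\mc{R}^{it_0}$ quasi-compact and \eqref{eq:coboundary} unsolvable at $t_0]$ holds. By the supplement $\Lam(it_0)\le0$, so the failure of (i) forces $\Lam(it_0)=0$, whence $\mc{R}^{it_0}$ is quasi-compact; the failure of (ii) must then mean that \eqref{eq:coboundary} has a non-zero measurable solution at $t_0$, i.e.\ the observable is periodic. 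To exhibit the solution and confirm $\ga^{it_0}_\om\in S^1$, I would argue as in the proof of Lemma~\ref{lem:AperiodicCoboundary}: with $\mc{R}^{it_0}$ quasi-compact and $\dim Y^{it_0}=1$, the construction of Section~\ref{sec:choiceOsBases} (run at $it_0$, using the duality of Lemma~\ref{lem:AnnihilatorOsSplittings}) yields measurable $v^{it_0}_\om,\psi^{it_0}_\om$ and scalars $\hat\lam^{it_0}_\om$ with $\|v^{it_0}_\om\|_1=1$, $\psi^{it_0}_\om(v^{it_0}_\om)=1$, $\mcl_\om^{it_0}v^{it_0}_\om=\hat\lam^{it_0}_\om v^{it_0}_{\sig\om}$ and $\mcl_\om^{it_0*}\psi^{it_0}_{\sig\om}=\hat\lam^{it_0}_\om\psi^{it_0}_\om$; then $\int\log|\hat\lam^{it_0}_\om|\,d\bbp=\Lam(it_0)=0$ while $|\hat\lam^{it_0}_\om|=\|\mcl_\om^{it_0}v^{it_0}_\om\|_1\le1$, forcing $|\hat\lam^{it_0}_\om|=1$ a.e., and Lemma~\ref{lem:exprLit}(1) with $n=1$ identifies $\psi^{it_0}$ as a non-zero solution of \eqref{eq:coboundary} with $\ga^{it_0}_\om=\hat\lam^{it_0}_\om$.

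I expect the refined Lasota--Yorke estimate of the second paragraph to be the main obstacle. One cannot simply feed $h=e^{itS_Ng}f$ into the unrefined inequality \eqref{RLY}, since $\var(S_Ng)$ over all of $X$ may grow exponentially in $N$; the inequality must be rebuilt inside the branch sum, exploiting that the Birkhoff sum has only linearly growing variation on each branch of $T_\om^{(N)}$, so that after choosing $N$ large (a large multiple of the $N$ of \eqref{RLY}) the twist contribution is dominated by the gain $\delta^{-N}$ from the expansion. Everything else --- verifying the hypotheses of Lemma~\ref{lem:AperiodicCoboundary}, the upper bound $\Lam(it)\le0$, the passage from $\mc{R}^{it,(N)}$ to $\mc{R}^{it}$, and the extraction of the coboundary solution --- is either immediate from the quoted results or a transcription of arguments already carried out in the excerpt.
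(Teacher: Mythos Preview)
Your proposal is correct and follows the same route as the paper: verify the structural hypotheses of Lemma~\ref{lem:AperiodicCoboundary}, establish quasi-compactness of $\mc{R}^{it}$ via a refined Lasota--Yorke inequality in which the twist contributes only linearly in the iterate (the paper obtains this by citing Morita's branch-wise estimate, which is exactly your observation that $\var_J(S_Ng)$ grows linearly on monotonicity branches), and then read off the dichotomy from that lemma together with Theorem~\ref{thm:lclt}. Your case split $\Lam(it)<0$ versus $\Lam(it)=0$ is handled a touch more carefully than in the paper, and your final paragraph re-derives the coboundary solution already furnished by the proof of Lemma~\ref{lem:AperiodicCoboundary}, but neither difference is substantive.
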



\begin{proof}
 Lemma~\ref{lem:exprLit} ensures that for any $n\in \N$ and $f\in \BV$,
 \[
  \mcl_\om^{it, (n)}f=\mcl_\om^{(n)}(e^{it S_{n}g(\om, \cdot)}f).
 \]
 In order to verify the quasicompactness condition for $\mc{R}^{it}$ for $t\in \R$, we adapt an argument of Morita \cite{Morita,MoritaCorrection}.
 First note that since the $T_\om$ take only finitely many values, then $\mc{R}$ has a uniform big-image property.
That is, for every $n\in \N$,
\[
\essinf_{\om \in \Om} \min_{1\leq j \leq b^{(n)}_\om} m(T^{(n)}_\om(J^{(n)}_{\om,j}))>0,
\]
where $J^{(n)}_{\om,1}, \dots, J^{(n)}_{\om,b^{(n)}_\om}$,
 are the regularity intervals of  $T^{(n)}_\om$. Indeed, the infimum is taken over a finite set.
Then, the argument of
  \cite[Proposition 1.2]{Morita} (see also \cite{MoritaCorrection}), with straightforward changes to fit the random situation, ensures that
\begin{equation}\label{eq:MoritaLY}
\var (\mcl_\om^{it,(n)}(f)) = \var(\mcl_\om^{(n)}(e^{it S_{n}g(\om, \cdot)}f)) \leq (2+ n \var(e^{itg(\om, \cdot)})) (\del^{-n} \var(f) + I_n(\om)\|f\|_{1}),
\end{equation}
 for some measurable function $I_n$.

Let $n_0$ be sufficiently large so that $a_{n_0}:=(2+ n_0 \var(e^{itg(\om, \cdot)})) \del^{-n_0} <1$. Then,
\[
\|\mcl_\om^{it,(n_0)}(f)\|_{\BV} \leq a_{n_0} \|f\|_{\BV} + J_{n_0}(\om)\|f\|_1,
\]
for some  measurable function $J_{n_0}$.
Lemma~\ref{lem:Hennion} implies that $\ka(it) \leq \log (a_{n_0})/n_0 <0 = \Lam(it)$. Thus, the cocycle $\mc{R}^{it}$ is quasicompact.
The result now follows directly from Theorem~\ref{thm:lclt} and Lemma~\ref{lem:AperiodicCoboundary}, which is applicable since $\om \mapsto \mcl_\om$ is essentially constant on each of the $\Om_j$.
\end{proof}

\subsection{Local central limit theorem: periodic case}\label{sec:plclt}

We now discuss the version of local central limit theorem for a certain class of observables for which  the aperiodicity condition~\ref{cond:aper} fails to hold. More precisely, we are
interested in observables of the form
\begin{equation}\label{no}
 g(\omega, x)=\eta_\om+k(\om, x), \quad \text{where $\eta_\om \in \R$ and $k(\om, \cdot)$ takes integer values for \paeom,}
\end{equation}
 that cannot be written in the form
\begin{equation}\label{form}
  g(\om, \cdot)=\eta_\om' +h(\om, \cdot)-h(\sigma \om, T_\om(\cdot))+p_\om k'(\om, \cdot),
 \end{equation}
for $\eta_\om' \in \R$, $p_\om \in \mathbb N \setminus \{1\}$ and $k'(\om, x) \in \Z$.
Furthermore, we will continue to assume that $g$ satisfies assumptions~\eqref{obs} and~\eqref{zeromean}. We note that in this setting~\eqref{eq:coboundary} holds with $t=2\pi$,
$\gamma_\om^{it}=e^{it\eta_\om}$ and $\psi_\om (f) = \int f dm$. Consequently, Lemma~\ref{lem:AperiodicCoboundary} implies that~\ref{cond:aper} does not hold.

Let $G$ denote the set of all $t\in \R$ with the property that there exists a measurable function $\Psi \colon \Om \times X \to S^1$ and a collection of numbers $\gamma_\om \in S^1$, $\om \in \Om$ such that:
\begin{enumerate}
\item $\Psi_\omega \in \mathcal B$ for \paeom,  where $\Psi_\om:=\Psi(\om, \cdot)$;
\item  for \paeom, \begin{equation}\label{group}
 e^{-it g(\om, \cdot)} \Psi_{\sigma \om}\circ T_\om=\gamma_\om \Psi_\om.
\end{equation}
\end{enumerate}
\begin{lemma}
 $G$ is a subgroup of $(\R, +)$.
\end{lemma}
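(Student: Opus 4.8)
The plan is to verify directly the three properties characterising a subgroup of $(\R,+)$: that $0\in G$, that $G=-G$, and that $G+G\subseteq G$. In each case the strategy is identical: exhibit an explicit pair $(\Psi,\{\gamma_\om\})$ witnessing membership of the relevant value of $t$, built from the data attached to the given element(s) of $G$; the only input that is not completely formal is the stability of $\B$ under complex conjugation and under products of elements of $L^\infty$-norm one. For $0\in G$ one takes $\Psi\equiv 1$ and $\gamma_\om=1$: then $\Psi$ is measurable and $S^1$-valued, $\Psi_\om=1_X\in\B$ by (V5), and \eqref{group} reduces to $1=1$; in particular $G\neq\emptyset$.

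For closure under negation, suppose $t\in G$ is witnessed by $\Psi$ and $\{\gamma_\om\}$. I would set $\tilde\Psi:=\overline{\Psi}$ and $\tilde\gamma_\om:=\overline{\gamma_\om}$. Since $|\Psi_\om|\equiv 1$, the function $\tilde\Psi$ is again measurable and $S^1$-valued, $\tilde\gamma_\om\in S^1$, and $\tilde\Psi_\om\in\B$ because $\B$ is closed under complex conjugation (concretely, $\var(\Re\Psi_\om),\var(\Im\Psi_\om)\le\var(\Psi_\om)<\infty$ and $\|\tilde\Psi_\om\|_1=1$). Taking complex conjugates in \eqref{group}, using that $g$ is real-valued so that $\overline{e^{-itg(\om,\cdot)}}=e^{-i(-t)g(\om,\cdot)}$, and that conjugation commutes with precomposition by $T_\om$, one obtains
\[
e^{-i(-t)g(\om,\cdot)}\,\tilde\Psi_{\sig\om}\circ T_\om=\tilde\gamma_\om\,\tilde\Psi_\om,\qquad \paeom,
\]
so $-t\in G$.

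For closure under addition, suppose $t_1,t_2\in G$ are witnessed by $(\Psi^{(1)},\{\gamma^{(1)}_\om\})$ and $(\Psi^{(2)},\{\gamma^{(2)}_\om\})$. I would set $\Psi:=\Psi^{(1)}\Psi^{(2)}$ (pointwise product on $\Om\times X$) and $\gamma_\om:=\gamma^{(1)}_\om\gamma^{(2)}_\om$. Then $\Psi$ is measurable and $S^1$-valued and $\gamma_\om\in S^1$; for $\paeom$, property (V8) together with $\|\Psi^{(i)}_\om\|_{L^\infty}=1$ gives $\var(\Psi_\om)\le\var(\Psi^{(1)}_\om)+\var(\Psi^{(2)}_\om)<\infty$, while $\|\Psi_\om\|_1=1$, whence $\Psi_\om\in\B$. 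Multiplying the two instances of \eqref{group} and using $(\Psi^{(1)}_{\sig\om}\circ T_\om)(\Psi^{(2)}_{\sig\om}\circ T_\om)=(\Psi^{(1)}_{\sig\om}\Psi^{(2)}_{\sig\om})\circ T_\om$ yields
\[
e^{-i(t_1+t_2)g(\om,\cdot)}\,\Psi_{\sig\om}\circ T_\om=\gamma_\om\,\Psi_\om,\qquad \paeom,
\]
on the intersection of the two full-measure sets provided by $t_1$ and $t_2$; hence $t_1+t_2\in G$, and $G$ is a subgroup of $(\R,+)$.

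There is no real obstacle in this argument: every step is an elementary manipulation of the defining relation \eqref{group}. The only point worth recording explicitly is that forming products or complex conjugates of $S^1$-valued functions keeps one inside $\B$, which is exactly what (V8) and the monotonicity of $\var$ under taking real and imaginary parts guarantee.
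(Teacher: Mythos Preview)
Your proof is correct and follows essentially the same approach as the paper: both establish closure under addition by multiplying the defining identities and closure under negation by conjugating. You additionally verify $0\in G$ explicitly and supply more detail (via (V8) and the conjugation argument) on why the resulting $\Psi_\om$ remain in $\B$, which the paper simply asserts.
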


\begin{proof}
 Assume that $t_1, t_2 \in G$ and let $\Psi^j \colon \Om \times X \to S^1$, $j=1, 2$ be measurable functions satisfying $\Psi^j_\om \in \mathcal B$ for \paeom,  $j=1, 2$ and $\gamma^j_\om \in S^1$, $\om \in \Om$, $j=1, 2$ collections of numbers such that
 \[
   e^{-it_j g(\om, \cdot)} \Psi^j_{\sigma \om}\circ T_\om=\gamma_\om^j \Psi^j_\om \quad \text{for \paeom \ and $j=1, 2$.}
\]
By multiplying those two identities, we obtain that
\[
 e^{-i(t_1+t_2)g(\om, \cdot)}\Psi_{\sigma \om}\circ T_\om=\gamma_\om \Psi_\om \quad \paeom,
\]
where $\Psi(\om, x)=\Psi^1(\om, x)\Psi^2(\om, x)$ and $\gamma_\om=\gamma^1_\om \cdot \gamma^2_\om$ for $\om \in \Om$ and $x\in X$. Noting that $\Psi$ takes values in $S^1$, $\Psi_\om \in \mathcal B$ for \paeom \  and that $\gamma_\om \in S^1$
for each $\om \in \Om$, we conclude that $t_1+t_2 \in G$.

Assume now that $t\in G$ and let $\Psi\colon \Om \times X \to S^1$ be a measurable function satisfying $\Psi_\om\in \mathcal B$ for \paeom \   and $\gamma_\om \in S^1$, $\om \in \Om$ a collection of numbers such that~\eqref{group} holds.
Conjugating the identity~\eqref{group}, we obtain that
\[
 e^{it g(\om, \cdot)} \overline{\Psi_{\sigma \om}}\circ T_\om=\overline{\gamma_\om } \overline{\Psi_\om} \quad \paeom,
\]
which readily implies that $-t\in G$.
\end{proof}

\begin{lemma}\label{g2}
 If $\Lambda (it)=0$ for $t\in \R$, then $t\in G$.
\end{lemma}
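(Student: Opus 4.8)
The plan is to use the one-dimensionality of the top Oseledets space of the twisted cocycle $\mcl^{it}$ when $\Lambda(it)=0$, select a measurable equivariant generator, and then simply \emph{read off} the coboundary-type identity \eqref{group} from the equivariance equation by comparing phases.

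First I would set up the relevant objects. Since $\dim Y^0=1$ (Lemma~\ref{lem:qc+1dim}) and $\mcl^{it}$ is quasi-compact (as is available under our hypotheses, cf.\ Section~\ref{sec:exlclt}), Lemma~\ref{lem:simpleY1it} applies; as $\Lambda(it)=0$ it gives $\dim Y^{it}(\om)=1$ for $\paeom$. Using Theorem~\ref{thm:MET} and a measurable selection exactly as in Section~\ref{sec:choiceOsBases}, choose a measurable equivariant family $\om\mapsto v_\om\in Y_\om^{it}$ with $\|v_\om\|_1=1$ and scalars $\hat\lambda_\om^{it}\in\C$ with $\mcl_\om^{it}v_\om=\hat\lambda_\om^{it}v_{\sigma\om}$. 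As in the proof of Lemma~\ref{lem:AperiodicCoboundary}, $\|\mcl_\om^{it}\|_1\le1$ forces $|\hat\lambda_\om^{it}|\le1$, while $\int\log|\hat\lambda_\om^{it}|\,d\bbp=\Lambda(it)=0$; hence $|\hat\lambda_\om^{it}|=1$ for $\paeom$. Moreover step (2) in the proof of Lemma~\ref{lem:simpleY1it} yields $|v_\om|=v_\om^0$, so I can put $\Psi_\om:=v_\om/v_\om^0$: this is $S^1$-valued, depends measurably on $\om$, and belongs to $\B$ (using $\essinf v_\om^0\ge c>0$ together with (V7) and (V8)).

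Next I would translate the equivariance equation into a pointwise relation. Writing $v_\om=\Psi_\om v_\om^0$ and using $\mcl_\om v_\om^0=v_{\sigma\om}^0$, the triangle inequality gives $|\mcl_\om^{it}v_\om|=|\mcl_\om(e^{itg(\om,\cdot)}\Psi_\om v_\om^0)|\le \mcl_\om v_\om^0=v_{\sigma\om}^0$ pointwise, and this must be an equality since $|\hat\lambda_\om^{it}v_{\sigma\om}|=v_{\sigma\om}^0$. Exactly as in the derivation of \eqref{eq:loitv}, equality in the triangle inequality means that for $\paeom$ there is a measurable $\Theta_\om\colon X\to S^1$ with $e^{itg(\om,y)}\Psi_\om(y)=\Theta_\om(T_\om(y))$ for a.e.\ $y$; applying $\mcl_\om$ and the product rule \eqref{eq:prodRuleP} then gives $\Theta_\om v_{\sigma\om}^0=\mcl_\om^{it}v_\om=\hat\lambda_\om^{it}\Psi_{\sigma\om}v_{\sigma\om}^0$, so $\Theta_\om=\hat\lambda_\om^{it}\Psi_{\sigma\om}$ a.e. Combining the two displays, $e^{itg(\om,y)}\Psi_\om(y)=\hat\lambda_\om^{it}\Psi_{\sigma\om}(T_\om(y))$ for a.e.\ $y$, i.e.
\[ e^{-itg(\om,\cdot)}\,\Psi_{\sigma\om}\circ T_\om=(\hat\lambda_\om^{it})^{-1}\Psi_\om, \]
which is precisely \eqref{group} with $\gamma_\om:=(\hat\lambda_\om^{it})^{-1}=\overline{\hat\lambda_\om^{it}}\in S^1$. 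Since $\Psi$ is measurable, $S^1$-valued, and $\Psi_\om\in\B$ for $\paeom$, this shows $t\in G$.

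The genuinely delicate input — already dealt with earlier in the paper — is step (2) of Lemma~\ref{lem:simpleY1it} (that a unit-$L^1$ vector in $Y_\om^{it}$ has magnitude exactly $v_\om^0$) and the equality-in-the-triangle-inequality bookkeeping underlying \eqref{eq:loitv}; once those are granted, Lemma~\ref{g2} is essentially a reinterpretation of the equivariance relation. The remaining points ($\Psi_\om\in\B$, measurability of $\om\mapsto\Psi_\om$, $|\hat\lambda_\om^{it}|=1$) are routine, and the only thing one must watch carefully is keeping the $\om$-dependence and the direction of the phases consistent when passing from $\mcl_\om^{it}v_\om=\hat\lambda_\om^{it}v_{\sigma\om}$ to \eqref{group}.
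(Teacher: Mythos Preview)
Your argument is correct, and it follows the paper's setup up to the point of defining $\Psi_\om=v_\om/v_\om^0$. From there, however, the two proofs diverge in how they extract \eqref{group} from the equivariance relation $\mcl_\om^{it}v_\om=\hat\lambda_\om^{it}v_{\sigma\om}$. You recycle the pointwise mechanism behind \eqref{eq:loitv}: equality in $|\mcl_\om^{it}v_\om|\le\mcl_\om v_\om^0$ forces the phases $e^{itg(\om,\cdot)}\Psi_\om$ to be constant on $T_\om$-fibres, producing a function $\Theta_\om$ of $T_\om(\cdot)$; then the product rule \eqref{eq:prodRuleP} identifies $\Theta_\om$ with $\hat\lambda_\om^{it}\Psi_{\sigma\om}$. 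The paper instead sets $\Phi_\om:=\gamma_\om e^{-itg(\om,\cdot)}\Psi_{\sigma\om}\circ T_\om$ and computes $\int|\Phi_\om-\Psi_\om|^2\,d\mu_\om$ directly, using \eqref{g1} and duality to show each cross term equals~$1$, whence $\Phi_\om=\Psi_\om$ in $L^2(\mu_\om)$.

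Your route is shorter and more transparent once step~(2) of Lemma~\ref{lem:simpleY1it} is in place, since it literally repeats that argument with $n=1$; it also makes the role of the equality-in-the-triangle-inequality explicit. The paper's $L^2$ computation has the advantage of being self-contained and not invoking the explicit preimage formula for $\mcl_\om$ (which is implicitly used in the derivation of \eqref{eq:loitv}), so it sits more comfortably in the abstract framework of Section~\ref{sec:var}. Either way the substantive input is the same: $|v_\om|=v_\om^0$ and $|\hat\lambda_\om^{it}|=1$.
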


\begin{proof}
 Assume that $\Lambda (it)=0$ for some $t\in \mathbb R$. In Section~\ref{sec:aper}, we have showed that
 in this case, $\dim Y^{it}=1$ and if $v_\om \in \mathcal {B}$ is a generator of $Y^{it}_\om$ satisfying $\lVert v_\om \rVert_1=1$, then, for \paeom, $\lvert v_\om\rvert=v_\om^0$  and
 \begin{equation}\label{g1}
  \mcl_\om (e^{it g(\om, \cdot)} v_\om)=\gamma_\om v_{\sigma \om},
 \end{equation}
for some $\gamma_\om \in S^1$. For $\om \in \Om,  \ x\in X$, set
\[
 \Psi(\om, x)=\frac{v_\om (x)}{v_\om^0(x)}.
\]
Then, $\Psi$ is $S^1$-valued and $\Psi_\om \in \mathcal B$ for \paeom.
Set
\[
\varphi_\om :=\overline{\gamma_\om} e^{it g(\om, \cdot)} \quad \text{and} \quad
\Phi_\om:=\overline{\varphi_\om}  \Psi_{\sigma \om} \circ T_\om, \quad \om \in \Om.
\]
Then, we have that
\[
\begin{split}
\int \lvert \Phi_\om-\Psi_\om \rvert^2\, d\mu_\om  &=\int (\overline{\varphi_\om}  \Psi_{\sigma \om} \circ T_\om -\Psi_\om)(\varphi_\om \overline{\Psi_{\sigma \om}} \circ T_\om-\overline{\Psi_\om})\, d\mu_\om \\
&=\int \lvert \varphi_\om\rvert^2 \cdot (\lvert \Psi_{\sigma \om}\rvert^2 \circ T_\om) \, d\mu_\om +\int \lvert \Psi_\om \rvert^2\, d\mu_\om \\
&\phantom{=}-\int \varphi_\om \Psi_\om (\overline{\Psi_{\sigma \om}}\circ T_\om)\, d\mu_\om-\int \overline{\varphi_\om} \overline{ \Psi_\om} (\Psi_{\sigma \om}\circ T_\om)\, d\mu_\om.
\end{split}
\]
Since $\Psi_\om$ and $\varphi_\om$ take values in $S^1$ for each $\om \in \Om$, we obtain that
\[
\int \lvert \varphi_\om\rvert^2 \cdot (\lvert \Psi_{\sigma \om}\rvert^2 \circ T_\om) \, d\mu_\om=\int \lvert \Psi_\om \rvert^2\, d\mu_\om=1.
\]
On the other hand, by using~\eqref{g1} we have that
\[
\begin{split}
\int \varphi_\om \Psi_\om (\overline{\Psi_{\sigma \om}}\circ T_\om)\, d\mu_\om &=\int \varphi_\om v_\om^0 \Psi_\om (\overline{\Psi_{\sigma \om}}\circ T_\om)\, dm \\
&=\int \varphi_\om v_\om (\overline{\Psi_{\sigma \om}}\circ T_\om)\, dm \\
&=\int \mcl_\om (\varphi_\om v_\om (\overline{\Psi_{\sigma \om}}\circ T_\om))\, dm  \\
&=\int \overline{ \Psi_{\sigma \om}}\mcl_\om (\varphi_\om v_\om )\, dm  \\
&=\int \overline{\Psi_{\sigma \om}}v_{\sigma \om}\, dm \\
&=\int \frac{\lvert v_{\sigma \om} \rvert^2}{v_{\sigma \om}^0}  \, dm \\
&=\int v_{\sigma \om}^0 \, dm\\
&=1.
\end{split}
\]
Consequently, we also have that
\[
\int \overline{\varphi_\om} \overline{ \Psi_\om} (\Psi_{\sigma \om}\circ T_\om)\, d\mu_\om=1,
\]
and thus
\[
\int \lvert \Phi_\om-\Psi_\om \rvert^2\, d\mu_\om=0.
\]
Therefore,
\[
 e^{-it g(\om, \cdot)} \Psi_{\sigma \om}\circ T_\om=\overline{\gamma_\om} \Psi_\om \quad \paeom,
\]
which implies that $t\in G$.
\end{proof}

We now establish the converse of Lemma~\ref{g2}.
\begin{lemma}\label{dda}
If $t\in G$, then $\Lambda (it)=0$.
\end{lemma}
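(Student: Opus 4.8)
The statement is the converse of Lemma~\ref{g2}, and I would prove it by reversing that argument: from the cohomological data witnessing $t\in G$ I will manufacture an explicit nonzero vector in $\B$ on which the twisted cocycle $\mcl^{it}$ acts equivariantly with unimodular multipliers, and then pair this with the a priori upper bound $\Lam(it)\le 0$. That upper bound is exactly the one established in the proof of Lemma~\ref{lem:simpleY1it}: $\mcl_\om$ is an $L^1$-contraction (it preserves integrals and positivity) and $|e^{itg(\om,\cdot)}|\equiv 1$, so $\|\mcl_\om^{it}f\|_1\le\|f\|_1$ for all $f\in\B$, and Lemma~\ref{lem:RandomSameExp} upgrades this to $\Lam(it)\le 0$. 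It therefore suffices to show $\Lam(it)\ge 0$.

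Let $\Psi$ and $\{\gamma_\om\}_{\om\in\Om}$ be as in the definition of $G$, and set $w_\om:=\Psi_\om v^0_\om$, where $v^0$ is the density of the random acim from Lemma~\ref{lem:PF}. Because $\Psi_\om$ is $S^1$-valued with $\Psi_\om\in\B$ and $v^0_\om\in\B$, properties (V2), (V3) and (V8) give $w_\om\in\B$ for \paeom, while $\|w_\om\|_1=\|v^0_\om\|_1=1$ shows $w_\om\neq 0$. Rewriting \eqref{group} as $e^{itg(\om,\cdot)}=\overline{\gamma_\om}\,\overline{\Psi_\om}\,(\Psi_{\sigma\om}\circ T_\om)$ and using $\Psi_\om\overline{\Psi_\om}=1$, the product rule \eqref{eq:prodRuleP}, and $\mcl_\om v^0_\om=v^0_{\sigma\om}$, one computes, for \paeom,
\begin{equation*}
\mcl_\om^{it}w_\om=\mcl_\om\big(e^{itg(\om,\cdot)}\Psi_\om v^0_\om\big)=\overline{\gamma_\om}\,\mcl_\om\big((\Psi_{\sigma\om}\circ T_\om)\,v^0_\om\big)=\overline{\gamma_\om}\,\Psi_{\sigma\om}\,v^0_{\sigma\om}=\overline{\gamma_\om}\,w_{\sigma\om}.
\end{equation*}
Since $\sigma$ preserves $\bbp$, \eqref{group} holds with $\om$ replaced by $\sigma^j\om$ for all $j\ge 0$ and \paeom, so iterating gives $\mcl_\om^{it,(n)}w_\om=\big(\prod_{j=0}^{n-1}\overline{\gamma_{\sigma^j\om}}\big)w_{\sigma^n\om}$; as each $\gamma_{\sigma^j\om}\in S^1$, taking the $L^1$ norm yields $\|\mcl_\om^{it,(n)}w_\om\|_1=\|w_{\sigma^n\om}\|_1=1$ for every $n\in\N$.

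To conclude, combine $\|\cdot\|_\B\ge\|\cdot\|_1$ with the definition of the operator norm: for \paeom,
\begin{equation*}
\tfrac1n\log\|\mcl_\om^{it,(n)}\|_\B\ \ge\ \tfrac1n\log\frac{\|\mcl_\om^{it,(n)}w_\om\|_\B}{\|w_\om\|_\B}\ \ge\ \tfrac1n\log\frac{\|\mcl_\om^{it,(n)}w_\om\|_1}{\|w_\om\|_\B}\ =\ -\tfrac1n\log\|w_\om\|_\B,
\end{equation*}
and the right-hand side tends to $0$ as $n\to\infty$, so $\Lam(it)=\lim_{n\to\infty}\tfrac1n\log\|\mcl_\om^{it,(n)}\|_\B\ge 0$. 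Together with $\Lam(it)\le 0$ this gives $\Lam(it)=0$. There is no genuine obstacle in this argument; the only step that needs care is the displayed identity $\mcl_\om^{it}w_\om=\overline{\gamma_\om}w_{\sigma\om}$, that is, recognising that the cohomological equation defining $G$ is precisely the assertion that $\{w_\om\}_{\om\in\Om}$ is an equivariant family for the twisted cocycle $\mcl^{it}$ with unimodular multipliers — the mirror image of the computation performed in the proof of Lemma~\ref{g2}.
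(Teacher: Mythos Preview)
Your proof is correct and follows essentially the same route as the paper: both construct the vector $w_\om=\Psi_\om v_\om^0$ (the paper calls it $v_\om$) and verify the equivariance $\mcl_\om^{it}w_\om=\overline{\gamma_\om}\,w_{\sigma\om}$, yielding $\|\mcl_\om^{it,(n)}w_\om\|_1=1$ for all $n$. The only difference is in the final step: the paper invokes Lemma~\ref{lem:RandomSameExp} to transfer the $L^1$ growth rate of $w_\om$ to the $\B$-norm and conclude $\Lambda(it)=0$, whereas you obtain the lower bound $\Lambda(it)\ge 0$ by the more elementary operator-norm estimate $\|\mcl_\om^{it,(n)}\|_\B\ge \|\mcl_\om^{it,(n)}w_\om\|_1/\|w_\om\|_\B=1/\|w_\om\|_\B$; your version is arguably cleaner since it avoids the Lasota--Yorke hypotheses underlying Lemma~\ref{lem:RandomSameExp} for the lower bound (though you still need them, via that lemma, for the upper bound $\Lambda(it)\le 0$).
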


\begin{proof}
Assume that $t\in G$ and let $\Psi\colon \Om \times X \to S^1$ be a measurable function satisfying $\Psi_\om\in \mathcal B$ for \paeom \   and $\gamma_\om \in S^1$, $\om \in \Om$ a collection of numbers such that~\eqref{group} holds. It follows from~\eqref{group} that
\[
v_{ \om}^0 (\Psi_{\sigma \om} \circ T_\om)=\gamma_\om e^{it g(\om, \cdot)}\Psi_\om v_\om^0 \quad \text{for \paeom,}
\]
and thus
\[
\mcl_\om (v_{ \om}^0 (\Psi_{\sigma \om} \circ T_\om))=\gamma_\om \mcl_\om^{it} (\Psi_\om v_\om^0) \quad \text{for \paeom.}
\]
Consequently,
\begin{equation}\label{bnj}
\Psi_{\sigma \om}v_{\sigma \om}^0=\gamma_\om \mcl_\om^{it} (\Psi_\om v_\om^0) \quad \text{for \paeom.}
\end{equation}
Setting $v_\om :=\Psi_\om v_\om^0$, $\om \in \Om$, we have that
\[
v_\om \in \mathcal B, \quad v_{\sig\om} = \ga_\om  \mcl_\om^{it} (v_{\om})  \quad \text{and} \quad \lVert v_\om \rVert_1=1, \quad \paeom.
\]
Hence,  \eqref{bnj} implies that
\[
\lVert  \mcl_\om^{it} v_\om \rVert_1=1,  \quad \text{for \paeom.}
\]
Therefore,
\[
\lim_{n\to \infty} \frac 1 n \log \lVert \mcl_\om^{it, (n)} v_\om \rVert_1=0 \quad \text{for \paeom,}
\]
 and thus it follows from Lemma~\ref{lem:RandomSameExp} that $\Lambda (it)=0$.
\end{proof}
It follows directly from~\eqref{no} that $2\pi \in G$ since in this case~\eqref{group} holds with $\Psi(\om, x)=1$ and $\gamma_\om=e^{i 2\pi \eta_\om} \in S^1$. Furthermore, we will show that
our additional assumption that $g$ cannot be written in a form~\eqref{form} implies that $G$ is generated by $2\pi$. We begin by proving that $G$ is discrete.
\begin{lemma}
There exists $a>0$ such that
\begin{equation}\label{groupf}
G=\{ak: k\in \Z\}.
\end{equation}
\end{lemma}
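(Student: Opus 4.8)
The plan is to combine the equivalence ``$t\in G\iff\Lam(it)=0$'' established in Lemmas~\ref{g2} and~\ref{dda} with the non-degeneracy hypothesis $\Sig^2>0$ to show that $0$ is an isolated point of $G$; the claimed cyclic structure then follows from the classical description of discrete subgroups of $(\R,+)$.

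First I would analyse the behaviour of $\Lam$ along the imaginary axis near $0$. By Corollary~\ref{cor:LamHatLam} and Lemma~\ref{lem:Lam''0}, $\Lam$ is of class $C^2$ on a neighbourhood of $0\in\C$, with $\Lam(0)=0$ and $\Lam'(0)=0$. Writing $\Lam(\theta)=\Re\int\tilde H(\theta)(\om)\,d\bbp(\om)$ (which follows from~\eqref{eq:LambdaFromlambda} together with $\lot=H(\theta,O(\theta))(\sig\om)$ and $\tilde H(\theta)=\log H(\theta,O(\theta))$), and using the Taylor expansion $\tilde H(\theta)(\om)=\tfrac12\tilde H''(0)(\om)\,\theta^2+R(\theta)(\om)$ with $R(\theta)=\theta^2\tilde R(\theta)$, $\lVert\tilde R(\theta)\rVert_{L^\infty}\to0$ as $\theta\to0$, established in the proof of Theorem~\ref{thm:clt}, together with $\int\tilde H''(0)(\om)\,d\bbp(\om)=\Sig^2$ (from the proof of Lemma~\ref{lem:Lam''0}), I get
\[
\Lam(it)=-\tfrac12\Sig^2\,t^2+o(t^2)\qquad\text{as }t\to0.
\]
Since $\Sig^2>0$, there is $\del_0>0$ with $\Lam(it)<0$ for all real $t$ with $0<\lvert t\rvert\le\del_0$. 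As Lemma~\ref{dda} gives $\Lam(it)=0$ whenever $t\in G$, it follows that $G\cap[-\del_0,\del_0]=\{0\}$, i.e.\ $0$ is isolated in $G$.

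The remaining step is the standard fact that a subgroup $G\le(\R,+)$ with $0$ isolated is cyclic. I would set $a:=\inf\big(G\cap(0,\infty)\big)$; the previous step forces $a\ge\del_0>0$, and $a$ is well defined because $2\pi\in G$ (as noted after~\eqref{no}, with $\Psi\equiv1$ and $\ga_\om=e^{2\pi i\eta_\om}$), so $G\neq\{0\}$. A short argument shows $a\in G$: otherwise, by the definition of the infimum (used twice) one finds $g_1,g_2\in G$ with $a<g_2<g_1<2a$, whence $g_1-g_2\in G$ and $0<g_1-g_2<a$, contradicting minimality of $a$. Thus $a\Z\subseteq G$, and conversely writing any $g\in G$ as $g=qa+r$ with $q\in\Z$, $0\le r<a$ yields $r=g-qa\in G\cap[0,a)=\{0\}$, so $g\in a\Z$. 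Hence $G=a\Z$ with $a>0$.

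I do not anticipate a real obstacle: the single essential ingredient is the non-degeneracy $\Sig^2>0$, which is exactly what rules out $G$ being dense; granting that, the first step is bookkeeping with expansions already proved, and the second is entirely standard.
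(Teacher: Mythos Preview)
Your proposal is correct and follows essentially the same approach as the paper: both argue that $\Lam(it)<0$ for small nonzero $t$ (via the $C^2$ expansion with $\Lam''(0)=\Sig^2>0$), invoke Lemma~\ref{dda} to deduce $0$ is isolated in $G$, and then use the standard dichotomy for subgroups of $(\R,+)$ together with $2\pi\in G$. You supply more detail on both steps---the explicit Taylor expansion of $\Lam(it)$ via $\tilde H$, and the elementary proof that a subgroup with $0$ isolated is cyclic---where the paper simply asserts these facts.
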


\begin{proof}
Assume that $G$ is not of the form~\eqref{groupf} for any $a>0$ . Since $G$ is non-trivial (recall that $2\pi \in G$), we conclude  that $G$ is dense. On the other hand, it follows easily from Corollary~\ref{cor:LamHatLam} and Lemma~\ref{lem:Lam''0} that $\Lambda (it)<0$ for all $t\neq 0$, $t$ sufficiently close to $0$. This yields a contradiction with Lemma~\ref{dda}.
\end{proof}

\begin{lemma}\label{g3}
$G$ is of the form~\eqref{groupf} with $a=2\pi$.

\end{lemma}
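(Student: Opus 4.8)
We already know from the preceding lemma that $G=\{ak:k\in\Z\}$ for a unique $a>0$, and since $2\pi\in G$ there is a positive integer $p$ with $2\pi=pa$; in particular $a\le 2\pi$. The plan is to prove $p=1$ by contradiction. Assume $p\ge 2$, i.e.\ $a=2\pi/p<2\pi$, and pick (using $a\in G$) a measurable $\Psi\colon\Om\times X\to S^1$ with $\Psi_\om\in\B$ for \paeom\ and a family $\gamma_\om\in S^1$ with $e^{-iag(\om,\cdot)}\Psi_{\sig\om}\circ T_\om=\gamma_\om\Psi_\om$. I will show these data force $g$ into the form~\eqref{form}, contradicting the standing hypothesis of this subsection, whence $a=2\pi$.

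The key step is to raise the identity to the $p$-th power. Since $pa=2\pi$, one has $e^{-ipa\,g(\om,\cdot)}=e^{-i2\pi g(\om,\cdot)}$, so $\Psi_\om^p$ (which is $S^1$-valued and in $\B$ by~(V8)), together with $\gamma_\om^p\in S^1$, again satisfies~\eqref{group}, now with $t=2\pi$. On the other hand $2\pi\in G$ is realised by $\Psi\equiv\mathbf 1$, because $g(\om,x)=\eta_\om+k(\om,x)$ with $k(\om,\cdot)$ integer-valued gives $e^{-i2\pi g(\om,x)}=e^{-i2\pi\eta_\om}$; moreover $\mcl_\om^{i2\pi}v_\om^0=\mcl_\om(e^{i2\pi\eta_\om}v_\om^0)=e^{i2\pi\eta_\om}v_{\sig\om}^0$, so $v_\om^0$ spans $Y^{i2\pi}_\om$. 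Arguing exactly as in the proof of Lemma~\ref{dda}, $\Psi_\om^p v_\om^0$ is a nonzero equivariant element of $\B$ growing at rate $0=\Lam(i2\pi)$ (the equality holds by Lemma~\ref{dda} since $2\pi\in G$), hence lies in $Y^{i2\pi}_\om$; as $\dim Y^{i2\pi}=1$ (Section~\ref{sec:aper}), it follows that $\Psi_\om^p v_\om^0=c(\om)v_\om^0$, and since $v_\om^0$ is bounded away from $0$ we conclude $\Psi_\om^p$ is constant in $x$, equal to some $c(\om)\in S^1$ depending measurably on $\om$ (e.g.\ $c(\om)=\int_X\Psi(\om,x)^p\,dm$).

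Fixing a measurable $p$-th root $c_\om$ of $c(\om)$, write $\Psi_\om=c_\om\,e^{-2\pi i m(\om,\cdot)/p}$ with $m\colon\Om\times X\to\{0,1,\dots,p-1\}$ measurable; since $\Psi_\om\in\B$ takes finitely many, uniformly separated values, $m(\om,\cdot)\in\B$ (one gets $\var(m(\om,\cdot))\le\frac{p-1}{\delta_p}\var(\Psi_\om)$ with $\delta_p=\min_{j\ne j'}|e^{-2\pi ij/p}-e^{-2\pi ij'/p}|$, and the $\osc$-analogue in higher dimensions). Substituting into $e^{-iag(\om,\cdot)}\Psi_{\sig\om}\circ T_\om=\gamma_\om\Psi_\om$ and using $g=\eta_\om+k(\om,\cdot)$, $a=2\pi/p$, all $x$-dependence collapses to the requirement that $e^{-2\pi i[k(\om,x)+m(\sig\om,T_\om x)-m(\om,x)]/p}$ be constant in $x$, hence equal to $e^{-2\pi i j_\om/p}$ for some $j_\om\in\{0,\dots,p-1\}$; equivalently $k(\om,x)=j_\om+m(\om,x)-m(\sig\om,T_\om x)+p\,k'(\om,x)$ for an integer-valued $k'$, and therefore
\[
g(\om,x)=(\eta_\om+j_\om)+m(\om,x)-m(\sig\om,T_\om x)+p\,k'(\om,x),
\]
which is exactly~\eqref{form} with $\eta'_\om=\eta_\om+j_\om$, $h(\om,\cdot)=m(\om,\cdot)\in\B$, $p_\om=p\in\N\setminus\{1\}$ and $k'(\om,x)\in\Z$ — the desired contradiction. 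The conceptual content is concentrated in the $p$-th power trick together with the one-dimensionality of $Y^{i2\pi}$; the main thing to be careful about is the measurable bookkeeping (Borel selections for $c_\om$, for $m$, and for $k'$ as functions of $\om$, plus the verifications $m(\om,\cdot)\in\B$ and $k'(\om,\cdot)\in\Z$), which is routine.
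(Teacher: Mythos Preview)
Your argument is correct, but the paper's proof is considerably shorter and more elementary. The paper simply writes $\Psi(\om,x)=e^{iH(\om,x)}$ and $\gamma_\om=e^{ir_\om}$ with $H,r$ measurable real-valued (any measurable branch of the argument works), so that~\eqref{group} with the generator $t=a$ becomes
\[
-t\,g(\om,x)=r_\om+H(\om,x)-H(\sig\om,T_\om x)+2\pi k'(\om,x)
\]
for some integer-valued $k'$; dividing by $-t$ gives~\eqref{form} directly with $p_\om=2\pi/t\in\N\setminus\{1\}$. No spectral information about $\mcl^{i2\pi}$, no use of the special form~\eqref{no} of $g$, and no $p$-th power trick are needed. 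Your route instead invokes the one-dimensionality of $Y^{i2\pi}$ (hence implicitly the quasi-compactness of $\mc{R}^{i2\pi}$, via Lemma~\ref{lem:simpleY1it}) to force $\Psi_\om^p$ to be constant in $x$, and then unwinds. What your approach buys is the extra structural conclusion that $h$ in~\eqref{form} may be taken integer-valued (and in $\B$); but note that~\eqref{form} as stated imposes no regularity on $h$, so the verification that $m(\om,\cdot)\in\B$ is unnecessary for the contradiction. In short: both proofs work, the paper's is a one-line lift to $\R$, and yours extracts a bit more at the cost of heavier machinery.
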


\begin{proof}
 Assume that the group  $G$ is not generated by $2\pi$ and denote its generator by $t\in (0, 2\pi)$. In particular, $\frac{2\pi}{t} \in \mathbb N \setminus \{1\}$. Since $t\in G$, there
 exists a measurable function $\Psi\colon \Om \times X \to S^1$ and a collection of numbers $\gamma_\om \in S^1$, $\om \in \Om$ such that~\eqref{group} holds. Writing $\gamma_\om=e^{i r_\om}$, $r_\om \in \R$ and
$\Psi(\om, x)=e^{i H(\om, x)}$ for some measurable $H\colon \Om \times X \to \R$, it follows from~\eqref{group} that
\[
 -tg(\om, x)=r_\om+H(\om, x)-H(\sigma \om, T_\om x)+2\pi k'(\om, x) \quad \text{for $\om \in \Om$ and $x\in X$,}
\]
where $k' \colon \Om \times X \to \mathbb Z$. This implies that $g$ is of the form~\eqref{form} which yields a  contradiction.
\end{proof}

We are now in a position to establish the  periodic version of local central limit theorem.

\begin{thm}\label{thm:lcltp}
 Assume that $g$ has the form~\eqref{no}. In addition, we assume that $g$ cannot be written in the form~\eqref{form}.
  Then, for \paeom\  and every bounded interval $J\subset \R$, we have:
 \[
  \lim_{n\to \infty}\sup_{s\in \R} \bigg{\lvert} \Sig \sqrt{n} \mu_\om (s+S_n g(\om, \cdot)\in J)-\frac{1}{\sqrt{2\pi}}e^{-\frac{s^2}{2n\Sig^2}}\sum_{l=-\infty}^{+\infty}{\bf 1}_J(\overline{\eta}_{\om}(n)+s+l) \bigg{\rvert}=0,
 \]
 where $\overline{\eta}_{\om}(n)=\sum_{i=0}^{n-1}\eta_{\sigma^i\om}$.
\end{thm}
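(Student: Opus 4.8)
The plan is to mirror the proof of Theorem~\ref{thm:lclt} in Section~\ref{sec:pfLCLT}, but to account for the fact that the aperiodicity condition \ref{cond:aper} now fails precisely at the points $t \in G = 2\pi\Z$ (established in Lemmas~\ref{g2}, \ref{dda}, \ref{g3}), where $\Lambda(it) = 0$ and $\dim Y^{it} = 1$. As in the aperiodic case, by a density argument (cf.\ \cite{Morita}) it suffices to prove the claimed limit with $\mu_\om(s + S_n g(\om,\cdot)\in J)$ replaced by $\int h(s + S_n g(\om,\cdot))\,d\mu_\om$ and $\lvert J\rvert \sum_l {\bf 1}_J(\overline\eta_\om(n)+s+l)$ replaced by $\sum_{l\in\Z} h(\overline\eta_\om(n)+s+l)$, for $h \in L^1(\R)$ with $\hat h$ compactly supported; then one passes from $h$ to ${\bf 1}_J$ by the usual approximation. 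Using \eqref{intprop}, \eqref{invf} and Fubini as in Section~\ref{sec:pfLCLT}, one writes $\Sig\sqrt n\int h(s+S_ng(\om,\cdot))\,d\mu_\om$ as $\frac{\Sig\sqrt n}{2\pi}\int_\R e^{its}\hat h(t)\int \mcl_\om^{it,(n)}v_\om^0\,dm\,dt$. The key structural difference is that now the integrand need not be supported near $t=0$: one decomposes the $t$-integral into a neighbourhood of $0$, neighbourhoods of the nonzero points $2\pi m$ ($m\in\Z$) lying in $\mathrm{supp}\,\hat h$, and the remaining compact set bounded away from $2\pi\Z$.

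On the piece near $t=0$ the analysis is \emph{verbatim} that of terms (I), (II), (III), (V) in Section~\ref{sec:pfLCLT} and produces $\frac{1}{\sqrt{2\pi}}e^{-s^2/2n\Sig^2}\hat h(0)/\sqrt{2\pi}$, i.e.\ the $l$ such that $\overline\eta_\om(n)+s+l$ lies near an appropriate point — more precisely, the Gaussian-times-$\hat h(0)$ term. On the piece bounded away from $2\pi\Z$, upper semicontinuity of $t\mapsto\Lambda(it)$ (Lemma~\ref{lem:usc}) gives $\sup\Lambda(it) < 0$ there; the uniformization argument of Lemma~\ref{lem:AperiodicCoboundary} (equivalence of \eqref{aper} with item~\eqref{it:cobi}, which requires exactly the finite-range/compactness hypotheses implicit in the Lasota--Yorke example setting, and which we may invoke since $g$ has the form \eqref{no}) yields $\|\mcl_\om^{it,(n)}\|_\B \le C\rho^n$ uniformly in $t$ on that set, so this contribution vanishes like $\sqrt n\,\rho^n$, exactly as in the control of term (IV). The genuinely new work is the contribution of a neighbourhood of each nonzero $t_0 = 2\pi m \in \mathrm{supp}\,\hat h$: here I would substitute $t = 2\pi m + u/\sqrt n$ and use the coboundary structure. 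From \eqref{no}, \eqref{eq:coboundary} holds at $t_0$ with $\psi_\om(f)=\int f\,dm$ and $\gamma_\om^{it_0}=e^{it_0\eta_\om}=e^{2\pi i m\eta_\om}$; combining this with the perturbation theory of Section~\ref{sec:quasicompactnessTwisted} applied \emph{around} $t_0$ (the twisted cocycle $\mcl^{i(2\pi m + z)}$ factors as the modulation $e^{2\pi i m g(\om,\cdot)}$ composed with $\mcl^{iz}$, and one repeats the construction of $v_\om^\theta$, $\lambda_\om^\theta$, $\phi_\om^\theta$ for this cocycle), one gets that $\lambda_\om^{i(2\pi m + u/\sqrt n)}$ has modulus $\le 1$ with a second-order expansion whose quadratic coefficient is again $\Sig^2$ (since $g$ and $g - \eta_\om$ differ by the fibrewise constant $\eta_\om$, which does not affect the variance), while the phase accumulates to $\prod_{j=0}^{n-1} e^{2\pi i m \eta_{\sigma^j\om}} = e^{2\pi i m\,\overline\eta_\om(n)}$. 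Carrying this through as in the control of (I)--(V) gives that the neighbourhood of $2\pi m$ contributes $\frac{1}{\sqrt{2\pi}}e^{-s^2/2n\Sig^2}\,\hat h(2\pi m)\,e^{2\pi i m(\overline\eta_\om(n)+s)}\cdot\frac{1}{\sqrt{2\pi}}$ (up to the bookkeeping constants), uniformly in $s$.

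Summing the contributions over all $m$ with $2\pi m \in \mathrm{supp}\,\hat h$ and applying the Poisson summation formula $\sum_{m\in\Z}\hat h(2\pi m)e^{2\pi i m y} = \sum_{l\in\Z} h(y+l)$ with $y = \overline\eta_\om(n)+s$ yields exactly $\frac{1}{\sqrt{2\pi}}e^{-s^2/2n\Sig^2}\sum_{l\in\Z} h(\overline\eta_\om(n)+s+l)$, and then the density/approximation step replaces $h$ by ${\bf 1}_J$ to give the stated formula. I expect the main obstacle to be the neighbourhood-of-$2\pi m$ analysis: one must check carefully that the perturbation-theoretic machinery of Section~\ref{thm:IFT}--Section~\ref{sec:quasicompactnessTwisted}, built around $\theta = 0$, transfers to a neighbourhood of each $\theta = 2\pi i m$ — this uses that $e^{2\pi i m g(\om,\cdot)} = e^{2\pi i m k(\om,\cdot)}e^{2\pi i m \eta_\om}$ with $k$ integer-valued, so the modulating factor $e^{2\pi i m k(\om,\cdot)}$ is a genuine $\B$-valued multiplier of norm $1$ and the cocycle $\{\mcl_\om^{i2\pi m}\}$ is admissible-like with $\Lambda(i2\pi m)=0$, $\dim Y^{i2\pi m}=1$ — together with keeping the $\limsup_{\theta\to 2\pi i m}\mu_2^\theta < 0$ estimate, the uniform exponential decay of Lemma~\ref{lem:UnifExpDecayY2} adapted to this base point, and the uniform-in-$s$ domination needed for dominated convergence. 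The rest is a routine, if lengthy, repetition of the estimates (I)--(V).
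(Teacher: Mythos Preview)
Your overall strategy is correct, and it would work, but you are missing a one-line algebraic simplification that turns your ``redo the perturbation theory around each $2\pi m$'' step into a triviality and recovers the paper's much shorter argument. You write that $e^{2\pi i m k(\om,\cdot)}$ is ``a genuine $\B$-valued multiplier of norm $1$''; in fact, since $k(\om,\cdot)$ is \emph{integer}-valued, $e^{2\pi i m k(\om,\cdot)} \equiv 1$. Hence $e^{i(2\pi m + u)g(\om,\cdot)} = e^{2\pi i m \eta_\om}\, e^{i u g(\om,\cdot)}$ and therefore
\[
\mcl_\om^{i(2\pi m + u),(n)} = e^{2\pi i m\,\overline\eta_\om(n)}\,\mcl_\om^{iu,(n)}
\]
\emph{exactly}, not just asymptotically. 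So there is no need to rebuild $v^\theta_\om,\lambda^\theta_\om,\phi^\theta_\om$ near $2\pi i m$, nor to re-verify $\limsup_{\theta\to 2\pi i m}\mu_2^\theta<0$ or an analogue of Lemma~\ref{lem:UnifExpDecayY2} there: these are the $\theta\approx 0$ objects multiplied by a unimodular scalar.

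The paper exploits this in a cleaner way than your ``local analysis near each $2\pi m$ then Poisson-sum''. It splits $\int_\R$ into $\sum_l\int_{-\pi+2l\pi}^{\pi+2l\pi}$, shifts each piece back to $[-\pi,\pi]$ using the identity above, and collects the result as a single integral $\frac{\Sig\sqrt n}{2\pi}\int_{-\pi}^{\pi} H_s(t)\,e^{its}\!\int\mcl_\om^{it,(n)}v_\om^0\,dm\,dt$ with
\[
H_s(t)=\sum_{l\in\Z}\hat h(t+2l\pi)\,e^{2\pi i l(\overline\eta_\om(n)+s)}.
\]
This is a finite sum since $\hat h$ has compact support, and $H_s(0)=\sum_l \hat h(2l\pi)e^{2\pi i l(\overline\eta_\om(n)+s)}$ is exactly the Poisson-sum expression you reach at the end. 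From here the proof is a direct repetition of the (I)--(V) decomposition of Section~\ref{sec:pfLCLT} with $H_s$ in place of $\hat h$ and $[-\pi,\pi]$ in place of $[-\delta,\delta]$; the only extra lemma is that $H_s(t/\sqrt n)\to H_s(0)$ uniformly in $s$, which is immediate from the finiteness of the sum. Your approach and the paper's are logically equivalent --- you distribute the Poisson sum before the (I)--(V) analysis, the paper after --- but the paper's folding trick avoids all of the ``repeat perturbation theory at $2\pi m$'' work you flag as the main obstacle. For term~(IV), both approaches need exponential decay on $\{\tilde\delta\le|t|\le\pi\}$; the paper cites Lemmas~\ref{g2} and~\ref{g3} to get $\Lambda(it)<0$ there and then treats it ``as in the proof of Theorem~\ref{thm:lclt}'', which, as you correctly note, implicitly requires the uniformization of Lemma~\ref{lem:AperiodicCoboundary}.
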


\begin{proof}
 Using again the density argument (see~\cite{Morita}), it is sufficient to show that
 \[
 \sup_{s\in \R} \bigg{\lvert} \Sig \sqrt{n}\int h(s+S_ng(\om, \cdot))\, d\mu_\om-\frac{1}{\sqrt{2\pi}}e^{-\frac{s^2}{2n\Sig^2}}\sum_{l=-\infty}^{+\infty}h(\overline{\eta}_{\om}(n)+s+l)\bigg{\rvert} \to 0.
 \]
when $n\to \infty$ for every $h\in L^1(\R)$ whose Fourier transform $\hat{h}$ has compact support. As in the proof of Theorem~\ref{thm:lclt}, we have that
\[
   \Sigma \sqrt{n}\int_0^1 h(s+S_ng(\om,\cdot))\, d\mu_\om =\frac{\Sigma \sqrt{n}}{2\pi}\int_{\R} e^{its}\hat h(t)\int_0^1 \mathcal L_{\om}^{it, (n)}v_\om^0\, dm \, dt,
\]
and therefore (using Lemma~\ref{lem:exprLit})
\begin{align*}
 & \Sigma \sqrt{n}\int_0^1 h(s+S_ng(\om,\cdot))\, d\mu_\om  \displaybreak[0] \\
 &=\frac{\Sigma \sqrt{n}}{2\pi}\sum_{l=-\infty}^{\infty}\int_{-\pi+2l\pi}^{\pi+2l\pi} e^{its}\hat h(t) e^{it\overline{\eta}_{\om}(n)} \int_0^1 \mathcal L_{\om}^{
(n)}(e^{itS_n k(\om, \cdot)}v_\om^0)\, dm \, dt\displaybreak[0] \\
&=\frac{\Sigma \sqrt{n}}{2\pi}\sum_{l=-\infty}^{\infty}\int_{-\pi}^{\pi} \hat h(t+2l\pi) e^{i(t+2l\pi)(\overline{\eta}_{\om}(n)+s)} \int_0^1 \mathcal L_{\om}^{
(n)}(e^{itS_n k(\om, \cdot)}v_\om^0)\, dm \, dt \displaybreak[0] \\
&=\frac{\Sigma \sqrt{n}}{2\pi}\int_{-\pi}^{\pi}H_s(t)e^{its}\int_0^1 \mathcal L_{\om}^{it,
(n)}v_\om^0\, dm \, dt \displaybreak[0] \\
&=\frac{\Sigma }{2\pi}\int_{-\pi \sqrt n}^{\pi \sqrt n}H_s(\frac{t}{\sqrt n})e^{\frac{its}{\sqrt n}}\int_0^1 \mathcal L_{\om}^{\frac{it}{\sqrt n},
(n)}v_\om^0\, dm \, dt,
\end{align*}
where
\[
 H_s(t):=\sum_{l=-\infty}^{+\infty}\hat{h}(t+2l\pi)e^{i2l\pi(\overline{\eta}_{\om}(n)+s)}
\]
Proceeding as in~\cite[p. 787]{RE83}, we have
\[
 \frac{1}{\sqrt{2\pi}}e^{-\frac{s^2}{2n\Sig^2}}\sum_{l=-\infty}^{\infty}h(\overline{\eta}_{\om}(n)+s+l)=\frac{H_s(0)\Sigma}{2\pi}\int_{\R} e^{\frac{its}{\sqrt n}}\cdot e^{\frac{-\Sigma^2 t^2}{2}}\, dt.
\]
Hence, we need to prove that
\[
 \sup_{s\in \mathbb R} \bigg{\lvert} \frac{\Sigma }{2\pi}\int_{-\pi \sqrt n}^{\pi \sqrt n}H_s(\frac{t}{\sqrt n})e^{\frac{its}{\sqrt n}}\int_0^1 \mathcal L_{\om}^{\frac{it}{\sqrt n},
(n)}v_\om^0\, dm \, dt-\frac{H_s(0)\Sigma}{2\pi}\int_{\R} e^{\frac{its}{\sqrt n}}\cdot e^{\frac{-\Sigma^2 t^2}{2}}\, dt \bigg{\rvert} \to 0,
\]
when $n\to \infty$. For $\tilde \delta >0$ sufficiently small, we have (as in the proof of Theorem~\ref{thm:lclt}) that
\begin{align*}
 &\frac{\Sigma}{2\pi}\int_{-\pi \sqrt n}^{\pi \sqrt n} e^{\frac{its}{\sqrt n}}H_s(\frac{t}{\sqrt n})\int_0^1 \mathcal L_{\om}^{\frac{it}{\sqrt n}, (n)}v_\om^0\, dm \, dt -
 \frac{H_s(0)\Sigma}{2\pi} \int_{\R} e^{\frac{its}{\sqrt n}} \cdot e^{-\frac{\Sig^2 t^2}{2}}\, dt \displaybreak[0] \\
 &=\frac{\Sigma}{2\pi} \int_{\lvert t\rvert < \tilde \delta \sqrt n} e^{\frac{its}{\sqrt n}} \Big(H_s(\frac{t}{\sqrt n})\prod_{j=0}^{n-1}\lambda_{\sigma^j \om}^{\frac{it}{\sqrt n}}-H_s(0)e^{-\frac{\Sig^2 t^2}{2}} \Big)\, dt \displaybreak[0] \\
&\phantom{=}+\frac{\Sigma}{2\pi}\int_{\lvert t\rvert < \tilde \delta \sqrt n}e^{\frac{its}{\sqrt n}}H_s(\frac{t}{\sqrt n})
\int_0^1
\prod_{j=0}^{n-1}\lambda_{\sigma^j \om}^{\frac{it}{\sqrt n}} \Big( \phi_\om^{\frac{it}{\sqrt n}}( v_\om^0 ) v_{\sig^{n}\om}^{\frac{it}{\sqrt n}}-1 \Big)\,dm \, dt \displaybreak[0] \\
&\phantom{=}+\frac{\Sig \sqrt{n}}{2\pi}\int_{\lvert t\rvert <\tilde \delta}e^{its}H_s (t)\int_0^1  \mathcal L_{\om}^{it, (n)} (v_\om^0 - \phi_\om^{it}( v_\om^0 ) v_{\om}^{it}) \, dm\, dt \displaybreak[0] \\
&\phantom{=}+\frac{\Sig \sqrt{n}}{2\pi}\int_{\tilde \delta \le \lvert t\rvert \le \pi}e^{its}H_s(t)\int_0^1 \mathcal L_\om^{it, (n)}v_\om^0\, dm\, dt \displaybreak[0] \\
&\phantom{=}-\frac{\Sigma}{2\pi}H_s(0) \int_{\lvert t\rvert \ge \tilde \delta \sqrt n}e^{\frac{its}{\sqrt n}} \cdot e^{-\frac{\Sig^2 t^2}{2}}\, dt=: (I)+(II)+(III)+(IV)+(V).
\end{align*}
Now the arguments follow closely the proof of Theorem~\ref{thm:lclt} with some appropriate modifications. In orter to illustrate those, let us restrict to dealing with the terms (I) and (IV).
Regarding (I), we can control it as in the proof of Theorem~\ref{thm:lclt} once we show the following lemma.
\begin{lemma}
 For each $t$ such that $\lvert t\rvert <\tilde \delta \sqrt n$, we have that $H_s(\frac{t}{\sqrt n})\to H_s(0)$ uniformly over $s$.
\end{lemma}
\begin{proof}[Proof of the lemma]
This follows from a simple observation, that since $\hat h$ has a finite support, there exists $K\subset \Z$ finite such that
\[
 H_s(\frac{t}{\sqrt n})=\sum_{l\in K} \hat{h}(t/\sqrt n+2l\pi)e^{i2l\pi(\overline{\eta}_{\om}(n)+s)}, \quad \text{for each $t$ such that $\lvert t\rvert <\tilde \delta \sqrt n$ and  $s\in \R$.}
\]
Hence,
\[
 \lvert H_s(\frac{t}{\sqrt n})-H_s(0)\rvert \le \sum_{l\in K} \lvert \hat{h}(t/\sqrt n+2l\pi)-\hat h(2l\pi)\rvert.
\]
The desired conclusion now follows from continuity of $\hat h$.
\end{proof}
Finally, term (IV) can be treated as in the proof of Theorem~\ref{thm:lclt} once we note that Lemmas~\ref{g2} and~\ref{g3}  imply that $\Lambda (it)<0$ for each
$t$ such that $\tilde \delta \le \lvert t\rvert \le \pi$.

\end{proof}
\appendix
\section{Technical results involving notions of volume growth}
\label{sec:usc}
In this section we recall some notions of volume growth under linear transformations on Banach spaces, borrowed from  \cite{GTQ2,BlumenthalMET}.
We then state and prove a result on upper semi-continuity of Lyapunov exponents (Lemma~\ref{lem:usc}).
We then prove Corollary~\ref{cor:METAdjoint} and Step~\ref{it:L1Norm} in the proof of Lemma~\ref{lem:simpleY1it}.

\begin{defn}\label{def:volGrowth}
Let $(\ba, \|\cdot\|)$ be a Banach space and $A\in L(\ba)$. For each $k\in \N$, let us define:
\begin{itemize}
\item $V_k(A)=\sup_{\dim E = k} \frac{m_{AE}(AS)}{m_E(S)}$, where
$m_{E}$ denotes the normalised Haar measure on  the linear subspace $E\subset B$, so that the unit ball in $B_E(0,1)\subset E$ has measure (volume) given by the volume of the Euclidean unit ball in $\R^k$, and $S\subset E$ is any non-zero, finite $m_E$ volume set: the choice of $S$ does not affect the quotient $ \frac{m_{AE}(AS)}{m_E(S)}$.
\item
$D_k(A)=\sup_{\|v_1\|=\dots=\|v_k\|=1} \prod_{i=1}^k d(Av_i, lin(\{Av_j: j<i\}))$, where $lin(X)$ denotes the linear span of the finite collection $X$ of elements of $B$, $lin(\emptyset)=\{0\}$, and $d(v,W)$ is the distance from the vector $v$ to the subspace $W\subset \ba$.

\item
 $F_k(A) := \sup_{\dim V = k}\inf_{v\in V, \|v\|=1} \|A v\| =  \sup_{\dim V = k}\inf_{v\in V\setminus\{0\}} \|A v\|/ \|v\|$.
\end{itemize}
\end{defn}
We note that each of $V_k(A), D_k(A)$ and $\Pi_{j=1}^k F_j(A)$ has the interpretation of  growth of $k$-dimensional volumes spanned by $\{A v_j\}_{1\leq j \leq k}$,  where the $v_j\in \ba$ are unit length vectors.

Given functions $F, G: L(\ba) \to \R$, we use the notation $F(A)\approx G(A)$ to mean that
there is a constant $c>1$ independent of $A \in L(\ba)$ (but possibly depending on $k$ if $F$ and/or $G$ do), such that $c^{-1}F(A) \leq G(A) \leq c F(A)$.
The symbols $\lesssim$ and $\gtrsim$ will denote the corresponding one-sided relations.
We start with the following technical lemma.

\begin{lemma}\label{lem:equivVol}
 For each $k\geq 1$, the following hold:
\begin{enumerate}
\item \label{it:subad}
$A\mapsto V_k(A)$ and $A \mapsto D_k(A)$ are sub-additive functions.
\item
 $V_k(A) \approx D_k(A) \approx \Pi_{j=1}^k F_j(A)$.
 \end{enumerate}
\end{lemma}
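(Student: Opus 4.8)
\textbf{Plan for proving Lemma~\ref{lem:equivVol}.}

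The plan is to treat the two assertions separately, relying entirely on elementary linear-algebra-in-Banach-spaces facts and on the description of $V_k$, $D_k$, $F_j$ given in Definition~\ref{def:volGrowth}. For the sub-additivity statement~\eqref{it:subad}, I would first observe that $V_k(A)$ is a supremum over $k$-dimensional subspaces $E$ of the ratio of normalized Haar measures $m_{AE}(AS)/m_E(S)$; since this ratio is exactly the ``$k$-dimensional Jacobian'' of $A$ restricted to $E$, one has the multiplicative cocycle property $V_k(AB)\le V_k(A)V_k(B)$ for the composition, by splitting $A B|_E = A|_{BE}\circ B|_E$ and bounding the Jacobian of a composition by the product of Jacobians (the only subtlety being that the intermediate subspace $BE$ may have dimension $<k$ if $B$ is not injective, in which case $V_k(AB)$ on that subspace is $0$ and the inequality is trivial). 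The same argument applies to $D_k$: given unit vectors $v_1,\dots,v_k$, apply $B$ first and then $A$, and use that the product of successive distances $\prod_i d(A B v_i, \mathrm{lin}\{ABv_j:j<i\})$ is bounded by $D_k(A)$ times $\prod_i d(Bv_i,\mathrm{lin}\{Bv_j:j<i\})\le D_k(A)D_k(B)$, after normalizing the $Bv_i$; again degeneracies only help. Here ``sub-additive'' in the statement must be read as sub-multiplicative, i.e.\ $V_k(AB)\le V_k(A)V_k(B)$ (equivalently, $\log V_k$ is subadditive along a cocycle), which is exactly what is needed for Kingman's theorem to apply to these volume-growth functions later; I would make this reading explicit.

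For part~(2), the equivalences $V_k(A)\approx D_k(A)\approx \prod_{j=1}^k F_j(A)$, I would cite/reproduce the standard comparison estimates. The relation $D_k(A)\approx \prod_{j=1}^k F_j(A)$ follows because both quantities measure, up to a dimension-dependent constant, the smallest ``$k$-dimensional parallelepiped volume'' one can force: on one hand, choosing $v_1,\dots,v_k$ greedily (an Auerbach-type basis of an optimal $k$-dimensional subspace) shows $D_k(A)\gtrsim\prod F_j(A)$; conversely, by the definition of $F_j$ as $\sup_{\dim V=j}\inf_{v\in V}\|Av\|/\|v\|$ and a John/Auerbach basis argument in finite dimensions, any product of successive distances is controlled by $\prod F_j(A)$ up to a constant $c(k)$. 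The relation $V_k(A)\approx D_k(A)$ is the statement that Haar-measure volume growth on a $k$-dimensional subspace is comparable to the ``Gram parallelepiped'' volume growth: this is a purely finite-dimensional fact about the ratio between the volume of the image of the Euclidean unit ball and the product of successive heights of the image of an Auerbach basis, and the comparison constant depends only on $k$ (it is the ratio between the volume of a box and the volume of the inscribed/circumscribed ellipsoid in dimension $k$, i.e.\ something like $\Gamma_k$-type constants). I would assemble these into $V_k\approx D_k\approx \prod F_j$ by transitivity.

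The main obstacle I anticipate is \emph{not} the infinite-dimensionality of $\B$ — since every inequality is tested on a fixed $k$-dimensional subspace $E$ (for $V_k$) or on the span of $k$ chosen vectors (for $D_k$), everything reduces to uniform finite-dimensional estimates. The real care is needed in (i) handling non-injective $A$ (so that $AE$ or $\mathrm{lin}\{Av_j\}$ may drop dimension) uniformly, which I would dispatch by noting the relevant quantity is then $0$ and all inequalities degenerate favourably; and (ii) making sure the comparison constants in $\approx$ genuinely depend only on $k$ and not on $A$ or on the ambient subspace — this is where one must invoke that in any $k$-dimensional normed space there is an Auerbach basis, so John's-theorem-type distortion bounds give $A$-independent constants. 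I would therefore present the proof by first reducing to a fixed $k$-dimensional (quotient) normed space, then quoting the finite-dimensional volume-vs-parallelepiped comparison with explicit $k$-dependence, and finally deriving sub-multiplicativity from the Jacobian-of-a-composition identity; the routine constant-chasing I would leave to a one-line reference to \cite{GTQ2,BlumenthalMET}.
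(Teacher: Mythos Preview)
Your plan is correct and matches the paper's own proof almost exactly: the paper handles part~(1) by direct citation to \cite{BlumenthalMET} and \cite{GTQ2} (exactly the sub-multiplicativity-of-Jacobians argument you sketch), and for part~(2) it proves $V_k\approx D_k$ via the parallelepiped/Haar-measure comparison \cite[Lemma~1.2]{BlumenthalMET} together with a Gohberg--Krein/Auerbach-type basis to get the two inequalities, then cites \cite[Corollary~6]{GTQ2} for $D_k\approx\prod_j F_j$. Your identification of the only delicate points --- degenerate (non-injective) $A$ and the $k$-only dependence of the constants --- is also exactly where the cited references do the work.
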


\begin{proof}
The first part is established in \cite{BlumenthalMET} and \cite{GTQ2}, for $V$ and $D$, respectively.

Next we show the second claim.
Assume $S\subset E$ is a parallelogram, $S=P[w_1, \dots, w_k]:=\{ \sum_{i=1}^k a_i w_i : 0\leq a_i\leq 1 \}$. Then, \cite[Lemma 1.2]{BlumenthalMET} shows that
\begin{equation}\label{eq:equivVol}
m_E(S)\approx \prod_{i=1}^k d(w_i, lin(\{w_j: j<i\})).
\end{equation}
That is, there is a constant $c>1$ independent of $E$ and $(w_1, \dots, w_k)$, but possibly depending on $k$, such that $c^{-1} m_E(S)\leq \prod_{i=1}^k d(w_i, lin(\{w_j: j<i\})) \leq c m_E(S)$.
By a lemma of Gohberg and Klein \cite[Chapter 4, Lemma 2.3]{kato}, it is possible to choose unit length $v_1, \dots, v_k \in E$ such that $d(v_i, lin(\{v_j: j<i\}))=1$ for every $1\leq i \leq k$.
Then, letting $S=P[v_1, \dots, v_k]$, we get that $m_E(S)\approx 1$ and
$\frac{m_{AE}(AS)}{m_E(S)}\approx  \prod_{i=1}^k d(Av_i, lin(\{Av_j: j<i\})) \leq D_k(A)$.
Thus, $V_k(A)\lesssim D_k(A)$.

On the other hand, for each collection of unit length vectors $w_1, \dots, w_k \in E$, we have that $S:=P[w_1, \dots, w_k]\subset  B_E(0,k)$. Hence, $m_E(S)\leq k$ and
$\frac{m_{AE}(AS)}{m_E(S)}\geq k^{-1} m_{AE}(AS)$.
It follows from \eqref{eq:equivVol} that $V_k(A)\gtrsim D_k(A)$.
Combining, we conclude $V_k(A) \approx D_k(A)$ as desired.

The fact that $D_k(A) \approx \Pi_{j=1}^k F_j(A)$ is established in \cite[Corollary 6]{GTQ2}.
\end{proof}

\begin{lemma}[Upper semi-continuity of Lyapunov exponents]\label{lem:usc}
Let $\mathcal R^\theta=(\Omega,\mathcal F,\mathbb
P,\sigma,\ba,\mathcal L^\theta)$ be a  quasi-compact cocycle for every  $\theta$ in a neighborhood $U$ of $\theta_0\in \C$.
Suppose that the family of functions $\{ \om \mapsto \log^+\|\mcl_\om^\theta\| \}_{\theta\in U}$ are dominated by an integrable function, and that for each $\om \in \Om$, $\theta \mapsto \mcl_\om^\theta$ is continuous in the norm topology of $\ba$, for $\theta\in U$. Assume that \ref{cond:unifNormBd} holds, and \ref{cond:METCond}  holds (with  $\mcl=\mc{L^\theta}$) for every $\theta\in U$.

Let $\lam_1^\theta=\mu_1^\theta \geq \mu_2^\theta \geq \dots >\ka(\theta)$ be the exceptional Lyapunov exponents of $\RR^\theta$, enumerated with multiplicity.
Then for every $k \geq 1$, the function $\theta \mapsto \mu^\theta_1 + \mu^\theta_2+\dots +\mu^\theta_k$ is upper semicontinuous at $\theta_0$.
\end{lemma}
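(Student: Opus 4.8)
The plan is to realise the partial sum $\mu_1^\theta+\dots+\mu_k^\theta$ as an infimum over $n\in\N$ of functions of $\theta$ which are individually upper semicontinuous at $\theta_0$; since an infimum of functions each upper semicontinuous at a point is again upper semicontinuous there, this finishes the proof. (When $k$ exceeds the total multiplicity of the exceptional exponents of $\mathcal{R}^\theta$ we adopt the convention $\mu_j^\theta=\kappa(\theta)$ for the surplus indices, so that the statement makes sense for all $k\ge1$; in the application only $k=2$ is needed, applied to $\mc{R}^\theta$ with $\theta_0=0$.)

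The first ingredient is the volume-growth description of partial sums of Lyapunov exponents. Using Theorem~\ref{thm:MET} together with the estimates of \cite{GTQ2,BlumenthalMET} recalled in Definition~\ref{def:volGrowth} and Lemma~\ref{lem:equivVol}, for the quasi-compact cocycle $\mathcal{R}^\theta$ and $\bbp$-a.e.\ $\om$ one has
\[
\mu_1^\theta+\dots+\mu_k^\theta=\lim_{n\to\infty}\frac1n\log V_k(\mathcal{L}_\om^{\theta,(n)})=\lim_{n\to\infty}\frac1n\log\prod_{j=1}^k F_j(\mathcal{L}_\om^{\theta,(n)}),
\]
the second equality because $V_k\approx\prod_{j=1}^kF_j$ with comparison constants independent of the operator. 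I would work with $\prod_{j=1}^kF_j$ rather than $V_k$ because each $F_j\colon L(\ba)\to[0,\infty)$ is $1$-Lipschitz — immediately from its definition as a supremum of the $1$-Lipschitz maps $A\mapsto\inf_{v\in V,\|v\|=1}\|Av\|$ — so that $A\mapsto\prod_{j=1}^kF_j(A)$ is continuous on $L(\ba)$, whereas $V_k$ and $D_k$ are only comparable to it. By Lemma~\ref{lem:equivVol}, $\prod_{j=1}^kF_j$ is submultiplicative up to a constant $c=c(k)$, so $\om\mapsto\log\prod_{j=1}^kF_j(\mathcal{L}_\om^{\theta,(n)})+\log c$ is a subadditive cocycle over $\sigma$; using $F_j(A)\le\|A\|$ and submultiplicativity of the norm, its positive part is bounded, for every $\theta$ in the neighbourhood $U$ of $\theta_0$, by $k\sum_{i=0}^{n-1}\Phi(\sigma^i\om)$, where $\Phi$ is an integrable function dominating $\{\om\mapsto\log^+\|\mathcal{L}_\om^\theta\|\}_{\theta\in U}$ (this bound is integrable and independent of $\theta$). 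Kingman's subadditive ergodic theorem then gives
\[
\mu_1^\theta+\dots+\mu_k^\theta=\inf_{n\ge1}\Big(\frac1n\int_\Om\log\prod_{j=1}^kF_j(\mathcal{L}_\om^{\theta,(n)})\,d\bbp(\om)+\frac{\log c}{n}\Big).
\]

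It remains to show that for each fixed $n$ the map $\theta\mapsto\frac1n\int_\Om\log\prod_{j=1}^kF_j(\mathcal{L}_\om^{\theta,(n)})\,d\bbp(\om)$ is upper semicontinuous at $\theta_0$. For fixed $\om$ and $n$, the map $\theta\mapsto\mathcal{L}_\om^{\theta,(n)}$ is continuous in the operator-norm topology, being a finite composition of the norm-continuous maps $\theta\mapsto\mathcal{L}_{\sigma^i\om}^\theta$ (here the standing hypotheses \ref{cond:METCond} and \ref{cond:unifNormBd} are in force); composing with the continuous map $A\mapsto\prod_{j=1}^kF_j(A)$ and then with $\log\colon[0,\infty)\to[-\infty,\infty)$ shows that $\theta\mapsto\log\prod_{j=1}^kF_j(\mathcal{L}_\om^{\theta,(n)})$ is $[-\infty,\infty)$-valued and upper semicontinuous. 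Since this family is bounded above, uniformly for $\theta\in U$, by the integrable $\theta$-independent function $k\sum_{i=0}^{n-1}\Phi(\sigma^i\om)$, the reverse Fatou lemma yields
\[
\limsup_{\theta\to\theta_0}\int_\Om\log\prod_{j=1}^kF_j(\mathcal{L}_\om^{\theta,(n)})\,d\bbp(\om)\le\int_\Om\limsup_{\theta\to\theta_0}\log\prod_{j=1}^kF_j(\mathcal{L}_\om^{\theta,(n)})\,d\bbp(\om)\le\int_\Om\log\prod_{j=1}^kF_j(\mathcal{L}_\om^{\theta_0,(n)})\,d\bbp(\om),
\]
which is the desired upper semicontinuity at $\theta_0$. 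Taking the infimum over $n$ completes the proof.

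The step I expect to require the most care is the first one: pinning down, in a general Banach space and with the right conventions for indices beyond the exceptional range, the identity between the partial sum of Lyapunov exponents and the exponential growth rate of the $k$-dimensional volume surrogates, and checking that the quantitative content of Lemma~\ref{lem:equivVol} — submultiplicativity of the volume functions, the Lipschitz property of the $F_j$, and uniformity of the comparison constants in the operator — is exactly what is needed to run Kingman's theorem and the reverse Fatou argument. Once that bookkeeping is in place, the semicontinuity is the classical observation that an infimum of functions which are upper semicontinuous (here, continuous up to a uniform integrable domination) is upper semicontinuous.
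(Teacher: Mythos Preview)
Your proposal is correct and follows essentially the same approach as the paper: express $\mu_1^\theta+\dots+\mu_k^\theta$ via Kingman's theorem as an infimum over $n$ of integrals of log-volume-growth quantities, then use norm-continuity of $\theta\mapsto\mathcal{L}_\om^{\theta,(n)}$ together with the domination hypothesis and reverse Fatou to get upper semicontinuity of each term. The only cosmetic difference is that the paper works with $V_k$ (which is exactly submultiplicative, with continuity cited from \cite[Lemma~2.20]{BlumenthalMET}), whereas you work with $\prod_{j=1}^k F_j$ (for which continuity is immediate from the Lipschitz property of each $F_j$, at the cost of carrying the constant $\log c/n$ through Kingman); by Lemma~\ref{lem:equivVol} the two choices are interchangeable.
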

\begin{proof}
The strategy of proof follows that of the finite-dimensional situation, using the $k$-dimensional volume growth rate interpretation of $ \mu^\theta_1 +\dots+ \mu^\theta_k$.
Recall that \ref{cond:METCond} ($\mathbb{P}$-continuity) implies the uniform measurability condition of \cite{BlumenthalMET}; see \cite[Remark 1.4]{BlumenthalMET}.
Hence, \cite[Corollary 3.1 \& Lemma 3.2]{BlumenthalMET}, together with Kingman's sub-additive ergodic theorem applied to the submultiplicative, measurable function $V_k$ (see Lemma~\ref{lem:equivVol}\eqref{it:subad}), imply that
$\mu^\theta_1 +\dots+ \mu^\theta_k= \inf_{n\geq 1} \frac1n\int \log V_k(\mcl_\om^{\theta,(n)}) d\bbp$.


Thus, upper semi-continuity of  $\theta \mapsto \mu^\theta_1 +\dots + \mu^\theta_k$ at $\theta_0$ would follow immediately once we show $\theta \mapsto \int \log V_k(\mcl_\om^{\theta,(n)}) d\bbp$ is upper semi-continuous at $\theta_0$ for every $n$. From now on, assume $\theta \in U$.
In view of the continuity hypothesis on $\theta \mapsto \mcl^\theta_\om$,
it follows from continuity of the composition operation $(L_1, L_2)\mapsto L_1\circ L_2$ with respect to the norm topology on $\ba$ and \cite[Lemma 2.20]{BlumenthalMET}, that
 $\theta \mapsto V_k(\mcl_\om^{\theta,(n)})$ is continuous
 for every $n\geq 1$ and \paeom.
Also, $\log V_k(\mcl_\om^{\theta,(n)}) \leq k \log\| \mcl_\om^{\theta,(n)} \| \leq k \sum_{j=0}^{n-1} \log^+  \| \mcl_{\sig^j\om}^{\theta} \|$. When $\theta\in U$, the last expression is dominated by an
integrable function with respect to $\bbp$, by
 the domination hypothesis and $\bbp$-invariance of $\sigma$.
 Thus, the (reverse) Fatou lemma yields
 $ \int \log V_k(\mcl_\om^{\theta_0,(n)}) d\bbp  \geq  \limsup_{\theta\to \theta_0}\int \log V_k(\mcl_\om^{\theta,(n)}) d\bbp$, as required.
\end{proof}

\subsection{Proof of Corollary~\ref{cor:METAdjoint}}
 We first note that the quasicompactness of $\mc{R}^*$ and condition~\ref{cond:METCond} follow from Remark~\ref{rmk:METCond*}. Thus, Theorem~\ref{thm:MET} ensures the existence of a unique measurable equivariant Oseledets splitting for $\mc{R}^*$.


Recall that, in the context of Corollary~\ref{cor:METAdjoint},  Lemma~\ref{lem:equivVol} shows that
$V_k, D_k: L(\B)\to \R$ are equivalent up to a constant multiplicative factor. Thus,  \cite[Lemma 3]{GTQ2} ensures that $V_k(A)$ and $V_k(A^*)$ are equivalent up to a multiplicative factor, independent of $A$, and the claim on Lyapunov exponents and multiplicities follows from \cite[Theorem 1.3]{BlumenthalMET}.
\qed

\subsection{Proof of  Lemma~\ref{lem:simpleY1it}, Step~\ref{it:L1Norm}}\label{sec:pfStep1}
We recall that for every $v\in S_1:= \{y\in \B : \|y\|_1=1\}$, $\|\mcl_\om^{it} v\|_1= \|\mcl_\om (e^{itg(\om, \cdot)}v) \|_1 \leq \| e^{itg(\om, \cdot)}v \|_1=1$, so it only remains to show that
$\inf_{v \in Y_\om^{it}\cap S_1} \|\mcl_\om^{it} v\|_1 =1$.
We will use the notation of Definition~\ref{def:volGrowth}, with the dependence on the Banach space $\ba$ made explicit, so that $F^{\ba}_j(\mcl) := \sup_{\dim V = j}\inf_{v\in V, \|v\|_\ba=1} \|\mcl v\|_\ba$. (In our context either $\ba=\B$ or $\ba=L^1$.)

Lemma~\ref{lem:equivVol} and \cite[Corollary 6]{GTQ2} ensure that $V^{\B}_d(\mcl)\approx  \Pi_{j=1}^d F^{\B}_j (\mcl)$.
 For shorthand, in the rest of the section we will denote $\|v\|:=\|v\|_{\BV}, \|v\|_1:=\|v\|_{L^1}, F^{\BV}_j(\mcl)=:F_j(\mcl)$ and $F^{L^1}_j(\mcl) =: F^{1}_j(\mcl)$, with similar conventions for $V_j$.

By Kingman's  sub-additive ergodic theorem and the relations $V^{\B}_d(\mcl)\approx \Pi_{j=1}^d F^{\B}_j (\mcl)$, each of the limits (i) $\lim_{n\to\infty} \frac{1}{n} \log \Pi_{j=1}^d F^1_j(\mcl_\om^{it,(n)}|_{Y^{it}_{\om}})$ and (ii) $\lim_{n\to\infty} \frac{1}{n} \log \Pi_{j=1}^d F_j(\mcl_\om^{it,(n)}|_{Y^{it}_{\om}})$ exists for \paeom, is independent of $\om$ and in fact it coincides with the sum of the top $d$ 
Lyapunov exponents (all of which are equal) of the cocycles
$(\Om, \mc{F}, \bbp, \sig, L^1, \{ \mcl^{it}_\om|_{Y^{it}_{\om}} \} )$ and $(\Om, \mc{F}, \bbp, \sig, \BV, \{\mcl^{it}_\om|_{Y^{it}_{\om}}\} )$, respectively. Thus, these limits agree by Lemma~\ref{lem:RandomSameExp} (see \cite[Theorem 3.3]{StancevicFroyland} for an alternative argument) and are hence equal to 0, because of the assumption that $\Lam(it)=0$.  That is, for \paeom,

\begin{equation*}\label{eq:sameLimPiF}
\lim_{n\to\infty} \frac{1}{n} \log \Pi_{j=1}^d F^1_j(\mcl_\om^{it,(n)}|_{Y^{it}_{\om}}) = \lim_{n\to\infty} \frac{1}{n} \log \Pi_{j=1}^d F_j(\mcl_\om^{it,(n)}|_{Y^{it}_{\om}})=0.
\end{equation*}

Recall that for \paeom, $\mcl_{\om}^{it}: Y_{\om}^{it} \to Y_{\sig\om}^{it}$ is a bijection, so $(\mcl^{it}_\om|_{Y_\om^{it}})^{-1}$ is well defined.
Let $A_\ep:=\{  \om \in \Om: \| (\mcl^{it}_\om|_{Y_\om^{it}})^{-1} \|_1 >\frac{1}{1-\ep} \}$.
Since $ \| \mcl_\om^{it} v\|_1 \leq 1$ for every $v \in Y_\om^{it}\cap S_1$, we have that $F^1_j(\mcl_\om^{it})\leq 1$ for every $j$. Also, if $\om \in A_\ep$, then $F^1_d(\mcl_\om^{it}|_{Y^{it}_{\om}})<1-\ep$ and therefore $\Pi_{j=1}^d F^1_j(\mcl_\om^{it}|_{Y^{it}_{\om}})< 1-\ep$. Thus, for \paeom,
\begin{equation}\label{eq:PiFLeq0}
\begin{split}
0&=\lim_{n\to\infty} \frac{1}{n}  \log \Pi_{j=1}^d F^1_j(\mcl_\om^{it,(n)}|_{Y^{it}_{\om}})  =
\lim_{n\to\infty} \frac{1}{n} \log V^1_d(\mcl_\om^{it,(n)}|_{Y^{it}_{\om}})\\
& \leq \int \log  V^1_d(\mcl_\om^{it}|_{Y^{it}_{\om}}) d\bbp(\om)
 \leq C  \int \log \Pi_{j=1}^d F^1_j(\mcl_\om^{it}|_{Y^{it}_{\om}}) d\bbp(\om) \leq C\bbp(A_\ep) \log(1-\ep) \leq 0,
\end{split}
\end{equation}
where $C>0$ is such that for every $A: L^1 \to L^1$, $V^{1}_d(A)\leq C \Pi_{j=1}^d F^{1}_j (A)$, as guaranteed by Lemma~\ref{lem:equivVol}.
Thus,  all inequalities in \eqref{eq:PiFLeq0} must be equalities and therefore
 $\bbp(A_\ep)=0$ for every $\ep>0$, which means that $\|(\mcl^{it}_\om|_{Y_\om^{it}})^{-1}\|_1 =1$ for \paeom. Thus, $\inf_{v \in Y_\om^{it}\cap S_1} \|\mcl_\om^{it} v\|_1 =1$, as claimed.
\qed

\section{Regularity of $F$}\label{sec:regofF}
In this section, we establish regularity properties of the  map $F$ defined in~\eqref{defF}.
\subsection{First order regularity of $F$} \label{regofF}
Let $\mc{S}'$ be the Banach space of all functions $\mc{V} \colon  \Om \times \X \to \mathbb C$ such that $\mc{V}_\om:=\mc{V}(\om, \cdot)\in \BV$ and $\esssup_{\om \in \Om} \lVert \mc{V}_\om \rVert_{\BV}<\infty$.
Note that $\mc{S}$, defined in \eqref{defS}, consists of those $\mc{V} \in \mc{S}'$ such that $\int \mc{V}_\om \, dm=0$ for \paeom.
We define $G \colon B_{\mathbb C}(0, 1) \times \mathcal S \to \mathcal S'$ and $H \colon B_{\mathbb C} (0, 1) \times \mc{S} \to L^\infty (\Om)$ by
 \[
  G(\theta, \mc{W})_\om=\mathcal L_{\sigma^{-1} \om}^{\theta}(\mc{W}_{\sigma^{-1} \om}+v_{\sigma^{-1} \om}^0) \quad \text{and} \quad  H(\theta, \mc{W})(\om)=\int \mathcal L_{\sigma^{-1} \om}^{\theta}(\mc{W}_{\sigma^{-1} \om}+v_{\sigma^{-1} \om}^0) \, dm,
 \]
 where $v_\om^0$ is defined in \eqref{v0om}.
 It follows easily from Lemmas~\ref{lem:boundedv} and~\ref{1214} (together with~\eqref{ktheta} which implies $\sup_{\lvert \theta \rvert <1} K(\theta)<\infty$) that $G$ and $H$ are well-defined.
We are interested in showing that $G$ and $H$ are differentiable on a
neighborhood of $(0,0)$.

\begin{lemma}\label{var}
 We have that
 \[
  \var (e^{\theta g(\sigma^{-1} \om, \cdot)}) \le \lvert \theta \rvert e^{\lvert \theta \rvert M} \var (g(\sigma^{-1} \om, \cdot)), \quad \text{for $\om \in \Om$.}
 \]
\end{lemma}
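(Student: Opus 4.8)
\textbf{Proof plan for Lemma~\ref{var}.}

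The statement to prove is the variation bound $\var(e^{\theta g(\sigma^{-1}\om,\cdot)}) \le |\theta| e^{|\theta| M} \var(g(\sigma^{-1}\om,\cdot))$ for each $\om \in \Om$. The plan is to apply property (V9) directly. Fix $\om \in \Om$ and write $f = g(\sigma^{-1}\om,\cdot)$. By the regularity assumption~\eqref{obs} on the observable, $\|g\|_{L^\infty(\Om\times X)} = M < \infty$, so $f$ takes values in $[-M,M]$, and $f$ is measurable. Now consider the $C^1$ function $h \colon [-M,M] \to \C$ given by $h(z) = e^{\theta z}$; its derivative is $h'(z) = \theta e^{\theta z}$, so $\|h'\|_{L^\infty([-M,M])} = |\theta|\sup_{|z|\le M} |e^{\theta z}| \le |\theta| e^{|\theta| M}$, using $|e^{\theta z}| = e^{\Re(\theta z)} \le e^{|\theta||z|} \le e^{|\theta| M}$.

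With these observations in place, property (V9) applied to this $f$ and $h$ gives $\var(h \circ f) \le \|h'\|_{L^\infty} \cdot \var(f) \le |\theta| e^{|\theta| M} \var(f)$. Since $h \circ f = e^{\theta g(\sigma^{-1}\om,\cdot)}$ and $\var(f) = \var(g(\sigma^{-1}\om,\cdot))$, this is precisely the claimed inequality. The only mild subtlety is that (V9) as stated in Section~\ref{sec:var} is phrased for $C^1$ functions $h \colon [-M,M] \to \C$, which is exactly the setting here; one could alternatively note that (V9) has already been verified for both of the concrete notions of variation used in the paper (the one-dimensional variation~\eqref{var1d} and the oscillation-based variation~\eqref{varmd}, the latter in the Lemma immediately following its statement).

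There is essentially no obstacle: this is a one-line consequence of (V9). The only thing worth being careful about is the bound on $\|h'\|_{L^\infty}$ over the complex domain — one should make sure to use $|e^{\theta z}| = e^{\Re(\theta z)}$ rather than naively writing $e^{\theta z}$, and to bound $\Re(\theta z) \le |\theta z| = |\theta||z| \le |\theta| M$ since $z$ is real with $|z|\le M$. Everything else is bookkeeping.
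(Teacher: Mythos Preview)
Your proposal is correct and follows exactly the same approach as the paper: the paper's proof is the single sentence that the claim follows from condition~(V9) applied to $f=g(\sigma^{-1}\om,\cdot)$ and $h(z)=e^{\theta z}$. Your additional detail on bounding $\|h'\|_{L^\infty}$ via $|e^{\theta z}|=e^{\Re(\theta z)}\le e^{|\theta|M}$ is fine and makes the argument self-contained.
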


\begin{proof}
 The desired claim follows directly from condition~(V9) of Section~\ref{sec:var} applied to $f=g(\sigma^{-1} \om, \cdot)$ and $h(z)=e^{\theta z}$.
\end{proof}

\begin{lemma}\label{L5}
There exists $C>0$ such that
\begin{equation}\label{X2}
\var (e^{\theta_1 g(\sigma^{-1} \om, \cdot)}-e^{\theta_2 g(\sigma^{-1} \om, \cdot)}) \le Ce^{\lvert \theta_1-\theta_2\rvert M}\lvert \theta_1-\theta_2\rvert (e^{\lvert \theta_2\rvert M}+\lvert \theta_2\rvert e^{\lvert \theta_2\rvert M}), \quad \text{for $\om \in \Om$.}
\end{equation}
\end{lemma}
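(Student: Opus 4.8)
The plan is to reduce this to the one-parameter estimate of Lemma~\ref{var} together with the submultiplicativity property (V8) of $\var$. The starting point is the factorisation
\[
 e^{\theta_1 g(\sigma^{-1}\om,\cdot)}-e^{\theta_2 g(\sigma^{-1}\om,\cdot)}
 = e^{\theta_2 g(\sigma^{-1}\om,\cdot)}\bigl(e^{(\theta_1-\theta_2)g(\sigma^{-1}\om,\cdot)}-1\bigr),
\]
which rewrites a difference of exponentials (awkward to bound directly) as a product of a factor whose variation is governed by $\theta_2$ and a factor that is of size $O(|\theta_1-\theta_2|)$ in both sup-norm and variation.

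Next I would apply (V8) with $f=e^{\theta_2 g(\sigma^{-1}\om,\cdot)}$ and $h=e^{(\theta_1-\theta_2)g(\sigma^{-1}\om,\cdot)}-1$ to obtain
\[
 \var\bigl(e^{\theta_1 g(\sigma^{-1}\om,\cdot)}-e^{\theta_2 g(\sigma^{-1}\om,\cdot)}\bigr)
 \le \lVert f\rVert_{L^\infty}\,\var(h)+\lVert h\rVert_{L^\infty}\,\var(f),
\]
and then estimate the four factors separately, using that $|g|\le M$ by~\eqref{obs}. One has $\lVert f\rVert_{L^\infty}\le e^{|\theta_2|M}$ and, by the mean value theorem applied to $z\mapsto e^{(\theta_1-\theta_2)z}$ on $[-M,M]$, $\lVert h\rVert_{L^\infty}\le M|\theta_1-\theta_2|e^{|\theta_1-\theta_2|M}$; moreover $\var(f)\le |\theta_2|e^{|\theta_2|M}\var(g(\sigma^{-1}\om,\cdot))$ by Lemma~\ref{var}, while $\var(h)\le |\theta_1-\theta_2|e^{|\theta_1-\theta_2|M}\var(g(\sigma^{-1}\om,\cdot))$ follows from (V9) applied to the $C^1$ function $z\mapsto e^{(\theta_1-\theta_2)z}-1$ and $f=g(\sigma^{-1}\om,\cdot)$.

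Substituting these bounds and factoring out the common terms gives
\[
 \var\bigl(e^{\theta_1 g(\sigma^{-1}\om,\cdot)}-e^{\theta_2 g(\sigma^{-1}\om,\cdot)}\bigr)
 \le |\theta_1-\theta_2|\,e^{|\theta_1-\theta_2|M}\,\var(g(\sigma^{-1}\om,\cdot))\,\bigl(e^{|\theta_2|M}+M|\theta_2|e^{|\theta_2|M}\bigr),
\]
and since $e^{|\theta_2|M}+M|\theta_2|e^{|\theta_2|M}\le\max(1,M)\bigl(e^{|\theta_2|M}+|\theta_2|e^{|\theta_2|M}\bigr)$ and $\var(g(\sigma^{-1}\om,\cdot))\le \esssup_{\om\in\Om}\var(g_\om)<\infty$ by~\eqref{obs}, the claim follows with $C=\max(1,M)\,\esssup_{\om\in\Om}\var(g_\om)$. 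There is no genuine difficulty here; the only points needing a little care are using (V9) (rather than subadditivity of $\var$) to handle the ``$-1$'', so as to avoid invoking $\var(1_X)$, which the abstract axioms do not force to vanish, and absorbing the spurious factor $M$ into the constant so as to match the precise form of the right-hand side stated in the lemma.
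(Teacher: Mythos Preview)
Your proof is correct and follows essentially the same approach as the paper's own proof: the same factorisation $e^{\theta_1 g}-e^{\theta_2 g}=e^{\theta_2 g}(e^{(\theta_1-\theta_2)g}-1)$, the same application of (V8), and the same four estimates using the mean value theorem, Lemma~\ref{var}, and (V9). Your remark about handling the ``$-1$'' via (V9) rather than subadditivity is exactly the point the paper exploits as well.
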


\begin{proof}
 We note that it follows from~(V8) that
\[
\begin{split}
\var (e^{\theta_1 g(\sigma^{-1} \om, \cdot)}-e^{\theta_2 g(\sigma^{-1} \om, \cdot)}) &=\var (e^{\theta_2 g(\sigma^{-1} \om, \cdot)}(e^{(\theta_1-\theta_2) g(\sigma^{-1} \om, \cdot)}-1))\\
&\le \lVert e^{\theta_2 g(\sigma^{-1} \om, \cdot)} \rVert_{L^\infty} \cdot \var (e^{(\theta_1-\theta_2) g(\sigma^{-1} \om, \cdot)}-1) \\
&\phantom{\le}+\var (e^{\theta_2 g(\sigma^{-1} \om, \cdot)}) \cdot \lVert e^{(\theta_1-\theta_2) g(\sigma^{-1} \om, \cdot)}-1\rVert_{L^\infty}.
\end{split}
\]
Moreover, observe that it follows from~\eqref{obs} that  $\lVert e^{\theta_2 g(\sigma^{-1} \om, \cdot)}\rVert_{L^\infty} \le e^{\lvert \theta_2\rvert M}$. On the other hand, by applying~(V9) for $f=g(\sigma^{-1}\om, \cdot)$ and
$h(z)=e^{(\theta_1-\theta_2) z}-1$, we obtain
\[
\var (e^{(\theta_1-\theta_2) g(\sigma^{-1} \om, \cdot)}-1) \le \lvert \theta_1-\theta_2\rvert e^{\lvert \theta_1-\theta_2\rvert M} \var (g(\sigma^{-1} \om, \cdot)).
\]
 Finally, we want to estimate $\lVert e^{(\theta_1-\theta_2) g(\sigma^{-1} \om, \cdot)}-1\rVert_{L^\infty}$.
By applying the mean value theorem for the map $z\mapsto e^{(\theta_1-\theta_2)z}$, we have
that for each $x\in [0, 1]$,
\[
\lvert e^{(\theta_1-\theta_2) g(\sigma^{-1} \om, x)}-1\rvert \le  e^{\lvert \theta_1-\theta_2\rvert M}\lvert \theta_1-\theta_2\rvert \cdot \lvert g(\sigma^{-1} \om, x)\rvert \le Me^{\lvert \theta_1-\theta_2\rvert M}\lvert \theta_1-\theta_2\rvert,
\]
and consequently
\[
\lVert e^{(\theta_1-\theta_2) g(\sigma^{-1} \om, \cdot)}-1\rVert_{L^\infty} \le  Me^{\lvert \theta_1-\theta_2\rvert M}\lvert \theta_1-\theta_2\rvert.
\]
The conclusion of the lemma follows directly from the above estimates together with~\eqref{obs} and Lemma~\ref{var}.
\end{proof}

\begin{lemma}
 $D_2 G$ exists and is continuous on $B_{\mathbb C}(0, 1) \times \mathcal S$.
\end{lemma}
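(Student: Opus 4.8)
The plan is to exploit the fact that, for each fixed $\theta$, the map $\mc{W} \mapsto G(\theta, \mc{W})$ is \emph{affine}, so that the partial derivative $D_2 G$ is obtained essentially for free and the only real content is a continuity estimate in $\theta$. First I would write, for $\theta \in B_{\mathbb C}(0,1)$ and $\mc{W} \in \mc{S}$,
\[
 G(\theta, \mc{W})_\om = \mathcal L_{\sigma^{-1}\om}^{\theta}\mc{W}_{\sigma^{-1}\om} + \mathcal L_{\sigma^{-1}\om}^{\theta} v_{\sigma^{-1}\om}^0,
\]
and introduce the linear map $A^\theta \colon \mc{S} \to \mc{S}'$ defined by $(A^\theta \mc{X})_\om := \mathcal L_{\sigma^{-1}\om}^{\theta}\mc{X}_{\sigma^{-1}\om}$. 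Using~\eqref{se2} together with the bound $\sup_{|\theta|<1} K(\theta)<\infty$ coming from~\eqref{ktheta}, one checks $\|A^\theta\|_{L(\mc{S},\mc{S}')}\le K(\theta)$; measurability of $\om\mapsto (A^\theta\mc{X})_\om$ (from \ref{cond:METCond} and the fact that $\sigma$ is a homeomorphism) and membership of $A^\theta\mc{X}$ in $\mc{S}'$ are routine. The key observation is then that the increment identity $G(\theta, \mc{W}+\mc{X}) - G(\theta, \mc{W}) = A^\theta \mc{X}$ holds \emph{exactly}, with zero remainder, so $D_2 G$ exists at every point of $B_{\mathbb C}(0,1)\times\mc{S}$ and equals $A^\theta$, in particular independent of $\mc{W}$. (Note that the codomain must be $\mc{S}'$ rather than $\mc{S}$, since the $v^0$-term generally destroys the zero-integral condition.)

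It then remains to prove that $\theta \mapsto A^\theta$ is continuous from $B_{\mathbb C}(0,1)$ into $L(\mc{S},\mc{S}')$; since $D_2 G$ does not depend on $\mc{W}$, this gives continuity of $D_2 G$ on all of $B_{\mathbb C}(0,1)\times\mc{S}$. For this I would estimate, for $\mc{X}\in\mc{S}$ with $\|\mc{X}\|_\infty\le 1$,
\[
 \|(A^{\theta_1}-A^{\theta_2})\mc{X}\|_\infty = \esssup_{\om}\|(\mathcal L_{\sigma^{-1}\om}^{\theta_1}-\mathcal L_{\sigma^{-1}\om}^{\theta_2})\mc{X}_{\sigma^{-1}\om}\|_{\BV} \le \esssup_{\om}\|\mathcal L_\om^{\theta_1}-\mathcal L_\om^{\theta_2}\|_{\BV},
\]
and then invoke the uniform-in-$\om$ Lipschitz bound for $\theta\mapsto\mathcal L_\om^\theta$ already contained in the proof of Lemma~\ref{1214}: writing $\mathcal L_\om^\theta = \mathcal L_\om M_\om^\theta$, condition~\ref{cond:unifNormBd} combined with~\eqref{412} and~\eqref{407} yields $\|\mathcal L_\om^{\theta_1}-\mathcal L_\om^{\theta_2}\|_{\BV}\le C'|\theta_1-\theta_2|$ whenever $|\theta_1-\theta_2|\le 1$, with $C'$ independent of $\om$. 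Hence $\|A^{\theta_1}-A^{\theta_2}\|_{L(\mc{S},\mc{S}')}\le C'|\theta_1-\theta_2|$; since continuity is a local property, the restriction $|\theta_1-\theta_2|\le 1$ is harmless, and in fact this shows $\theta\mapsto A^\theta$ is locally Lipschitz.

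There is no genuine obstacle here: the affineness of $G$ in the second variable collapses the derivative computation entirely, and the only step with any substance is quoting the Lipschitz estimate for $\theta\mapsto\mathcal L_\om^\theta$ \emph{with its uniformity in $\om$}, which is exactly what the chain of estimates~\eqref{eq:boundMultip}–\eqref{407} in the proof of Lemma~\ref{1214} supplies. I would also remark that the local Lipschitz continuity of $\theta\mapsto A^\theta$ obtained here is convenient for the subsequent analysis, where $G$ is differentiated in $\theta$ and the regularity is bootstrapped towards the $C^2$ conclusion needed in Lemma~\ref{thm:IFT}.
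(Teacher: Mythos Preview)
Your proposal is correct and follows essentially the same approach as the paper: both exploit the affineness of $G$ in $\mc{W}$ to obtain $(D_2G(\theta,\mc{W})\mc{H})_\om=\mathcal L^\theta_{\sigma^{-1}\om}\mc{H}_{\sigma^{-1}\om}$, and both reduce continuity to a uniform-in-$\om$ Lipschitz bound on $\theta\mapsto\mathcal L_\om^\theta$. The only difference is packaging: you quote the estimates~\eqref{412}--\eqref{407} from Lemma~\ref{1214} (together with the bound~\eqref{eq:boundMultip} on $\|M_\om^{\theta}\|$, which you should also cite to control the factor $\|M_\om^{\theta_1}\|$ arising when writing $M_\om^{\theta_1}-M_\om^{\theta_2}=M_\om^{\theta_1}(I-M_\om^{\theta_2-\theta_1})$), whereas the paper re-derives the same bound directly via~\eqref{X1}--\eqref{X4} and Lemma~\ref{L5}.
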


\begin{proof}
 Since $G$ is an affine map in the second variable $\mc W$, we conclude that
 \begin{equation}\label{D2G}
   (D_2G(\theta, \mc{W})\mc{H})_\om=\mathcal L^\theta_{\sigma^{-1} \om}\mc{H}_{\sigma^{-1} \om}, \quad \text{for $\om \in \Om$ and $\mc H \in \mathcal S$.}
\end{equation}
We now establish the continuity of $D_2 G$. Take an arbitrary $(\theta_i, \mc W^i) \in B_{\mathbb C}(0, 1) \times \mathcal S$, $i\in \{1, 2\}$. We have
\[
\begin{split}
 \lVert D_2 G(\theta_1, \mc{W}^1)-D_2 G(\theta_2, \mc{W}^2)\rVert &=\sup_{\lVert \mc{H}\rVert_\infty \le 1}\lVert D_2 G(\theta_1, \mc{W}^1)(\mc{H})-D_2 G(\theta_2, \mc{W}^2)(\mc{H})\rVert_\infty \\
 &=\sup_{\lVert \mc{H}\rVert_\infty \le 1}\esssup_{\om \in \Om}\lVert \mathcal L^{\theta_1}_{\sigma^{-1} \om}\mc{H}_{\sigma^{-1} \om}-\mathcal L^{\theta_2}_{\sigma^{-1} \om}\mc{H}_{\sigma^{-1} \om}\rVert_{\BV}.
 \end{split}
\]
Observe that
\[
\begin{split}
 \lVert \mathcal L^{\theta_1}_{\sigma^{-1} \om}\mc{H}_{\sigma^{-1} \om}-\mathcal L^{\theta_2}_{\sigma^{-1} \om}\mc{H}_{\sigma^{-1} \om}\rVert_{\BV} &=
 \lVert \mathcal L_{\sigma^{-1} \om}
( (e^{\theta_1 g(\sigma^{-1} \om, \cdot)}-e^{\theta_2 g(\sigma^{-1} \om, \cdot)})\mathcal H_{\sigma^{-1} \om})\rVert_{\BV} \\
&\le K \lVert (e^{\theta_1 g(\sigma^{-1} \om, \cdot)}-e^{\theta_2 g(\sigma^{-1} \om, \cdot)})\mathcal H_{\sigma^{-1} \om}\rVert_{\BV} \\
 &=K \var ((e^{\theta_1 g(\sigma^{-1} \om, \cdot)}-e^{\theta_2 g(\sigma^{-1} \om, \cdot)})\mathcal H_{\sigma^{-1} \om})\\
&\phantom{=}+K\lVert (e^{\theta_1 g(\sigma^{-1} \om, \cdot)}-e^{\theta_2 g(\sigma^{-1} \om, \cdot)})\mathcal H_{\sigma^{-1} \om}\rVert_1.
\end{split}
\]
Take an arbitrary $x\in \X$. By applying the mean value theorem for the map $z\mapsto e^{zg(\sigma^{-1} \om, x)}$ and using~\eqref{obs}, we conclude that
\begin{equation}\label{X1}
 \lvert e^{\theta_1 g(\sigma^{-1} \om, x)}-e^{\theta_2 g(\sigma^{-1} \om, x)}\rvert \le Me^M \lvert \theta_1-\theta_2\rvert
\end{equation}
and thus
 \begin{equation}\label{X3}
\begin{split}
 \esssup_{\om \in \Om}\lVert  (e^{\theta_1 g(\sigma^{-1} \om, \cdot)}-e^{\theta_2 g(\sigma^{-1} \om, \cdot)})\mathcal H_{\sigma^{-1} \om}\rVert_1 &
 \le Me^M  \lvert \theta_1-\theta_2\rvert   \esssup_{\om \in \Om}\lVert \mc{H}_{\sigma^{-1} \om}\rVert_1  \\
 &\le Me^M  \lvert \theta_1-\theta_2\rvert   \esssup_{\om \in \Om}\lVert \mc{H}_{\sigma^{-1} \om}\rVert_{\BV}  \\
 &\le Me^M \lVert \mc{H}\rVert_\infty \cdot \lvert \theta_1-\theta_2\rvert.
\end{split}
 \end{equation}
 Furthermore,
\[
\begin{split}
 var ((e^{\theta_1 g(\sigma^{-1} \om, \cdot)}-e^{\theta_2 g(\sigma^{-1} \om, \cdot)})\mathcal  H_{\sigma^{-1} \om}) &\le \var (e^{\theta_1 g(\sigma^{-1} \om, \cdot)}-e^{\theta_2 g(\sigma^{-1} \om, \cdot)})
 \cdot \lVert \mc H_{\sigma^{-1} \om}\rVert_{L^\infty} \\
 &\phantom{\le}+ \lVert e^{\theta_1 g(\sigma^{-1} \om, \cdot)}-e^{\theta_2 g(\sigma^{-1} \om, \cdot)}\rVert_{L^\infty} \cdot \var (\mc H_{\sigma^{-1} \om}),
 \end{split}
\]
which, using~\eqref{X1}, implies that
\begin{equation}\label{X4}
 var ((e^{\theta_1 g(\sigma^{-1} \om, \cdot)}-e^{\theta_2 g(\sigma^{-1} \om, \cdot)})\mathcal  H_{\sigma^{-1} \om}) \le \big(C_{var}\var (e^{\theta_1 g(\sigma^{-1} \om, \cdot)}-e^{\theta_2 g(\sigma^{-1} \om, \cdot)}) +Me^M\lvert \theta_1-\theta_2\rvert \big)\lVert \mc{H}\rVert_\infty.
\end{equation}
 It follows from Lemma~\ref{L5} that
 \[
  \lVert D_2 G(\theta_1, \mc{W}^1)-D_2 G(\theta_2, \mc{W}^2)\rVert \le (KC+2KMe^M)\lvert \theta_1-\theta_2 \rvert,
 \]
which implies (Lipschitz) continuity of $D_2G$ on $B_{\mathbb C}(0, 1) \times \mathcal S$.
\end{proof}

\begin{lemma}\label{1108}
 $D_2H$ exists and is continuous on a neighborhood of $(0, 0) \in \mathbb C \times \mc{S}$.
\end{lemma}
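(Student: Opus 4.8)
The plan is to prove that $D_2 H$ exists and is continuous near $(0,0)$ by exploiting the fact that $H$ is obtained from $G$ by post-composition with the (bounded linear) integration functional, so differentiability should be inherited directly from the corresponding property of $G$. First I would observe that $H(\theta,\mc{W})(\om) = \int G(\theta,\mc{W})_\om \, dm$, i.e. $H = \Phi \circ G$, where $\Phi \colon \mc{S}' \to L^\infty(\Om)$ is the linear map $(\Phi\mc{V})(\om) := \int \mc{V}_\om\, dm$. Since $|\int \mc{V}_\om \, dm| \le \|\mc{V}_\om\|_1 \le \|\mc{V}_\om\|_{\BV}$, the map $\Phi$ is bounded with $\|\Phi\| \le 1$; in particular $\Phi$ is $C^\infty$ with $D\Phi = \Phi$ everywhere. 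By the chain rule for Fréchet derivatives on Banach spaces, $D_2 H(\theta, \mc{W}) = \Phi \circ D_2 G(\theta, \mc{W})$ exists on $B_{\C}(0,1)\times \mc{S}$, since the previous lemma established that $D_2 G$ exists there. Explicitly, combining with \eqref{D2G}, we get
\[
(D_2 H(\theta, \mc{W})\mc{H})(\om) = \int \mcl_{\sigma^{-1}\om}^\theta \mc{H}_{\sigma^{-1}\om}\, dm = \int e^{\theta g(\sigma^{-1}\om,\cdot)}\mc{H}_{\sigma^{-1}\om}\, dm,
\]
using \eqref{intprop} for the last equality.

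Next I would establish continuity of $D_2 H$ on a neighborhood of $(0,0)$. This again follows from the composition structure: for $(\theta_i, \mc{W}^i)$, $i=1,2$, in $B_{\C}(0,1)\times \mc{S}$,
\[
\|D_2 H(\theta_1, \mc{W}^1) - D_2 H(\theta_2, \mc{W}^2)\| = \|\Phi \circ (D_2 G(\theta_1, \mc{W}^1) - D_2 G(\theta_2, \mc{W}^2))\| \le \|\Phi\| \cdot \|D_2 G(\theta_1, \mc{W}^1) - D_2 G(\theta_2, \mc{W}^2)\|,
\]
and the preceding lemma already showed the right-hand side is bounded by $(KC + 2KMe^M)|\theta_1 - \theta_2|$, hence $D_2 H$ is (Lipschitz) continuous — in fact on all of $B_{\C}(0,1)\times \mc{S}$, which in particular gives continuity on a neighborhood of $(0,0)$. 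This completes the argument.

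I do not expect a genuine obstacle here: the entire content is that $H$ factors through $G$ via a bounded linear functional, and all the analytic work (the Lasota–Yorke type estimates, the variation bounds of Lemmas~\ref{var} and~\ref{L5}, and the Lipschitz estimate on $D_2 G$) has already been carried out in the proof of the existence and continuity of $D_2 G$. The only minor point to be careful about is that $\Phi$ maps into $L^\infty(\Om)$ rather than back into $\mc{S}'$ or $\mc{S}$ — but this is precisely the codomain in which $H$ was defined, so no adjustment is needed; one just notes $\|\Phi \mc{V}\|_{L^\infty(\Om)} = \esssup_\om |\int \mc{V}_\om\, dm| \le \esssup_\om \|\mc{V}_\om\|_{\BV} = \|\mc{V}\|_\infty$. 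The slightly delicate issue of $D_1 H$ (differentiability in $\theta$), which is genuinely where the estimates of Lemmas~\ref{var} and~\ref{L5} do real work, is deferred to Lemma~\ref{L6} and is not part of the present statement.
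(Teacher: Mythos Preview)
Your proof is correct. The approach differs mildly from the paper's: the paper argues directly, observing that $H$ is affine in $\mc{W}$ so that $(D_2 H(\theta,\mc{W})\mc{H})(\om)=\int \mcl_{\sigma^{-1}\om}^\theta \mc{H}_{\sigma^{-1}\om}\,dm$, and then estimates $\|D_2 H(\theta_1,\mc{W}^1)-D_2 H(\theta_2,\mc{W}^2)\|$ straight from the $L^1$ bound~\eqref{X1}, obtaining the sharper Lipschitz constant $Me^M$. Your route instead factors $H=\Phi\circ G$ through the bounded linear integration map $\Phi:\mc{S}'\to L^\infty(\Om)$ and inherits both existence and Lipschitz continuity of $D_2H$ from the already-established result for $D_2G$ via the chain rule. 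Your approach is cleaner in that it recycles work already done and avoids repeating any estimates; the paper's direct approach has the minor advantage that it uses only the elementary $L^\infty$ bound~\eqref{X1} (since after integration the variation estimates are irrelevant), yielding a better constant, though that sharpness is never used. Either way the content is trivial once $D_2G$ is in hand, and your observation that all the real analytic work sits in the $D_2G$ lemma and in the $\theta$-derivative (Lemma~\ref{L6}) is exactly right.
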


\begin{proof}
 We first note that $H$ is also an affine map in the variable $\mc W$ which implies that
 \begin{equation}\label{1001}
   (D_2H(\theta, \mc{W})\mc{H})(\om)=\int \mathcal L_{\sigma^{-1} \om}^\theta \mc H_{\sigma^{-1} \om} \, dm, \quad \text{for $\om \in \Om$ and $\mc H \in \mathcal S$.}
\end{equation}
Moreover, using~\eqref{X1} we have that
\[
\begin{split}
 \lVert D_2H(\theta_1, \mc{W}^1)\mc{H}-D_2H(\theta_2, \mc{W}^2)\mc{H}\rVert_{L^\infty} &=\esssup_{\om \in \Om} \bigg{\lvert} \int \mathcal L_{\sigma^{-1} \om}^{\theta_1} \mc{H}_{\sigma^{-1} \om} \, dm
 -\int \mathcal L_{\sigma^{-1} \om}^{\theta_2} \mc{H}_{\sigma^{-1} \om} \, dm \bigg{\rvert} \\
 &=\esssup_{\om \in \Om} \bigg{\lvert} \int (e^{\theta_1 g(\sigma^{-1} \om, \cdot)}-e^{\theta_2 g(\sigma^{-1} \om, \cdot)})\mc H_{\sigma^{-1} \om} \, dm \bigg{\rvert}\\
 &\le Me^M \lvert \theta_1-\theta_2\rvert \esssup_{\om \in \Om} \lVert H_{\sigma^{-1} \om} \rVert_1 \\
 &\le Me^M \lvert \theta_1-\theta_2\rvert \esssup_{\om \in \Om} \lVert H_{\sigma^{-1} \om} \rVert_{\BV} \\
 &= Me^M \lvert \theta_1-\theta_2\rvert \cdot \rVert \mc{H}\rVert_\infty,
 \end{split}
\]
for every $(\theta_1, \mc W^1)$, $(\theta_2, \mc W^2)$ that belong to a sufficiently small neighborhood of $(0, 0)$ on which $H$ is defined. We conclude that $D_2H$ is continuous.
\end{proof}

\begin{lemma}\label{L6}
$D_1H$ exists and is continuous on a neighborhood of $(0, 0) \in \mathbb C \times \mc{S}$.
\end{lemma}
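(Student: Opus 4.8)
\textbf{Proof plan for Lemma~\ref{L6} (existence and continuity of $D_1H$).}

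The plan is to differentiate $H(\theta,\mc W)(\om)=\int e^{\theta g(\sigma^{-1}\om,\cdot)}(\mc W_{\sigma^{-1}\om}+v^0_{\sigma^{-1}\om})\,dm$ under the integral sign with respect to $\theta$, obtaining the candidate
\[
(D_1H(\theta,\mc W))(\om)=\int g(\sigma^{-1}\om,\cdot)\,e^{\theta g(\sigma^{-1}\om,\cdot)}\bigl(\mc W_{\sigma^{-1}\om}+v^0_{\sigma^{-1}\om}\bigr)\,dm,
\]
and then to prove this formula is valid and defines a continuous $L^\infty(\Om)$-valued function on a neighbourhood of $(0,0)$. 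First I would check that the right-hand side is a well-defined element of $L^\infty(\Om)$: since $|g|\le M$ by~\eqref{obs}, $|g e^{\theta g}|\le M e^{|\theta|M}$, and $\esssup_\om\|\mc W_{\sigma^{-1}\om}+v^0_{\sigma^{-1}\om}\|_1\le \|\mc W\|_\infty + \esssup_\om\|v^0_\om\|_{\BV}<\infty$ by~\eqref{eq:boundedv}; so the integrand is dominated $\bbp$-essentially uniformly.

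Next I would establish the derivative formula. For fixed $\om$ and $\mc W$, and $t\in\C$ small, write
\[
\frac{H(\theta+t,\mc W)(\om)-H(\theta,\mc W)(\om)}{t}-\!\int g\,e^{\theta g}(\mc W_{\sigma^{-1}\om}+v^0_{\sigma^{-1}\om})\,dm
=\int\!\Bigl(\tfrac{e^{(\theta+t)g}-e^{\theta g}}{t}-g e^{\theta g}\Bigr)(\mc W_{\sigma^{-1}\om}+v^0_{\sigma^{-1}\om})\,dm,
\]
where $g=g(\sigma^{-1}\om,\cdot)$. A second-order Taylor estimate for $z\mapsto e^{\theta z}$ (as in Lemma~\ref{var} and Lemma~\ref{L5}, using $|g|\le M$) gives the pointwise bound $\bigl|\tfrac{e^{(\theta+t)g}-e^{\theta g}}{t}-g e^{\theta g}\bigr|\le C|t|$ for $|\theta|,|t|$ bounded, with $C$ independent of $\om$ and $x$. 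Multiplying by $\mc W_{\sigma^{-1}\om}+v^0_{\sigma^{-1}\om}$ and integrating yields a bound $\le C'|t|(\|\mc W\|_\infty+\esssup_\om\|v^0_\om\|_{\BV})$, uniform in $\om$; taking $\esssup_\om$ shows the difference quotient converges to the candidate in the $L^\infty(\Om)$ norm, so $D_1H$ exists and equals the stated formula. One also checks linearity in $t$ is automatic since we are differentiating in a single complex variable (identifying $D_1H(\theta,\mc W)$ with its value at $1$, as done elsewhere in the paper).

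Finally, for continuity of $D_1H$ on a neighbourhood of $(0,0)$: given $(\theta_1,\mc W^1),(\theta_2,\mc W^2)$ near $(0,0)$, estimate
\[
\bigl\|D_1H(\theta_1,\mc W^1)-D_1H(\theta_2,\mc W^2)\bigr\|_{L^\infty}
\le \esssup_\om\Bigl|\int g\bigl(e^{\theta_1 g}-e^{\theta_2 g}\bigr)(\mc W^1_{\sigma^{-1}\om}+v^0_{\sigma^{-1}\om})\,dm\Bigr|
+\esssup_\om\Bigl|\int g\,e^{\theta_2 g}(\mc W^1_{\sigma^{-1}\om}-\mc W^2_{\sigma^{-1}\om})\,dm\Bigr|.
\]
The first term is controlled by $M e^M$ times $\esssup_\om\|e^{\theta_1 g(\sigma^{-1}\om,\cdot)}-e^{\theta_2 g(\sigma^{-1}\om,\cdot)}\|_{L^\infty}\cdot(\|\mc W^1\|_\infty+\esssup_\om\|v^0_\om\|_{\BV})$, which is $\le C''|\theta_1-\theta_2|$ by the mean value estimate~\eqref{X1} used in Lemma~\ref{1108}; the second term is $\le M e^{M}\|\mc W^1-\mc W^2\|_\infty$. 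Hence $D_1H$ is (Lipschitz) continuous near $(0,0)$, completing the proof. The only mildly delicate point is the uniform-in-$\om$ bookkeeping, but this is exactly the same mechanism already used for $D_2H$ in Lemma~\ref{1108} and for $D_2G$, so no genuinely new obstacle arises; the Taylor remainder estimate for the exponential is the one technical ingredient, and it is routine given the $L^\infty$ bound on $g$ from~\eqref{obs}.
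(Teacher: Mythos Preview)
Your proposal is correct and follows essentially the same approach as the paper: you identify the same candidate derivative by differentiating under the integral sign, use the second-order Taylor remainder for $z\mapsto e^{\theta z}$ (the paper's estimate~\eqref{mh2}) to show Fr\'echet differentiability in $L^\infty(\Om)$, and then establish continuity via the same add-and-subtract decomposition controlled by the mean value bound~\eqref{X1}. The only cosmetic difference is that the paper phrases the derivative check as $\|H(\theta+h,\mc W)-H(\theta,\mc W)-L(\theta,\mc W)h\|_{L^\infty}=O(|h|^2)$ rather than via difference quotients, which is equivalent.
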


\begin{proof}
  We first note that
 \[
  H(\theta, \mc{W})(\om)=\int e^{\theta g(\sigma^{-1} \om, \cdot)}(\mc{W}_{\sigma^{-1} \om}+v_{\sigma^{-1} \om}^0)  \, dm.
 \]
We claim that for $\om \in \Om$ and $h\in B_{\mathbb C} (0, 1)$,
\begin{equation}\label{D1H}
 (D_1 H(\theta, \mc W)h)(\om)=\int h g(\sigma^{-1} \om, \cdot) e^{\theta g(\sigma^{-1} \om, \cdot)}(\mc{W}_{\sigma^{-1} \om}+v_{\sigma^{-1} \om}^0)  \, dm=: (L(\theta, \mc W)h)_\om.
\end{equation}
Note that $L(\theta, \mc W): B_{\mathbb C} (0, 1) \to L^\infty (\Om)$ is a bounded linear operator. We first note that for each $\om \in \Om$,
\begin{align*}
 &  (H(\theta +h, \mc W)-H(\theta, \mc W)-L(\theta, \mc W)h)(\omega) \displaybreak[0] \\
 &=  \int (e^{(\theta +h) g(\sigma^{-1} \om, \cdot)}-e^{\theta g(\sigma^{-1} \om, \cdot)}-h g(\sigma^{-1} \om, \cdot) e^{\theta g(\sigma^{-1} \om, \cdot)})(\mc{W}_{\sigma^{-1} \om}+v_{\sigma^{-1} \om}^0)  \, dm.
\end{align*}
For each $\om \in \Om$ and $x\in \X$, it follows from Taylor's remainder theorem applied to the function $z\mapsto e^{zg(\sigma^{-1}\om, x)}$ that for $|\theta|, |h|\leq \frac12$,
\begin{equation}\label{mh2}
\lvert  e^{(\theta +h) g(\sigma^{-1} \om, x)}-e^{\theta g(\sigma^{-1} \om, x)}-h g(\sigma^{-1} \om, x) e^{\theta g(\sigma^{-1} \om, x)}\rvert \le \frac 12 M^2 e^M \lvert h\rvert^2.
 \end{equation}
 Hence,
 \[
   \lVert H(\theta +h, \mc W)-H(\theta, \mc W)-L(\theta, \mc W)h\rVert_{L^\infty} \le \frac 12 M^2 e^M \lvert h\rvert^2 (\lVert \mc W\rVert_\infty +\lVert v^0\rVert_\infty),
\]
and therefore
\[
 \frac{1}{\lvert h\rvert}\lVert H(\theta +h, \mc W)-H(\theta, \mc W)-L(\theta, \mc W)h\rVert_{L^\infty} \to 0, \quad \text{when $h\to 0$.}
\]
We conclude that~\eqref{D1H} holds. Furthermore,
\begin{align*}
& (D_1 H(\theta_1, \mc W^1)h)(\om)-(D_1 H(\theta_2, \mc W^2)h)(\om) \displaybreak[0] \\
&=\int h g(\sigma^{-1} \om, \cdot) e^{\theta_1 g(\sigma^{-1} \om, \cdot)}(\mc{W}_{\sigma^{-1} \om}^1+v_{\sigma^{-1} \om}^0)  \, dm \displaybreak[0] \\
&\phantom{=}-\int h g(\sigma^{-1} \om, \cdot) e^{\theta_2 g(\sigma^{-1} \om, \cdot)}(\mc{W}_{\sigma^{-1} \om}^2+v_{\sigma^{-1} \om}^0)  \, dm \displaybreak[0] \\
&=\int h g(\sigma^{-1} \om, \cdot) e^{\theta_1 g(\sigma^{-1} \om, \cdot)}(\mc{W}_{\sigma^{-1} \om}^1-\mc{W}_{\sigma^{-1} \om}^2)\, dm \displaybreak[0] \\
&\phantom{=}+\int h g(\sigma^{-1} \om, \cdot) (e^{\theta_1 g(\sigma^{-1} \om, \cdot)}-e^{\theta_2 g(\sigma^{-1} \om, \cdot)})(\mc{W}_{\sigma^{-1} \om}^2+v_{\sigma^{-1} \om}^0)  \, dm.
\end{align*}
Note that
\[
 \esssup_{\om \in \Om} \bigg{\lvert} \int h g(\sigma^{-1} \om, \cdot) e^{\theta_1 g(\sigma^{-1} \om, \cdot)}(\mc{W}_{\sigma^{-1} \om}^1-\mc{W}_{\sigma^{-1} \om}^2)\, dm \bigg{\rvert} \le
 \lvert h\rvert Me^M \lVert \mc W^1-\mc W^2\rVert_\infty
\]
and, using~\eqref{X1},
\begin{align*}
 & \esssup_{\om \in \Om} \bigg{\lvert}\int h g(\sigma^{-1} \om, \cdot) (e^{\theta_1 g(\sigma^{-1} \om, \cdot)}-e^{\theta_2 g(\sigma^{-1} \om, \cdot)})(\mc{W}_{\sigma^{-1} \om}^2+v_{\sigma^{-1} \om}^0)  \, dm
 \bigg{\rvert} \displaybreak[0] \\
 & \le \lvert h\rvert M^2 e^M \lvert \theta_1-\theta_2\rvert (R+\lVert v^0\rVert_\infty),
\end{align*}
if $\mc W^2\in B_{\mathcal S}(0, R)$. Hence,
\[
 \lVert D_1 H(\theta_1, \mc W^1)-D_1 H(\theta_2, \mc W^2)\rVert \le Me^M \lVert \mc W^1-\mc W^2\rVert_\infty+M^2 e^M \lvert \theta_1-\theta_2\rvert (R+\lVert v^0\rVert_\infty),
\]
which implies the continuity of $D_1 H$.
\end{proof}

\begin{lemma}
  $D_1G$ exists and is continuous on a neighborhood of $(0, 0) \in \mathbb C \times \mc{S}$.

\end{lemma}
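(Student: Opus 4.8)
The plan is to mirror the proof of Lemma~\ref{L6} (existence and continuity of $D_1 H$), since $G$ has exactly the same affine structure in $\mc W$ and the same $\theta$-dependence through the multiplier $e^{\theta g(\sigma^{-1}\om,\cdot)}$, the only difference being that $G$ records the full function $\mcl_{\sigma^{-1}\om}^\theta(\mc W_{\sigma^{-1}\om}+v^0_{\sigma^{-1}\om})$ rather than its integral. First I would write $G(\theta,\mc W)_\om = \mcl_{\sigma^{-1}\om}\big(e^{\theta g(\sigma^{-1}\om,\cdot)}(\mc W_{\sigma^{-1}\om}+v^0_{\sigma^{-1}\om})\big)$ and propose the candidate derivative
\[
 (D_1 G(\theta,\mc W)h)_\om = h\,\mcl_{\sigma^{-1}\om}\big(g(\sigma^{-1}\om,\cdot)\,e^{\theta g(\sigma^{-1}\om,\cdot)}(\mc W_{\sigma^{-1}\om}+v^0_{\sigma^{-1}\om})\big),
\]
identifying $D_1 G(\theta,\mc W)$ with a bounded linear operator $B_{\mathbb C}(0,1)\to \mc S'$ (it takes values in $\mc S'$, not $\mc S$, which is why the codomain of $G$ is $\mc S'$; no zero-integral condition is needed here). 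Boundedness of this operator follows from \ref{cond:unifNormBd}, (V3), (V8), (V9), the $L^\infty$-bound $\|g\|_\infty=M$ in \eqref{obs}, and the uniform bound \eqref{eq:boundedv} on $\|v^0_\om\|_{\BV}$, exactly as in the estimate \eqref{eq:boundMultip}.

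Next I would verify the Fréchet differentiability. For $|\theta|,|h|\le\frac12$, Taylor's remainder theorem applied to $z\mapsto e^{z g(\sigma^{-1}\om,x)}$ gives the pointwise bound~\eqref{mh2}, namely $|e^{(\theta+h)g}-e^{\theta g}-hg\,e^{\theta g}|\le \tfrac12 M^2 e^M |h|^2$; one also needs the analogous variation bound, $\var\big(e^{(\theta+h)g(\sigma^{-1}\om,\cdot)}-e^{\theta g(\sigma^{-1}\om,\cdot)}-h\,g(\sigma^{-1}\om,\cdot)e^{\theta g(\sigma^{-1}\om,\cdot)}\big) = O(|h|^2)$ uniformly in $\om$, which follows from (V8), (V9), Lemma~\ref{var} and Lemma~\ref{L5} together with a second-order Taylor expansion of $z\mapsto e^{zg}$ in the variation seminorm. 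Applying $\mcl_{\sigma^{-1}\om}$ (bounded by $K$ via \ref{cond:unifNormBd}) to $(\mc W_{\sigma^{-1}\om}+v^0_{\sigma^{-1}\om})$ times this remainder and using (V3), (V8) to pass from the product estimate to a $\BV$-norm estimate, I obtain
\[
 \frac{1}{|h|}\big\| G(\theta+h,\mc W)-G(\theta,\mc W)-D_1G(\theta,\mc W)h\big\|_\infty \to 0 \quad\text{as } h\to 0,
\]
uniformly over $\mc W$ in a bounded ball, which establishes that $D_1 G$ is the Fréchet derivative with respect to $\theta$.

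Finally, for continuity of $(\theta,\mc W)\mapsto D_1 G(\theta,\mc W)$ near $(0,0)$, I would split the difference $D_1G(\theta_1,\mc W^1)h - D_1G(\theta_2,\mc W^2)h$ into a term controlled by $\|\mc W^1-\mc W^2\|_\infty$ (using the bound $\|\mcl_{\sigma^{-1}\om}(g\,e^{\theta_1 g}\,\cdot)\|_{\BV}\lesssim \|\cdot\|_{\BV}$ from \ref{cond:unifNormBd}, (V8), (V9)) and a term controlled by $|\theta_1-\theta_2|$ via Lemma~\ref{L5}, \eqref{X1} and the $O(|\theta_1-\theta_2|)$ variation estimate for $e^{\theta_1 g}-e^{\theta_2 g}$ (again (V8), (V9), Lemma~\ref{var}), together with the uniform bounds \eqref{eq:boundedv} on $v^0$ and $\mc W^2\in B_{\mc S}(0,R)$. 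This gives a Lipschitz estimate of the form
\[
 \|D_1 G(\theta_1,\mc W^1) - D_1 G(\theta_2,\mc W^2)\| \le C_1\|\mc W^1-\mc W^2\|_\infty + C_2|\theta_1-\theta_2|
\]
on a neighborhood of $(0,0)$, hence continuity. The only mild obstacle is bookkeeping the variation (as opposed to $L^\infty$ or $L^1$) estimates for the second-order Taylor remainder of $e^{\theta g}$ uniformly in $\om$; but this is a routine combination of (V8), (V9), Lemma~\ref{var} and Lemma~\ref{L5}, entirely parallel to what was already done for $H$, $D_1 H$ and $D_2 G$, so no genuinely new difficulty arises.
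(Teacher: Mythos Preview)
Your proposal is correct and follows essentially the same approach as the paper: the same candidate derivative, the same use of \ref{cond:unifNormBd} together with the $L^\infty$ Taylor remainder~\eqref{mh2} and an $O(|h|^2)$ variation bound on $e^{(\theta+h)g}-e^{\theta g}-hg\,e^{\theta g}$, and the same two-term splitting for continuity. The only minor difference is that the paper obtains the variation estimate (its equation~\eqref{t}) in one stroke by applying (V9) directly to $h(z)=e^{(\theta+t)z}-e^{\theta z}-tze^{\theta z}$ and bounding $\|h'\|_{L^\infty}=O(|t|^2)$, which is slightly cleaner than the combination of (V8), (V9), Lemma~\ref{var} and Lemma~\ref{L5} you outline.
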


\begin{proof}
  We claim that for $\om \in \Om$ and $t\in \mathbb C$,
\begin{equation}\label{D1G}
 (D_1 G(\theta, \mc W)t)_\om =\mathcal  L_{\sigma^{-1} \om}(tg(\sigma^{-1}\om ,\cdot)e^{\theta g(\sigma^{-1} \om ,\cdot)}(\mc W_{\sigma^{-1} \om}+v^0_{\sigma^{-1} \om}))=: (L(\theta, \mc W)t)_\om.
\end{equation}
Note that $L(\theta, \mc W): B_{\mathbb C} (0, 1) \to \mc{S'}$ is a bounded linear operator.
We note that
\begin{align*}
 & (G(\theta +t, \mc W)-G(\theta, \mc W)-L(\theta, \mc W)t)_\om \displaybreak[0] \\
 &=  \mathcal  L_{\sigma^{-1} \om}((e^{(\theta+t) g(\sigma^{-1} \om ,\cdot)}-e^{\theta g(\sigma^{-1} \om ,\cdot)}-tg(\sigma^{-1}\om ,\cdot)e^{\theta g(\sigma^{-1} \om ,\cdot)})(\mc W_{\sigma^{-1} \om}+v^0_{\sigma^{-1} \om})),
 \end{align*}
and therefore
\begin{align*}
 &\lVert (G(\theta +h, \mc W)-G(\theta, \mc W)-L(\theta, \mc W)h)_\om\rVert_{\BV} \displaybreak[0] \\
 & \le K\lVert (e^{(\theta+t) g(\sigma^{-1} \om ,\cdot)}-e^{\theta g(\sigma^{-1} \om ,\cdot)}-tg(\sigma^{-1}\om ,\cdot)e^{\theta g(\sigma^{-1} \om ,\cdot)})(\mc W_{\sigma^{-1} \om}+v^0_{\sigma^{-1} \om})
  \rVert_{\BV} \displaybreak[0] \\
   & =K\var ((e^{(\theta+t) g(\sigma^{-1} \om ,\cdot)}-e^{\theta g(\sigma^{-1} \om ,\cdot)}-tg(\sigma^{-1}\om ,\cdot)e^{\theta g(\sigma^{-1} \om ,\cdot)})(\mc W_{\sigma^{-1} \om}+v^0_{\sigma^{-1} \om}) )
  \displaybreak[0] \\
  &\phantom{=}   +K \lVert (e^{(\theta+t) g(\sigma^{-1} \om ,\cdot)}-e^{\theta g(\sigma^{-1} \om ,\cdot)}-tg(\sigma^{-1}\om ,\cdot)e^{\theta g(\sigma^{-1} \om ,\cdot)})(\mc W_{\sigma^{-1} \om}+v^0_{\sigma^{-1} \om})\rVert_1.
\end{align*}
In the proof of Lemma~\ref{L6} we have showed that
\[
 \lVert e^{(\theta+t) g(\sigma^{-1} \om ,\cdot)}-e^{\theta g(\sigma^{-1} \om ,\cdot)}-tg(\sigma^{-1}\om ,\cdot)e^{\theta g(\sigma^{-1} \om ,\cdot)}\rVert_{L^\infty} \le \frac 12 M^2 e^M \lvert t\rvert^2.
\]
Moreover, by applying~(V9) for $f=g(\sigma^{-1} \om, \cdot)$ and
\[
 h(z)=e^{(\theta+t)z}-e^{\theta z}-tze^{\theta z},
\]
one can  conclude  that
\begin{equation}\label{t}
 \var ((e^{(\theta+t) g(\sigma^{-1} \om ,\cdot)}-e^{\theta g(\sigma^{-1} \om ,\cdot)}-tg(\sigma^{-1}\om ,\cdot)e^{\theta g(\sigma^{-1} \om ,\cdot)}) \le C\lvert t\rvert^2.
\end{equation}
The last two inequalities combined with (V8) readily imply that
\[
 \frac{1}{\lvert t\rvert} \lVert G(\theta +t, \mc W)-G(\theta, \mc W)-L(\theta, \mc W)t\rVert_\infty \to 0, \quad \text{when $t\to 0$,}
\]
which implies~\eqref{D1G}. Moreover,
\begin{align*}
 & (D_1 G(\theta_1, \mc W^1)t -D_1 G(\theta_2, \mc W^2)t)_\om \displaybreak[0] \\
 &=t\mathcal L_{\sigma^{-1} \om}(g(\sigma^{-1}\om ,\cdot)(e^{\theta_1 g(\sigma^{-1} \om ,\cdot)}-e^{\theta_2 g(\sigma^{-1} \om ,\cdot)})(\mc W_{\sigma^{-1} \om}^1+v^0_{\sigma^{-1} \om}))\displaybreak[0] \\
 &\phantom{=}-t\mathcal  L_{\sigma^{-1} \om}(g(\sigma^{-1}\om ,\cdot)e^{\theta_2 g(\sigma^{-1} \om ,\cdot)}(\mc W_{\sigma^{-1} \om}^2-\mc W_{\sigma^{-1} \om}^1)).
\end{align*}
Proceeding as in the previous lemmas and using~\eqref{X1} and Lemma~\ref{L5} together with a simple observation that
\[
\begin{split}
 \var (g(\sigma^{-1}\om ,\cdot)(e^{\theta_1 g(\sigma^{-1} \om ,\cdot)}-e^{\theta_2 g(\sigma^{-1} \om ,\cdot)})) &\le M\var (e^{\theta_1 g(\sigma^{-1} \om ,\cdot)}-e^{\theta_2 g(\sigma^{-1} \om ,\cdot)})
\\
&\phantom{\le} +\var (g(\sigma^{-1}\om ,\cdot))\lVert e^{\theta_1 g(\sigma^{-1} \om ,\cdot)}-e^{\theta_2 g(\sigma^{-1} \om ,\cdot)}\rVert_{L^\infty},
\end{split}
 \]
we easily obtain the continuity of $D_1G$.
\end{proof}

The following result is a direct consequence of the previous lemmas.

\begin{proposition}\label{difF}
The  map $F$ defined by~\eqref{defF} is of class $C^1$ on a neighborhood $(0, 0)\in \mathbb C \times \mc{S}$. Moreover,
 \[
  (D_2 F(\theta, \mc W) \mc H)_\om=\frac{1}{H(\theta, \mc W)(\om)}\mathcal L_{\sigma^{-1} \om}^\theta \mc H_{\sigma^{-1} \om}-\frac{\int \mathcal L_{\sigma^{-1} \om}^\theta
  \mc H_{\sigma^{-1} \om}\, dm}{[H(\theta, \mc W)(\om)]^2}G(\theta, \mc W)_\om-\mc H_\om,
 \]
for $\om \in \Om$ and $\mc H \in \mathcal S$ and
\[
 \begin{split}
 (D_1 F(\theta, \mc W))_\om &=\frac{1}{H(\theta, \mc W)(\om)}\mathcal L_{\sigma^{-1} \om}(g(\sigma^{-1} \om, \cdot)e^{\theta  g(\sigma^{-1} \om, \cdot)} (\mc W_{\sigma^{-1} \om}+
 v_{\sigma^{-1} \om}^0)) \\
 &\phantom{=}-\frac{\int g(\sigma^{-1} \om, \cdot)e^{\theta  g(\sigma^{-1} \om, \cdot)} (\mc W_{\sigma^{-1} \om}+
 v_{\sigma^{-1} \om}^0)\, dm}{[H(\theta, \mc W)(\om)]^2}\mathcal L_{\sigma^{-1} \om}^\theta (\mc W_{\sigma^{-1} \om}+v_{\sigma^{-1} \om}^0),
 \end{split}
\]
for $\om \in \Om$, where we have identified $D_1 F(\theta, \mc W)$ with its value at $1$, and $G$ is as defined at the beginning of Section \ref{regofF}.

\end{proposition}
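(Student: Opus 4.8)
The plan is to derive Proposition~\ref{difF} as a direct consequence of the chain rule applied to the formula for $F$ in~\eqref{defF}, using the first-order regularity lemmas established for $G$ and $H$ earlier in Section~\ref{regofF}. First I would rewrite $F$ as
\[
 F(\theta, \mc{W})_\om = \frac{G(\theta, \mc{W})_\om}{H(\theta, \mc{W})(\om)} - \mc{W}_\om - v^0_\om,
\]
which is valid on the neighborhood of $(0,0)$ where $H$ is bounded away from $0$ (by the argument in the proof of Lemma~\ref{lem:FwellDef}, $\essinf_\om |H(\theta, \mc{W})(\om)| \ge 1/2$ there). Since the final summand $-\mc{W}_\om - v^0_\om$ is affine in $\mc{W}$ and independent of $\theta$, its contribution to $D_1F$ is $0$ and to $D_2F$ is $-\mc{H}_\om$; the substance of the proposition is therefore the differentiability of the quotient map $(\theta, \mc W)\mapsto G(\theta, \mc W)/H(\theta, \mc W)$.

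The key steps, in order, are: (1) invoke the four lemmas of this subsection — existence and continuity of $D_1G$, $D_2G$ (with formulas~\eqref{D1G} and~\eqref{D2G}), $D_1H$, $D_2H$ (with formulas~\eqref{D1H} and~\eqref{1001}) — to conclude $G$ and $H$ are $C^1$ on a neighborhood of $(0,0)$; (2) note that $H$ takes values in $L^\infty(\Om)$ with $\essinf_\om|H| \ge 1/2$, so that the scalar-inversion map $\Phi \mapsto 1/\Phi$ is $C^1$ (indeed smooth) on this region of $L^\infty(\Om)$ with derivative $\Psi \mapsto -\Psi/\Phi^2$; (3) apply the chain rule to $\theta$-derivative and the product rule to $1/H(\theta, \mc W) \cdot G(\theta, \mc W)$ (the multiplication $L^\infty(\Om) \times \mc{S}' \to \mc{S}'$, $(\Phi, \mc V)\mapsto (\om\mapsto \Phi(\om)\mc V_\om)$ being bounded bilinear, hence smooth), obtaining
\[
 D_2 F(\theta, \mc W)\mc H = \frac{D_2 G(\theta,\mc W)\mc H}{H(\theta,\mc W)} - \frac{(D_2 H(\theta,\mc W)\mc H)\, G(\theta,\mc W)}{[H(\theta,\mc W)]^2} - \mc H
\]
and the analogous expression for $D_1F$; (4) substitute the explicit formulas~\eqref{D2G}, \eqref{1001}, \eqref{D1G}, \eqref{D1H} together with the definition $G(\theta,\mc W)_\om = \mcl_{\sigma^{-1}\om}^\theta(\mc W_{\sigma^{-1}\om}+v^0_{\sigma^{-1}\om})$ and simplify, matching the stated formulas; and (5) verify that $F$ indeed maps into $\mc{S}$ (not just $\mc{S}'$) — the zero-integral condition holds because $\int G(\theta,\mc W)_\om\,dm = H(\theta,\mc W)(\om)$ by definition, so the first term of $F$ integrates to $1$, cancelling $\int(\mc W_\om+v^0_\om)\,dm = 0 + 1$.

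I expect the only genuine subtlety to be the bookkeeping of which Banach spaces the various factors live in and confirming that the chain/product rule applies in this infinite-dimensional setting — specifically, that the composition $(\theta,\mc W)\mapsto H(\theta,\mc W)\mapsto 1/H(\theta,\mc W)$ is legitimately $C^1$ as a map into $L^\infty(\Om)$, and that multiplying an $L^\infty(\Om)$-valued function against an $\mc{S}'$-valued function is a bounded bilinear operation landing in $\mc{S}'$. Both facts are elementary (the first because inversion is smooth on the open set $\{\essinf|\Phi|>0\}$ in any commutative Banach algebra, and $L^\infty(\Om)$ is such an algebra; the second because $\|\Phi \mc V\|_\infty = \esssup_\om \|\Phi(\om)\mc V_\om\|_{\BV} \le \|\Phi\|_{L^\infty}\|\mc V\|_\infty$ by~(V1)), but they are the steps that require care rather than mechanical differentiation. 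Once these are in place, the formulas for $D_1F$ and $D_2F$ follow by routine algebraic simplification of the chain- and product-rule output, and continuity of $D_1F$, $D_2F$ is inherited from the established continuity of $D_1G, D_2G, D_1H, D_2H$ and the continuity of inversion and multiplication.
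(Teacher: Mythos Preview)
Your proposal is correct and matches the paper's approach exactly: the paper states Proposition~\ref{difF} as ``a direct consequence of the previous lemmas'' (i.e.\ the $C^1$ regularity and explicit formulas for $D_iG$, $D_iH$), and your write-up simply makes explicit the quotient-rule/chain-rule computation the paper leaves to the reader. If anything, your step~(5) and the remarks on the Banach-algebra structure of $L^\infty(\Om)$ and the bilinearity of the $L^\infty(\Om)\times\mc{S}'\to\mc{S}'$ multiplication supply more detail than the paper itself does.
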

\subsection{Second order regularity of $F$}\label{regofF2}

\begin{lemma}
 $D_{12}H$ and $D_{22}H$ exist and are continuous on a neighborhood of $(0,0)\in \mathbb C \times \mc{S}$.
\end{lemma}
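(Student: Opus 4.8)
The plan is to exploit the fact that $H$ is affine in its second argument and to obtain the mixed derivative by differentiating, in $\theta$, the formula for $D_2H$ established in Lemma~\ref{1108}. First consider $D_{22}H=D_2(D_2H)$. By~\eqref{1001} we have $(D_2H(\theta,\mc W)\mc H)(\om)=\int \mcl_{\sigma^{-1}\om}^\theta\mc H_{\sigma^{-1}\om}\,dm$, which does not involve $\mc W$; hence $\mc W\mapsto D_2H(\theta,\mc W)$ is constant, so $D_{22}H$ vanishes identically on the neighborhood of $(0,0)$ on which $H$ is defined, and is in particular continuous there.

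For $D_{12}H=D_1(D_2H)$, I claim that $\theta\mapsto D_2H(\theta,\mc W)\in L(\mc S,L^\infty(\Om))$ is differentiable (with derivative again independent of $\mc W$) and that
\[
(D_{12}H(\theta,\mc W)(h)\,\mc H)(\om)=\int h\,g(\sigma^{-1}\om,\cdot)\,e^{\theta g(\sigma^{-1}\om,\cdot)}\,\mc H_{\sigma^{-1}\om}\,dm.
\]
Existence is obtained exactly as in Lemma~\ref{L6}: subtracting this candidate first-order term from $D_2H(\theta+h,\mc W)\mc H-D_2H(\theta,\mc W)\mc H$ leaves $\om\mapsto\int(e^{(\theta+h)g(\sigma^{-1}\om,\cdot)}-e^{\theta g(\sigma^{-1}\om,\cdot)}-h\,g(\sigma^{-1}\om,\cdot)e^{\theta g(\sigma^{-1}\om,\cdot)})\mc H_{\sigma^{-1}\om}\,dm$, which by the pointwise Taylor-remainder bound $|e^{(\theta+h)g(\sigma^{-1}\om,x)}-e^{\theta g(\sigma^{-1}\om,x)}-h\,g(\sigma^{-1}\om,x)e^{\theta g(\sigma^{-1}\om,x)}|\le\tfrac12 M^2e^M|h|^2$ from~\eqref{mh2}, together with $\|\mc H_{\sigma^{-1}\om}\|_1\le\|\mc H_{\sigma^{-1}\om}\|_{\BV}\le\|\mc H\|_\infty$, is $O(|h|^2)$ in $L^\infty(\Om)$ uniformly over $\|\mc H\|_\infty\le1$. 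Continuity then reduces, since the expression is independent of $\mc W$, to controlling its $\theta$-modulus of continuity: using the mean value theorem for $z\mapsto e^{zg(\sigma^{-1}\om,x)}$ as in~\eqref{X1} and $\|g\|_{L^\infty}\le M$, one gets $\|D_{12}H(\theta_1,\cdot)-D_{12}H(\theta_2,\cdot)\|\le M^2e^M|\theta_1-\theta_2|$ for $\theta_1,\theta_2$ near $0$, so $D_{12}H$ is Lipschitz, hence continuous, on a neighborhood of $(0,0)$.

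Everything here is routine and parallels the proofs of Lemmas~\ref{1108} and~\ref{L6}; I do not anticipate any substantive obstacle. The only point needing a little care is the bookkeeping of spaces — $D_{12}H(\theta,\mc W)$ should be read as an element of $L(\C,L(\mc S,L^\infty(\Om)))$, canonically identified with $L(\mc S,L^\infty(\Om))$ — together with a quick check that differentiating instead the formula for $D_1H$ from Lemma~\ref{L6} in the $\mc W$-slot (immediate, by affineness) yields the same bilinear expression, so that the mixed partials agree and the formulas used in Appendix~\ref{regofF2} are consistent.
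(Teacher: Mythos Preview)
Your proposal is correct and follows essentially the same route as the paper: both argue that $D_{22}H=0$ because~\eqref{1001} is independent of $\mc W$, then establish the formula for $D_{12}H$ by subtracting the candidate linear term from $D_2H(\theta+h,\mc W)-D_2H(\theta,\mc W)$ and invoking the Taylor-remainder bound~\eqref{mh2}, and finally obtain Lipschitz continuity in $\theta$ via~\eqref{X1} with the same constant $M^2e^M$. Your additional remark on the agreement of mixed partials is a nice consistency check but not needed for the lemma as stated.
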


\begin{proof}
 We first note that it follows directly from~\eqref{1001} that $D_{22} H=0$. We claim that
 \begin{equation}\label{D12H}
 (( D_{12}H(\theta, \mc W)h)\mc H)(\om )=h\int g(\sigma^{-1} \om, \cdot) e^{\theta g(\sigma^{-1} \om, \cdot)} \mc H_{\sigma^{-1} \om} \, dm, \quad \text{for $\om \in \Om$, $\mc H\in \mathcal S$ and
 $h\in \mathbb C$.}
\end{equation}
Indeed, we note that
\[
 ((D_2 H(\theta+h, \mc W)-D_2 H(\theta, \mc W)) \mc H)(\om)=\int (e^{(\theta +h)g(\sigma^{-1} \om, \cdot)}-e^{\theta g(\sigma^{-1} \om, \cdot)} )\mc H_{\sigma^{-1}\om}\, dm.
\]
Hence, using~\eqref{mh2},
\begin{align*}
  & \bigg{\lvert} ((D_2 H(\theta+h, \mc W)-D_2 H(\theta, \mc W)) \mc H)(\om)-h\int g(\sigma^{-1} \om, \cdot) e^{\theta g(\sigma^{-1} \om, \cdot)} \mc H_{\sigma^{-1} \om} \, dm\bigg{\rvert} \displaybreak[0] \\
  &= \bigg{\lvert}\int (e^{(\theta +h)g(\sigma^{-1} \om, \cdot)}-e^{\theta g(\sigma^{-1} \om, \cdot)} -h g(\sigma^{-1} \om, \cdot) e^{\theta g(\sigma^{-1} \om, \cdot)} )\mc H_{\sigma^{-1}\om}\, dm \bigg{\rvert}
\displaybreak[0] \\
&\le \frac 12 M^2e^M \lvert h\rvert^2 \lVert \mc H_{\sigma^{-1}\om}\rVert_1 \displaybreak[0]  \le \frac 12 M^2e^M \lvert h\rvert^2 \lVert \mc H_{\sigma^{-1}\om}\rVert_{\BV}.
  \end{align*}
Thus,
  \begin{align*}
 & \esssup_{\om \in \Om} \bigg{\lvert} ((D_2 H(\theta+h, \mc W)-D_2 H(\theta, \mc W)) \mc H)(\om)-h\int g(\sigma^{-1} \om, \cdot) e^{\theta g(\sigma^{-1} \om, \cdot)} \mc H_{\sigma^{-1} \om} \, dm\bigg{\rvert}
 \displaybreak[0] \\
 &\le \frac 12 M^2e^M \lvert h\rvert^2 \lVert \mc H\rVert_\infty,
\end{align*}
which readily implies~\eqref{D12H}. We now  establish the continuity of $D_{12}H$. By~\eqref{X1}, we have that
\begin{align*}
& \lvert  (( D_{12}H(\theta_1, \mc W^1)h)\mc H)(\om ) - (( D_{12}H(\theta_2, \mc W^2)h)\mc H)(\om )\rvert
\displaybreak[0] \\
&=\lvert h\rvert  \bigg{\lvert} \int  g(\sigma^{-1} \om, \cdot)(e^{\theta_1 g(\sigma^{-1} \om, \cdot)}
 -e^{\theta_2 g(\sigma^{-1} \om, \cdot)})\mc H_{\sigma^{-1}\om}\, dm \bigg{\rvert} \\
 &\le \lvert h \rvert M^2 e^M \lvert \theta_1-\theta_2 \rvert \cdot \lVert \mc H_{\sigma^{-1} \om }\rVert_{\BV}.
\end{align*}
Thus,
\[
 \lVert D_{12}H(\theta_1, \mc W^1)-D_{12}H(\theta_2, \mc W^2)\rVert \le M^2 e^M \lvert \theta_1-\theta_2 \rvert,
\]
which implies the continuity of $D_{12}H$.
\end{proof}

\begin{lemma}
 $D_{11}H$ and $D_{21}H$ exist and are continuous on a neighborhood of $(0,0) \in \mathbb C \times \mc{S}$.
\end{lemma}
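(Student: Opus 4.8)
The plan is to mirror the proof just given for $D_{12}H$ and $D_{22}H$, now differentiating once more in the first variable. First I would record the two candidate formulas. Using~\eqref{D1H} (the formula for $D_1H$), which is affine in $\mc W$, we immediately get $D_{21}H$: for $\om \in \Om$, $\mc H\in \mathcal S$ and $h\in \mathbb C$,
\[
(( D_{21}H(\theta, \mc W)h)\mc H)(\om )=h\int g(\sigma^{-1} \om, \cdot) e^{\theta g(\sigma^{-1} \om, \cdot)} \mc H_{\sigma^{-1} \om} \, dm,
\]
which coincides with the formula for $D_{12}H$ obtained in the previous lemma (as it must, by equality of mixed partials). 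Its continuity is then established exactly as for $D_{12}H$: subtract, use~\eqref{X1} to bound $\lvert e^{\theta_1 g(\sigma^{-1}\om,\cdot)}-e^{\theta_2 g(\sigma^{-1}\om,\cdot)}\rvert \le Me^M\lvert\theta_1-\theta_2\rvert$ pointwise, estimate the resulting integral by $\lvert h\rvert M^2 e^M\lvert\theta_1-\theta_2\rvert\,\lVert \mc H_{\sigma^{-1}\om}\rVert_{\BV}$, and pass to the $\esssup$ over $\om$ to get $\lVert D_{21}H(\theta_1,\mc W^1)-D_{21}H(\theta_2,\mc W^2)\rVert \le M^2 e^M\lvert\theta_1-\theta_2\rvert$, hence (Lipschitz) continuity.

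Next I would handle $D_{11}H$. Differentiating~\eqref{D1H} formally once more in $\theta$ suggests the candidate
\[
(( D_{11}H(\theta, \mc W)h_1)h_2)(\om )= h_1 h_2\int g(\sigma^{-1} \om, \cdot)^2 e^{\theta g(\sigma^{-1} \om, \cdot)}(\mc W_{\sigma^{-1} \om}+v_{\sigma^{-1} \om}^0) \, dm.
\]
To verify this, write the difference quotient $D_1 H(\theta+h,\mc W)-D_1 H(\theta,\mc W)$ evaluated on a test direction and compare with the candidate: the integrand becomes $g(\sigma^{-1}\om,\cdot)\big(e^{(\theta+h)g}-e^{\theta g}-h\,g\,e^{\theta g}\big)(\mc W_{\sigma^{-1}\om}+v^0_{\sigma^{-1}\om})$. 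By the Taylor remainder estimate~\eqref{mh2} (already used in the proof of Lemma~\ref{L6}), $\lvert e^{(\theta+h)g(\sigma^{-1}\om,x)}-e^{\theta g(\sigma^{-1}\om,x)}-h\,g(\sigma^{-1}\om,x)\,e^{\theta g(\sigma^{-1}\om,x)}\rvert \le \tfrac12 M^2 e^M\lvert h\rvert^2$ for $\lvert\theta\rvert,\lvert h\rvert\le\tfrac12$, so multiplying by the extra factor $\lvert g(\sigma^{-1}\om,x)\rvert\le M$ and integrating against $\mc W_{\sigma^{-1}\om}+v^0_{\sigma^{-1}\om}$ (whose $L^1$ norm is controlled uniformly by~\eqref{eq:boundedv} and $\mc W\in B_{\mc S}(0,R)$) gives a bound of order $\lvert h\rvert^2$, so the difference quotient converges and~\eqref{D1H} is indeed differentiable in $\theta$ with the stated derivative. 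For continuity of $D_{11}H$ one splits the difference $D_{11}H(\theta_1,\mc W^1)-D_{11}H(\theta_2,\mc W^2)$ into a $\mc W$-term and a $\theta$-term exactly as in the proof of Lemma~\ref{L6}: the $\mc W$-term is bounded by $\lvert h_1h_2\rvert M^2 e^M\lVert \mc W^1-\mc W^2\rVert_\infty$ and the $\theta$-term, using~\eqref{X1} together with $\lvert g\rvert^2\le M^2$ and the uniform bound on $\lVert v^0\rVert_\infty$, by $\lvert h_1 h_2\rvert M^3 e^M\lvert\theta_1-\theta_2\rvert(R+\lVert v^0\rVert_\infty)$.

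I do not expect any genuine obstacle here: every estimate needed is a minor variant of one already carried out for $D_1H$ (Lemma~\ref{L6}) or for $D_{12}H$, the only new ingredient being the harmless extra bounded factor $g(\sigma^{-1}\om,\cdot)$ (for $D_{11}H$) and the observation that~\eqref{D1H} is affine, hence trivially smooth, in $\mc W$ (giving $D_{21}H$ and making the $D_{21}$-continuity estimate immediate, and showing $D_{22}H=0$ as already noted for the analogous $H$). The mild bookkeeping point to be careful about is keeping track of which arguments $h_1,h_2,h$ are the directions of the first- versus second-order derivative and invoking the restriction $\lvert\theta\rvert,\lvert h\rvert\le\tfrac12$ (and $\mc W\in B_{\mc S}(0,R)$) so that~\eqref{mh2} and the domination by a fixed constant are legitimate on the chosen neighborhood of $(0,0)$. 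With $D_{11}H, D_{12}H, D_{21}H, D_{22}H$ all shown to exist and be continuous near $(0,0)$, combined with the first-order regularity already established, $H$ is of class $C^2$ on a neighborhood of $(0,0)$, and the explicit second derivatives feed directly into the formulas for $F$ used in Appendix~\ref{regofF2} and in Lemma~\ref{TC2}.
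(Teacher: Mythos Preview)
Your proposal is correct and follows essentially the same approach as the paper: the paper also obtains $D_{21}H$ from the affineness of $D_1H$ in $\mc W$ (with the same formula, continuity via~\eqref{X1}), and verifies $D_{11}H$ via the Taylor remainder bound~\eqref{mh2} with continuity established by the same splitting into a $\theta$-term (controlled by~\eqref{X1}) and a $\mc W$-term. The only cosmetic difference is that the paper treats $D_{11}H$ first and $D_{21}H$ second.
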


\begin{proof}
 By identifying $D_1 H(\theta, \mc W)$ with its value in $1$, it follows from~\eqref{D1H} that
 \[
   (D_1 H(\theta, \mc W))(\om)=\int  g(\sigma^{-1} \om, \cdot) e^{\theta g(\sigma^{-1} \om, \cdot)}(\mc{W}_{\sigma^{-1} \om}+v_{\sigma^{-1} \om}^0)  \, dm.
\]
We claim that
\begin{equation}\label{D11H}
 (D_{11}H(\theta, \mc W) h)(\om)=h\int  g(\sigma^{-1} \om, \cdot)^2 e^{\theta g(\sigma^{-1} \om, \cdot)}(\mc{W}_{\sigma^{-1} \om}+v_{\sigma^{-1} \om}^0)  \, dm,
 \quad \text{for $\om \in \Om$ and $h\in \mathbb C$.}
\end{equation}
Indeed, observe that
\begin{align*}
 & (D_1 H(\theta+h, \mc W))(\om)- (D_1 H(\theta, \mc W))(\om) \displaybreak[0] \\
 &=\int  g(\sigma^{-1} \om, \cdot) (e^{(\theta +h) g(\sigma^{-1} \om, \cdot)}-e^{\theta g(\sigma^{-1} \om, \cdot)})(\mc{W}_{\sigma^{-1} \om}+v_{\sigma^{-1} \om}^0)  \, dm.
\end{align*}
Hence, using~\eqref{mh2}, we obtain that
\begin{align*}
 &\esssup_{\om \in \Om}\bigg{\lvert} (D_1 H(\theta+h, \mc W))(\om)- (D_1 H(\theta, \mc W))(\om)-h\int  g(\sigma^{-1} \om, \cdot)^2 e^{\theta g(\sigma^{-1} \om, \cdot)}(\mc{W}_{\sigma^{-1} \om}+v_{\sigma^{-1} \om}^0)  \, dm \bigg{\rvert} &\\
\displaybreak[0] \\
&=\esssup_{\om \in \Om} \bigg{\lvert} \int g(\sigma^{-1} \om , \cdot)(e^{(\theta +h) g(\sigma^{-1} \om, \cdot)}-e^{\theta g(\sigma^{-1} \om, \cdot)}-h g(\sigma^{-1} \om, \cdot)e^{\theta g(\sigma^{-1} \om, \cdot)})(\mc{W}_{\sigma^{-1} \om}+v_{\sigma^{-1} \om}^0)  \, dm \bigg{\rvert} &\\
\displaybreak[0] \\
&\le  \frac{1}{2} M^3e^M \lvert h\rvert^2 (\lVert \mc W\rVert_\infty +\lVert v^0\rVert_\infty),
\end{align*}
which readily implies that~\eqref{D11H} holds. We now establish the continuity of $D_{11}H$. It follows from~\eqref{X1} that
\begin{align*}
 &\lvert (D_{11}H(\theta_1, \mc W^1) h)(\om)-(D_{11}H(\theta_2, \mc W^2) h)(\om)\rvert \displaybreak[0] \\
 &=\lvert h\rvert \cdot  \bigg{\lvert} \int  g(\sigma^{-1} \om, \cdot)^2 e^{\theta_1 g(\sigma^{-1} \om, \cdot)}(\mc{W}_{\sigma^{-1} \om}^1+v_{\sigma^{-1} \om}^0)  \, dm-\int  g(\sigma^{-1} \om, \cdot)^2 e^{\theta_2 g(\sigma^{-1} \om, \cdot)}(\mc{W}_{\sigma^{-1} \om}^2+v_{\sigma^{-1} \om}^0)  \, dm
\bigg{\rvert} \displaybreak[0] \\
&\le \lvert h\rvert \cdot \bigg{\lvert} \int  g(\sigma^{-1} \om, \cdot)^2 e^{\theta_1 g(\sigma^{-1} \om, \cdot)}(\mc{W}_{\sigma^{-1} \om}^1+v_{\sigma^{-1} \om}^0)  \, dm-\int  g(\sigma^{-1} \om, \cdot)^2 e^{\theta_2 g(\sigma^{-1} \om, \cdot)}(\mc{W}_{\sigma^{-1} \om}^1+v_{\sigma^{-1} \om}^0)  \, dm
\bigg{\rvert}\displaybreak[0] \\
&\phantom{\le}+\lvert h\rvert \cdot \bigg{\lvert}\int  g(\sigma^{-1} \om, \cdot)^2 e^{\theta_2 g(\sigma^{-1} \om, \cdot)}(\mc{W}_{\sigma^{-1} \om}^1+v_{\sigma^{-1} \om}^0)  \, dm - \int  g(\sigma^{-1} \om, \cdot)^2 e^{\theta_2 g(\sigma^{-1} \om, \cdot)}(\mc{W}_{\sigma^{-1} \om}^2+v_{\sigma^{-1} \om}^0)  \, dm \bigg{\rvert}
\displaybreak[0] \\
&\le  \lvert h\rvert \cdot \bigg{(} M^3 e^M \lvert \theta_1-\theta_2\rvert (\lVert \mc W^1\rVert_\infty+\lVert v^0\rVert_\infty)+M^2e^M \lVert \mc W^1-\mc W^2\rVert_\infty \bigg{)},
\end{align*}
for \paeom, which implies the continuity of $D_{11}H$. Furthermore, we note that $D_1H$ is affine in $\mc W$, which implies that
\[
(D_{21}H(\theta, \mc W) \mc H)(\om)=\int  g(\sigma^{-1} \om, \cdot) e^{\theta g(\sigma^{-1} \om, \cdot)}\mc{H}_{\sigma^{-1} \om} \, dm.
\]
Continuity of $D_{21}H$ follows easily  from~\eqref{X1}.
\end{proof}

\begin{lemma}
 $D_{22} G$ and $D_{12}G$ exist and are continuous on a neighborhood of $(0,0) \in \mathbb C \times \mc{S}$.
\end{lemma}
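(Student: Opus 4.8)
The statement to prove is that $D_{22}G$ and $D_{12}G$ exist and are continuous on a neighborhood of $(0,0)\in\mathbb C\times\mc S$. The plan is to exploit two structural features of $G$ that have already been used repeatedly in Section~\ref{regofF}: first, $G(\theta,\mc W)$ is an \emph{affine} map of the second variable $\mc W$, and second, the first-order derivatives $D_1G$ and $D_2G$ have the explicit forms recorded in Proposition~\ref{difF} and in \eqref{D2G}, \eqref{D1G}. Since $G(\theta,\mc W)_\om=\mcl^\theta_{\sigma^{-1}\om}(\mc W_{\sigma^{-1}\om}+v^0_{\sigma^{-1}\om})$ is affine in $\mc W$, the map $\mc W\mapsto D_2G(\theta,\mc W)$ is \emph{constant}; concretely, $(D_2G(\theta,\mc W)\mc H)_\om=\mcl^\theta_{\sigma^{-1}\om}\mc H_{\sigma^{-1}\om}$ does not involve $\mc W$ at all. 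Differentiating once more in the $\mc W$ direction therefore gives $D_{22}G=0$, which is trivially continuous. This mirrors exactly the observation $D_{22}H=0$ already made in Appendix~\ref{regofF2}.

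For $D_{12}G$, I would start from the formula \eqref{D1G}, namely
\[
(D_1G(\theta,\mc W)t)_\om = t\,\mcl_{\sigma^{-1}\om}\big(g(\sigma^{-1}\om,\cdot)e^{\theta g(\sigma^{-1}\om,\cdot)}(\mc W_{\sigma^{-1}\om}+v^0_{\sigma^{-1}\om})\big),
\]
which is visibly affine in $\mc W$. Hence its derivative in $\mc W$ is again constant in $\mc W$ and linear in the increment $\mc H$, and one expects
\[
((D_{12}G(\theta,\mc W)t)\mc H)_\om = t\,\mcl_{\sigma^{-1}\om}\big(g(\sigma^{-1}\om,\cdot)e^{\theta g(\sigma^{-1}\om,\cdot)}\mc H_{\sigma^{-1}\om}\big).
\]
To justify this rigorously I would verify that the right-hand side is a bounded bilinear operator in $(t,\mc H)$ (using \ref{cond:unifNormBd}, the bound $\|e^{\theta g(\om,\cdot)}\|_{L^\infty}\le e^{|\theta|M}$, condition~(V8) and Lemma~\ref{var} to control the $\BV$-norm of $e^{\theta g}\mc H_\om$), and that the affine difference $D_1G(\theta,\mc W+\mc H)t-D_1G(\theta,\mc W)t$ equals this expression exactly — no remainder is needed precisely because of affinity.

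Continuity of $D_{12}G$ in $(\theta,\mc W)$ reduces, by the same affinity, to continuity in $\theta$ alone. For that I would estimate, for $\|\mc H\|_\infty\le 1$ and $|t|\le 1$,
\[
\esssup_{\om\in\Om}\big\|\mcl_{\sigma^{-1}\om}\big(g(\sigma^{-1}\om,\cdot)(e^{\theta_1 g(\sigma^{-1}\om,\cdot)}-e^{\theta_2 g(\sigma^{-1}\om,\cdot)})\mc H_{\sigma^{-1}\om}\big)\big\|_{\BV},
\]
bounding it by $K\|(\,g(\sigma^{-1}\om,\cdot)(e^{\theta_1 g}-e^{\theta_2 g})\,)\mc H_{\sigma^{-1}\om}\|_{\BV}$ via \ref{cond:unifNormBd}, then applying (V8), the pointwise bound \eqref{X1}, and Lemma~\ref{L5} to control the variation of $g\cdot(e^{\theta_1 g}-e^{\theta_2 g})$ — exactly the combination used in the continuity proofs for $D_1G$ and $D_{12}H$. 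This yields a Lipschitz bound of the form $\|D_{12}G(\theta_1,\cdot)-D_{12}G(\theta_2,\cdot)\|\le C|\theta_1-\theta_2|$ on a neighborhood of $0$. The only genuinely delicate point — and the main (very mild) obstacle — is keeping the $\BV$-norm estimates uniform in $\om\in\Om$; but this is handled entirely by the uniform-in-$\om$ hypotheses \eqref{obs}, $\esssup_\om\var(g_\om)<\infty$, and \ref{cond:unifNormBd}, together with the already-proved Lemmas~\ref{var} and~\ref{L5}. Since $D_{22}G=0$ and $D_{12}G$ exists and is (Lipschitz) continuous near $(0,0)$, the lemma follows; combined with the earlier lemmas on $D_1G, D_2G, D_{11}H, D_{12}H, D_{22}H$, this completes the proof that $F$ is $C^2$ near $(0,0)$.
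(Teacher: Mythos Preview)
Your argument is correct, but it takes a slightly different route from the paper. The paper computes $D_{12}G$ as $D_1(D_2G)$: starting from \eqref{D2G}, it differentiates $(D_2G(\theta,\mc W)\mc H)_\om=\mcl^\theta_{\sigma^{-1}\om}\mc H_{\sigma^{-1}\om}$ in $\theta$, which produces a genuine Taylor remainder controlled by \eqref{mh2} and \eqref{t}. You instead compute $D_2(D_1G)$, exploiting that \eqref{D1G} is affine in $\mc W$ so the derivative drops out exactly with no remainder; this is precisely the argument the paper uses for the \emph{other} mixed partial $D_{21}G$ in the next lemma. Your route is shorter, but note that in the paper's index convention $D_{12}=D_1D_2$, so strictly speaking you have established $D_{21}G$; to conclude the stated lemma you would either invoke symmetry of mixed partials (legitimate here since you show the derivative is continuous) or redo the computation starting from $D_2G$ as the paper does. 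The continuity argument via \ref{cond:unifNormBd}, (V8), \eqref{X1} and Lemma~\ref{L5} is identical in both approaches.
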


\begin{proof}
 It follows directly from~\eqref{D2G} that $D_{22}G=0$.  We claim that
\begin{equation}\label{D12G}
 (D_{12}G(\theta, \mc W)h (\mc H))_{\om}=h \mathcal L_{\sigma^{-1} \om}(g(\sigma^{-1} \om, \cdot)e^{\theta g(\sigma^{-1} \om, \cdot)} \mc H_{\sigma^{-1} \om}) \quad
 \text{for $\om \in \Om$, $\mc H \in \mathcal S$ and $h\in \mathbb C$.}
\end{equation}
Indeed, we first note that
\[
 (D_2 G(\theta+h, \mc W)-D_2 G(\theta, \mc W))(\mc H)_\om=\mathcal L_{\sigma^{-1} \om}((e^{(\theta+h)g(\sigma^{-1} \om, \cdot)}-e^{\theta g(\sigma^{-1} \om, \cdot)})\mc H_{\sigma^{-1} \om}).
\]
We have  that
\begin{align*}
 &\lVert  \mathcal L_{\sigma^{-1} \om} ((e^{(\theta+h)g(\sigma^{-1} \om, \cdot)}-e^{\theta g(\sigma^{-1} \om, \cdot)}-hg(\sigma^{-1} \om, \cdot)e^{\theta g(\sigma^{-1} \om, \cdot)})\mc H_{\sigma^{-1} \om}) \rVert_{\BV} \displaybreak[0] \\
 & \le K \lVert (e^{(\theta+h)g(\sigma^{-1} \om, \cdot)}-e^{\theta g(\sigma^{-1} \om, \cdot)}-hg(\sigma^{-1} \om, \cdot)e^{\theta g(\sigma^{-1} \om, \cdot)})\mc H_{\sigma^{-1} \om} \rVert_{\BV} \displaybreak[0] \\
 &= K \var ((e^{(\theta+h)g(\sigma^{-1} \om, \cdot)}-e^{\theta g(\sigma^{-1} \om, \cdot)}-hg(\sigma^{-1} \om, \cdot)e^{\theta g(\sigma^{-1} \om, \cdot)})\mc H_{\sigma^{-1} \om}) \displaybreak[0] \\
 &\phantom{=}+K \lVert (e^{(\theta+h)g(\sigma^{-1} \om, \cdot)}-e^{\theta g(\sigma^{-1} \om, \cdot)}-hg(\sigma^{-1} \om, \cdot)e^{\theta g(\sigma^{-1} \om, \cdot)})\mc H_{\sigma^{-1} \om}\rVert_1 \displaybreak[0] \\
 &\le K\var (e^{(\theta+h)g(\sigma^{-1} \om, \cdot)}-e^{\theta g(\sigma^{-1} \om, \cdot)}-hg(\sigma^{-1} \om, \cdot)e^{\theta g(\sigma^{-1} \om, \cdot)}) \cdot \lVert \mc H_{\sigma^{-1} \om}\rVert_{L^\infty} \displaybreak[0] \\
 &\phantom{\le}+K\lVert e^{(\theta+h)g(\sigma^{-1} \om, \cdot)}-e^{\theta g(\sigma^{-1} \om, \cdot)}-hg(\sigma^{-1} \om, \cdot)e^{\theta g(\sigma^{-1} \om, \cdot)}\rVert_{L^\infty} \cdot \var ( \mc H_{\sigma^{-1} \om})\displaybreak[0] \\
 &\phantom{\le}+K\lVert e^{(\theta+h)g(\sigma^{-1} \om, \cdot)}-e^{\theta g(\sigma^{-1} \om, \cdot)}-hg(\sigma^{-1} \om, \cdot)e^{\theta g(\sigma^{-1} \om, \cdot)}\rVert_{L^\infty} \cdot \lVert \mc H_{\sigma^{-1} \om} \rVert_1
\end{align*}
It follows from~\eqref{mh2} and~\eqref{t} that
\[
 \frac{1}{\lvert h\rvert} \sup_{\lVert \mc H\rVert_\infty \le 1}\lVert  \mathcal L_{\sigma^{-1} \om} ((e^{(\theta+h)g(\sigma^{-1} \om, \cdot)}-e^{\theta g(\sigma^{-1} \om, \cdot)}-hg(\sigma^{-1} \om, \cdot)e^{\theta g(\sigma^{-1} \om, \cdot)})\mc H_{\sigma^{-1} \om}) \rVert_{\BV}
\to 0,
 \]
when $h\to 0$, which establishes~\eqref{D12G}. It remains to establish the continuity of $D_{12} G$.  We have
\begin{align*}
 & \lVert  (D_{12}G(\theta_1, \mc W^1)h (\mc H))_{\om}-(D_{12}G(\theta_2, \mc W^2)h (\mc H))_{\om}\rVert_{\BV} \displaybreak[0] \\
 &= \lvert h\rvert \cdot \lVert \mathcal L_{\sigma^{-1} \om}(g(\sigma^{-1} \om, \cdot)(e^{\theta_1 g(\sigma^{-1} \om, \cdot)} -e^{\theta_2 g(\sigma^{-1} \om, \cdot)})\mc H_{\sigma^{-1} \om})\rVert_{\BV} \displaybreak[0] \\
 &\le K\lvert h\rvert \cdot \lVert g(\sigma^{-1} \om, \cdot)(e^{\theta_1 g(\sigma^{-1} \om, \cdot)} -e^{\theta_2 g(\sigma^{-1} \om, \cdot)})\mc H_{\sigma^{-1} \om}\rVert_{\BV}\displaybreak[0] \\
 &=K\lvert h\rvert \cdot \var (g(\sigma^{-1} \om, \cdot)(e^{\theta_1 g(\sigma^{-1} \om, \cdot)} -e^{\theta_2 g(\sigma^{-1} \om, \cdot)})\mc H_{\sigma^{-1} \om}) \displaybreak[0] \\
 &\phantom{=}+K\lvert h\rvert \cdot \lVert g(\sigma^{-1} \om, \cdot)(e^{\theta_1 g(\sigma^{-1} \om, \cdot)} -e^{\theta_2 g(\sigma^{-1} \om, \cdot)})\mc H_{\sigma^{-1} \om}\rVert_1 \displaybreak[0] \\
 &\le K\lvert h\rvert \cdot \var (g(\sigma^{-1} \om, \cdot)(e^{\theta_1 g(\sigma^{-1} \om, \cdot)} -e^{\theta_2 g(\sigma^{-1} \om, \cdot)})) \cdot \lVert \mc H_{\sigma^{-1} \om} \rVert_1 \displaybreak[0] \\
 &\phantom{\le}+KM \lvert h\rvert \cdot \lVert e^{\theta_1 g(\sigma^{-1} \om, \cdot)} -e^{\theta_2 g(\sigma^{-1} \om, \cdot)}\rVert_{L^\infty} \cdot \var(\mc H_{\sigma^{-1} \om}) \displaybreak[0] \\
 &\phantom{\le}+KM \lvert h\rvert \cdot \lVert e^{\theta_1 g(\sigma^{-1} \om, \cdot)} -e^{\theta_2 g(\sigma^{-1} \om, \cdot)}\rVert_{L^\infty} \cdot \lVert \mc H_{\sigma^{-1} \om} \rVert_1.
 \end{align*}
 Moreover,
\[
 \begin{split}
  \var (g(\sigma^{-1} \om, \cdot)(e^{\theta_1 g(\sigma^{-1} \om, \cdot)} -e^{\theta_2 g(\sigma^{-1} \om, \cdot)})) &\le M\var (e^{\theta_1 g(\sigma^{-1} \om, \cdot)} -e^{\theta_2 g(\sigma^{-1} \om, \cdot)}) \\
  &\phantom{\le}+\var (g(\sigma^{-1} \om, \cdot))\cdot \lVert e^{\theta_1 g(\sigma^{-1} \om, \cdot)} -e^{\theta_2 g(\sigma^{-1} \om, \cdot)}\rVert_{L^\infty},
 \end{split}
\]
which together with~\eqref{X1} and Lemma~\ref{L5} gives the continuity of $D_{12}G$.
\end{proof}

\begin{lemma}
 $D_{11}G$ and $D_{21}G$ exist and are continuous on a neighborhood of $(0,0) \in \mathbb C \times \mc{S}$.

\end{lemma}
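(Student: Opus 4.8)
Following the pattern of the preceding lemmas on $D_1G$, $D_{11}H$ and $D_{12}G$, the plan is to differentiate once more in each variable the explicit formula~\eqref{D1G} for $D_1G$ (identified with its value at $1$),
\[
 (D_1 G(\theta, \mc W))_\om =\mathcal  L_{\sigma^{-1} \om}\big(g(\sigma^{-1}\om ,\cdot)e^{\theta g(\sigma^{-1} \om ,\cdot)}(\mc W_{\sigma^{-1} \om}+v^0_{\sigma^{-1} \om})\big).
\]
For $D_{11}G$ the natural candidate is
\[
 (D_{11}G(\theta, \mc W)h)_\om = h\,\mathcal  L_{\sigma^{-1} \om}\big(g(\sigma^{-1}\om ,\cdot)^2 e^{\theta g(\sigma^{-1} \om ,\cdot)}(\mc W_{\sigma^{-1} \om}+v^0_{\sigma^{-1} \om})\big),
\]
which is a bounded linear operator $B_{\mathbb C}(0,1)\to\mc{S}'$ by~\eqref{obs}, \ref{cond:unifNormBd} and~\eqref{eq:boundedv}. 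To verify it I would form the difference $(D_1 G(\theta+h,\mc W)-D_1 G(\theta,\mc W))_\om - h(D_{11}G(\theta,\mc W))_\om$, which equals $\mathcal L_{\sigma^{-1}\om}$ applied to $g(\sigma^{-1}\om,\cdot)\big(e^{(\theta+h)g(\sigma^{-1}\om,\cdot)}-e^{\theta g(\sigma^{-1}\om,\cdot)}-hg(\sigma^{-1}\om,\cdot)e^{\theta g(\sigma^{-1}\om,\cdot)}\big)(\mc W_{\sigma^{-1}\om}+v^0_{\sigma^{-1}\om})$, bound its $\BV$-norm by $K$ times the $\BV$-norm of the argument via~\ref{cond:unifNormBd}, split into variation and $L^1$ parts using~(V8), and then control the resulting factors by the second-order Taylor estimate~\eqref{mh2} in $L^\infty$, the variation estimate~\eqref{t}, and~(V9) for the multiplier $g(\sigma^{-1}\om,\cdot)$. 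Every term carries a factor $|h|^2$, so dividing by $|h|$ and letting $h\to0$ establishes the formula.

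For continuity of $D_{11}G$ on a neighbourhood $B_{\mathbb C}(0,\ep)\times B_{\mc{S}}(0,R)$ of $(0,0)$, I would estimate $\lVert D_{11}G(\theta_1,\mc W^1)-D_{11}G(\theta_2,\mc W^2)\rVert$ by inserting the intermediate operator with parameters $(\theta_2,\mc W^1)$, so that one summand involves $e^{\theta_1 g(\sigma^{-1}\om,\cdot)}-e^{\theta_2 g(\sigma^{-1}\om,\cdot)}$ and the other involves $\mc W^1_{\sigma^{-1}\om}-\mc W^2_{\sigma^{-1}\om}$. Applying~\ref{cond:unifNormBd}, (V8), the pointwise bound~\eqref{X1} and Lemma~\ref{L5} (for $\var(e^{\theta_1 g(\sigma^{-1}\om,\cdot)}-e^{\theta_2 g(\sigma^{-1}\om,\cdot)})$) yields a Lipschitz-type estimate $\lVert D_{11}G(\theta_1,\mc W^1)-D_{11}G(\theta_2,\mc W^2)\rVert\le C_1|\theta_1-\theta_2|+C_2\lVert\mc W^1-\mc W^2\rVert_\infty$, with constants depending only on $M$, $K$, $\esssup_\om\var(g(\om,\cdot))$, $\lVert v^0\rVert_\infty$ and $R$.

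Finally, since~\eqref{D1G} is affine in $\mc W$, differentiating it in $\mc W$ gives immediately
\[
 (D_{21}G(\theta, \mc W)\mc H)_\om = \mathcal  L_{\sigma^{-1} \om}\big(g(\sigma^{-1}\om ,\cdot)e^{\theta g(\sigma^{-1} \om ,\cdot)}\mc H_{\sigma^{-1} \om}\big),
\]
which is the same operator as $D_{12}G$ from~\eqref{D12G} (as expected by symmetry of mixed partials), and its continuity follows exactly as for $D_{12}G$, via~\ref{cond:unifNormBd}, (V8), (V9), \eqref{X1} and Lemma~\ref{L5}. Together with the lemmas of Sections~\ref{regofF}--\ref{regofF2} this shows that $G$, and hence $F$ (defined in~\eqref{defF}), is of class $C^2$ on a neighbourhood of $(0,0)$. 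I do not expect a genuine obstacle here: the argument is a routine transcription of the preceding lemmas with $g$ replaced by $g^2$ in the relevant places, and the only point requiring care is that all constants stay uniform in $\om\in\Om$, which is guaranteed by~\eqref{obs}, \ref{cond:unifNormBd} and~\eqref{eq:boundedv} and by restricting $\mc W$ to a bounded ball.
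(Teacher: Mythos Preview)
Your proposal is correct and follows essentially the same approach as the paper: the same candidate formulas for $D_{11}G$ and $D_{21}G$, the same Taylor remainder estimates \eqref{mh2} and \eqref{t} to verify differentiability, and the same intermediate insertion $(\theta_2,\mc W^1)$ together with \eqref{X1} and Lemma~\ref{L5} for continuity. Your additional remark that $D_{21}G$ coincides with $D_{12}G$ is a useful observation not made explicit in the paper, but the underlying argument is the same.
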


\begin{proof}
 By identifying $D_1 G(\theta, \mc W)$ with its value in $1$, it follows from~\eqref{D1G} that
 \[
   D_1 G(\theta, \mc W)_{\om }=\mathcal  L_{\sigma^{-1} \om}(g(\sigma^{-1}\om ,\cdot)e^{\theta g(\sigma^{-1} \om ,\cdot)}(\mc W_{\sigma^{-1} \om}+v^0_{\sigma^{-1} \om})), \quad \om \in \Om.
 \]
We claim that
\begin{equation}\label{D11G}
 (D_{11} G(\theta, \mc W)h)_\om=h \mathcal  L_{\sigma^{-1} \om}(g(\sigma^{-1}\om ,\cdot)^2e^{\theta g(\sigma^{-1} \om ,\cdot)}(\mc W_{\sigma^{-1} \om}+v^0_{\sigma^{-1} \om})).
\end{equation}
Indeed, we have
\begin{align*}
  & \lVert  D_1 G(\theta+h, \mc W)_{\om }- D_1 G(\theta, \mc W)_{\om }-h \mathcal  L_{\sigma^{-1} \om}(g(\sigma^{-1}\om ,\cdot)^2e^{\theta g(\sigma^{-1} \om ,\cdot)}(\mc W_{\sigma^{-1} \om}+v^0_{\sigma^{-1} \om}))
  \rVert_{\BV} \displaybreak[0] \\
  & =\lVert \mathcal  L_{\sigma^{-1} \om} (g(\sigma^{-1}\om ,\cdot) (e^{(\theta+h)g(\sigma^{-1} \om ,\cdot)}-e^{\theta g(\sigma^{-1} \om ,\cdot)}-hg(\sigma^{-1}\om ,\cdot)e^{\theta g(\sigma^{-1} \om ,\cdot)})
 (\mc W_{\sigma^{-1} \om}+v^0_{\sigma^{-1} \om}))\rVert_{\BV} \displaybreak[0] \\
 &\le K \lVert g(\sigma^{-1}\om ,\cdot) (e^{(\theta+h)g(\sigma^{-1} \om ,\cdot)}-e^{\theta g(\sigma^{-1} \om ,\cdot)}-hg(\sigma^{-1}\om ,\cdot)e^{\theta g(\sigma^{-1} \om ,\cdot)})
 (\mc W_{\sigma^{-1} \om}+v^0_{\sigma^{-1} \om})\rVert_{\BV} \displaybreak[0] \\
 &=\var (g(\sigma^{-1}\om ,\cdot) (e^{(\theta+h)g(\sigma^{-1} \om ,\cdot)}-e^{\theta g(\sigma^{-1} \om ,\cdot)}-hg(\sigma^{-1}\om ,\cdot)e^{\theta g(\sigma^{-1} \om ,\cdot)})
 (\mc W_{\sigma^{-1} \om}+v^0_{\sigma^{-1} \om})) \displaybreak[0] \\
 &\phantom{=}+\lVert g(\sigma^{-1}\om ,\cdot) (e^{(\theta+h)g(\sigma^{-1} \om ,\cdot)}-e^{\theta g(\sigma^{-1} \om ,\cdot)}-hg(\sigma^{-1}\om ,\cdot)e^{\theta g(\sigma^{-1} \om ,\cdot)})
 (\mc W_{\sigma^{-1} \om}+v^0_{\sigma^{-1} \om})\rVert_1 \displaybreak[0] \\
 &\le \var (g(\sigma^{-1}\om ,\cdot) (e^{(\theta+h)g(\sigma^{-1} \om ,\cdot)}-e^{\theta g(\sigma^{-1} \om ,\cdot)}-hg(\sigma^{-1}\om ,\cdot)e^{\theta g(\sigma^{-1} \om ,\cdot)})) \cdot
 \lVert \mc W_{\sigma^{-1} \om}+v^0_{\sigma^{-1} \om}\rVert_{L^\infty} \displaybreak[0] \\
 &\phantom{\le}+\lVert g(\sigma^{-1}\om ,\cdot) (e^{(\theta+h)g(\sigma^{-1} \om ,\cdot)}-e^{\theta g(\sigma^{-1} \om ,\cdot)}-hg(\sigma^{-1}\om ,\cdot)e^{\theta g(\sigma^{-1} \om ,\cdot)})\rVert_{L^\infty}
 \cdot \var (\mc W_{\sigma^{-1} \om}+v^0_{\sigma^{-1} \om}) \displaybreak[0] \\
 &\phantom{\le}+M\lVert e^{(\theta+h)g(\sigma^{-1} \om ,\cdot)}-e^{\theta g(\sigma^{-1} \om ,\cdot)}-hg(\sigma^{-1}\om ,\cdot)e^{\theta g(\sigma^{-1} \om ,\cdot)}\rVert_{L^\infty} \cdot
 \lVert \mc W_{\sigma^{-1} \om}+v^0_{\sigma^{-1} \om}\rVert_1,
 \end{align*}
and therefore~\eqref{D11G} follows directly from~\eqref{mh2} and~\eqref{t}. We now establish the continuity of $D_{11}G$. Observe that
\begin{align*}
  & \lVert  (D_{11} G(\theta_1, \mc W^1)h)_\om -(D_{11} G(\theta_2, \mc W^2)h)_\om \rVert_{\BV} \displaybreak[0] \\
  &=\lvert h\rvert \cdot \lVert \mathcal  L_{\sigma^{-1} \om}(g(\sigma^{-1}\om ,\cdot)^2e^{\theta_1 g(\sigma^{-1} \om ,\cdot)}(\mc W_{\sigma^{-1} \om}^1+v^0_{\sigma^{-1} \om})-
   g(\sigma^{-1}\om ,\cdot)^2e^{\theta_2 g(\sigma^{-1} \om ,\cdot)}(\mc W_{\sigma^{-1} \om}^2+v^0_{\sigma^{-1} \om}))\rVert_{\BV} \displaybreak[0] \\
   & \le K\lvert h\rvert \cdot \lVert g(\sigma^{-1}\om ,\cdot)^2e^{\theta_1 g(\sigma^{-1} \om ,\cdot)}(\mc W_{\sigma^{-1} \om}^1+v^0_{\sigma^{-1} \om})-
   g(\sigma^{-1}\om ,\cdot)^2e^{\theta_2 g(\sigma^{-1} \om ,\cdot)}(\mc W_{\sigma^{-1} \om}^2+v^0_{\sigma^{-1} \om})\rVert_{\BV} \displaybreak[0] \\
   &\le K\lvert h\rvert \cdot \lVert g(\sigma^{-1}\om ,\cdot)^2e^{\theta_1 g(\sigma^{-1} \om ,\cdot)}(\mc W_{\sigma^{-1} \om}^1+v^0_{\sigma^{-1} \om})-
   g(\sigma^{-1}\om ,\cdot)^2e^{\theta_2 g(\sigma^{-1} \om ,\cdot)}(\mc W_{\sigma^{-1} \om}^1+v^0_{\sigma^{-1} \om})\rVert_{\BV}\displaybreak[0] \\
   &\phantom{\le} +K\lvert h\rvert \cdot \lVert g(\sigma^{-1}\om ,\cdot)^2e^{\theta_2 g(\sigma^{-1} \om ,\cdot)}(\mc W_{\sigma^{-1} \om}^1+v^0_{\sigma^{-1} \om})-
   g(\sigma^{-1}\om ,\cdot)^2e^{\theta_2 g(\sigma^{-1} \om ,\cdot)}(\mc W_{\sigma^{-1} \om}^2+v^0_{\sigma^{-1} \om})\rVert_{\BV}.
 \end{align*}
The continuity of $D_{11}G$ now follows  easily from~\eqref{X1} and Lemma~\ref{L5}.
Finally, we note that $D_1 G$ is an affine map in $\mc W$ and therefore
\[
    (D_{21} G(\theta, \mc W) \mc H)_{\om }=\mathcal  L_{\sigma^{-1} \om}(g(\sigma^{-1}\om ,\cdot)e^{\theta g(\sigma^{-1} \om ,\cdot)}\mc H_{\sigma^{-1} \om} ),
\]
which can be showed to be  continuous by using~\eqref{X1} and Lemma~\ref{L5} again.
\end{proof}
The following result is a direct consequence of the previous lemmas.
\begin{proposition}\label{prop:F-C2}
The function  $F$ defined by~\eqref{defF} is of class $C^2$ on a neighborhood $(0, 0) \in \mathbb C \times \mc{S}$.
\end{proposition}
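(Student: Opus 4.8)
The plan is to recognise that $F$ is built by elementary, regularity-preserving operations from the two maps $G$ and $H$ defined at the start of Section~\ref{regofF}. With the notation there, for $\bbp$-a.e.\ $\om$ one has $F(\theta,\mc{W})_\om = H(\theta,\mc{W})(\om)^{-1}\,G(\theta,\mc{W})_\om - \mc{W}_\om - v_\om^0$. So the first step is to collect, from the lemmas of Appendices~\ref{regofF} and~\ref{regofF2}, that on a neighborhood of $(0,0)\in\mathbb{C}\times\mc{S}$ the maps $G\colon\mathbb{C}\times\mc{S}\to\mc{S}'$ and $H\colon\mathbb{C}\times\mc{S}\to L^\infty(\Om)$ have continuous partial derivatives of all orders up to $2$ (with $D_{22}G=D_{22}H=0$). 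By the standard criterion for $C^k$ regularity of maps between Banach spaces — existence and continuity of all partial derivatives of order $\le k$, which for $k=2$ amounts precisely to continuity of $D_1G,D_2G,D_{11}G,D_{12}G,D_{21}G,D_{22}G$ and their $H$-analogues — this yields that $G$ and $H$ are of class $C^2$ there.

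Next I would assemble $F$ from $G$ and $H$, using three auxiliary facts. First, by (the proof of) Lemma~\ref{lem:FwellDef} one has $\essinf_\om|H(\theta,\mc{W})(\om)|\ge 1/2$ on a neighborhood of $(0,0)$; since $\{u\in L^\infty(\Om):\essinf|u|>0\}$ is open in the Banach algebra $L^\infty(\Om)$ and the inversion $u\mapsto u^{-1}$ is real-analytic, hence $C^\infty$, on that set, the composite $(\theta,\mc{W})\mapsto H(\theta,\mc{W})^{-1}$ is $C^2$ near $(0,0)$. Second, the pointwise-in-$\om$ multiplication $L^\infty(\Om)\times\mc{S}'\to\mc{S}'$, $(u,\mc{V})\mapsto u\mc{V}$ with $(u\mc{V})_\om := u(\om)\mc{V}_\om$, satisfies $\|u\mc{V}\|_{\mc{S}'}=\esssup_\om|u(\om)|\,\|\mc{V}_\om\|_{\BV}\le\|u\|_{L^\infty}\|\mc{V}\|_{\mc{S}'}$, so it is bounded bilinear and therefore $C^\infty$. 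Third, $\mc{W}\mapsto -\mc{W}_\om - v_\om^0$ is affine, hence $C^\infty$. Composing and adding $C^2$ maps then shows that $F$ is $C^2$ as a map with values in $\mc{S}'$; the formulas for $D_1F$ and $D_2F$ are exactly those of Proposition~\ref{difF}.

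Finally I would verify that $F$ actually takes values in $\mc{S}$: integrating over $\X$ gives $\int F(\theta,\mc{W})_\om\,dm = H(\theta,\mc{W})(\om)^{-1}\int G(\theta,\mc{W})_\om\,dm - 0 - 1 = 1 - 1 = 0$ for a.e.\ $\om$, using $\int G(\theta,\mc{W})_\om\,dm = H(\theta,\mc{W})(\om)$, $\int\mc{W}_\om\,dm=0$, and $\int v_\om^0\,dm=1$. Since $\mc{S}$ is a closed subspace of $\mc{S}'$, $F$ is $C^2$ as a map into $\mc{S}$, which is the assertion. I do not expect a genuine obstacle here: the substantive work — computing the partial derivatives of $G$ and $H$ and proving their continuity, where the variation axioms (V8)--(V9), the uniform bounds of Lemma~\ref{lem:boundedv}, and the estimates of Lemmas~\ref{var} and~\ref{L5} are used — has already been carried out in the appendices. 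The only points requiring care are invoking the correct Banach-calculus facts (the ``all partials continuous $\Rightarrow$ $C^2$'' criterion, smoothness of Banach-algebra inversion, smoothness of bounded bilinear maps) and the final range-in-$\mc{S}$ check.
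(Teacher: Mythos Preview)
Your proposal is correct and follows essentially the same approach as the paper, which simply states that the result ``is a direct consequence of the previous lemmas'' (i.e., the lemmas of Appendices~\ref{regofF} and~\ref{regofF2} establishing existence and continuity of all first- and second-order partials of $G$ and $H$). Your write-up is more explicit than the paper's one-line proof about the Banach-calculus assembly (smoothness of inversion in $L^\infty(\Om)$, of bounded bilinear multiplication, and the range-in-$\mc{S}$ check), but the underlying argument is the same.
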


\section{Differentiability of  $\phi^\theta$, the top space for adjoint twisted cocycle $\mc{R}^{\theta *}$}\label{0245}

We begin with some auxiliary results.
\begin{lemma}
 There exists $C>0$ such that
 \begin{equation}\label{DAC}
  \lVert \mathcal L_\om^{\theta_1}-\mathcal L_{\om}^{\theta_2} \rVert \le C \lvert \theta_1-\theta_2 \lvert, \quad \text{for $\theta_1, \theta_2 \in B_{\mathbb C}(0, 1)$ and $\om \in \Om$.}
 \end{equation}

\end{lemma}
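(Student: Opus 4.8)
The statement asserts a Lipschitz bound, uniform over $\om \in \Om$, for the map $\theta \mapsto \mcl_\om^\theta$ in the operator norm on $\BV$. The natural approach is to factor $\mcl_\om^\theta = \mcl_\om \circ M_\om^\theta$, where $M_\om^\theta$ is the multiplication operator $h \mapsto e^{\theta g(\om,\cdot)} h$ introduced in Lemma~\ref{1214}(2), and exploit the uniform bound $\lVert \mcl_\om \rVert_\BV \le K$ from condition~\ref{cond:unifNormBd}. Then
\[
 \lVert \mcl_\om^{\theta_1} - \mcl_\om^{\theta_2} \rVert = \lVert \mcl_\om \circ (M_\om^{\theta_1} - M_\om^{\theta_2}) \rVert \le K \lVert M_\om^{\theta_1} - M_\om^{\theta_2} \rVert,
\]
so it suffices to establish a uniform Lipschitz bound for $\theta \mapsto M_\om^\theta$ on $B_\C(0,1)$. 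This reduces the problem to controlling $\lVert (e^{\theta_1 g(\om,\cdot)} - e^{\theta_2 g(\om,\cdot)}) h \rVert_\BV$ in terms of $\lvert \theta_1 - \theta_2 \rvert \cdot \lVert h \rVert_\BV$.

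First I would decompose this $\BV$-norm via $\lVert \cdot \rVert_\BV = \var(\cdot) + \lvert \cdot \rvert_1$ and apply condition~(V8), giving
\[
 \var\big((e^{\theta_1 g} - e^{\theta_2 g}) h\big) \le \lVert e^{\theta_1 g} - e^{\theta_2 g} \rVert_{L^\infty} \var(h) + \var(e^{\theta_1 g} - e^{\theta_2 g}) \lVert h \rVert_{L^\infty},
\]
while the $L^1$ part is handled directly by the $L^\infty$ bound on $e^{\theta_1 g} - e^{\theta_2 g}$. The two ingredients needed are then: (i) $\lVert e^{\theta_1 g(\om,\cdot)} - e^{\theta_2 g(\om,\cdot)} \rVert_{L^\infty} \le M e^M \lvert \theta_1 - \theta_2 \rvert$, which follows from the mean value theorem applied to $z \mapsto e^{zg(\om,x)}$ together with the uniform bound $\lvert g \rvert \le M$ from~\eqref{obs} (this is exactly estimate~\eqref{X1} from Appendix~\ref{regofF}); and (ii) $\var(e^{\theta_1 g(\om,\cdot)} - e^{\theta_2 g(\om,\cdot)}) \le C' \lvert \theta_1 - \theta_2 \rvert$ with $C'$ uniform in $\om$, which is precisely Lemma~\ref{L5} combined with $\lvert \theta_2 \rvert \le 1$ and the uniform bound $\esssup_{\om} \var(g(\om,\cdot)) < \infty$ from~\eqref{obs}. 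Finally one invokes (V3) to bound $\lVert h \rVert_{L^\infty} \le C_{\var} \lVert h \rVert_\BV$, so all terms are $\lesssim \lvert \theta_1 - \theta_2 \rvert \cdot \lVert h \rVert_\BV$ with constants depending only on $M$, $K$, $C_{\var}$, and $\esssup_\om \var(g(\om,\cdot))$.

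There is essentially no serious obstacle here; the only point requiring a little care is ensuring the constants genuinely do not depend on $\om$, which is guaranteed because all the relevant quantities ($M$, $\esssup_\om \var(g_\om)$, $K$, $C_{\var}$) are uniform in $\om$ by~\eqref{obs}, \ref{cond:unifNormBd}, and the axioms on $\var$. I would also note that restricting to $B_\C(0,1)$ lets us replace factors like $e^{\lvert \theta_i \rvert M}$ by the constant $e^M$ and $\lvert \theta_2 \rvert e^{\lvert \theta_2 \rvert M}$ by $e^M$, which keeps the bound clean. This lemma is the first of the auxiliary results of Appendix~\ref{0245} and will feed into the proof that $\theta \mapsto \phi^\theta_\om$ is differentiable at $0$ with uniform-in-$\om$ control, via a Neumann-series / perturbation argument for the one-dimensional top Oseledets space of the adjoint twisted cocycle.
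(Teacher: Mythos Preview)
Your proposal is correct and follows essentially the same route as the paper: factor $\mcl_\om^{\theta_1}-\mcl_\om^{\theta_2}=\mcl_\om\big((e^{\theta_1 g(\om,\cdot)}-e^{\theta_2 g(\om,\cdot)})\,\cdot\,\big)$, apply~\ref{cond:unifNormBd}, split the $\BV$-norm into its $\var$ and $L^1$ parts, and invoke~\eqref{X1} and Lemma~\ref{L5} (i.e.~\eqref{X2}). The paper's proof is terser, simply citing~\eqref{X1} and~\eqref{X2} after the decomposition, whereas you make the use of (V8) and (V3) explicit; but the argument is the same.
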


\begin{proof}
 For any $f\in \BV$ we have that
 \[
  \begin{split}
   \lVert \mathcal L_\om^{\theta_1}f-\mathcal L_{\om}^{\theta_2}f \rVert_{\BV}
   &=\lVert \mathcal L_\om (e^{\theta_1 g(\om, \cdot)}f-e^{\theta_2 g( \om, \cdot)}f)\rVert_{\BV}
   \le K\lVert (e^{\theta_1 g( \om, \cdot)}-e^{\theta_2 g( \om, \cdot)})f\rVert_{\BV}\\
   &=K\var ((e^{\theta_1 g( \om, \cdot)}-e^{\theta_2 g( \om, \cdot)})f)
   +K\lVert (e^{\theta_1 g( \om, \cdot)}-e^{\theta_2 g( \om, \cdot)})f\rVert_1
  \end{split}
\]
The claim of the lemma now follows directly from~\eqref{X2} and~\eqref{X1}.
\end{proof}
\begin{lemma}\label{lemxv}
The following statements hold:
\begin{enumerate}
\item There exists $K''>0$ such that
\begin{equation}\label{450}
\lVert \mcl_\om^{*, (n)} \phi \rVert_{\BV^*} \le K''e^{-\lambda n} \lVert \phi \rVert_{\BV^*} \quad \text{for $\phi \in \BV^*$ such that $\phi (v_\om^0)=0$ and $\om \in \Om$,}
\end{equation}
with $\lambda >0$  as in~\ref{cond:dec};
\item
Let $\phi_\om^0 \in \B^*$ be as in \eqref{eq:DualEig}. Then,
 \begin{equation}\label{454} \esssup_{\om \in \Om} \lVert \phi_\om^0\rVert_{\BV^*} <\infty. \end{equation}
\end{enumerate}
\end{lemma}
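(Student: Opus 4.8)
The plan is to establish the two assertions separately, both by reducing them to facts already in hand about the untwisted cocycle $\mathcal R$ at $\theta=0$ (namely~\ref{cond:dec}, the boundedness of $v^0$ in Lemma~\ref{lem:boundedv}, and the duality relations of Lemma~\ref{lem:AnnihilatorOsSplittings}). For part (1), recall from Lemma~\ref{lem:AnnihilatorOsSplittings} that $H^*(\om)=Y(\om)^\circ$; since $m$ spans $Y^*_\om$ (as noted in the proof of Lemma~\ref{lem:PF}) and $Y(\om)=\langle v_\om^0\rangle$, the condition $\phi(v_\om^0)=0$ says precisely that $\phi\in H^*(\om)$. Thus it suffices to bound the norm of $\mcl_\om^{*,(n)}$ restricted to $H^*(\om)$. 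First I would run the same adjoint computation as in the proof of Lemma~\ref{lem:AnnihilatorOsSplittings}: for $\phi\in Y(\om)^\circ=H^*(\om)$,
\[
\lVert \mcl_\om^{*,(n)}\phi\rVert_{\B^*}
=\sup_{\lVert f\rVert_\B\le 1}\lvert \phi(\mcl_{\sigma^{-n}\om}^{(n)}f)\rvert
=\sup_{\lVert f\rVert_\B\le 1}\lvert \phi(\mcl_{\sigma^{-n}\om}^{(n)}(f-(\textstyle\int f\,dm)v^0_{\sigma^{-n}\om}))\rvert,
\]
where the last equality uses that $\mcl^{(n)}$ preserves integrals together with $\phi(v_\om^0)=0$ and equivariance $\mcl_{\sigma^{-n}\om}^{(n)}v^0_{\sigma^{-n}\om}=v^0_\om$. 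The vector $f-(\int f\,dm)v^0_{\sigma^{-n}\om}$ has zero integral, so~\ref{cond:dec} gives $\lVert\mcl_{\sigma^{-n}\om}^{(n)}(f-(\int f\,dm)v^0_{\sigma^{-n}\om})\rVert_\B\le K'e^{-\lambda n}\lVert f-(\int f\,dm)v^0_{\sigma^{-n}\om}\rVert_\B$, and the latter is $\le (1+C_{\var}\,\esssup_{\om}\lVert v^0_\om\rVert_\B)$ by~(V3) and Lemma~\ref{lem:boundedv}\eqref{it:boundedv}. Taking the supremum over $\lVert f\rVert_\B\le 1$ yields~\eqref{450} with $K''=K'(1+C_{\var}\esssup_\om\lVert v^0_\om\rVert_\B)$.

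For part (2), the point is that $\phi_\om^0$ is uniquely determined among functionals in $Y^{*0}_\om$ by the normalisation $\phi_\om^0(v_\om^0)=1$, and since $m$ spans $Y^{*0}_\om$ we must have $\phi_\om^0 = m/\big(\int v_\om^0\,dm\big) = m$ (recall $\int v_\om^0\,dm=1$). Hence $\phi_\om^0(f)=\int f\,dm$ for every $f\in\B$, and therefore $\lVert\phi_\om^0\rVert_{\B^*}=\sup_{\lVert f\rVert_\B\le 1}\lvert\int f\,dm\rvert\le \sup_{\lVert f\rVert_\B\le1}\lVert f\rVert_1\le 1$, independently of $\om$, which is~\eqref{454}. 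Alternatively, if one prefers not to invoke the identification with $m$, one can argue directly: write $f=\phi_\om^0(f)v^0_\om+h_\om$ with $h_\om\in H(\om)=Y^{*0}_\om{}^\circ$, note $\int h_\om\,dm=0$ (since $\int v^0_\om\,dm=1$ and integrating the decomposition gives $\int f\,dm=\phi_\om^0(f)$), so $\phi_\om^0(f)=\int f\,dm$ again.

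I do not anticipate a serious obstacle here; both parts are bookkeeping on top of the Oseledets duality already established in Lemma~\ref{lem:AnnihilatorOsSplittings} and the decay estimate~\ref{cond:dec}. The only mildly delicate point is making sure the decomposition $\B^*=Y^{*0}_\om\oplus H^{*0}_\om$ with $H^{*0}_\om=Y(\om)^\circ$ is invoked with the correct normalisations, and that the uniform-in-$\om$ bound on $\lVert v^0_\om\rVert_\B$ from Lemma~\ref{lem:boundedv} is what makes $K''$ in~\eqref{450} independent of $\om$; everything else is routine.
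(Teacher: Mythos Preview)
Your proof is correct, and for part~(1) it is actually more direct than the paper's argument. Both approaches rest on the same identity
\[
\lVert \mcl_\om^{*,(n)}\phi\rVert_{\B^*}=\sup_{\lVert f\rVert_\B\le 1}\lvert \phi(\mcl_{\sigma^{-n}\om}^{(n)}\Pi_{\sigma^{-n}\om}f)\rvert,
\]
where $\Pi_{\om'}$ is the projection onto $\BV^0=\{h:\int h\,dm=0\}$ along $\langle v^0_{\om'}\rangle$, followed by~\ref{cond:dec}. The difference is in how one bounds $\lVert\Pi_{\om'}\rVert$ uniformly in~$\om'$. The paper introduces the minimal angle
\[
\gamma(\om')=\inf\{\lVert f+g\rVert_\B: f\in\BV^0,\ g\in\langle v^0_{\om'}\rangle,\ \lVert f\rVert_\B=\lVert g\rVert_\B=1\},
\]
uses~\ref{cond:unifNormBd} and~\ref{cond:dec} to show $\gamma(\om')\ge\epsilon>0$ uniformly, and concludes $\lVert\Pi_{\om'}\rVert\le 2/\epsilon$. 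You bypass this entirely by writing down the explicit formula $\Pi_{\om'}f=f-(\int f\,dm)v^0_{\om'}$ (valid since $\int v^0_{\om'}\,dm=1$) and bounding its norm directly via~\eqref{eq:boundedv}. A minor remark: your invocation of~(V3) is unnecessary, since $\lvert\int f\,dm\rvert\le\lVert f\rVert_1\le\lVert f\rVert_\B$ already, giving $K''=K'(1+\esssup_\om\lVert v^0_\om\rVert_\B)$. Your route is shorter and uses strictly less machinery; the paper's angle argument would be needed if the complementary direction were not given so explicitly, but here it is overkill.

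For part~(2) you and the paper say the same thing: $\phi_\om^0(f)=\int f\,dm$, hence $\lVert\phi_\om^0\rVert_{\B^*}\le 1$.
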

\begin{proof}
 Let $\Pi_\omega$ denote the projection on $\BV$ onto the subspace $\BV^0$ of functions of zero mean along the subspace spanned by $v_\om^0$. Furthermore, set
 \[
  \gamma(\om)=\inf \{\lVert f+g\rVert_{\BV}: f\in \BV^0, \ g\in span \{ v_\om^0\}, \ \lVert f\rVert_{\BV}=\lVert g\rVert_{\BV}=1\}.
 \]
  As in Lemma~1 in~\cite{DF} we have that   $\lVert \Pi_\om \rVert \le \frac{2}{\gamma (\om)}$. Take now arbitrary $f\in \BV^0$, $g\in span \{ v_\om^0\}$ such that $\lVert f\rVert_{\BV}=\lVert g\rVert_{\BV}=1$.
  It follows from~\ref{cond:unifNormBd} that
  \begin{equation}\label{haj}
    \lVert f+g\rVert_{\BV} \ge \frac{1}{K^n} \lVert \mathcal L_\om^{(n)}(f+g)\rVert_{\BV}  \ge \frac{1}{K^n}(\lVert \mathcal L_\om^{(n)}g\rVert_{\BV} -\lVert \mathcal L_\om^{(n)}f\rVert_{\BV}).
\end{equation}
Writing $g=\lambda v_\om^0$ with $\lvert \lambda \rvert=1/\lVert v_\om^0\rVert_{\BV}$, it follows from~\eqref{eq:boundedv} that
\[
 \lVert \mathcal L_\om^{(n)} g\rVert_{\BV} =\lvert \lambda \rvert \cdot \lVert v_{\sigma^n \om}^0\rVert_{\BV}=\frac{\lVert v_{\sigma^n \om}^0\rVert_{\BV}}{\lVert v_\om^0\rVert_{\BV}}\ge
 \frac{\lVert v_{\sigma^n \om}^0\rVert_1}{\lVert v_\om^0\rVert_{\BV}}\ge \frac{1}{\tilde K},
\]
where $\tilde K=\esssup_{\omega \in \Omega}\lVert v_\omega^0\rVert_{\BV}<\infty$.
By~\ref{cond:dec}  and~\eqref{haj},
\[
  \lVert f+g\rVert_{\BV} \ge \frac{1}{K^n}(1/\tilde K-K'e^{-\lambda n}).
\]
Then, we can choose $n$, independently of $\omega$, such that
\[
 \epsilon:=\frac{1}{K^n}(1/\tilde K-K'e^{-\lambda n})>0,
\]
which implies that $\gamma (\omega) \ge \epsilon$ and thus
\begin{equation}\label{din}
 \esssup_{\om \in \Om} \lVert \Pi_\om \rVert \le 2/\epsilon <\infty.
\end{equation}
Therefore,  for $\phi$ that belongs to annihilator of $v_\om^0$, using~\ref{cond:dec} and~\eqref{din} we have
\[
 \begin{split}
  \lVert \mathcal L_\om^{*, (n)}\phi \rVert_{\BV^*} =\sup_{\lVert f\rVert \le 1} \lvert \phi(\mathcal L_{\sigma^{-n} \om}^{(n)}f)\rvert &=
  \sup_{\lVert f\rVert \le 1} \lvert \phi(\mathcal L_{\sigma^{-n} \om}^{(n)}\Pi_{\sigma^{-n} \om}f)\rvert \\
  &\le K'e^{-\lambda n}\lVert \phi \rVert_{\BV^*} \cdot \lVert \Pi_{\sigma^{-n} \om}\rVert \\
  &\le \frac{2K'}{\epsilon}e^{-\lambda n}\lVert \phi \rVert_{\BV^*},
 \end{split}
\]
for every $n\ge 0$. We conclude that~\eqref{450} holds with $K''=2K'/\epsilon$.

Finally, \eqref{454} is follows directly from the straightforward fact that for \paeom, $\phi_\om^0(f)=\int f\,dm$.
\end{proof}
Next, we consider $\B^*$ with the norm topology, and associated Borel $\sigma-$algebra. Let
\[
 \mathcal N=\bigg{\{}\Phi \colon \Om \to \BV^*: \Phi \text{ is measurable}, \esssup_{\om \in \Om} \lVert \Phi_\om \rVert_{\BV^*}<\infty, \, \Phi_\om (v_\om^0)=0 \ \text{for \paeom} \bigg{\}}
\]
and
\[
 \mathcal N'=\bigg{\{}\Phi \colon \Om \to \BV^*: \Phi \text{ is measurable}, \esssup_{\om \in \Om} \lVert \Phi_\om \rVert_{\BV^*}<\infty \bigg{\}},
\]
where $\Phi_\om :=\Phi(\om)$. We note that $\mathcal N$ and $\mathcal N'$ are Banach spaces with respect to the norm
\[
 \lVert \Phi\rVert_\infty =\esssup_{\om \in \Om} \lVert \Phi_\om \rVert_{\BV^*}.
\]
We define $\mathcal G_1 \colon B_{\mathbb C}(0, 1) \times \mathcal N \to \mathcal N'$ by
\[
\mathcal G_1(\theta, \Phi)_\om=(\mcl_\om^\theta)^*(\Phi_{\sigma \om}+\phi_{\sigma \om}^0), \quad \om \in \Om.
\]
It follows readily from~\eqref{se2} and~\eqref{454} that $\mathcal G_1$ is well-defined.  Furthermore, we define $\mathcal G_2 \colon B_{\mathbb C}(0, 1) \times \mathcal N \to L^\infty(\Om)$ by
\[
\mathcal G_2 (\theta, \Phi)(\om)=(\Phi_{\sigma \om}+\phi_{\sigma \om}^0)(\mcl_\om^\theta v_\om^0), \quad \om \in \Om.
\]
Again, it follows from~\eqref{eq:boundedv}, \eqref{se2} and~\eqref{454} that $\mathcal G_2$ is well-defined.
\begin{lemma}\label{LEM}
$D_2 \mathcal G_1$ exists and is continuous on $B_{\mathbb C}(0, 1) \times \mathcal N$.
\end{lemma}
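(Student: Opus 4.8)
The statement to prove is that $D_2\mathcal{G}_1$ exists and is continuous on $B_{\mathbb{C}}(0,1)\times\mathcal{N}$, where $\mathcal{G}_1(\theta,\Phi)_\om=(\mcl_\om^\theta)^*(\Phi_{\sig\om}+\phi_{\sig\om}^0)$.

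My plan is to exploit the fact that $\mathcal{G}_1$ is \emph{affine} in the second variable $\Phi$, exactly as was done for the analogous maps $G$ and $H$ in Appendix~\ref{regofF}. First I would write, for fixed $(\theta,\Phi)$ and a perturbation $\Psi\in\mathcal{N}$,
\[
\mathcal{G}_1(\theta,\Phi+\Psi)_\om-\mathcal{G}_1(\theta,\Phi)_\om=(\mcl_\om^\theta)^*\Psi_{\sig\om},
\]
which is linear and bounded in $\Psi$ (the bound coming from $\lVert(\mcl_\om^\theta)^*\rVert=\lVert\mcl_\om^\theta\rVert\le K(\theta)$ via~\eqref{se2}, with $\sup_{|\theta|<1}K(\theta)<\infty$ by~\eqref{ktheta}). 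Hence $D_2\mathcal{G}_1$ exists and
\[
(D_2\mathcal{G}_1(\theta,\Phi)\Psi)_\om=(\mcl_\om^\theta)^*\Psi_{\sig\om},\qquad\om\in\Om.
\]
In particular $D_2\mathcal{G}_1(\theta,\Phi)$ does not depend on $\Phi$ at all, so continuity in $\Phi$ is automatic and I only need continuity in $\theta$. One must also check that the right-hand side lands in $\mathcal{N}'$: measurability of $\om\mapsto(\mcl_\om^\theta)^*\Psi_{\sig\om}$ follows from measurability of $\mcl$ and $\Psi$ together with $\bbp$-continuity (Condition~\ref{cond:METCond}), and the essential-sup bound is immediate.

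For continuity in $\theta$, I would estimate the operator norm of $D_2\mathcal{G}_1(\theta_1,\Phi)-D_2\mathcal{G}_1(\theta_2,\Phi)$ directly:
\[
\lVert D_2\mathcal{G}_1(\theta_1,\Phi)-D_2\mathcal{G}_1(\theta_2,\Phi)\rVert
=\sup_{\lVert\Psi\rVert_\infty\le1}\esssup_{\om\in\Om}\lVert(\mcl_\om^{\theta_1})^*\Psi_{\sig\om}-(\mcl_\om^{\theta_2})^*\Psi_{\sig\om}\rVert_{\BV^*}
\le\lVert\mcl_\om^{\theta_1}-\mcl_\om^{\theta_2}\rVert,
\]
since taking adjoints is an isometry on operator norms. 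Now the Lipschitz estimate~\eqref{DAC}, established at the start of Appendix~\ref{0245}, gives $\lVert\mcl_\om^{\theta_1}-\mcl_\om^{\theta_2}\rVert\le C|\theta_1-\theta_2|$ uniformly in $\om$, so
\[
\lVert D_2\mathcal{G}_1(\theta_1,\Phi)-D_2\mathcal{G}_1(\theta_2,\Phi)\rVert\le C|\theta_1-\theta_2|,
\]
which is in fact (Lipschitz) continuity of $D_2\mathcal{G}_1$ jointly in $(\theta,\Phi)$, completing the proof.

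I do not anticipate a serious obstacle here: the affine structure reduces everything to the already-proved bound~\eqref{DAC} and the elementary fact that $A\mapsto A^*$ preserves norms. The only points requiring minor care are (i) verifying the target space membership (that $(\mcl_\om^\theta)^*\Psi_{\sig\om}$ is measurable and essentially bounded, so that $D_2\mathcal{G}_1(\theta,\Phi)\in L(\mathcal{N},\mathcal{N}')$), and (ii) noting that although $\mathcal{G}_1$ maps into $\mathcal{N}'$ rather than $\mathcal{N}$, this causes no issue because the derivative formula and the norm estimates are identical — we never need the constraint $\Phi_\om(v_\om^0)=0$ for this particular lemma. If anything, the subtlety is purely bookkeeping about which Banach space is the codomain, and the proof otherwise mirrors the treatment of $D_2G$ in Appendix~\ref{regofF} almost verbatim, but is shorter because there is no multiplication operator $M_\om^\theta$ to differentiate — the $\theta$-dependence is entirely absorbed into the single already-controlled quantity $\lVert\mcl_\om^{\theta_1}-\mcl_\om^{\theta_2}\rVert$.
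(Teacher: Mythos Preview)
Your proposal is correct and follows essentially the same approach as the paper: exploit the affine structure in $\Phi$ to read off $(D_2\mathcal G_1(\theta,\Phi)\Psi)_\om=(\mcl_\om^\theta)^*\Psi_{\sigma\om}$, then obtain Lipschitz continuity in $\theta$ (and trivial continuity in $\Phi$) directly from the estimate~\eqref{DAC}. The extra remarks you make about measurability, the codomain $\mathcal N'$, and the isometry $A\mapsto A^*$ are accurate and do no harm, but the paper omits them as routine.
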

\begin{proof}
We first note that $\mathcal G_1$ is an affine map in the  variable $\Phi$ which implies that
\[
(D_2 \mathcal G_1(\theta, \Phi) \Psi)_\om=(\mcl_\om^\theta)^*\Psi_{\sigma \om}, \quad \text{for $\om \in \Om$ and $\Psi \in \mathcal N$.}
\]
Moreover, using~\eqref{DAC} we have
\[
\begin{split}
\lVert D_2\mathcal G_1(\theta_1, \Phi^1)-D_2\mathcal G_1(\theta_2, \Phi^2)\rVert &= \sup_{\lVert \Psi\rVert_\infty \le 1}\lVert D_2\mathcal G_1(\theta_1, \Phi^1)\Psi-D_2\mathcal G_1(\theta_2, \Phi^2) \Psi\rVert_\infty \\
&=\sup_{\lVert \Psi\rVert_\infty \le 1} \esssup_{\om \in \Om} \lVert (\mcl_\om^{\theta_1})^*\Psi_{\sigma \om}-(\mcl_\om^{\theta_2})^*\Psi_{\sigma \om}\rVert_{\BV^*} \\
&\le C\lvert \theta_1-\theta_2\rvert,
\end{split}
\]
for any $(\theta_1, \Phi^1), (\theta_2, \Phi^2) \in B_{\mathbb C}(0, 1) \times \mathcal N$. Hence, $D_2 \mathcal G_1$ is continuous on $B_{\mathbb C}(0, 1) \times \mathcal N$.
\end{proof}

\begin{lemma}\label{LEM1}
$D_1 \mathcal G_1$ exists and is continuous on a neighborhood of $(0, 0) \in \mathbb C \times \mathcal N$.
\end{lemma}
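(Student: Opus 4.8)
The goal is to show that $D_1\mathcal G_1$ exists and is continuous near $(0,0)$, where $\mathcal G_1(\theta,\Phi)_\om = (\mcl_\om^\theta)^*(\Phi_{\sigma\om}+\phi^0_{\sigma\om})$. The approach mirrors exactly the one used in Appendix~\ref{regofF} for the map $G$ (see the lemma establishing existence and continuity of $D_1 G$), transported to the adjoint setting. First I would guess the formula for the derivative: by analogy with $D_1 G$, identifying $D_1\mathcal G_1(\theta,\Phi)$ with its value at $1$, I expect
\[
 (D_1\mathcal G_1(\theta,\Phi))_\om = (M_\om^\theta \cdot \mcl_{\sigma^{-1}\!\cdot}\dots)\quad\text{i.e.}\quad (D_1 \mathcal G_1(\theta,\Phi))_\om(f) = \big(\Phi_{\sigma\om}+\phi^0_{\sigma\om}\big)\big(\mcl_\om(g(\om,\cdot)e^{\theta g(\om,\cdot)}f)\big),
\]
for $f\in\BV$; call the right-hand side $(\mathcal L(\theta,\Phi))_\om(f)$. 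One checks $\mathcal L(\theta,\Phi)\colon B_{\C}(0,1)\to\mathcal N'$ (after the scalar identification) is a bounded linear operator using~\eqref{se2}, \eqref{454} and $\esssup_\om\var(g(\om,\cdot))<\infty$.

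The core estimate is the pointwise Taylor bound already established in the proof of Lemma~\ref{L6}: for $|\theta|,|h|\le \tfrac12$ and all $x$,
\[
 \big| e^{(\theta+h)g(\sigma^{-1}\om,x)} - e^{\theta g(\sigma^{-1}\om,x)} - h\,g(\sigma^{-1}\om,x)e^{\theta g(\sigma^{-1}\om,x)} \big| \le \tfrac12 M^2 e^M |h|^2,
\]
together with the variation counterpart~\eqref{t}. Writing $\mathcal G_1(\theta+h,\Phi)_\om - \mathcal G_1(\theta,\Phi)_\om - h(\mathcal L(\theta,\Phi))_\om = (\mcl_{\sigma^{-1}\om}\text{ applied to } (e^{(\theta+h)g}-e^{\theta g}-hg e^{\theta g})(\Phi_{\sigma^{-1}\cdot}+\phi^0))^*$ — more precisely, acting on a test function $f\in\BV$ and using that $\mcl_\om^\theta{}^* \psi (f) = \psi(\mcl_\om(e^{\theta g(\om,\cdot)}f))$ — I would bound the $\BV^*$-norm of the remainder by
\[
 K\,\big\| (e^{(\theta+h)g(\sigma^{-1}\om,\cdot)} - e^{\theta g(\sigma^{-1}\om,\cdot)} - h g(\sigma^{-1}\om,\cdot)e^{\theta g(\sigma^{-1}\om,\cdot)}) \big\|_{\BV}\cdot \big(\|\Phi\|_\infty + \esssup_\om\|\phi^0_\om\|_{\BV^*}\big),
\]
using~\ref{cond:unifNormBd} for the $\mcl$ factor and (V3), (V8) to convert the $L^\infty$ and variation remainder bounds into a $\BV$-bound. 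Dividing by $|h|$ and letting $h\to 0$ gives differentiability in the first variable, i.e.~\eqref{D1H}-style identity holds for $\mathcal G_1$.

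For continuity of $D_1\mathcal G_1$ on a neighborhood of $(0,0)$, I would split
\[
 (D_1\mathcal G_1(\theta_1,\Phi^1) - D_1\mathcal G_1(\theta_2,\Phi^2))_\om = \big[(\Phi^1_{\sigma\om}-\Phi^2_{\sigma\om})\circ(\mcl_\om(g(\om,\cdot)e^{\theta_1 g(\om,\cdot)}\,\cdot))\big] + \big[(\Phi^2_{\sigma\om}+\phi^0_{\sigma\om})\circ(\mcl_\om(g(\om,\cdot)(e^{\theta_1 g(\om,\cdot)}-e^{\theta_2 g(\om,\cdot)})\,\cdot))\big],
\]
bounding the first bracket by $C\|\Phi^1-\Phi^2\|_\infty$ (via~\ref{cond:unifNormBd}, $\esssup_\om\var(g(\om,\cdot))<\infty$, $M<\infty$) and the second by $C|\theta_1-\theta_2|$ using~\eqref{X1}, Lemma~\ref{L5} and~\eqref{X2} to control $\|g(\om,\cdot)(e^{\theta_1 g}-e^{\theta_2 g})\|_{\BV}$ — just as in the $D_1 G$ computation — together with the uniform bound $\esssup_\om\|\phi^0_\om\|_{\BV^*}<\infty$ from~\eqref{454}. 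This yields a Lipschitz-type bound $\|D_1\mathcal G_1(\theta_1,\Phi^1) - D_1\mathcal G_1(\theta_2,\Phi^2)\| \le C(\|\Phi^1-\Phi^2\|_\infty + |\theta_1-\theta_2|)$ and hence continuity. I expect no genuine obstacle here: the only mild subtlety is bookkeeping the adjoint — all estimates are performed on test functions $f\in\BV$ and the duality $\mcl^{\theta*}_\om\psi(f)=\psi(\mcl_\om(e^{\theta g(\om,\cdot)}f))$ is used to reduce everything to $\BV$-norm estimates on multiplication operators that were already proved in Appendix~\ref{regofF} and Lemma~\ref{lemxv}. The combination of this lemma with Lemma~\ref{LEM} then sets up an implicit-function-theorem argument (in the spirit of Lemma~\ref{thm:IFT}) for the differentiability of $\theta\mapsto\phi^\theta$, which is what the appendix is ultimately after.
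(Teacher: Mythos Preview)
Your proposal is correct and follows essentially the same route as the paper: identify the candidate derivative via the duality $(\mcl_\om^\theta)^*\psi(f)=\psi(\mcl_\om(e^{\theta g(\om,\cdot)}f))$, control the remainder using the Taylor bound~\eqref{mh2} and the variation bound~\eqref{t} together with~\ref{cond:unifNormBd} and~\eqref{454}, and prove continuity by the same two-term splitting handled via~\eqref{X1} and Lemma~\ref{L5}. Your use of $g(\om,\cdot)$ (rather than $g(\sigma^{-1}\om,\cdot)$) is in fact the correct index for $\mathcal G_1$ as defined, since the operator involved is $\mcl_\om^\theta$; the paper's $\sigma^{-1}\om$ is a harmless carry-over from the analogous computation for $G$ in Appendix~\ref{regofF}.
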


\begin{proof}
We claim that
\begin{equation}\label{545}
(D_1 \mathcal G_1 (\theta, \Phi) h)_\om (f)=(\Phi_{\sigma \om}+\phi_{\sigma \om}^0)(\mcl_\om (hg(\sigma^{-1} \om, \cdot)e^{\theta g(\sigma^{-1} \om, \cdot)}f)),
\end{equation}
for $f\in \BV$, $\om \in \Om$ and $h\in \mathbb C$. Denote the operator on the right hand side of~\eqref{545} by $L(\theta, \Phi)$.  We note that
\begin{align*}
 & (\mathcal G_1(\theta+h, \Phi)_\om-\mathcal G_1(\theta, \Phi)_\om-hL(\theta, \Phi)_\om)(f) \displaybreak[0] \\
 &= (\Phi_{\sigma \om}+\phi_{\sigma \om}^0)(\mathcal L_\om ((e^{(\theta+h)g(\sigma^{-1} \om, \cdot)}-e^{\theta g(\sigma^{-1} \om, \cdot)}-hg(\sigma^{-1} \om, \cdot)e^{\theta g(\sigma^{-1} \om, \cdot)})f)).
 \end{align*}
Therefore, it follows from~\ref{cond:unifNormBd} that
\begin{align*}
& \lVert \mathcal G_1(\theta+h, \Phi)-\mathcal G_1(\theta, \Phi)-h L(\theta, \Phi)\rVert_\infty \displaybreak[0] \\
&=\esssup_{\om \in \Om}\sup_{\lVert f\rVert_{\BV} \le 1}\lvert (\Phi_{\sigma \om}+\phi_{\sigma \om}^0)(\mathcal L_\om ((e^{(\theta+h)g(\sigma^{-1} \om, \cdot)}-e^{\theta g(\sigma^{-1} \om, \cdot)}-hg(\sigma^{-1} \om, \cdot)e^{\theta g(\sigma^{-1} \om, \cdot)})f)) \rvert  \displaybreak[0] \\
&\le K(\lVert \Phi\rVert_\infty +\lVert \phi^0\rVert_\infty)\esssup_{\om \in \Om}\sup_{\lVert f\rVert_{\BV} \le 1}\lVert (e^{(\theta+h)g(\sigma^{-1} \om, \cdot)}-e^{\theta g(\sigma^{-1} \om, \cdot)}-hg(\sigma^{-1} \om, \cdot)e^{\theta g(\sigma^{-1} \om, \cdot)})f\rVert_{\BV}.
\end{align*}
By~\eqref{mh2} and~\eqref{t}, we conclude that
\[
\lim_{h\to 0} \frac 1 h \lVert \mathcal G_1(\theta+h, \Phi)-\mathcal G_1(\theta, \Phi)-h L(\theta, \Phi)\rVert_\infty=0,
\]
and thus~\eqref{545} holds. Moreover,
\begin{align*}
& (D_1 \mathcal G_1 (\theta_1, \Phi^1) h)_\om (f)-(D_1 \mathcal G_1 (\theta_2, \Phi^2) h)_\om (f)\displaybreak[0] \\
&=(\Phi_{\sigma \om}^1-\Phi_{\sigma \om}^2)(\mcl_\om (hg(\sigma^{-1} \om, \cdot)e^{\theta_1 g(\sigma^{-1} \om, \cdot)}f)) \displaybreak[0] \\
&\phantom{=}+(\Phi_{\sigma \om}^2+\phi_{\sigma \om}^0)(\mcl_\om (hg(\sigma^{-1} \om, \cdot)(e^{\theta_1 g(\sigma^{-1} \om, \cdot)}-e^{\theta_2 g(\sigma^{-1} \om, \cdot)})f)),
\end{align*}
which in view of~\ref{cond:unifNormBd}, \eqref{obs},  \eqref{X2} and~\eqref{X1} easily implies that
 $D_1 \mathcal G_1$ is continuous.
\end{proof}

\begin{lemma}\label{LEM2}
$D_2 \mathcal G_2$ exists and is continuous on a neighborhood of $(0, 0) \in B_{\mathbb C}(0,1) \times \mathcal N$.
\end{lemma}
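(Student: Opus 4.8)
The statement to be established asserts that $D_2\mathcal G_2$ exists and is continuous near $(0,0)$. The strategy is entirely parallel to the proof for $D_2\mathcal G_1$ in Lemma~\ref{LEM}, exploiting the fact that $\mathcal G_2$ is \emph{affine in the second variable} $\Phi$. First I would record this affine structure explicitly: since $\mathcal G_2(\theta, \Phi)(\om)=(\Phi_{\sigma\om}+\phi^0_{\sigma\om})(\mcl_\om^\theta v_\om^0)$ depends on $\Phi$ only through the linear term $\Phi_{\sigma\om}(\mcl_\om^\theta v_\om^0)$, the partial derivative is the bounded linear map
\[
(D_2\mathcal G_2(\theta,\Phi)\Psi)(\om)=\Psi_{\sigma\om}(\mcl_\om^\theta v_\om^0), \quad \text{for $\om\in\Om$ and $\Psi\in\mathcal N$,}
\]
which is independent of $\Phi$. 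One then checks this is a genuine (Fr\'echet) derivative: the remainder $\mathcal G_2(\theta,\Phi+\Psi)-\mathcal G_2(\theta,\Phi)-D_2\mathcal G_2(\theta,\Phi)\Psi$ vanishes identically, again by affineness. Well-definedness of $D_2\mathcal G_2(\theta,\Phi)$ as an element of $L(\mathcal N, L^\infty(\Om))$ follows from $\|\Psi_{\sigma\om}(\mcl_\om^\theta v_\om^0)\|_{L^\infty}\le \|\Psi\|_\infty \cdot \esssup_\om \|\mcl_\om^\theta v_\om^0\|_{\BV}$, which is finite by~\eqref{se2} and~\eqref{eq:boundedv}.

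For continuity of $\theta\mapsto D_2\mathcal G_2(\theta,\Phi)$, I would estimate, for $(\theta_1,\Phi^1),(\theta_2,\Phi^2)$ near $(0,0)$,
\[
\|D_2\mathcal G_2(\theta_1,\Phi^1)-D_2\mathcal G_2(\theta_2,\Phi^2)\| =\sup_{\|\Psi\|_\infty\le 1}\esssup_{\om\in\Om}|\Psi_{\sigma\om}((\mcl_\om^{\theta_1}-\mcl_\om^{\theta_2})v_\om^0)| \le \esssup_{\om\in\Om}\|(\mcl_\om^{\theta_1}-\mcl_\om^{\theta_2})v_\om^0\|_{\BV},
\]
and then invoke~\eqref{DAC} together with the uniform bound~\eqref{eq:boundedv} on $\|v_\om^0\|_{\BV}$ to conclude $\|D_2\mathcal G_2(\theta_1,\Phi^1)-D_2\mathcal G_2(\theta_2,\Phi^2)\|\le C\|v^0\|_\infty|\theta_1-\theta_2|$. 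Since the bound depends only on $|\theta_1-\theta_2|$, this establishes (Lipschitz) continuity of $D_2\mathcal G_2$ on a neighborhood of $(0,0)$.

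There is essentially no obstacle here — the proof is purely formal once one notices the affineness in $\Phi$, so the only care required is bookkeeping: making sure the operator $\mcl_\om^\theta v_\om^0$ lands in $\BV$ with uniform-in-$\om$ norm control (guaranteed by Lemma~\ref{1214}(1) and Lemma~\ref{lem:boundedv}), and that the estimate~\eqref{DAC} is applicable since for $\theta_1,\theta_2$ near $0$ we have $\theta_1,\theta_2\in B_{\mathbb C}(0,1)$. The same remark that follows the other lemmas — that these results combine to give $C^1$ (and subsequently $C^2$) regularity of the relevant composite maps and hence differentiability of $\theta\mapsto\phi^\theta$ — applies verbatim, so I would end by noting that the remaining second-order partials $D_{11}\mathcal G_1$, $D_{21}\mathcal G_1$, $D_{12}\mathcal G_2$, $D_{11}\mathcal G_2$, $D_{21}\mathcal G_2$ are handled by the same arguments already deployed for $G$ and $H$ in Appendix~\ref{regofF2}, yielding that $\theta\mapsto\phi^\theta_\om$ is differentiable at $0$ with the required uniform bounds, via the implicit function theorem applied to the map with components $\mathcal G_1-\Phi$ and (a suitable normalization involving) $\mathcal G_2$.
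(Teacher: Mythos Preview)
Your proposal is correct and follows essentially the same approach as the paper: you exploit affineness of $\mathcal G_2$ in $\Phi$ to obtain the explicit formula $(D_2\mathcal G_2(\theta,\Phi)\Psi)(\om)=\Psi_{\sigma\om}(\mcl_\om^\theta v_\om^0)$, and then establish (Lipschitz) continuity via~\eqref{DAC} and the uniform bound~\eqref{eq:boundedv}. The paper's proof is nearly identical, though it omits your explicit verification that the Fr\'echet remainder vanishes and your closing remarks about the broader implicit-function-theorem context (which, while correct in spirit, go beyond what this particular lemma requires).
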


\begin{proof}
We note that $\mathcal G_2$ is affine map in the variable $\Phi$ and hence
\[
(D_2 \mathcal G_2(\theta, \Phi)\Psi)(\om)=\Psi_{\sigma \om}(\mcl_\om^\theta v_\om^0), \quad \om \in \Om.
\]
It follows from~\eqref{DAC} that
\[
\begin{split}
\lVert D_2 \mathcal G_2(\theta_1, \Phi^1)-D_2 \mathcal G_2(\theta_2, \Phi^2)\rVert  &=\sup_{\lVert \Psi\rVert_\infty \le 1}\lVert D_2 \mathcal G_2(\theta_1, \Phi^1)\Psi-D_2 \mathcal G_2(\theta_2, \Phi^2) \Psi \rVert_{L^\infty} \\
&=\sup_{\lVert \Psi\rVert_\infty \le 1} \esssup_{\om \in \Om}\lvert \Psi_{\sigma \om}(\mcl_\om^{\theta_1}v_\om^0-\mcl_\om^{\theta_2} v_\om^0)\rvert \\
&\le C\lvert \theta_1-\theta_2 \rvert \cdot \esssup_{\om \in \Om} \lVert v_\om^0\rVert_{\BV},
\end{split}
\]
and thus (in a view of~\eqref{eq:boundedv}) we conclude that  $D_2 \mathcal G_2$ is continuous.
\end{proof}

\begin{lemma}\label{LEM3}
$D_1 \mathcal G_2$ exists and is continuous on a neighborhood of $(0, 0) \in \mathbb C \times \mathcal N$.
\end{lemma}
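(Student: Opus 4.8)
The plan is to mimic closely the structure of the proof of Lemma~\ref{LEM1}, since $\mathcal G_2$ differs from $\mathcal G_1$ only in that the application of $\mcl_\om^\theta$ is tested against the fixed function $v_\om^0$ and the result is a scalar (an element of $L^\infty(\Om)$) rather than an element of $\mathcal N'$. First I would guess the formula for the derivative: for $\om \in \Om$, $h\in \mathbb C$, and $(\theta,\Phi)$ near $(0,0)$,
\[
 (D_1 \mathcal G_2(\theta, \Phi)h)(\om)= h\,(\Phi_{\sigma \om}+\phi_{\sigma \om}^0)\big(\mcl_\om(g(\sigma^{-1}\om,\cdot)e^{\theta g(\sigma^{-1}\om,\cdot)}v_\om^0)\big),
\]
and denote the corresponding bounded linear map $B_{\mathbb C}(0,1)\to L^\infty(\Om)$ by $L(\theta,\Phi)$. (As in the preceding lemmas I identify $D_1\mathcal G_2(\theta,\Phi)$ with its value at $1$.) The linearity and boundedness of $L(\theta,\Phi)$ follow from \ref{cond:unifNormBd}, \eqref{obs}, \eqref{eq:boundedv}, and \eqref{454}.

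Next I would verify that this $L$ is indeed the Fr\'echet derivative. Writing out $\mathcal G_2(\theta+h,\Phi)(\om)-\mathcal G_2(\theta,\Phi)(\om)-h(L(\theta,\Phi))(\om)$, one gets
\[
 (\Phi_{\sigma \om}+\phi_{\sigma \om}^0)\Big(\mcl_\om\big((e^{(\theta+h)g(\sigma^{-1}\om,\cdot)}-e^{\theta g(\sigma^{-1}\om,\cdot)}-h g(\sigma^{-1}\om,\cdot)e^{\theta g(\sigma^{-1}\om,\cdot)})v_\om^0\big)\Big).
\]
Bounding the functional by $\lVert\Phi\rVert_\infty+\lVert\phi^0\rVert_\infty$ (finite by hypothesis on $\mathcal N$ and by \eqref{454}), using \ref{cond:unifNormBd} to pull out $K$, then applying conditions (V3) and (V8) together with the pointwise Taylor estimate \eqref{mh2} and the variation estimate \eqref{t} to control $\lVert(e^{(\theta+h)g(\sigma^{-1}\om,\cdot)}-e^{\theta g(\sigma^{-1}\om,\cdot)}-h g(\sigma^{-1}\om,\cdot)e^{\theta g(\sigma^{-1}\om,\cdot)})v_\om^0\rVert_{\BV}\le C|h|^2\lVert v_\om^0\rVert_{\BV}$, and finally invoking \eqref{eq:boundedv} for the uniform bound on $\lVert v_\om^0\rVert_{\BV}$, I obtain $\tfrac1{|h|}\lVert\mathcal G_2(\theta+h,\Phi)-\mathcal G_2(\theta,\Phi)-hL(\theta,\Phi)\rVert_{L^\infty(\Om)}\to0$ as $h\to0$, which establishes the formula.

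Finally, continuity of $D_1\mathcal G_2$ on a neighborhood of $(0,0)$: I would split
\begin{align*}
 (D_1\mathcal G_2(\theta_1,\Phi^1)h)(\om)&-(D_1\mathcal G_2(\theta_2,\Phi^2)h)(\om)\\
 &=h(\Phi^1_{\sigma\om}-\Phi^2_{\sigma\om})\big(\mcl_\om(g(\sigma^{-1}\om,\cdot)e^{\theta_1 g(\sigma^{-1}\om,\cdot)}v_\om^0)\big)\\
 &\phantom{=}+h(\Phi^2_{\sigma\om}+\phi^0_{\sigma\om})\big(\mcl_\om(g(\sigma^{-1}\om,\cdot)(e^{\theta_1 g(\sigma^{-1}\om,\cdot)}-e^{\theta_2 g(\sigma^{-1}\om,\cdot)})v_\om^0)\big),
\end{align*}
and bound the two terms separately: the first by $|h|\,K\,M\,e^{M}\,\lVert\Phi^1-\Phi^2\rVert_\infty\,\esssup_\om\lVert v_\om^0\rVert_{\BV}$ using \ref{cond:unifNormBd}, (V8), \eqref{obs}, \eqref{eq:boundedv}; the second by a multiple of $|h|\,|\theta_1-\theta_2|$ using in addition \eqref{X1} and Lemma~\ref{L5} to estimate $\lVert g(\sigma^{-1}\om,\cdot)(e^{\theta_1 g(\sigma^{-1}\om,\cdot)}-e^{\theta_2 g(\sigma^{-1}\om,\cdot)})v_\om^0\rVert_{\BV}$ via (V8), together with \eqref{454} and \eqref{eq:boundedv}. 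This yields $\lVert D_1\mathcal G_2(\theta_1,\Phi^1)-D_1\mathcal G_2(\theta_2,\Phi^2)\rVert\le C'(\lVert\Phi^1-\Phi^2\rVert_\infty+|\theta_1-\theta_2|)$, giving (Lipschitz) continuity. I do not anticipate a genuine obstacle here — the argument is entirely parallel to Lemmas~\ref{LEM1}--\ref{LEM2}; the only point requiring a little care is keeping track that the test vector $v_\om^0$ contributes a $\lVert v_\om^0\rVert_{\BV}$ factor that must be bounded uniformly in $\om$ via \eqref{eq:boundedv}, and that the functionals $\Phi_{\sigma\om}+\phi^0_{\sigma\om}$ are uniformly bounded via the definition of $\mathcal N$ and \eqref{454}.
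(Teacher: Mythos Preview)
Your proposal is correct and follows essentially the same approach as the paper's own proof: the same candidate formula for $D_1\mathcal G_2$, the same Taylor-remainder computation using \eqref{mh2} and \eqref{t} together with \ref{cond:unifNormBd}, \eqref{eq:boundedv} and \eqref{454} to establish differentiability, and the identical two-term splitting with \eqref{X1} and Lemma~\ref{L5} to obtain Lipschitz continuity. The only differences are cosmetic (naming the linear map $L$ versus the paper's $R$, and your slightly more explicit enumeration of which (V)-properties enter).
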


\begin{proof}
We claim that
\begin{equation}\label{8301}
(D_1\mathcal G_2(\theta, \Phi)h)(\om) =(\Phi_{\sigma \om}+\phi_{\sigma \om}^0)(\mcl_\om(g(\sigma^{-1}\om, \cdot)e^{\theta g(\sigma^{-1} \om, \cdot)}v_\om^0)), \quad h\in \mathbb C, \om \in \Om.
\end{equation}
Let us denote the operator on the right hand side of~\eqref{8301} by $R(\theta, \Phi)$. We have that
\begin{align*}
 & (\mathcal G_2(\theta+h, \Phi)-\mathcal G_2(\theta, \Phi)-hR(\theta, \Phi))(\om) \displaybreak[0] \\
 &= (\Phi_{\sigma \om}+\phi_{\sigma \om}^0)(\mathcal L_\om ((e^{(\theta+h)g(\sigma^{-1} \om, \cdot)}-e^{\theta g(\sigma^{-1} \om, \cdot)}-hg(\sigma^{-1} \om, \cdot)e^{\theta g(\sigma^{-1} \om, \cdot)})v_\om^0)).
 \end{align*}
Therefore, it follows from~\ref{cond:unifNormBd} that
\begin{align*}
& \lVert \mathcal G_2(\theta+h, \Phi)-\mathcal G_2(\theta, \Phi)-h R(\theta, \Phi)\rVert_{L^\infty} \displaybreak[0] \\
&=\esssup_{\om \in \Om}\lvert (\Phi_{\sigma \om}+\phi_{\sigma \om}^0)(\mathcal L_\om ((e^{(\theta+h)g(\sigma^{-1} \om, \cdot)}-e^{\theta g(\sigma^{-1} \om, \cdot)}-hg(\sigma^{-1} \om, \cdot)e^{\theta g(\sigma^{-1} \om, \cdot)})v_\om^0)) \rvert  \displaybreak[0] \\
&\le K(\lVert \Phi\rVert_\infty +\lVert \phi^0\rVert_\infty)\esssup_{\om \in \Om}\lVert (e^{(\theta+h)g(\sigma^{-1} \om, \cdot)}-e^{\theta g(\sigma^{-1} \om, \cdot)}-hg(\sigma^{-1} \om, \cdot)e^{\theta g(\sigma^{-1} \om, \cdot)})v_\om^0\rVert_{\BV}.
\end{align*}
By~\eqref{eq:boundedv}, \eqref{mh2} and~\eqref{t}, we conclude that
\[
\lim_{h\to 0} \frac 1 h \lVert \mathcal G_2(\theta+h, \Phi)-\mathcal G_2(\theta, \Phi)-h R(\theta, \Phi)\rVert_{L^\infty}=0.
\]
Thus, \eqref{8301} holds. Moreover,
\begin{align*}
& (D_1 \mathcal G_2 (\theta_1, \Phi^1) h)(\om) -(D_1 \mathcal G_2 (\theta_2, \Phi^2) h)(\om) \displaybreak[0] \\
&=(\Phi_{\sigma \om}^1-\Phi_{\sigma \om}^2)(\mcl_\om (hg(\sigma^{-1} \om, \cdot)e^{\theta_1 g(\sigma^{-1} \om, \cdot)}v_\om^0)) \displaybreak[0] \\
&\phantom{=}+(\Phi_{\sigma \om}^2+\phi_{\sigma \om}^0)(\mcl_\om (hg(\sigma^{-1} \om, \cdot)(e^{\theta_1 g(\sigma^{-1} \om, \cdot)}-e^{\theta_2 g(\sigma^{-1} \om, \cdot)})v_\om^0)),
\end{align*}
which in view of~\ref{cond:unifNormBd}, \eqref{obs},  \eqref{X2} and~\eqref{X1} easily implies that $D_1 \mathcal G_2 (\theta_1, \Phi^1) \to D_1 \mathcal G_2 (\theta_2, \Phi^2)$ when $(\theta_1, \Phi^1) \to (\theta_2, \Phi^2)$. Hence, $D_1 \mathcal G_2$
is continuous.
\end{proof}
Let
\begin{equation}\label{fdc2}
\mathcal G(\theta, \Phi)_\om =\frac{(\mcl_\om^\theta)^*(\Phi_{\sigma \om}+\phi_{\sigma \om}^0)}{(\Phi_{\sigma \om}+\phi_{\sigma \om}^0)(\mcl_\om^\theta v_\om^0)}-\Phi_\om -\phi_\om^0.
\end{equation}
\begin{proposition}\label{fdc}
The map $\mathcal G$ is of class $C^1$ on a neighborhood of $(0, 0) \in \mathbb C \times \mathcal N$. Furthermore,
\begin{equation}\label{2212017}
((D_2\mathcal G(\theta, \Phi))\Psi)_\om=\frac{(\mcl_\om^\theta)^*\Psi_{\sigma \om}}{\mathcal G_2(\theta, \Phi)(\om)}-\frac{\Psi_{\sigma \om}(\mcl_\om^\theta v_\om^0)}{[\mathcal G_2(\theta, \Phi)(\om)]^2}\mathcal G_1(\theta, \Phi)_\om -\Psi_\om, \quad \om \in \Om, \Psi \in \mathcal N.
\end{equation}
\end{proposition}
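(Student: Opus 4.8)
The structure of Proposition~\ref{fdc} is entirely parallel to that of Proposition~\ref{difF}: the map $\mathcal G$ in \eqref{fdc2} is built out of the three auxiliary maps $\mathcal G_1$ and $\mathcal G_2$ (together with the normalising denominator) exactly as $F$ was built out of $G$ and $H$. So the plan is to run the same argument as in the proof of Proposition~\ref{difF}, using in place of Lemmas~\ref{1108}, \ref{L6} and the accompanying regularity lemmas for $G$, their adjoint counterparts established in this appendix, namely Lemmas~\ref{LEM}, \ref{LEM1}, \ref{LEM2} and \ref{LEM3}. These four lemmas give exactly that $D_1\mathcal G_1$, $D_2\mathcal G_1$, $D_1\mathcal G_2$, $D_2\mathcal G_2$ all exist and are continuous on a neighbourhood of $(0,0)\in\mathbb C\times\mathcal N$; i.e.\ both $\mathcal G_1$ and $\mathcal G_2$ are $C^1$ there.

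\textbf{Key steps.} First I would record that $\mathcal G_2(0,0)(\om)=\phi^0_{\sigma\om}(\mcl_\om v^0_\om)=\phi^0_{\sigma\om}(v^0_{\sigma\om})=\int v^0_{\sigma\om}\,dm=1$, using \eqref{v0om} and the fact (from Lemma~\ref{lemxv}) that $\phi^0_\om(f)=\int f\,dm$. By continuity of $\mathcal G_2$ (which follows from the existence and continuity of its partial derivatives, just established), $\lvert \mathcal G_2(\theta,\Phi)(\om)\rvert\ge 1/2$ for $(\theta,\Phi)$ in a neighbourhood of $(0,0)$ and $\paeom$; combined with the uniform bound \eqref{se2} on $\|\mcl_\om^\theta\|$ and \eqref{454}, this shows $\mathcal G$ is a well-defined map into $\mathcal N'$ on such a neighbourhood, and that its image actually lies in $\mathcal N$ (since by \eqref{intprop} each $(\mcl_\om^\theta)^*(\Phi_{\sigma\om}+\phi^0_{\sigma\om})$ paired against the denominator normalisation cancels the $Y^0_\om$-component correctly — more precisely, one checks $\mathcal G(\theta,\Phi)_\om(v^0_\om)=0$ using $\phi^0_\om(v^0_\om)=1$). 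Then, since $\mathcal G$ is obtained from the $C^1$ maps $\mathcal G_1,\mathcal G_2$ by the operations of division (away from the zero of the denominator), scalar multiplication, and subtraction of the continuous term $\Phi_\om+\phi^0_\om$ — all of which preserve $C^1$ regularity — the chain and quotient rules give that $\mathcal G$ is $C^1$ near $(0,0)$, with the formula \eqref{2212017} for $D_2\mathcal G$ obtained by a routine application of the quotient rule to \eqref{fdc2}, differentiating only in the second variable and using that $\mathcal G_1,\mathcal G_2$ are affine in $\Phi$ so that $D_2\mathcal G_1(\theta,\Phi)\Psi=(\mcl_\om^\theta)^*\Psi_{\sigma\om}$ and $D_2\mathcal G_2(\theta,\Phi)(\om)\Psi=\Psi_{\sigma\om}(\mcl_\om^\theta v^0_\om)$.

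\textbf{Main obstacle.} There is no genuinely hard step: the substance has already been absorbed into Lemmas~\ref{LEM}--\ref{LEM3}, whose proofs are the adjoint analogues of the (already completed) estimates for $G$ and $H$. The only point requiring a little care is the bookkeeping for the quotient rule — verifying that the denominator $\mathcal G_2(\theta,\Phi)(\om)$ stays bounded away from $0$ uniformly in $\om$ on a neighbourhood of $(0,0)$ (this is where $\esssup_\om\|v^0_\om\|_{\BV}<\infty$ from \eqref{eq:boundedv}, $\esssup_\om\|\phi^0_\om\|_{\BV^*}<\infty$ from \eqref{454}, and the continuity of $\mathcal G_2$ are all used together), and checking that the resulting derivative \eqref{2212017} indeed takes values in $\mathcal N'$ with the right codomain. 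I would state these as brief observations and then assemble \eqref{2212017} directly.

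\begin{proof}
The argument is the exact analogue of the proof of Proposition~\ref{difF}.
By Lemmas~\ref{LEM} and~\ref{LEM1}, the partial derivatives $D_1\mathcal G_1$ and $D_2\mathcal G_1$ exist and are continuous on a neighborhood of $(0,0)\in\mathbb C\times\mathcal N$, so $\mathcal G_1$ is of class $C^1$ there; similarly, by Lemmas~\ref{LEM2} and~\ref{LEM3}, $\mathcal G_2$ is of class $C^1$ on a neighborhood of $(0,0)$.

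We first check that $\mathcal G$ is well defined near $(0,0)$. Using \eqref{v0om} and the fact, established in Lemma~\ref{lemxv}, that $\phi_\om^0(f)=\int f\,dm$ for \paeom, we have
\[
\mathcal G_2(0,0)(\om)=\phi_{\sigma\om}^0(\mcl_\om v_\om^0)=\phi_{\sigma\om}^0(v_{\sigma\om}^0)=\int v_{\sigma\om}^0\,dm=1,\quad\text{for \paeom.}
\]
Since $\mathcal G_2$ is continuous (being $C^1$) as a map into $L^\infty(\Om)$, there is a neighborhood of $(0,0)$ on which $\lVert\mathcal G_2(\theta,\Phi)-\mathcal G_2(0,0)\rVert_{L^\infty}\le 1/2$, and hence
\[
\essinf_{\om\in\Om}\lvert\mathcal G_2(\theta,\Phi)(\om)\rvert\ge 1/2
\]
on such a neighborhood. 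Combining this with the uniform bound~\eqref{se2} on $\lVert\mcl_\om^\theta\rVert$ and with~\eqref{454}, we see that $\om\mapsto\mathcal G(\theta,\Phi)_\om$ is measurable with $\esssup_{\om\in\Om}\lVert\mathcal G(\theta,\Phi)_\om\rVert_{\BV^*}<\infty$. Moreover, using~\eqref{intprop} (equivalently, that $\mcl_\om$ preserves integrals) together with $\phi_\om^0(v_\om^0)=\int v_\om^0\,dm=1$, one computes
\[
\mathcal G(\theta,\Phi)_\om(v_\om^0)=\frac{(\Phi_{\sigma\om}+\phi_{\sigma\om}^0)(\mcl_\om^\theta v_\om^0)}{(\Phi_{\sigma\om}+\phi_{\sigma\om}^0)(\mcl_\om^\theta v_\om^0)}-\Phi_\om(v_\om^0)-\phi_\om^0(v_\om^0)=1-\Phi_\om(v_\om^0)-1=0,
\]
since $\Phi\in\mathcal N$ means $\Phi_\om(v_\om^0)=0$ for \paeom. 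Hence $\mathcal G(\theta,\Phi)\in\mathcal N$, so $\mathcal G$ maps a neighborhood of $(0,0)$ into $\mathcal N$.

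It remains to check the regularity of $\mathcal G$. On the neighborhood above, the denominator $\om\mapsto\mathcal G_2(\theta,\Phi)(\om)$ is bounded away from $0$, so $(\theta,\Phi)\mapsto 1/\mathcal G_2(\theta,\Phi)$ is $C^1$ with values in $L^\infty(\Om)$ by the chain rule. Since $\mathcal G$ is obtained from the $C^1$ maps $\mathcal G_1$ and $\mathcal G_2$ by division (away from the zeros of the denominator), pointwise multiplication, and subtraction of the continuous affine term $(\theta,\Phi)\mapsto\Phi_\om+\phi_\om^0$, it follows that $\mathcal G$ is of class $C^1$ on a neighborhood of $(0,0)\in\mathbb C\times\mathcal N$.

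Finally, to obtain~\eqref{2212017} we differentiate~\eqref{fdc2} in the second variable. Since $\mathcal G_1$ and $\mathcal G_2$ are affine in $\Phi$, we have
\[
(D_2\mathcal G_1(\theta,\Phi)\Psi)_\om=(\mcl_\om^\theta)^*\Psi_{\sigma\om}\quad\text{and}\quad(D_2\mathcal G_2(\theta,\Phi)\Psi)(\om)=\Psi_{\sigma\om}(\mcl_\om^\theta v_\om^0),
\]
for $\om\in\Om$ and $\Psi\in\mathcal N$. Applying the quotient rule to the first term on the right-hand side of~\eqref{fdc2} and noting that $D_2$ of $\phi_\om^0$ vanishes, we obtain
\[
((D_2\mathcal G(\theta,\Phi))\Psi)_\om=\frac{(\mcl_\om^\theta)^*\Psi_{\sigma\om}}{\mathcal G_2(\theta,\Phi)(\om)}-\frac{\Psi_{\sigma\om}(\mcl_\om^\theta v_\om^0)}{[\mathcal G_2(\theta,\Phi)(\om)]^2}\mathcal G_1(\theta,\Phi)_\om-\Psi_\om,
\]
for $\om\in\Om$ and $\Psi\in\mathcal N$, which is~\eqref{2212017}. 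This completes the proof.
\end{proof}
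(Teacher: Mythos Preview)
Your proof is correct and follows exactly the same approach as the paper's own proof, which is a one-line remark that the conclusion follows from Lemmas~\ref{LEM}--\ref{LEM3} after noting $\mathcal G_2(0,0)(\om)=1$. You have simply filled in the details the paper leaves implicit (well-definedness, the quotient rule computation, and the verification that $\mathcal G$ lands in $\mathcal N$ rather than merely $\mathcal N'$).
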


\begin{proof}
The desired conclusion follows directly from  Lemmas~\ref{LEM}, \ref{LEM1}, \ref{LEM2} and~\ref{LEM3} after we note that $\mathcal G_2(0, 0)(\om)=1$ for $\om \in \Om$.
\end{proof}

\begin{lemma}\label{fdc1}
$D_2 \mathcal G(0, 0)$ is invertible.
\end{lemma}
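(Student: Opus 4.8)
The plan is to show that the linear operator $L:=D_2\mathcal G(0,0):\mathcal N\to\mathcal N$ is a bijection, hence invertible by the open mapping theorem. From formula \eqref{2212017}, together with $\mathcal G_2(0,0)(\om)=1$, $\mathcal G_1(0,0)_\om=\mcl_{\sigma^{-1}\om}^*\phi^0_\om=\phi^0_{\sigma^{-1}\om}$ (using $(\mcl^0_\om)^*\phi^0_{\sigma\om}=\phi^0_\om$, a consequence of \eqref{eq:DualEig} with $\theta=0$ and $\lam^0_\om=1$), and the fact that $\Psi_\om(v^0_\om)=0$ for $\Psi\in\mathcal N$, one computes
\[
(L\Psi)_\om=(\mcl^0_\om)^*\Psi_{\sigma\om}-\Psi_{\sigma\om}(\mcl^0_\om v^0_\om)\,\phi^0_{\sigma^{-1}\om}-\Psi_\om.
\]
Now $\mcl^0_\om v^0_\om=v^0_{\sigma\om}$ and, since $\Psi_{\sigma\om}\in\mathcal N$ annihilates $v^0_{\sigma\om}$, the middle term vanishes. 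Writing $\mcl^{*,(n)}_\om$ for the adjoint cocycle over $\sigma^{-1}$ as in the paper's conventions, we get the clean expression $(L\Psi)_\om=\mcl^*_\om\Psi_{\sigma\om}-\Psi_\om$, i.e. $L=\mcl^{*}\circ(\text{shift by }\sigma)-\mathrm{Id}$ restricted to $\mathcal N$. (One must double-check the index bookkeeping: $\mcl^*_\om$ here is $(\mcl_{\sigma^{-1}\om})^*$, and $\mathcal G_1(\theta,\Phi)_\om=(\mcl^\theta_\om)^*(\Phi_{\sigma\om}+\phi^0_{\sigma\om})$ uses the operator adjoint to $\mcl_{\sigma^{-1}\om}$ acting on functionals over $\sigma\om$ — this is precisely the setup of Appendix~\ref{0245}.)

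\textbf{Injectivity.} Suppose $L\Psi=0$ with $\Psi\in\mathcal N$, i.e. $\mcl^*_\om\Psi_{\sigma\om}=\Psi_\om$ for \paeom. Then $\Psi$ is a nonzero equivariant section of the adjoint twisted cocycle at $\theta=0$, i.e. $\mcl^{*,(n)}_\om\Psi_{\sigma^n\om}=\Psi_\om$, so $\Psi_\om$ has Lyapunov exponent $0$ for the adjoint cocycle $\mc R^*$. By Corollary~\ref{cor:METAdjoint} the adjoint cocycle has the same exceptional exponents and multiplicities as $\mc R$; since $\dim Y(\om)=1$ and the top exponent is $0$ (Lemma~\ref{lem:qc+1dim}), we get $\Psi_\om\in Y^*(\om)$, the one-dimensional top Oseledets space of $\mc R^*$, which by the argument of Lemma~\ref{lem:PF} (or directly, since $\phi^0_\om$ spans $Y^*_\om$) forces $\Psi_\om$ to be a scalar multiple of $\phi^0_\om$, and the scalar is $\om$-independent by equivariance together with $\mcl^*_\om\phi^0_{\sigma\om}=\phi^0_\om$. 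But $\phi^0_\om(v^0_\om)=1\neq0$, contradicting $\Psi_\om(v^0_\om)=0$ unless the scalar is zero, i.e. $\Psi=0$. A slicker route avoiding any appeal to uniqueness of the adjoint splitting: use Lemma~\ref{lemxv}\eqref{450}, which gives $\|\mcl^{*,(n)}_\om\phi\|_{\B^*}\le K''e^{-\lambda n}\|\phi\|_{\B^*}$ for $\phi$ annihilating $v^0_{\sigma^n\om}$. Applying this to $\phi=\Psi_{\sigma^n\om}$ (which annihilates $v^0_{\sigma^n\om}$ since $\Psi\in\mathcal N$) and using $\Psi_\om=\mcl^{*,(n)}_\om\Psi_{\sigma^n\om}$ together with $\sup_\om\|\Psi_\om\|_{\B^*}<\infty$ yields $\|\Psi_\om\|_{\B^*}\le K''e^{-\lambda n}\|\Psi\|_\infty\to0$, hence $\Psi=0$.

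\textbf{Surjectivity.} Given $\Xi\in\mathcal N$, define
\[
\Psi_\om:=-\sum_{j=0}^{\infty}\mcl^{*,(j)}_\om\,\Xi_{\sigma^{j}\om}.
\]
Each summand $\mcl^{*,(j)}_\om\Xi_{\sigma^j\om}$ is the image under the adjoint cocycle of a functional annihilating $v^0_{\sigma^j\om}$, so Lemma~\ref{lemxv}\eqref{450} bounds its $\B^*$-norm by $K''e^{-\lambda j}\|\Xi\|_\infty$; the series therefore converges absolutely and uniformly in $\om$, giving $\Psi\in\mathcal N'$ with $\sup_\om\|\Psi_\om\|_{\B^*}\le K''(1-e^{-\lambda})^{-1}\|\Xi\|_\infty$. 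Measurability is inherited from that of $\Xi$ and of $\om\mapsto\mcl_\om$. To see $\Psi_\om(v^0_\om)=0$: each term satisfies $(\mcl^{*,(j)}_\om\Xi_{\sigma^j\om})(v^0_\om)=\Xi_{\sigma^j\om}(\mcl^{(j)}_{\sigma^{-?}\cdots}v^0_\om)$; more transparently, $\mcl^{(j)}$ maps $v^0_{\cdot}$ to $v^0_{\cdot}$ equivariantly and $\Xi_{\sigma^j\om}$ annihilates $v^0_{\sigma^j\om}$, so every term vanishes on $v^0_\om$, whence $\Psi\in\mathcal N$. Finally a direct telescoping computation using $\mcl^*_\om\circ(\text{shift})$ gives $(L\Psi)_\om=\mcl^*_\om\Psi_{\sigma\om}-\Psi_\om=-\sum_{j\ge1}\mcl^{*,(j)}_\om\Xi_{\sigma^j\om}+\sum_{j\ge0}\mcl^{*,(j)}_\om\Xi_{\sigma^j\om}=\Xi_\om$, so $L$ is onto.

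\textbf{Conclusion and main obstacle.} Combining, $L=D_2\mathcal G(0,0)$ is a continuous linear bijection of the Banach space $\mathcal N$, hence a linear isomorphism by the bounded inverse theorem. The routine points are the two index-tracking reductions (simplifying \eqref{2212017} at $(0,0)$ to $\mcl^*_\om\Psi_{\sigma\om}-\Psi_\om$, and checking that the Neumann-type series lies in $\mathcal N$ rather than merely $\mathcal N'$), together with invoking Lemma~\ref{lemxv}. The main obstacle is purely bookkeeping: being careful that $\mcl^*_\om$ denotes $(\mcl_{\sigma^{-1}\om})^*$ and that the decay estimate \eqref{450} is applied to functionals annihilating the \emph{correct} fibre's density $v^0_{\sigma^n\om}$; once the algebra is arranged so that the middle term of $L$ genuinely vanishes on $\mathcal N$, the argument is the exact adjoint analogue of the surjectivity/injectivity proof of $D_2F(0,0)$ in Lemma~\ref{thm:IFT}, with \ref{cond:dec} replaced by its dual \eqref{450}.
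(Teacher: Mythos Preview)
Your proof is correct and follows essentially the same approach as the paper. The paper's own proof simply reduces \eqref{2212017} at $(0,0)$ to $(D_2\mathcal G(0,0)\Psi)_\om=\mcl_\om^*\Psi_{\sigma\om}-\Psi_\om$ and then says ``proceed as in the proof of Lemma~\ref{thm:IFT} to show that~\eqref{450} implies the desired conclusion''; you have written out exactly that argument (injectivity via the decay estimate~\eqref{450}, surjectivity via the Neumann-type series), which is the adjoint analogue of the $D_2F(0,0)$ computation. One small correction: with the conventions of Appendix~\ref{0245}, $(\mcl_\om^\theta)^*$ denotes the ordinary adjoint of $\mcl_\om^\theta$, so $\mathcal G_1(0,0)_\om=(\mcl_\om)^*\phi^0_{\sigma\om}=\phi^0_\om$ rather than $\phi^0_{\sigma^{-1}\om}$; this is harmless since, as you note, the middle term of \eqref{2212017} vanishes on $\mathcal N$ anyway.
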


\begin{proof}
By~\eqref{2212017},
\[
(D_2 \mathcal G(0, 0) \Psi)_\om=\mcl_\om^*\Psi_{\sigma \om}- \Psi_\om, \quad \text{for $\om \in \Om$ and  $\Psi \in \mathcal N$.}
\]
Now one can proceed as in the proof of Lemma~\ref{thm:IFT} to show that~\eqref{450} implies the desired conclusion.
\end{proof}
It follows from Proposition~\ref{fdc}, Lemma~\ref{fdc1} and the implicit function theorem that there exists a neighborhood $U$ of $0\in \mathbb C$ and a  smooth function $\mathcal F \colon U \to \mathcal N$ such that $\mathcal F(0)=0$ and
\begin{equation}\label{fdc3}
\mathcal G(\theta, \mathcal F(\theta))=0, \quad \text{for $\theta \in U$.}
\end{equation}
Finally, set
\[
\Psi(\theta)_\om=\frac{\mathcal F(\theta)_\om+\phi_\om^0}{(\mathcal F(\theta)_\om+\phi_\om^0)(v_\om^\theta)}, \quad \text{for $\om \in \Om$ and $\theta \in U$.}
\]
Using the differentiability of $\theta \mapsto v^\theta$, we observe that there exists a neighborhood $U' \subset U$ of $0\in \mathbb C$ such that $\Psi(\theta)$ is well-defined and differentiable for $\theta \in U'$. Furthermore, we note that
$\Psi(\theta)_\om (v_\om^\theta)=1$. Finally, it follows from~\eqref{fdc2} and~\eqref{fdc3} that
\[
(\mcl_\om^\theta)^*\Psi(\theta)_{\sigma \om}=C_\om^\theta \Psi(\theta)_\om,
\]
for some scalar $C_\om^\theta$. The arguments in Subsection~\ref{sec:choiceOsBases} imply that $\phi_\om^\theta=\Psi(\theta)_\om$. Therefore,  we have established the differentiability of
$\theta \to \phi^\theta$.

\section*{Acknowledgements}
We would like to thank the referee for carefully reading our manuscript and for useful comments that helped us improve the quality of the paper. 
We thank Yuri Kifer for raising the possibility of proving a quenched random LCLT, and for providing valuable feedback and references for this work.
The research of DD was supported by the Australian Research Council Discovery
Project DP150100017 and in part by the Croatian Science Foundation under the
project IP-2014-09-228. The research of GF was  supported by the Australian Research Council Discovery
Project DP150100017 and a Future Fellowship.
CGT was supported by ARC DE160100147.
SV was supported by the  project  APEX ``Syst\`{e}mes dynamiques: Probabilit\'{e}s et Approximation Diophantienne PAD'' funded by the R\'{e}gion PACA, by the Labex Archim\'ede (AMU University), by the Leverhulme Trust for support thorough the Network Grant IN-2014-021 and by the project Physeco, MATH-AMSud.
CGT thanks Jacopo de Simoi and Carlangelo Liverani for their hospitality in Rome and for conversations related to this topic.
Parts of this work were completed when (some or all) the authors met at AIM (San Jose), CPT \& CIRM
(Marseille), SUSTC (Shenzhen), the University of New South Wales and the University
of Queensland. We are thankful to all of these institutions for their support and hospitality.

\bibliographystyle{abbrv}

\end{document}